\title{Semiclassical measures for higher dimensional quantum cat maps}
\author{Semyon Dyatlov}
\email{dyatlov@math.mit.edu}
\address{Department of Mathematics, Massachusetts Institute of Technology, Cambridge, MA 02139}
\author{Malo J\'ez\'equel}
\email{mpjez@mit.edu}
\address{Department of Mathematics, Massachusetts Institute of Technology, Cambridge, MA 02139}
\newcommand{\p}[1]{\left(#1\right)}
\newcommand{\n}[1]{\left\|#1\right\|}
\newcommand{\set}[1]{\left\{#1\right\}}
\newcommand{\va}[1]{\left|#1\right|}
\newcommand{\T}{\mathbb{T}}
\newcommand{\Q}{\mathbb{Q}}
\newcommand{\R}{\mathbb{R}}
\newcommand{\N}{\mathbb{N}}
\newcommand{\Z}{\mathbb{Z}}
\newcommand{\C}{\mathbb{C}}
\newcommand{\w}{\mathtt{w}}
\begin{document}

\begin{abstract}
Consider a quantum cat map $M$ associated to a matrix~$A\in\Sp(2n,\mathbb Z)$, which
is a common toy model in quantum chaos. We show that the mass of eigenfunctions
of~$M$ on any nonempty open set in the position-frequency space satisfies a lower bound
which is uniform
in the semiclassical limit, under two assumptions: (1) there is a unique
simple eigenvalue of $A$ of largest absolute value and (2) the characteristic
polynomial of $A$ is irreducible over the rationals.
This is similar to previous work~\cite{meassupp,varfup} on negatively curved
surfaces and~\cite{Schwartz-cat} on quantum cat maps with $n=1$, but this paper gives
the first results of this type which apply in any dimension. When condition (2) fails we provide a
weaker version of the result and discuss relations to existing counterexamples.
We also obtain corresponding statements regarding semiclassical measures and damped
quantum cat maps. 
\end{abstract}

\maketitle

\addtocounter{section}{1}
\addcontentsline{toc}{section}{1. Introduction}

In~\cite{varfup}, Dyatlov--Jin--Nonnenmacher proved the following lower
bound on $L^2$ mass of eigenfunctions: if $(\mathcal M,g)$ is a compact connected Riemannian surface
with Anosov geodesic flow (e.g. a negatively curved surface) and $u$ is an eigenfunction of the Laplacian
on~$\mathcal M$ with eigenvalue $-\lambda^2$, then
\begin{equation}
  \label{e:varfup-estimate}
\|u\|_{L^2(\mathcal M)}\leq C_\Omega\|u\|_{L^2(\Omega)}\quad\text{for any nonempty open}\quad
\Omega\subset \mathcal M
\end{equation}
where the constant $C_\Omega>0$ depends on $\mathcal M$ and $\Omega$, but does not depend
on~$\lambda$. This result has applications to control for the Schr\"odinger equation,
exponential energy decay for the damped wave equation, and semiclassical
measures, and belongs to the domain of \emph{quantum chaos}~-- see~\cite{varfup} and~\S\ref{s:history} for a historical overview.

The paper~\cite{varfup} only deals with the case of surfaces because the key ingredient,
the \emph{fractal uncertainty principle} of Bourgain--Dyatlov~\cite{fullgap},
is only known for subsets of $\mathbb R$. To prove an analogous result
for manifolds of dimension $n+1>2$ would require a fractal uncertainty principle
for subsets of $\mathbb R^n$. A naive extension of the fractal uncertainty
principle to higher dimensions is false and
no generalization suitable for applications to~\eqref{e:varfup-estimate} is currently known~--
see the review of Dyatlov~\cite[\S6]{FUP-ICMP} and the paper of Han--Schlag~\cite{Han-Schlag}.

In this paper we give a class of higher dimensional examples where a bound of type~\eqref{e:varfup-estimate} can still be shown using the one-dimensional fractal uncertainty principle of~\cite{fullgap}.
We work in the setting of \emph{quantum cat maps}, which are toy models
commonly used in quantum chaos.
In this setting, the geodesic flow on a Riemannian manifold
is replaced by a \emph{classical cat map}, which is the automorphism of the
torus $\mathbb T^{2n}:=\mathbb R^{2n}/\mathbb Z^{2n}$ induced
by an integer symplectic matrix
$A\in\Sp(2n,\mathbb Z)$.
The eigenfunctions of the Laplacian are replaced
by those of a \emph{quantum cat map},
an operator~$M_{\mathbf N,\theta}$ on an $\mathbf N^n$-dimensional space
$\mathcal H_{\mathbf N}(\theta)$ which
quantizes~$A$ in the sense of~\eqref{e:egorov-intro} below.
The high eigenvalue limit $\lambda\to\infty$ is replaced by the limit $\mathbf N\to\infty$.
More general quantum maps have also been used in the study of continuous systems
(such as Laplacians on Riemannian manifolds),
where the quantum map corresponds to Poincar\'e map(s) of the original dynamical system,
see in particular Bogomolny~\cite{Bogomolny-Poincare} for a physics perspective and Sj\"ostrand--Zworski~\cite{Sjostrand-Zworski-monodromy} for an approach relevant to trace formulas.

For two-dimensional quantum cat maps (analogous
to the case of Laplacian eigenfunctions on surfaces) an estimate similar to~\eqref{e:varfup-estimate} was recently proved by Schwartz~\cite{Schwartz-cat}. The novelty of the present paper is that it also applies in higher dimensions.

\subsection{Setting and lower bounds on eigenfunctions}

To explain quantum cat maps in more detail, we use a \emph{semiclassical quantization
procedure}, mapping each classical observable (a function on a symplectic manifold called
the phase space)
to a quantum observable (an operator on some Hilbert space). In our setting
the phase space is the torus
$\mathbb T^{2n}$
and each classical observable is quantized to a family of operators
(see~\S\ref{s:quantization-on-torus}):
$$
a\in C^\infty(\mathbb T^{2n})\quad \mapsto\quad \Op_{\mathbf N,\theta}(a):\mathcal H_{\mathbf N}(\theta)
\to\mathcal H_{\mathbf N}(\theta).
$$
Here $\theta\in\mathbb T^{2n}$ is a parameter, $\mathbf N\geq 1$ is an integer,
and the Hilbert spaces of quantum states $\mathcal H_{\mathbf N}(\theta)$,
defined in~\S\ref{s:quantum-states}, have dimension $\mathbf N^n$.
We denote the inner product on these spaces by $\langle\bullet,\bullet\rangle_{\mathcal H}$.
The semiclassical parameter is $h:=(2\pi \mathbf N)^{-1}$.

Every matrix $A\in\Sp(2n,\mathbb Z)$ defines a symplectic automorphism of the torus
$\mathbb T^{2n}$. This automorphism is quantized by a family of unitary maps
$$
M_{\mathbf N,\theta}:\mathcal H_{\mathbf N}(\theta)\to\mathcal H_{\mathbf N}(\theta)
$$
which satisfy the following \emph{exact Egorov's theorem}~\eqref{e:our-egorov} intertwining
the action of $A$ on $\mathbb T^{2n}$ with conjugation by $M_{\mathbf N,\theta}$:
\begin{equation}
  \label{e:egorov-intro}
M_{\mathbf N,\theta}^{-1}\Op_{\mathbf N,\theta}(a)M_{\mathbf N,\theta}=\Op_{\mathbf N,\theta}(a\circ A)\quad\text{for all}\quad a\in C^\infty(\mathbb T^{2n}).
\end{equation}
To construct such $M_{\mathbf N,\theta}$ we need to impose a \emph{quantization condition}
\eqref{e:theta-quantize-condition} on~$\theta$. The constructed operators are unique
up to multiplication by a unit complex constant. See~\S\ref{s:quantization-toric-auto} for more details and~\S\ref{s:explicit-formulas} for explicit formulas
for $\Op_{\mathbf N,\theta}(a)$ and~$M_{\mathbf N,\theta}$.

Throughout the paper we fix $A\in\Sp(2n,\mathbb Z)$ and
assume the following \emph{spectral gap} condition
on the spectrum $\Spec(A)$:
\begin{equation}
  \label{e:spectral-gap}
\text{$A$ has a simple eigenvalue $\lambda_+$ 
such that}\quad
\max_{\lambda\in \Spec(A)\setminus \{\lambda_+\}}|\lambda|<|\lambda_+|.
\end{equation}
This condition is crucial in the proof because it means there is
a one-dimensional `fast' direction in which the powers of $A$
grow faster than in other directions, which makes
it possible to apply the one-dimensional fractal uncertainty principle~--
see~\S\ref{s:outline}.

Our first result is the following analog of~\eqref{e:varfup-estimate}:
\begin{theo}
  \label{t:basic}
Assume that~$A$ satisfies~\eqref{e:spectral-gap} and the characteristic
polynomial of~$A$ is irreducible over the rationals. Then
for each $a\in C^\infty(\mathbb T^{2n})$, $a\not\equiv 0$,
there exists $C_a>0$ such that for all large enough $\mathbf N$
and every eigenfunction~$u\in\mathcal H_{\mathbf N}(\theta)$
of $M_{\mathbf N,\theta}$ we have
\begin{equation}
  \label{e:basic-estimate}
\|u\|_{\mathcal H}\leq C_a \|\Op_{\mathbf N,\theta}(a)u\|_{\mathcal H}.
\end{equation}
\end{theo}
Here the norm $\|\Op_{\mathbf N,\theta}(a)u\|_{\mathcal H}$
on the right-hand side of~\eqref{e:basic-estimate}
plays a similar role to the norm $\|u\|_{L^2(\Omega)}$
on the right-hand side of~\eqref{e:varfup-estimate}:
if $a$ is supported in some $\mathbf N$-independent subset
of $\mathbb T^{2n}$, then $\|\Op_{\mathbf N,\theta}(a)u\|_{\mathcal H}$
can be thought of as the norm of $u$ localized in the position-frequency
space to this set.

We remark that the conditions of Theorem~\ref{t:basic} are always satisfied
if $n=1$ (i.e. $A$ is a $2\times 2$ matrix) and $A$ is hyperbolic,
that is it has no eigenvalues on the unit circle. Thus Theorem~\ref{t:basic}
(or more precisely, Theorem~\ref{t:general} below) implies the result of~\cite{Schwartz-cat}.
See Figure~\ref{f:qcat1d} for a numerical illustration in the case $n=1$.
For $n\geq 2$, our assumption~\eqref{e:spectral-gap} does not require $A$ to be hyperbolic.
We also remark that having characteristic polynomial irreducible over~$\mathbb Q$ is a generic
property for integer symplectic matrices, see Rivin~\cite{rivin-irreducible}
and the book of Kowalski~\cite[Theorem~7.12]{Kowalski-sieve}.

\begin{figure}
\includegraphics[height=7.3cm]{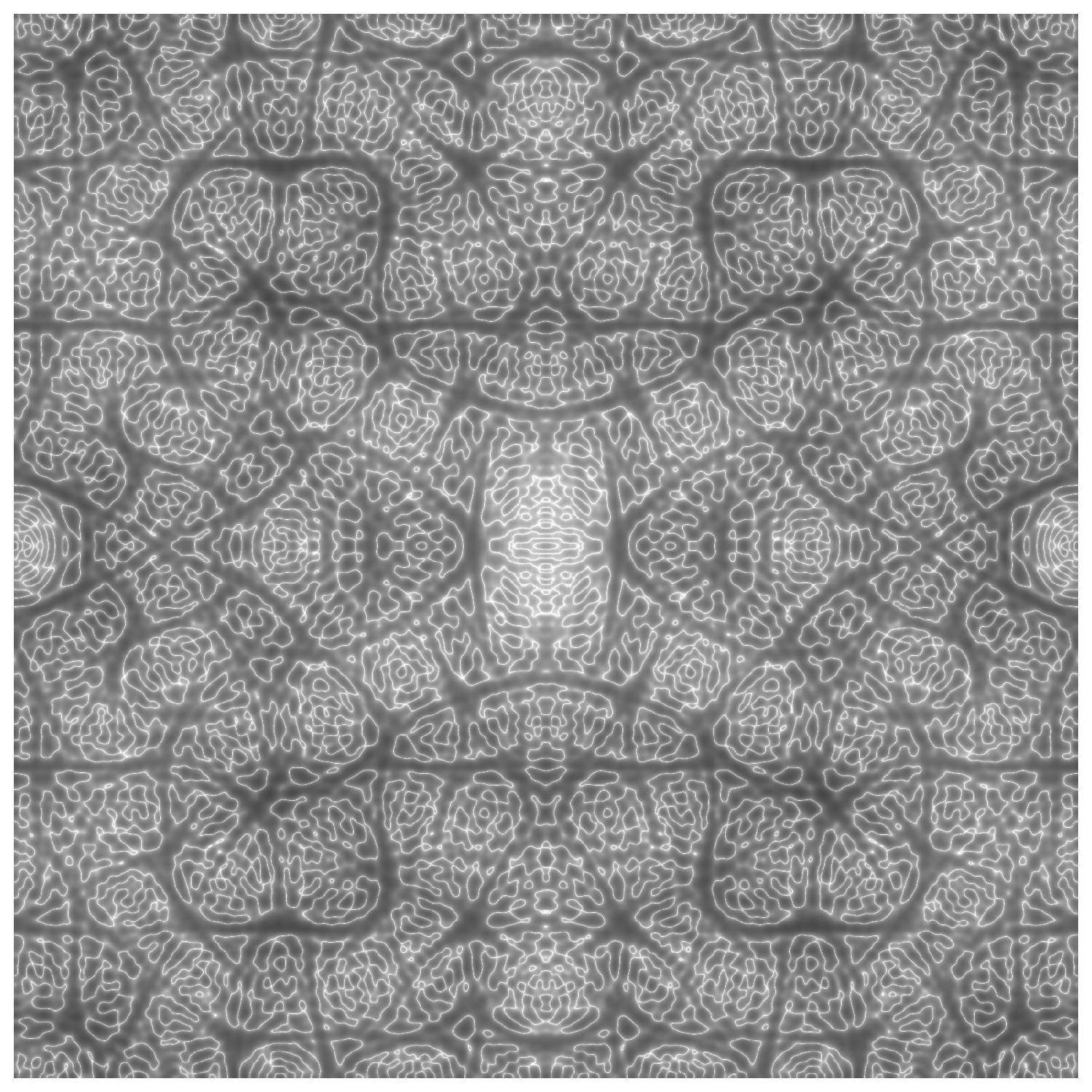}\qquad
\includegraphics[height=7.3cm]{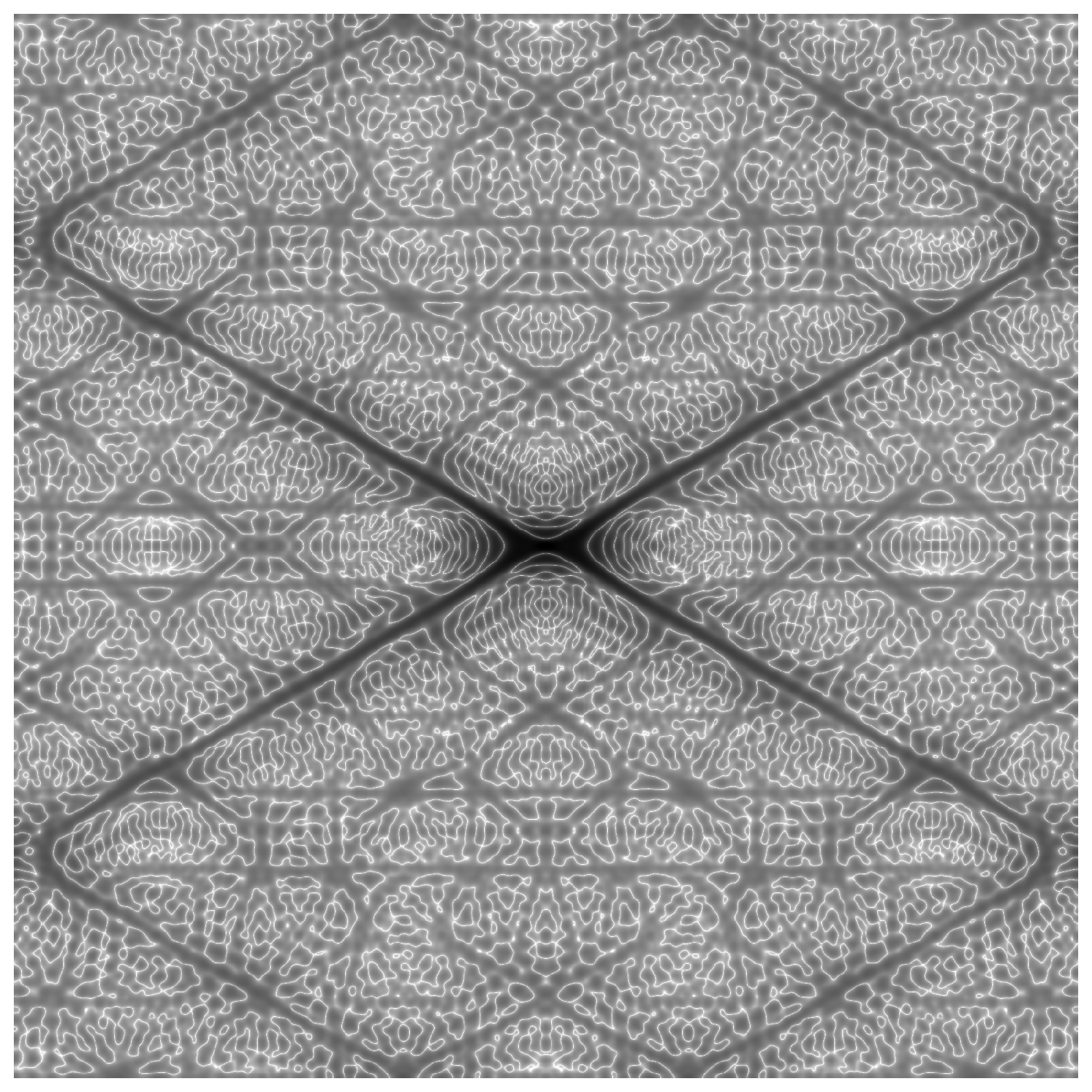}
\caption[]{Plots of concentration in the position-frequency space $\mathbb T^2$
(more precisely, plots of the corresponding Wigner matrices
convolved with appropriate Gaussians, on the logarithmic scale,
see~\eqref{e:Wigner}) of two eigenfunctions of a quantum cat map corresponding
to $A=\begin{pmatrix} 2 &3 \\ 1 & 2 \end{pmatrix}$,
$\mathbf N=780$.
On the left is a typical eigenfunction, showing equidistribution
consistent with the Quantum Ergodicity result of~\cite{Bouzouina-deBievre}.
On the right is a particular eigenfunction which exhibits scarring
discovered in~\cite{Faure-Nonnenmacher-dB}. This eigenfunction
does not violate Theorem~\ref{t:basic} because
the mass on the scar in the center
is approximately equal to the mass elsewhere.}
\label{f:qcat1d}
\end{figure}

\subsection{Further results}
\label{s:more-results}

Theorem \ref{t:basic} is a consequence of a more general result, Theorem~\ref{t:general} below, that applies to quasimodes of $M_{\mathbf{N},\theta}$ and does not require the irreducibility over~$\mathbb{Q}$ of the characteristic polynomial of $A$.
Before stating it, we need to introduce more notation.
In order to measure the strength of a quasimode, we introduce for $u \in \mathcal{H}_{\mathbf{N}}(\theta)$ the quantity
\begin{equation}\label{e:quasimode}
\mathbf r_M(u) := \min_{z \in \mathbb{S}^1} \n{ M_{\mathbf{N},\theta} u - z u}_{\mathcal{H}},
\end{equation}
where $\mathbb{S}^1$ denotes the unit circle in $\mathbb{C}$. 
Note that $\mathbf r_M(u)=0$ if and only if $u$ is an eigenfunction
of $M_{\mathbf N,\theta}$.

To relax the assumption on the characteristic polynomial of~$A$,
let us notice that by~\eqref{e:spectral-gap} the leading eigenvalue
$\lambda_+$ is real. Since the matrix~$A$ is symplectic, its transpose
is conjugate to its inverse,
thus $\lambda_- := \lambda_+^{-1}$ is also a simple eigenvalue for $A$
and $|\lambda_-|<1<|\lambda_+|$.
Moreover, all other eigenvalues $\lambda$ of $A$ satisfy 
$|\lambda_-|<|\lambda|<|\lambda_+|$.

Denote by $E_\pm\subset\mathbb R^{2n}$ the (real) eigenspaces of $A$ associated to $\lambda_\pm$. Let $V_{\pm}$ be the smallest subspace of~$\Q^{2n}$ such that $E_{\pm}$ is contained in $V_{\pm} \otimes \R$. Note that $V_\pm$ are invariant under~$A$.
Denote by
\begin{equation}
  \label{e:T-pm-def}
\mathbb T_\pm\ \subset\ \mathbb T^{2n}
\end{equation}
the subtori given by the projections of $V_\pm\otimes\mathbb R$ to $\mathbb T^{2n}$.

The tori $\mathbb{T}_\pm$ are relevant here because of their alternative dynamical definition given in Lemma \ref{l:minimality}: if $e_\pm$ is any eigenvector of $A$ associated to the eigenvalue $\lambda_\pm$, then the closure of the orbit of a point $x \in \mathbb{T}^{2n}$ by the translation flow generated by $e_\pm$ is $x + \mathbb{T}_\pm$. In our setting, these translation flows will play the role that was played by the horocyclic flows on the unit tangent bundle of hyperbolic surfaces in \cite{meassupp}. Let us also give
\begin{defi}\label{d:geometric_control_condition}
Let $U$ and $\T'$ be respectively an open subset and a subtorus of $\T^{2n}$
and assume that $A(\T')=\T'$. We say that $U$ satisfies the \emph{geometric control condition transversally to} $\T'$ if, for every $x \in \T^{2n}$, there exists $m \in \Z$ such that $A^m x + \T'$ intersects~$U$.
\end{defi}
We now state a more general version of Theorem~\ref{t:basic}:
\begin{theo}\label{t:general}
Assume that $A$ satisfies \eqref{e:spectral-gap}. Let $a \in C^\infty(\mathbb{T}^{2n})$ be such that $\set{a \neq 0}$ satisfies the geometric control condition transversally to $\T_+$ and $\T_-$.
Then there exists $C_a > 0$ such that for all large enough $\mathbf{N}$ and every $u \in \mathcal{H}_{\mathbf{N}}(\theta)$, we have
\begin{equation}
  \label{e:general-estimate}
\n{u}_{\mathcal{H}} \leq C_a \n{\Op_{\mathbf{N},\theta}(a)u}_{\mathcal{H}} + C_a \mathbf{r}_M(u) \log \mathbf{N} .
\end{equation}
\end{theo}
Theorem~\ref{t:general} implies Theorem~\ref{t:basic}. Indeed,
if the characteristic polynomial of~$A$ is irreducible over the rationals,
then $\T_\pm=\mathbb T^{2n}$ (see Lemma~\ref{l:irreducible}),
thus every nonempty open set satisfies the geometric control condition
transversally to $\T_+$ and $\T_-$.
However, one can find a matrix $A$ that satisfies \eqref{e:spectral-gap} but for which \eqref{e:basic-estimate} fails for certain choices of $a$ (in particular the characteristic polynomial of $A$ is reducible over $\mathbb{Q}$) -- see the examples in \S \ref{s:most_favorable}.

We will also prove the following theorem about damped quantum cat maps.
It is analogous to exponential energy decay for negatively curved surfaces
proved in~\cite{varfup} (following earlier work of Jin~\cite{Jin-DWE} in the constant
curvature case):
\begin{theo}\label{t:damped}
Assume that $A$ satisfies \eqref{e:spectral-gap}. Let $b \in C^\infty(\T^{2n})$ be such that $\va{b} \leq 1$. Assume that the set $\{|b| < 1\}$ satisfies the geometric control condition transversally to~$\T_+$ and~$\T_-$.
Then there exists $0 < \eta < 1$ such that for all $\mathbf{N}$ large enough the spectral radius of the operator $\Op_{\mathbf N,\theta}(b) M_{\mathbf{N},\theta}$ is less than $\eta$.
\end{theo}
Note that by Lemma~\ref{l:irreducible}, if the characteristic polynomial of $A$ is irreducible over~$\Q$ then the condition on~$b$ simplifies to $\{|b|<1\}\neq\emptyset$.
See Figure~\ref{f:damped} for a numerical illustration.

\begin{figure}
\includegraphics[width=7.45cm]{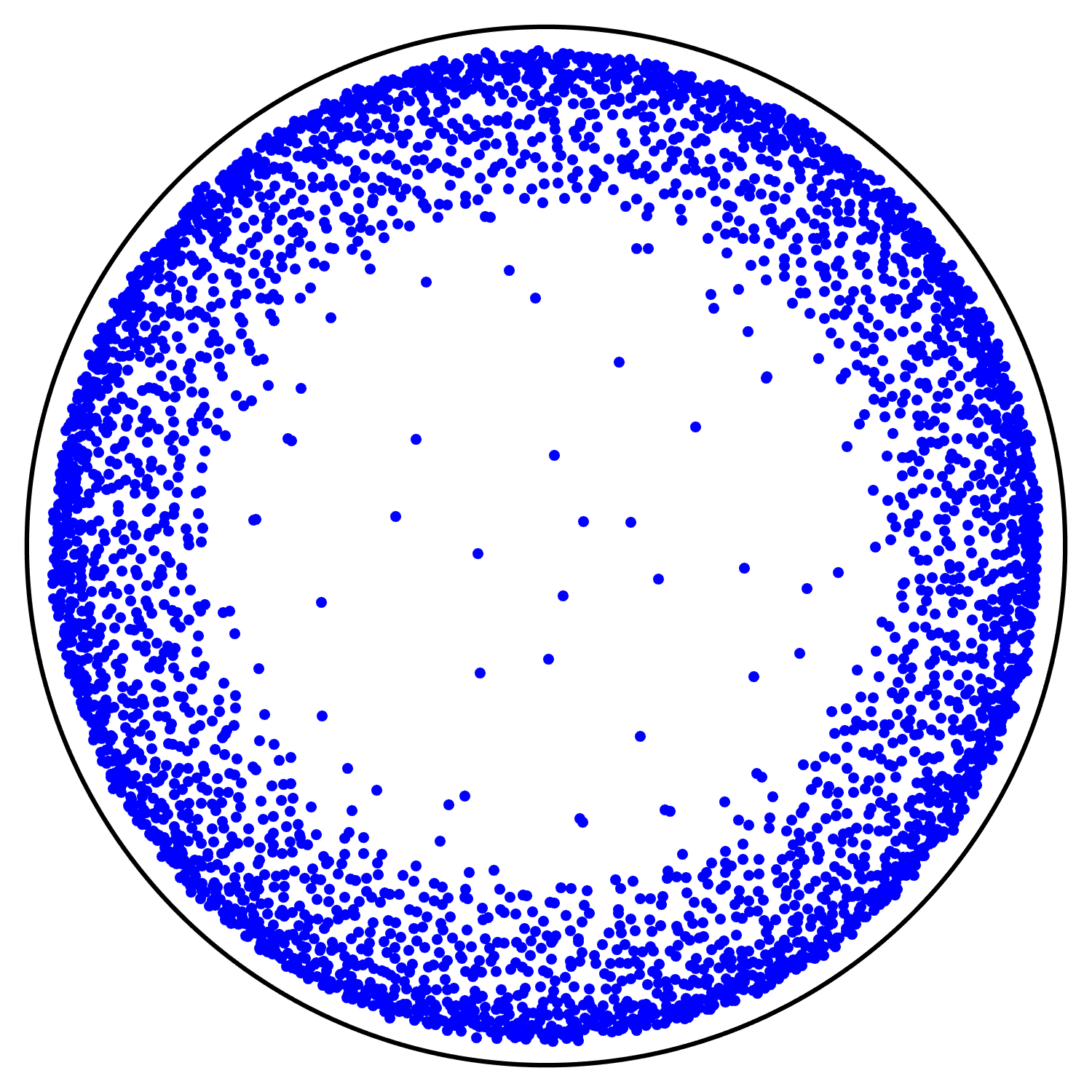}
\quad
\includegraphics[width=7.45cm]{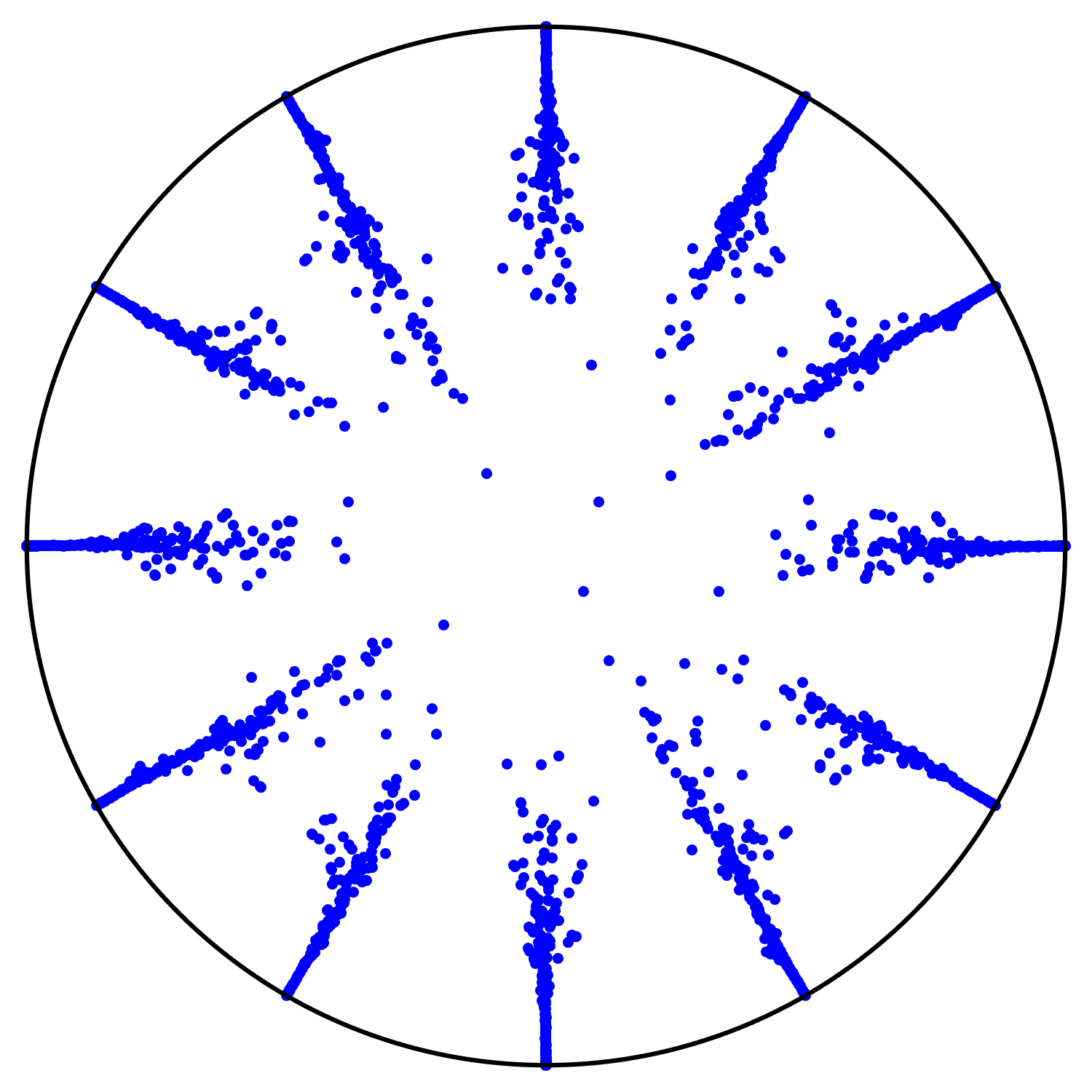}
\caption{Numerically computed eigenvalues for two damped quantum cat maps
$\Op_{\mathbf N,\theta}(b) M_{\mathbf{N},\theta}$, with the unit circle in the background.
In both cases $n=2$ (i.e. $A$ is a $4\times 4$ matrix), $\mathbf N=60$, and the set $\{|b|<1\}$ is inside the ${1\over 4}$-neighborhood of the origin $(0,0,0,0)\in\mathbb T^4$
in the $\ell^\infty$ norm. On the left
we use the matrix~\eqref{e:A-example-1} with irreducible characteristic polynomial
and the spectral gap of Theorem~\ref{t:damped} is visible.
On the right we use the matrix~\eqref{e:A-example-2}
and the condition of Theorem~\ref{t:damped} is not satisfied;
there does not appear to be a spectral gap.}
\label{f:damped}
\end{figure}

\subsection{Semiclassical measures and overview of history}
\label{s:history}

We now give an application of Theorem~\ref{t:general} to \emph{semiclassical measures}.
These measures describe the possible ways in which the mass of eigenfunctions
of $M_{\mathbf N,\theta}$ can distribute in the position-frequency space
in the limit $\mathbf N\to\infty$, and they are defined as follows:
\begin{defi}
Let $\mathbf N_j\to\infty$, $\theta_j\in\mathbb T^{2n}$ be sequences
such that the quantization condition~\eqref{e:theta-quantize-condition}
holds for all $\mathbf N_j,\theta_j$. Let $u_j\in\mathcal H_{\mathbf N_j}(\theta_j)$
be eigenfunctions of $M_{\mathbf N_j,\theta_j}$ of norm~1.
We say that the sequence $u_j$ \emph{converges weakly} to a Borel
measure $\mu$ on $\mathbb T^{2n}$ if
\begin{equation}
\label{e:measure-def}
  \langle \Op_{\mathbf N_j,\theta_j}(a)u_j,u_j\rangle_{\mathcal H}
  \to \int_{\mathbb T^{2n}}a\,d\mu\quad\text{for all}\quad
  a\in C^\infty(\mathbb T^{2n}).
\end{equation}
A measure $\mu$ on $\mathbb T^{2n}$ is called a \emph{semiclassical measure}
(associated to the toric automorphism $A$) if there exist sequences
$\mathbf N_j,\theta_j,u_j$ such that~\eqref{e:measure-def} holds.
\end{defi}
One can show (similarly to~\cite[Theorem~5.2]{Zworski-Book}
using a diagonal argument, the norm bound~\eqref{e:basic-norm},
Riesz representation theorem for the dual to $C^0(\mathbb T^{2n})$, and the sharp G\r arding inequality~\eqref{e:our-garding})
that each norm~1 sequence $u_j\in \mathcal H_{\mathbf N_j}(\theta_j)$
has a subsequence which has a weak limit in the sense of~\eqref{e:measure-def}.
Every semiclassical measure is a probability measure on $\mathbb T^{2n}$ (as follows from taking
$a=1$ in~\eqref{e:measure-def} and using that~$\Op_{\mathbf N,\theta}(1)=I$)
which is invariant under the map~$A$ (as follows from~\eqref{e:egorov-intro}).
Semiclassical measures for quantum cat maps are analogous to those for Laplacian
eigenfunctions on a Riemannian manifold $(\mathcal M,g)$, which are probability
measures on the cosphere bundle $S^*\mathcal M$ invariant under the geodesic flow~--
see~\cite[Chapter~5]{Zworski-Book} for more information.

As explained in~\S\ref{s:proof-measures},
Theorem~\ref{t:general} implies the following property of the support
of semiclassical measures:
\begin{theo}
  \label{t:measures}
Assume that $A$ satisfies~\eqref{e:spectral-gap}
and $\mu$ is a semiclassical measure associated to~$A$.
Then $\supp\mu$ contains a set of the form
$x+\mathbb T_+$ or $x+\mathbb T_-$ for some $x\in\mathbb T^{2n}$.
In particular, if the characteristic polynomial of~$A$ is irreducible over
the rationals, then $\supp\mu=\mathbb T^{2n}$.  
\end{theo}
In Appendix~\ref{appendix:symplectic_matrices}, we discuss the algebraic properties of the spaces $V_+$ and $V_-$, and their implications in terms of the supports of semiclassical measures for $A$. In particular, we give examples of the following situations:
\begin{itemize}
\item the characteristic polynomial of $A$ is irreducible, and hence the semiclassical measures are fully supported;
\item $\mathbb{Q}^{2n} = V_+ \oplus V_-$, in which case there are semiclassical measures supported on translates of $\mathbb{T}_+$ and $\T_-$ due to \cite{Kelmer-cat};
\item $V_+ + V_- \neq \mathbb{Q}^{2n}$ and there are semiclassical measures supported in $\T_+$ and $\T_-$;
\item $V_+ + V_- \neq \mathbb{Q}^{2n}$ but the supports of semiclassical measures (associated to a certain basis of eigenfunctions) are strictly larger than $\T_+$ or $\T_-$.
\end{itemize}
The last two cases emphasize the fact that when $V_+ + V_- \neq \mathbb{Q}^{2n}$, one has to study the action of $A$ on $\mathbb{Q}^{2n}/(V_+ + V_-)$ in order to refine the information on the support of semiclassical measures given by Theorem~\ref{t:measures}.

We now give a brief overview of history of semiclassical measures in quantum chaos,
referring the reader to review articles by Marklof~\cite{Marklof-QE}, Sarnak~\cite{Sarnak-QE-review}, Nonnenmacher~\cite{Nonnenmacher-Anatomy}, and Dyatlov~\cite{around-shnirelman}
for more information.
In the setting of eigenfunctions of the Laplacian on compact negatively curved Riemannian manifolds, the
Quantum Ergodicity theorem of Shnirelman~\cite{Shnirelman1}, Zelditch~\cite{Zelditch-QE},
and Colin de Verdi\`ere~\cite{CdV-QE} states that a density~1 sequence
of eigenfunctions \emph{equidistributes}, i.e. converges to the Liouville measure
in a way analogous to~\eqref{e:measure-def}.
The Quantum Unique Ergodicity conjecture of Rudnick--Sarnak~\cite{Rudnick-Sarnak-QUE}
claims that the whole sequence of eigenfunctions equidistributes,
i.e. the Liouville measure is the only semiclassical measure.
It is open in general but known in the particular cases of \emph{arithmetic}
hyperbolic surfaces for joint eigenfunctions of the Laplacian and
the Hecke operators, which are additional symmetries present in the arithmetic case~--
see Lindenstrauss~\cite{Lindenstrauss-QUE} and Brooks--Lindenstrauss~\cite{Brooks-Lindenstrauss-QUE}.

The works of Anantharaman~\cite{Anantharaman-Entropy},
Anantharaman--Nonnenmacher~\cite{Anantharaman-Nonnenmacher-Entropy}, Anan\-tharaman--Koch--Nonnenmacher~\cite{Anantharaman-Koch-Nonnenmacher}, Rivi\`ere~\cite{Riviere-Entropy-1,Riviere-Entropy-2}, and Anantharaman--Silberman~\cite{Anantharaman-Silberman}
establish \emph{entropy bounds}, which are positive lower bounds
on the Kolmogorov--Sina\u\i{} entropy of semiclassical measures.
As mentioned in the beginning of the introduction, in the setting of negatively curved surfaces the paper~\cite{varfup} gives another restriction: all semiclassical
measures have full support. In the case of hyperbolic surfaces this was previously proved by Dyatlov--Jin~\cite{meassupp}.

The study of quantum cat maps, which is the setting of the present paper,
goes back to the work of Hannay--Berry~\cite{Hannay-Berry}. Bouzouina--De Bi\`evre~\cite{Bouzouina-deBievre} proved
the analogue of quantum ergodicity in this setting.
Faure--Nonnenmacher--De Bi\`evre~\cite{Faure-Nonnenmacher-dB} constructed 
examples of semiclassical measures for 2-dimensional quantum cat maps which are not the Liouville measure; this contradicts the Quantum Unique Ergodicity conjecture for quantum cat maps
but it does not contradict Theorem~\ref{t:basic} since
these measures were supported on the entire $\mathbb T^2$.
Faure--Nonnenmacher~\cite{Faure-Nonnenmacher-cat}, Brooks~\cite{Brooks-cat},
and Rivi\`ere~\cite{Riviere-Entropy-3}
established `entropy-like' bounds on semiclassical measures;
see also Anantharaman--Nonnenmacher~\cite{Anantharaman-Nonnenmacher-Baker} and
Gutkin~\cite{Gutkin} for entropy bounds for other models of quantum maps.
As mentioned above, Schwartz~\cite{Schwartz-cat} obtained
Theorem~\ref{t:basic} for 2-dimensional quantum cat maps.

Kurlberg--Rudnick~\cite{Kurlberg-Rudnick} introduced the analogue of Hecke
operators in the setting of 2-dimensional quantum cat maps. For the joint eigenfunctions
of a quantum cat map and the Hecke operators (known as \emph{arithmetic}
eigenfunctions) they showed that Quantum Unique Ergodicity holds;
see also Gurevich--Hadani~\cite{Gurevich-Hadani-cat}.
This does not contradict the counterexample in~\cite{Faure-Nonnenmacher-dB}
since the quantum cat maps used there have eigenvalues of high multiplicity.

In the setting of higher dimensional quantum cat maps which have an
\emph{isotropic invariant rational subspace},
Kelmer~\cite{Kelmer-cat} constructed examples of semiclassical measures
(for arithmetic eigenfunctions) which are supported on proper submanifolds
of $\mathbb T^{2n}$. On the other hand, if there are no isotropic invariant rational
subspaces, then~\cite{Kelmer-cat} gives Quantum Unique Ergodicity for
arithmetic eigenfunctions. Compared to Theorem~\ref{t:measures}, the conclusion of~\cite{Kelmer-cat}
is stronger than $\supp \mu=\mathbb T^{2n}$ and the assumption
is weaker than the characteristic polynomial of~$A$ being irreducible
over rationals (since~\cite{Kelmer-cat} only assumes
that there are no \emph{isotropic} invariant rational subspaces),
however the result of~\cite{Kelmer-cat} only applies to arithmetic eigenfunctions.
For a further discussion of the relation of our results
with those of~\cite{Kelmer-cat}, see Appendix~\ref{appendix:symplectic_matrices}.

\subsection{Outline of the proof}
  \label{s:outline}

We now give an outline of the proof of Theorem~\ref{t:general}.
For simplicity we assume that the symbol $a$ satisfies $0\leq a\leq 1$, as well as the following stronger version
of the geometric control condition: there exists an open set $\mathcal U\subset\mathbb T^{2n}$
such that $a=1$ on $\mathcal U$ and
each shifted torus $x+\T_+$, $x+\T_-$ intersects~$\mathcal U$.
We also assume that $\mathbf N$ is large and $u\in\mathcal H_{\mathbf N}(\theta)$ is an eigenfunction of~$M_{\mathbf N,\theta}$.

\subsubsection{Reduction to the key estimate}

Take the partition of unity on $\mathbb T^{2n}$
\begin{equation}
  \label{e:partitor-intro}
1=b_1+b_2,\quad
b_1:=a,\quad
b_2:=1-a,\quad
\supp b_2\cap\mathcal U=\emptyset.
\end{equation}
Quantizing $b_1,b_2$ to pseudodifferential operators, we get the quantum partition of unity
$$
I=B_1+B_2,\quad
B_1:=\Op_{\mathbf N,\theta}(b_1),\quad
B_2:=\Op_{\mathbf N,\theta}(b_2).
$$
For an operator $L$ on $\mathcal H_{\mathbf N,\theta}$ and an integer~$T$,
define the conjugated operator
$$
L(T):=M_{\mathbf N,\theta}^{-T} L M_{\mathbf N,\theta}^T:\mathcal H_{\mathbf N}(\theta)\to\mathcal H_{\mathbf N}(\theta).
$$
Since $u$ is an eigenfunction of the unitary operator $M_{\mathbf N,\theta}$,
we have for all $T$ and $L$
\begin{equation}
  \label{e:prop-norm}
\|L(T)u\|_{\mathcal H}=\|L M_{\mathbf N,\theta}^Tu\|_{\mathcal H}=\|Lu\|_{\mathcal H}.
\end{equation}
We will propagate up to time $2T_1$, where $T_1\in\mathbb N$ is chosen so that
\begin{equation}
  \label{e:T-1-def-intro}
T_1\approx {\rho\over\log|\lambda_+|}\log\mathbf N
\end{equation}
and the constant $\rho>0$ is chosen later.
We write a refined quantum partition of unity
\begin{equation}
  \label{e:partition-intro}
\begin{gathered}
I=B_{\mathcal X}+B_{\mathcal Y},\quad
B_{\mathcal X}:=B_2(2T_1-1)\cdots B_2(1)B_2(0),\\
B_{\mathcal Y}:=\sum_{j=0}^{2T_1-1} B_2(2T_1-1)\cdots B_2(j+1)B_1(j).
\end{gathered}
\end{equation}
Since $|b_2|\leq 1$, we have $\|B_2\|_{\mathcal H\to\mathcal H}\leq 1+\mathcal O(\mathbf N^{-{1\over 2}})$ (see~\eqref{e:basic-norm}), thus we can bound the norm
of the product $B_2(2T_1-1)\cdots B_2(j+1)$ by~2. Applying the partition~\eqref{e:partition-intro} to~$u$ and using~\eqref{e:prop-norm} with $L:=B_1=\Op_{\mathbf N,\theta}(a)$, we then get
\begin{equation}
  \label{e:puttor-intro}
\|u\|_{\mathcal H}\leq \|B_{\mathcal X}u\|_{\mathcal H}+4T_1\|\Op_{\mathbf N,\theta}(a)u\|_{\mathcal H}.
\end{equation}
The key component of the proof is the following estimate valid
for the right choice of~$T_1$:
\begin{equation}
  \label{e:key-estimate-intro}
\|B_{\mathcal X}\|_{\mathcal H\to\mathcal H}=\mathcal O({\mathbf N}^{-\beta})\quad\text{for some}\quad \beta>0\quad\text{as}\quad
\mathbf N\to\infty.
\end{equation}
Here the exponent $\beta$ only depends on the matrix $A$ and the choice of the partition~\eqref{e:partitor-intro}.

Together with~\eqref{e:puttor-intro}, the key estimate~\eqref{e:key-estimate-intro}
implies the following weaker version of Theorem~\ref{t:general}:
\begin{equation}
  \label{e:lossor-intro}
\|u\|_{\mathcal H}\leq C\log\mathbf N\|\Op_{\mathbf N,\theta}(a)u\|_{\mathcal H}.
\end{equation}
To remove the $\log\mathbf N$ prefactor, we revise the decomposition $I=B_{\mathcal X}+B_{\mathcal Y}$ in the same way as in~\cite{meassupp,varfup} (which in turn
was inspired by~\cite{Anantharaman-Entropy}), including more terms into $B_{\mathcal X}$
and using that the norm bound in~\eqref{e:key-estimate-intro} (or rather, its slight generalization) is a negative power of $\mathbf N$. See~\S\ref{s:partition-control} for details.

\subsubsection{Microlocal structure of the propagated operators}

We now give an outline of the proof of the estimate~\eqref{e:key-estimate-intro}.
To simplify the presentation, we assume the following stronger version of the gap condition~\eqref{e:spectral-gap}:
\begin{equation}
  \label{e:spectral-gap-strong}
\text{$A$ has a simple eigenvalue $\lambda_+$ 
such that}\quad
\max_{\lambda\in \Spec(A)\setminus \{\lambda_+\}}|\lambda|<\sqrt{|\lambda_+|}.
\end{equation}
Let $E_\pm\subset\mathbb R^{2n}$ be the one-dimensional eigenspaces 
of $A$ corresponding to $\lambda_+$ and $\lambda_-:=\lambda_+^{-1}$.
We also define $L_\mp$ to be the sum of the generalized eigenspaces
corresponding to all eigenvalues of $A$ other than $\lambda_\pm$.
Then
$$
\mathbb R^{2n}=E_+\oplus L_-=E_-\oplus L_+.
$$
By~\eqref{e:spectral-gap-strong} we have the following norm bounds
as $T\to\infty$:
\begin{equation}
  \label{e:normer-intro}
\|A^{\pm T}\|=\mathcal O(|\lambda_+|^T),\quad
\|A^{\pm T}|_{L_\mp}\|=\mathcal O(|\lambda_+|^{T\over 2}).
\end{equation}
Returning to the proof of~\eqref{e:key-estimate-intro}, conjugating
by $M_{\mathbf N,\theta}^{T_1}$ we reduce it to the bound
$$
\begin{gathered}
\|B_-B_+\|_{\mathcal H\to\mathcal H}=\mathcal O(\mathbf N^{-\beta})\quad\text{for some}\quad \beta>0\\
\text{where}\quad
B_-:=B_2(T_1-1)\cdots B_2(0),\quad
B_+:=B_2(-1)\cdots B_2(-T_1).
\end{gathered}
$$
We would like to write the operators $B_\pm$ as quantizations
of some symbols.
By the exact Egorov's Theorem~\eqref{e:egorov-intro}, we have
$B_2(j)=\Op_{\mathbf N,\theta}(b_2\circ A^j)$ for all $j$.
However, when $j$ is too large the derivatives of the symbols
$b_2\circ A^j$ grow too fast with $\mathbf N$ and the methods
of semiclassical analysis no longer apply.

If $0\leq j<T_1$, then by~\eqref{e:normer-intro}
the derivatives of $b_2\circ A^j$ in the eigendirection~$E_+$ are bounded by $|\lambda_+|^{T_1}$ but the derivatives along the complementary hyperplane~$L_-$ are bounded by $|\lambda_+|^{T_1/2}$. Thus
 $b_2\circ A^j$ lies in the anisotropic symbol class
$S_{L_-,\rho,{\rho\over 2}}(\mathbb T^{2n})$, consisting of $\mathbf N$-dependent functions
in $C^\infty(\mathbb T^{2n})$
such that
each derivative along~$L_-$ gives an $\mathbf N^{\rho\over 2}$ growth
and derivatives in other directions give an $\mathbf N^{\rho}$ growth~-- see~\S\S\ref{s:symbol-coisotropic},\ref{s:quantization-on-torus} for details. In order
for the standard properties of semiclassical quantization to hold,
we need $\rho+{\rho\over 2}<1$, that is
\begin{equation}
  \label{e:rho-restr-intro}
0\leq \rho<\textstyle{2\over 3}.
\end{equation}
For such $\rho$ we can then show (see Lemma~\ref{l:long_logarithmic_words}) that
for any $\delta\in (0,1-{3\rho\over 2})$
$$
B_-=\Op_{\mathbf N,\theta}(b_-)+\mathcal O(\mathbf N^{-\delta})_{\mathcal H\to\mathcal H}\quad\text{where}\quad b_-:=\prod_{j=0}^{T_1-1}b_2\circ A^j.
$$
Reversing the direction of propagation and replacing $L_-$ with $L_+$, we similarly have
$$
B_+=\Op_{\mathbf N,\theta}(b_+)+\mathcal O(\mathbf N^{-\delta})_{\mathcal H\to\mathcal H}
\quad\text{where}\quad b_+:=\prod_{j=1}^{T_1}b_2\circ A^{-j}.
$$
Then the key estimate~\eqref{e:key-estimate-intro} reduces to
\begin{equation}
  \label{e:key-estimate-intro-2}
\|\Op_{\mathbf N,\theta}(b_-)\Op_{\mathbf N,\theta}(b_+)\|_{\mathcal H\to\mathcal H}=\mathcal O(\mathbf N^{-\beta})\quad\text{for some}\quad \beta>0.
\end{equation}
The symbols $b_\pm$ lie in the classes $S_{L_\pm,\rho+\varepsilon,{\rho\over 2}+\varepsilon}(\mathbb T^{2n})$ for each $\varepsilon>0$ but when $\rho>{1\over 2}$
they cannot be put in the same symbol calculus. This is important
because otherwise the norm of~$\Op_{\mathbf N,\theta}(b_-)\Op_{\mathbf N,\theta}(b_+)$ would
converge as $\mathbf N\to\infty$ to $\sup|b_-b_+|$, which would be equal to~1
as long as $\supp b_2$ contains at least one periodic trajectory of~$A$.

\subsubsection{Applying the fractal uncertainty principle}

Of course, just not being in the same symbol calculus does not give
the bound~\eqref{e:key-estimate-intro-2}. This is where the \emph{fractal uncertainty principle} of~\cite{fullgap} enters.

We first discuss the case
$n=1$, that is, the matrix $A$ has size $2\times 2$.
Then the fractal uncertainty principle shows the decay bound~\eqref{e:key-estimate-intro-2}
if the sets $\pi_\pm(\supp b_\pm)$ are \emph{porous} on scales
$C\mathbf N^{-\rho}$ to~1 in the sense of Definition~\ref{d:porous} below,
and $\rho>{1\over 2}$.
Here we think of $\supp b_\pm$ as subsets of $[0,1]^2\subset\mathbb R^{2}$ and
$\pi_-,\pi_+:\mathbb R^{2}\to\mathbb R$ are two linearly independent linear functionals.
(The original result of~\cite{fullgap} is stated in the particular case $\pi_-(x,\xi)=x$, $\pi_+(x,\xi)=\xi$ and the general case follows via conjugation by a metaplectic transformation.)

\begin{figure}
\includegraphics[height=8cm]{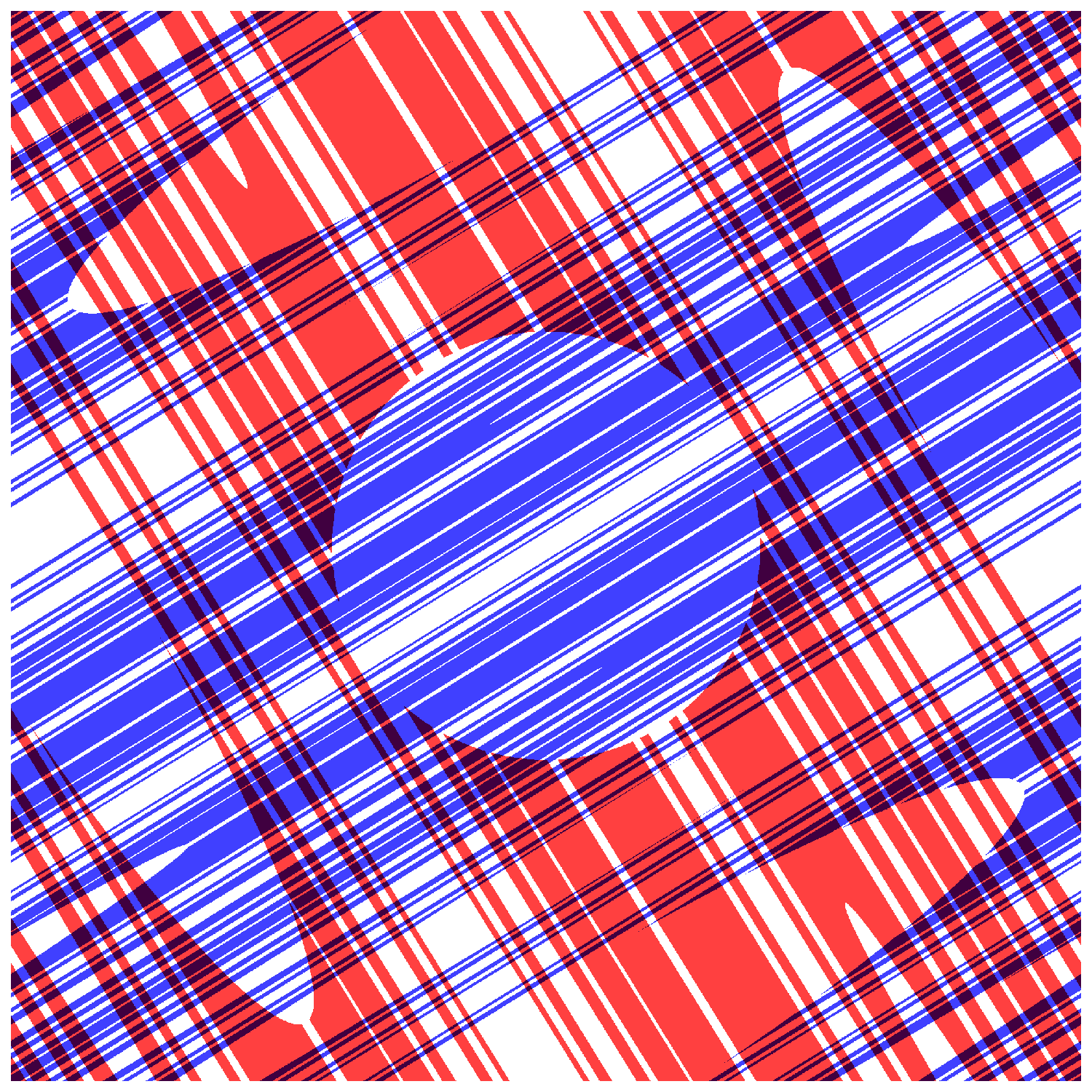}
\caption{A numerical illustration of the sets $\supp b_-$ (red) and
$\supp b_+$ (blue) for a 2-dimensional cat map. Each of these sets
is `smooth' in one of the eigendirections of $A$ and porous in the other eigendirection.}
\label{f:porosity}
\end{figure}

The required porosity property holds
if we choose the projections $\pi_\pm$ such that their kernels are given by the eigenspaces
$E_\pm=L_\pm$. This is illustrated on Figure~\ref{f:porosity},
and can be proved (taking the case of $\supp b_-$ to fix notation)
roughly speaking by combining the following observations:
\begin{enumerate}
\item Whether or not some $z\in\mathbb T^2$ belongs to $\supp b_-$
depends on the forward trajectory $\{A^jz\mid 0\leq j<T_1\}$,
more specifically on whether the points in this trajectory all lie
in $\supp b_2$.
\item The intersection of $\supp b_-$ with any line in the direction $E_+$
is porous on scales $C\mathbf N^{-\rho}$ to~1. Indeed,
the projection of $E_+$ to $\mathbb T^2$ is dense,
so any sufficiently large line segment in the direction of $E_+$
intersects the complement of $\supp b_2$ (recalling~\eqref{e:partitor-intro}).
This creates the pores on scale 1. To get pores on smaller scales, we use
that segments in the direction of $E_+$ are expanded by the map~$A^j$ by the factor $|\lambda_+|^j$, so the condition that $\supp b_-\subset A^{-j}(\supp b_2)$
creates pores on scales $|\lambda_+|^{-j}\in [\mathbf N^{-\rho},1]$.
\item If $z,w\in\mathbb T^2$ lie on the same line segment in the direction $E_-$
(of bounded length), then the forward trajectories $A^jz$, $A^jw$
converge to each other as $j\to\infty$. Thus the projection $\pi_-(\supp b_-)$
looks similar to the intersection of $\supp b_-$ with any fixed line segment
in the direction of $E_+$, which we already know is porous.
\end{enumerate}
Now, if we take ${1\over 2}<\rho<{2\over 3}$, then the fractal uncertainty principle
applies and gives the key estimate. This roughly corresponds
to the proof in~\cite{Schwartz-cat}.

We now move on to the case of higher dimensions $n>1$ which is the main novelty of this paper.
As mentioned above, the higher dimensional version of fractal uncertainty principle is not
available. However, we can still derive the key estimate~\eqref{e:key-estimate-intro-2}
from the \emph{one-dimensional} fractal uncertainty principle as long
as the projections $\pi_\pm(\supp b_\pm)$ are porous on scales $C\mathbf N^{-\rho}$ to~1 for some $\rho>{1\over 2}$,
where $\pi_\pm:\mathbb R^{2n}\to\mathbb R$ are some fixed linear maps such that
$\ker \pi_+\cap \ker \pi_-$ is a codimension~2 symplectic subspace of $\mathbb R^{2n}$.
Following the case $n=1$, it is natural to take $\pi_\pm$ such that
their kernels are given by the spaces $L_\pm$. However, we cannot expect
$\pi_\pm(\supp b_\pm)$ to be porous because the observation~(3) above is no longer valid:
there exist $z,w$ such that $z-w\in L_-$ but $A^jz-A^jw\not\to 0$ as $j\to\infty$,
for example one can take $z-w$ to be an eigenvector of $A$ with any eigenvalue
$\lambda\neq\lambda_+$ such that $|\lambda|\geq 1$.

To deal with this issue, we split the product
of operators in~\eqref{e:key-estimate-intro-2} into a sum of many pieces:
\begin{equation}
  \label{e:splittor-intro}
\Op_{\mathbf N,\theta}(b_-)\Op_{\mathbf N,\theta}(b_+)=\sum_k \Op_{\mathbf N,\theta}(b_-)
\Op_{\mathbf N,\theta}(\psi_k)\Op_{\mathbf N,\theta}(b_+)
\end{equation}
where $\{\psi_k\}$ is a partition of unity on $\mathbb T^{2n}$ such that
each $\supp\psi_k$ looks like a ball of radius $\mathbf N^{-{\rho\over 2}}$. We then observe that:
\begin{figure}
\includegraphics[height=7cm]{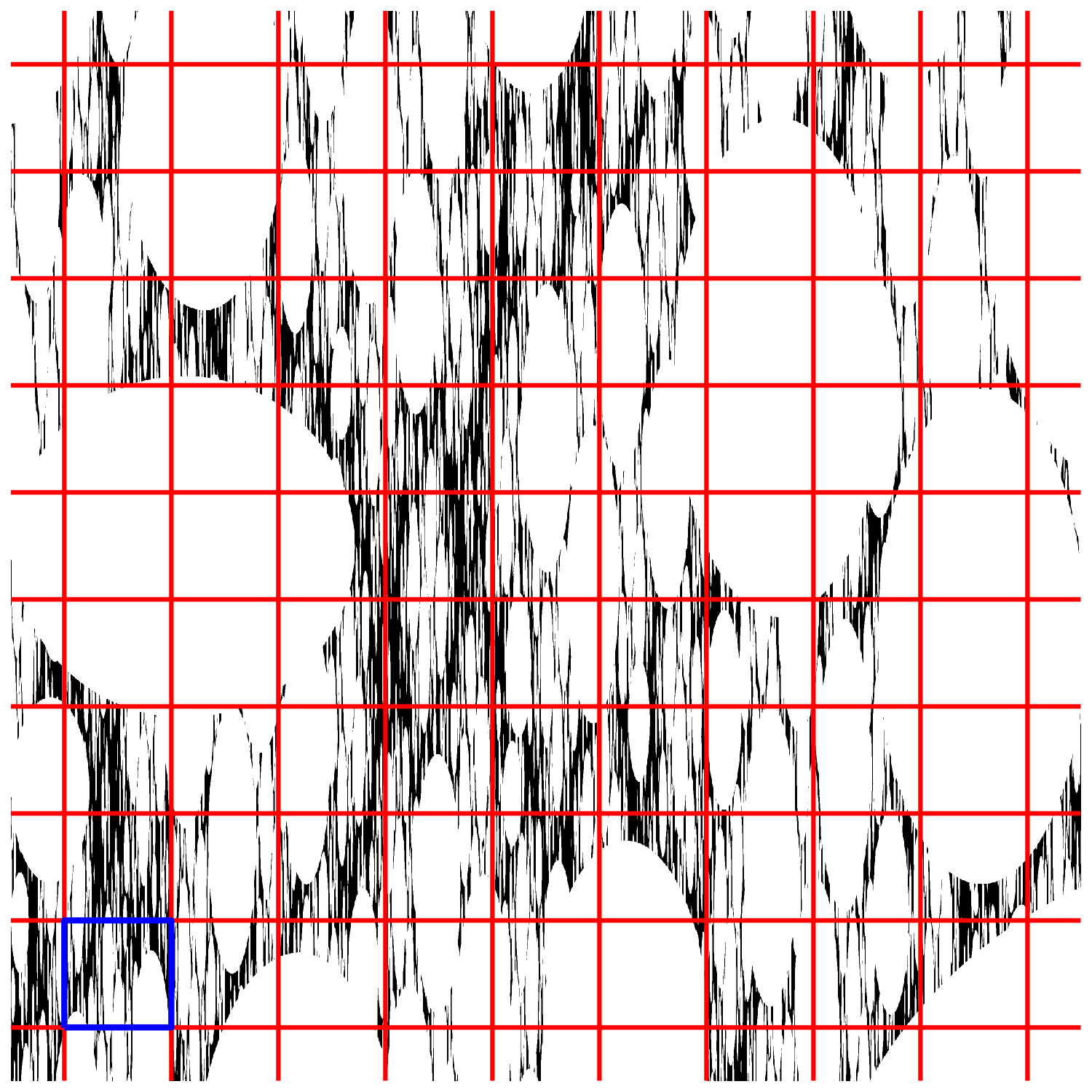}
\qquad
\includegraphics[height=7cm]{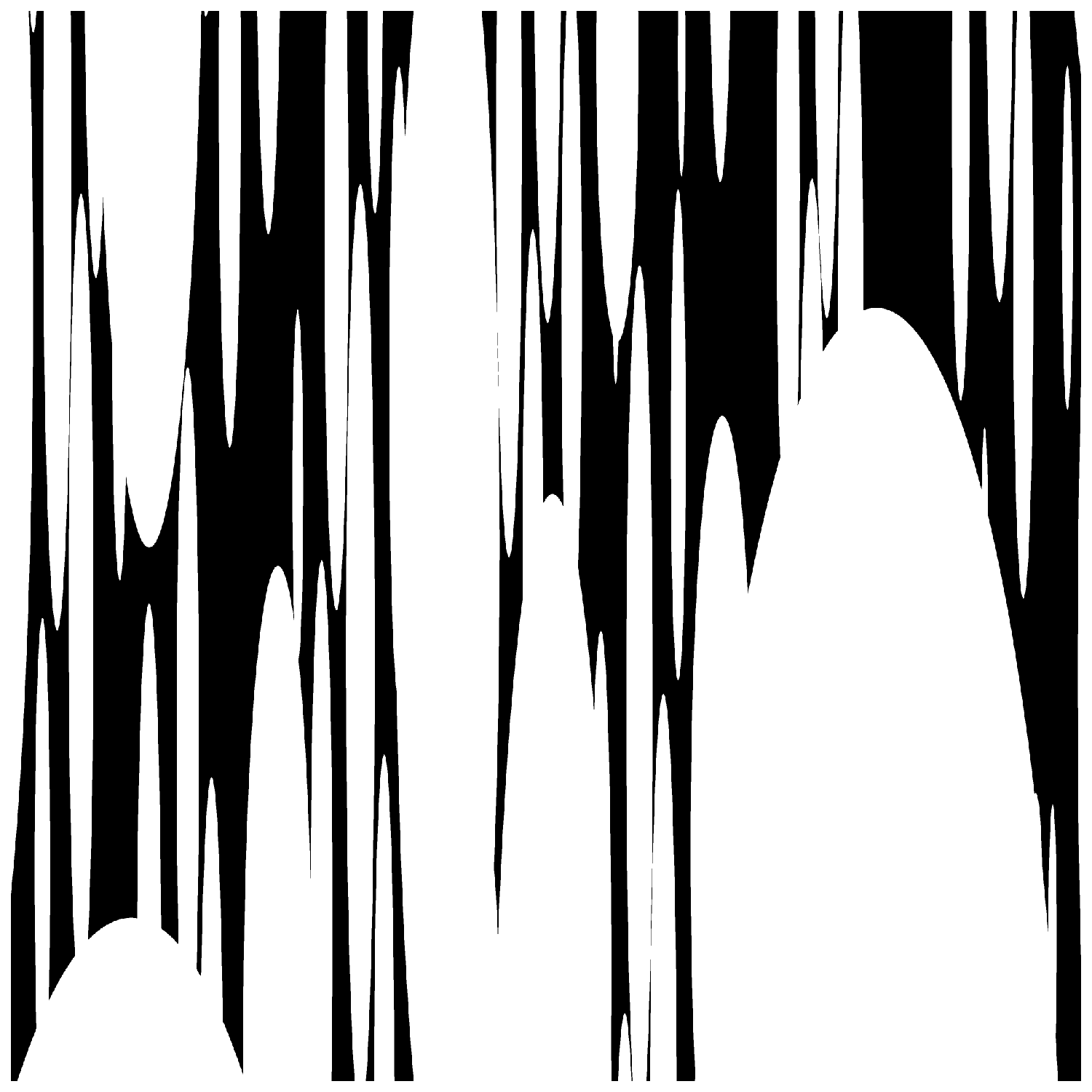}
\caption{Left: A numerical illustration of the set $\supp b_-$
for a 4-dimensional cat map, intersected with some 2-dimensional plane.
The little squares correspond to the supports of the functions $\psi_k$.
Right: The zoomed in image of the blue square on the left. The projection
onto the horizontal direction is $\pi_-$ and the set $\pi_-(\supp b_-\cap\supp\psi_k)$
is porous.}
\label{f:porosity2}
\end{figure}
\begin{itemize}
\item the terms $B_{(k)}$ in the sum in~\eqref{e:splittor-intro} form an almost orthogonal
family, namely
$$
B_{(k)}^*B_{(\ell)}=\mathcal O(\mathbf N^{-\infty}),\quad
B_{(k)}B_{(\ell)}^*=\mathcal O(\mathbf N^{-\infty})\quad\text{when}\quad
\supp\psi_k\cap\supp\psi_{\ell}=\emptyset.
$$
This follows from the nonintersecting support property of semiclassical calculus,
where we use that the symbols $\psi_k$, $\psi_{\ell}$ grow by $\mathbf N^{{\rho\over 2}}$
with each differentiation and thus belong to both
the calculi $S_{L_\pm,\rho,{\rho\over 2}}(\mathbb T^{2n})$ where the symbols $b_\pm$ lie.
By the Cotlar--Stein Theorem,
this reduces the estimate~\eqref{e:key-estimate-intro-2} to a bound on the
norms of the individual summands
\begin{equation}
  \label{e:key-estimate-intro-3}
\|\Op_{\mathbf N,\theta}(b_-)
\Op_{\mathbf N,\theta}(\psi_k)\Op_{\mathbf N,\theta}(b_+)\|_{\mathcal H\to\mathcal H}=\mathcal O(\mathbf N^{-\beta})\quad\text{for some}\quad \beta>0.
\end{equation}
\item Each of the localized symbols $b_\pm\psi_k$ does satisfy the porosity condition, namely
$\pi_\pm(\supp(b_\pm\psi_k))$ is porous on scales $C\mathbf N^{-\rho}$ to~1. This is because
a localized version of the observation~(3) above holds:
if $z,w\in\mathbb R^{2n}$ and $z-w\in L_-$ has length $\leq \mathbf N^{-{\rho\over 2}}$,
then for $0\leq j\leq T_1$ the difference $A^jz-A^jw$ is small.
Indeed, by~\eqref{e:spectral-gap-strong} the expansion rate of $A^j$ in the direction
of $L_-$ is slower than $|\lambda_+|^{j\over 2}$, and
$\mathbf N^{-{\rho\over 2}}|\lambda_+|^{T_1\over 2}\leq 1$.
(See Figure~\ref{f:porosity2}.)
On the other hand, an analogue of the observation~(2) above holds as well:
the closure of the projection of $E_+$ onto $\mathbb T^{2n}$ is equal to the subtorus
$\T_+$, all of whose shifts intersect the complement of $\supp b_2$
(see the beginning of~\S\ref{s:outline}).
Thus the one-dimensional
fractal uncertainty principle can be applied to give~\eqref{e:key-estimate-intro-3}
and thus finish the proof.
\end{itemize}
In the above argument we again fix $\rho\in ({1\over 2},{2\over 3})$.
More precisely we need $\rho>{1\over 2}$ so that the fractal uncertainty principle
can be applied (i.e. we get porosity down to some scale below~$\mathbf N^{-{1\over 2}}$),
and we need $\rho<{2\over 3}$ so that semiclassical calculus could be used
to get the almost orthogonality property.

There are several differences of the above outline from the actual proof of the key estimate. First of all, since we make the weaker spectral gap assumption~\eqref{e:spectral-gap} instead of~\eqref{e:spectral-gap-strong}, we need to revise the choice of $T_1$
and the parameters $\rho,\rho'$ in the calculus~-- see~\S\ref{s:propagation-times}. Secondly, at a certain point in the argument (see~\S\ref{s:reduction}) we pass from the toric quantization $\Op_{\mathbf N,\theta}$ to the Weyl quantization $\Op_h$ on $\mathbb R^n$, with $h:=(2\pi \mathbf N)^{-1}$.

\subsection{Structure of the paper}

\begin{itemize}
\item In~\S\ref{s:preliminaries} we review semiclassical quantization
and metaplectic operators on $\mathbb R^{2n}$ and on the torus $\mathbb T^{2n}$,
as well as the anisotropic symbol classes $S_{L,\rho,\rho'}$.
\item In~\S\ref{s:proofs-big} we prove Theorems~\ref{t:general}--\ref{t:measures}
modulo the key estimate given by Proposition~\ref{l:FUP_type}.
\item In~\S\ref{s:decay_long_words} we prove Proposition~\ref{l:FUP_type}
using the fractal uncertainty principle.
\item Finally, in Appendix~\ref{appendix:symplectic_matrices}, we discuss the algebraic properties of the spaces $V_+$ and $V_-$ and investigate the sharpness of Theorem~\ref{t:measures} in different situations.
\end{itemize}

\noindent\textbf{Notation:}
if $(F,\|\bullet\|_F)$ is a normed vector space
and $f_h\in F$ is a family depending on a parameter $h>0$, then
we say that $f_h=\mathcal O(h^{\alpha})_F$ if
$\|f_h\|_F=\mathcal O(h^{\alpha})$.
We write $f_h=\mathcal O_g (h^\alpha)_F$ if
the constant in $\mathcal O(\bullet)$ depends
on some additional parameter(s) $g$.

\section{Preliminaries}
\label{s:preliminaries}

\subsection{Semiclassical quantization}
\label{s:semi-quant}

Here we briefly review semiclassical quantization on~$\mathbb R^n$,
referring the reader to the book of Zworski~\cite{Zworski-Book} for details.

\subsubsection{Basic properties}
\label{s:semi-quant-basic}

Let $h\in (0,1]$ be the semiclassical parameter.
Throughout the paper we denote by $\Op_h$ the Weyl quantization
on $\mathbb R^n$.
For a Schwartz class symbol $a\in\mathscr S(\mathbb R^{2n})$ it can be defined as follows~\cite[\S4.1.1]{Zworski-Book}:
\begin{equation}
  \label{e:weyl-quantization}
\Op_h(a)f(x)=(2\pi h)^{-n}\int_{\mathbb R^{2n}} e^{{i\over h}\langle x-x',\xi\rangle}a\big(\textstyle{x+x'\over 2},\xi\big)f(x')\,dx'd\xi,\quad
f\in\mathscr S(\mathbb R^n). 
\end{equation}
One can extend this definition to $a$ belonging to the set of symbols
(see~\cite[\S4.4.1]{Zworski-Book})
\begin{equation}
  \label{e:S-1-def}
S(1):=\{a\in C^\infty(\mathbb R^{2n})\colon \sup|\partial^\alpha_{(x,\xi)}a|<\infty\ \text{for all multiindices}\ \alpha\}.
\end{equation}
A natural set of seminorms on $S(1)$ is given by
\begin{equation}
  \label{e:C-m-def}
\|a\|_{C^m}:=\max_{|\alpha|\leq m}\sup_{\mathbb R^{2n}}|\partial^\alpha_{(x,\xi)} a|,\quad
m\in\mathbb N_0.
\end{equation}
For any $a\in S(1)$, the operator $\Op_h(a)$ acts on the space of Schwartz functions $\mathscr S(\mathbb R^n)$
and on the dual space of tempered distributions $\mathscr S'(\mathbb R^n)$, see~\cite[Theorem~4.16]{Zworski-Book}.

We define the standard symplectic form $\sigma$ on $\mathbb R^{2n}$ by
\begin{equation}
  \label{e:symplectic-def}
\sigma(z,w):=\langle \xi,y\rangle-\langle x,\eta\rangle\quad\text{for all}\quad
z=(x,\xi),\ w=(y,\eta)\in\mathbb R^{2n}
\end{equation}
where $\langle\bullet,\bullet\rangle$ denotes the Euclidean inner product on~$\mathbb R^n$.

We now list several standard properties of the Weyl quantization:
\begin{itemize}
\item Composition formula~\cite[Theorems~4.11--4.12, 4.17--4.18]{Zworski-Book}: for $a,b\in S(1)$
\begin{equation}
  \label{e:basic-product}
\Op_h(a)\Op_h(b)=\Op_h(a\#b)\quad\text{for some $h$-dependent}\quad a\#b\in S(1),
\end{equation}
the function $a\#b$ is bounded in~$S(1)$ uniformly in~$h$
and satisfies the following asymptotic expansion as $h\to 0$ for all $N\in\mathbb N_0$:
\begin{equation}
  \label{e:Op-h-composition-expansion}
a\#b(z)=\sum_{k=0}^{N-1} {(-ih)^k\over 2^k k!}\big(\sigma(\partial_z,\partial_w)^k(a(z)b(w))\big)\big|_{w=z}+\mathcal O(h^N)_{S(1)}
\end{equation}
where $\sigma(\partial_z,\partial_w)=\langle\partial_\xi,\partial_y\rangle-\langle\partial_x,\partial_\eta\rangle$ is a second order differential operator in $z=(x,\xi)$, $w=(y,\eta)$.
Each $C^m$-seminorm of the $\mathcal O(h^N)$ remainder can be bounded
by $C\|a\|_{C^{N'}}\|b\|_{C^{N'}}h^N$ for some $C$ and $N'$ depending
only on $n,N,m$.
\item Adjoint formula~\cite[Theorem~4.1]{Zworski-Book}: if $a\in S(1)$, then
\begin{equation}
  \label{e:basic-adjoint}
\Op_h(a)^*=\Op_h(\overline a).
\end{equation}
\item $L^2$-boundedness~\cite[Theorem~4.23]{Zworski-Book}:
For $a\in S(1)$, the operator $\Op_h(a):L^2(\mathbb R^n)\to L^2(\mathbb R^n)$
is bounded uniformly in $h$, more precisely
its norm is bounded by $C\|a\|_{C^N}$ for some $C,N$ depending only on~$n$.
\item
Sharp G\r arding inequality~\cite[Theorem~4.32]{Zworski-Book}:
if $a\in S(1)$ and $a(x,\xi)\geq 0$ for all $(x,\xi)$, then
$$
\langle\Op_h(a) f,f\rangle_{L^2}\geq -C_ah\|f\|_{L^2}^2\quad\text{for all}\quad
f\in L^2(\mathbb R^n)
$$
where the constant $C_a$ has the form $C\|a\|_{C^N}$ for some $C,N$ depending only on~$n$.
\end{itemize}
The above properties imply in particular the nonintersecting support property
\begin{equation}
  \label{e:nonint-sup}
\Op_h(a)\Op_h(b)=\mathcal O(h^\infty)_{L^2(\mathbb R^n)\to L^2(\mathbb R^n)}\quad\text{if}\quad
a,b\in S(1),\quad
\supp a\cap\supp b=\emptyset.
\end{equation}
The next lemma gives an improved version of~\eqref{e:nonint-sup} in
a special case when $\supp a,\supp b$ are separated far from each other
in some direction. Lemma \ref{l:nonint-sup-1} is needed for the proof of Lemma \ref{l:nonint-sup-2}, which is itself used in the proof of Lemma \ref{l:non_stationary}.
\begin{lemm}
  \label{l:nonint-sup-1}
Let $q:\mathbb R^{2n}\to\mathbb R$ be a linear functional of norm~1
and assume that $a,b\in S(1)$ satisfy for some $r>0$
\begin{equation}
  \label{e:nonintsup-1}
\supp a\subset \{z\in\mathbb R^{2n}\mid q(z)\leq -r\},\quad
\supp b\subset \{z\in\mathbb R^{2n}\mid q(z)\geq r\}.
\end{equation}
Then for each $N>0$ we have
\begin{equation}
  \label{e:nonintsup-1-conc}
\|\Op_h(a)\Op_h(b)\|_{L^2(\mathbb R^n)\to L^2(\mathbb R^n)}\leq C_Nh^N(1+r)^{-N}
\end{equation}
where the constant $C_N$ depends only on $n$, $N$, and some seminorms
$\|a\|_{C^{N'}}$, $\|b\|_{C^{N'}}$ where $N'$ depends only on~$n,N$.
\end{lemm}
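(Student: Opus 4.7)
The plan is to reduce the operator-norm estimate to a $C^M$-seminorm bound on the Moyal product $a\#b$: by the composition formula \eqref{e:basic-product} and the $L^2$-boundedness of Weyl quantization with $S(1)$ symbols, it suffices to show $\|a\#b\|_{C^M}\leq C_N h^N(1+r)^{-N}$ with $C_N$ depending on finitely many $C^{N'}$-seminorms of $a$ and $b$, where $M$ and $N'$ depend only on $n,N$. The bound on $a\#b$ will come from an integration by parts argument in the oscillatory-integral form of the Moyal product.

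The key geometric input uses the symplectic structure. Let $v\in\mathbb R^{2n}$ be the unit vector with $q(\cdot)=\langle v,\cdot\rangle$ and set $\vec e := Jv$, where $J$ is the standard symplectic matrix, $\sigma(X,Y)=\langle JX,Y\rangle$. In the oscillatory-integral representation
$$
a\#b(z) = (\pi h)^{-2n}\iint e^{i\Phi(z,w_1,w_2)}a(w_1)b(w_2)\,dw_1\,dw_2,\quad \Phi := -\tfrac{2}{h}\sigma(w_1-z, w_2-z),
$$
consider the first-order operator $L := \vec e\cdot(\nabla_{w_1}+\nabla_{w_2})$, a simultaneous translation of $w_1$ and $w_2$ in the direction $\vec e$. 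A direct computation yields
$$
L\Phi = -\tfrac{2}{h}\, q(w_1-w_2),\qquad L(L\Phi) = 0.
$$
On $\supp a\times\supp b$ we have $q(w_1-w_2)\leq-2r$, hence $|L\Phi|\geq 4r/h$. The vanishing of $L^2\Phi$ makes iterated IBP against $L$ particularly clean: after $N$ steps one obtains
$$
a\#b(z) = (\pi h)^{-2n}\iint e^{i\Phi}\,(iL\Phi)^{-N}\,L^N\bigl[a(w_1)b(w_2)\bigr]\,dw_1\,dw_2,
$$
where $|(L\Phi)^{-N}|\leq(h/(4r))^N$ on the support and $|L^N[ab]|\leq 2^N\|a\|_{C^N}\|b\|_{C^N}$.

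To obtain the $h^N$ decay and simultaneously deal with the non-absolute convergence arising from $a,b\in S(1)$, I would combine this with the Moyal asymptotic expansion to order $K$: every term in that expansion vanishes because $\supp a\cap\supp b=\emptyset$ (using $r>0$), so $a\#b$ equals the remainder $R_K$, which itself admits an oscillatory-integral representation (as the Taylor remainder of $e^{\pm\frac{ih}{2}\sigma(D_z,D_w)}$) with an extra prefactor $h^K$ and amplitude $\sigma(\partial_z,\partial_w)^K(ab)$. Applying the $L$-IBP to this remainder representation produces a bound of the form $|a\#b(z)|\leq C_{K,N,n}h^K(h/r)^N\|a\|_{C^{K+N+n}}\|b\|_{C^{K+N+n}}$. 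Choosing $K$ and $N$ according to the regime (large $K$ with $N=0$ for $r\leq 1$, and the full IBP for $r\geq 1$) yields the claimed bound $C_N h^N(1+r)^{-N}$. The $C^M$-seminorm estimates on $a\#b$ follow by differentiating under the integral sign; the $z$-derivatives produce factors proportional to $w_i-z$ which are absorbed via the same argument.

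The main technical point is the choice of $L$: the identity $L(L\Phi)=0$ arises because $L$ translates $w_1$ and $w_2$ by the same vector, leaving $w_1-w_2$ invariant, which makes iterated IBP free of lower-order corrections. The fact that $L\Phi$ captures precisely $q(w_1-w_2)$ (rather than a $z$-dependent quantity or $q(w_1+w_2)$) relies crucially on choosing $\vec e = Jv$, i.e., on the non-degeneracy of the symplectic form. The delicate balance is then between the $h^N$ semiclassical decay coming from disjointness of supports and the $(1+r)^{-N}$ decay coming from the separation $r$; handling both uniformly in $r>0$ is the main challenge of the proof.
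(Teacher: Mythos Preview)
Your approach is correct but takes a genuinely different route from the paper. The paper (for $r\geq 1$) introduces an order function $\mathbf{m}=F\circ q$, where $F\in C^\infty(\mathbb R;(0,\infty))$ satisfies $F(\rho)=\rho$ for $\rho\geq 1$ and $F(\rho)=|\rho|^{-1}$ for $\rho\leq -1$; the support hypotheses then give $a=\mathcal{O}(r^{-N})$ in $S(\mathbf{m}^{-N})$ and $b=\mathcal{O}(r^{-N})$ in $S(\mathbf{m}^N)$, and the ready-made composition theorem for general $S(m)$ classes (Zworski, Theorems~4.17--4.18) yields $a\#b=\mathcal{O}(h^N r^{-2N})_{S(1)}$ since every term of the expansion vanishes on $\supp a\cap\supp b=\emptyset$. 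The case $r<1$ is handled separately by the plain disjoint-support estimate. Your argument instead extracts the $r$-decay directly at the level of the Moyal oscillatory integral via the vector field $L=Jv\cdot(\nabla_{w_1}+\nabla_{w_2})$; the identity $L(L\Phi)=0$ is a nice observation that makes the iteration clean. One point worth tightening: after $N$ steps of $L$-IBP the amplitude $(iL\Phi)^{-N}L^N[ab]$ is still merely bounded, so the remaining integral with prefactor $(\pi h)^{-2n}$ is not absolutely convergent and must itself be controlled as an oscillatory integral---either by further IBP in the transverse directions (which your seminorm count $K+N+n$ hints at) or by observing that for $r\geq 1$ the new amplitude lies in $S(1)$ in $(w_1,w_2)$ with seminorms $\lesssim(h/r)^N$ and invoking the standard bound for nondegenerate quadratic phases. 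The paper's order-function method buys brevity by outsourcing precisely this step to the $S(m)$ calculus; your method is more hands-on and self-contained but needs that last estimate spelled out. For the $C^M$-seminorms of $a\#b$, rather than differentiating under the integral, it is cleaner to use translation-invariance of the Moyal product: $\partial_z^\alpha(a\#b)=\sum_{\beta+\gamma=\alpha}\binom{\alpha}{\beta}(\partial^\beta a)\#(\partial^\gamma b)$, and each summand satisfies the same support hypotheses.
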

\begin{proof}
If $0<r<1$, then~\eqref{e:nonintsup-1-conc} follows
immediately from~\eqref{e:nonint-sup}. 
The constant $C_N$ does not depend on $r$ or~$q$: we only use that all the terms in the expansion~\eqref{e:Op-h-composition-expansion}
for~$a\# b$ are equal to 0.
Thus we henceforth assume that
$r\geq 1$.

\noindent 1. Fix a function
$$
F\in C^\infty(\mathbb R;(0,\infty)),\quad
F(\rho)=\begin{cases}
\rho,&\text{if}\quad \rho\geq 1;\\
|\rho|^{-1},&\text{if}\quad \rho\leq -1.
\end{cases}
$$
Then there exists a constant $C_0$ such that
\begin{equation}
  \label{e:nonintsup-1-1}
F(\rho')\leq C_0 F(\rho)(1+|\rho-\rho'|^2)\quad\text{for all}\quad
\rho,\rho'\in\mathbb R.
\end{equation}
Now define the function
$$
\mathbf m:=F\circ q\ \in\ C^\infty(\mathbb R^{2n};(0,\infty)).
$$
It follows from~\eqref{e:nonintsup-1-1} that
the function $\mathbf m$, as well as any its power $\mathbf m^s$,
is an \emph{order function} in the sense of~\cite[\S4.4.1]{Zworski-Book}.

Following~\cite[(4.4.2)]{Zworski-Book}, we define for an order function $\widetilde{\mathbf m}$
the class of symbols $S(\widetilde{\mathbf m})$ consisting of functions $a\in C^\infty(\mathbb R^{2n})$ satisfying the derivative bounds
$$
|\partial^\alpha_z a(z)|\leq C_\alpha \widetilde{\mathbf m}(z)\quad\text{for all}\quad
z\in\mathbb R^{2n}.
$$
The space $S(\widetilde{\mathbf m})$ has a natural family of seminorms defined
similarly to~\eqref{e:C-m-def}.

\noindent 2. Take arbitrary $N\in\mathbb N$.
From the support property~\eqref{e:nonintsup-1} we see that
$$
a=\mathcal O(r^{-N})_{S(\mathbf m^{-N})},\quad
b=\mathcal O(r^{-N})_{S(\mathbf m^N)}
$$
where the constants in $\mathcal O(\bullet)$ depend only
on some $S(1)$-seminorms of $a,b$. The composition formula~\eqref{e:Op-h-composition-expansion} is true for $a\in S(\mathbf m^{-N})$, $b\in S(\mathbf m^N)$
with the remainder in the expansion still in $S(1)$, see~\cite[Theorems~4.17--4.18]{Zworski-Book}.
Since $\supp a\cap\supp b=\emptyset$, all the terms in the asymptotic
expansion~\eqref{e:Op-h-composition-expansion} are zero, so in particular
$$
\Op_h(a)\Op_h(b)=\Op_h(a\#b)\quad\text{where}\quad a\# b=\mathcal O(h^Nr^{-2N})_{S(1)}.
$$
Now~\eqref{e:nonintsup-1-conc} follows from the $L^2$-boundedness property
of quantizations of symbols in $S(1)$.
\end{proof}

\subsubsection{Quantum translations}
\label{s:quantum-translations}

For $w\in \mathbb R^{2n}$, consider the operator
\begin{equation}
  \label{e:U-w-def}
U_w:=\Op_h(a_w)\quad\text{where}\quad a_w(z)=\exp\big(\textstyle{i\over h} \sigma(w,z)\big).
\end{equation}
Here the symbol $a_w$ lies in $S(1)$, though its seminorms are not bounded uniformly in~$h$.
By~\cite[Theorem~4.7]{Zworski-Book} we have
\begin{equation}
  \label{e:U-w-form}
U_wf(x)=e^{{i\over h}\langle \eta,x\rangle-{i\over 2h}\langle y,\eta\rangle}f(x-y),\quad
f\in \mathscr S'(\mathbb R^n)\quad\text{where}\quad w=(y,\eta).
\end{equation}
In particular, $U_w$ is a unitary operator on $L^2(\mathbb R^n)$.

We call $U_w$ a \emph{quantum translation} because it satisfies the following exact Egorov's Theorem (which is easy to check for $a\in\mathscr S(\mathbb R^{2n})$ using~\eqref{e:weyl-quantization} and extends to the general case by density):
\begin{equation}
  \label{e:conjugate-translation}
U_w^{-1}\Op_h(a)U_w=\Op_h(\tilde a)\quad\text{for all}\quad a\in S(1),\quad
\tilde a(z):=a(z+w).
\end{equation}
The map $w\mapsto U_w$ is not a group homomorphism, instead we have
\begin{equation}
  \label{e:U-w-prod}
U_wU_{w'}=e^{{i\over 2h}\sigma(w,w')}U_{w+w'}.
\end{equation}
This in particular implies the commutator formula
\begin{equation}
  \label{e:U-w-comm}
U_wU_{w'}=e^{{i\over h}\sigma(w,w')}U_{w'}U_w.  
\end{equation}

\subsubsection{Metaplectic transformations}
\label{s:metaplectic}

Denote by $\Sp(2n,\mathbb R)$ the group of real symplectic $2n\times 2n$ matrices,
i.e. linear isomorphisms $A:\mathbb R^{2n}\to\mathbb R^{2n}$ such that,
with the symplectic form $\sigma$ defined in~\eqref{e:symplectic-def},
$$
\sigma(Az,Aw)=\sigma(z,w)\quad\text{for all}\quad
z,w\in\mathbb R^{2n}.
$$
For each $A\in\Sp(2n,\mathbb R)$, denote by $\mathcal M_A$ the set of all
unitary transformations $M:L^2(\mathbb R^n)\to L^2(\mathbb R^n)$ satisfying
the following exact Egorov's theorem:
\begin{equation}
  \label{e:egorov-metaplectic}
M^{-1}\Op_h(a)M=\Op_h(a\circ A)\quad\text{for all}\quad a\in S(1).  
\end{equation}
Such transformations exist and are unique up to multiplication by a unit complex number,
see~\cite[Theorem~11.9]{Zworski-Book}. Moreover, they act
$\mathscr S(\mathbb R^n)\to\mathscr S(\mathbb R^n)$ and $\mathscr S'(\mathbb R^n)\to\mathscr S'(\mathbb R^n)$. The set
$$
\mathcal M:=\bigcup_{A\in\Sp(2n,\mathbb R)}\mathcal M_A
$$
is a subgroup of the group of unitary transformations of $L^2(\mathbb R^n)$
called the \emph{metaplectic group} and the map
$M\mapsto A$ is a group homomorphism $\mathcal M\to \Sp(2n,\mathbb R)$.

The metaplectic transformations and the quantum translations
are intertwined by the following corollary of~\eqref{e:egorov-metaplectic}:
\begin{equation}
  \label{e:metaplectic-shift}
M^{-1}U_wM=U_{A^{-1}w}\quad\text{for all}\quad
M\in \mathcal M_A,\
w\in\mathbb R^{2n}.
\end{equation}

\subsubsection{Symbol calculus associated to a coisotropic space}
\label{s:symbol-coisotropic}

We now introduce an exotic calculus corresponding to a linear foliation
by coisotropic spaces. For each subspace $L\subset \mathbb R^{2n}$, define
its symplectic complement $L^{\perp\sigma}$ as
\begin{equation}
  \label{e:perp-sigma-def}
L^{\perp\sigma}=\{w\in \mathbb R^{2n}\mid \sigma(w,z)=0\text{ for all }z\in L\}.
\end{equation}
Note that $L^{\perp\sigma}$ is a subspace of dimension $2n-\dim L$.

We call $L$ \emph{coisotropic} if $L^{\perp\sigma}\subset L$. This in particular
implies that $\dim L\geq n$. The case $\dim L=n$ corresponds
to $L$ being \emph{Lagrangian}, that is $L^{\perp\sigma}=L$.
On the opposite end, both the whole $\mathbb R^{2n}$ and any codimension 1 subspace
of it are coisotropic.

Our exotic calculus will use symbols in the class $S_{L,\rho,\rho'}$,
defined as follows:
\begin{defi}
\label{d:S-L}
Let $L\subset\mathbb R^{2n}$ be a coisotropic subspace. Fix
$$
0\leq\rho'\leq \rho\quad\text{such that}\quad \rho+\rho'<1.
$$
We say that an $h$-dependent symbol $a(x,\xi;h)\in C^\infty(\mathbb R^{2n})$ lies in
$S_{L,\rho,\rho'}(\mathbb R^{2n})$ if
for any choice of constant vector fields $X_1,\dots,X_k,Y_1,\dots,Y_m$ on $\mathbb R^{2n}$
such that $Y_1,\dots,Y_m$ are tangent to~$L$ there exists a constant $C$ such that
for all $h\in (0,1]$
\begin{equation}
  \label{e:S-L-def}
\sup_{\mathbb R^{2n}}|X_1\dots X_k Y_1\dots Y_m a|\leq Ch^{-\rho k-\rho'm}.
\end{equation}
\end{defi}
\Remarks
1. The derivative bounds~\eqref{e:S-L-def} can be interpreted as follows:
the symbol~$a$ can grow by~$h^{-\rho'}$ when differentiated along $L$ and by~$h^{-\rho}$
when differentiated in other directions.

\noindent 2. If $L$ is Lagrangian, then a version of the class $S_{L,\rho,\rho'}$ corresponding
to compactly supported symbols but an arbitrary (not necessarily constant) Lagrangian foliation previously appeared in~\cite{meassupp}
which inspired part of the argument in the present paper. In the
important special case $\rho'=0$ this class was introduced in~\cite{hgap}.

\noindent 3. In the case $\rho=\rho'<{1\over 2}$ the class $S_{L,\rho,\rho'}$ does not depend
on~$L$ and becomes the standard mildly exotic pseudodifferential class $S_\rho(1)$,
see~\cite[\S4.4.1]{Zworski-Book}. In particular, if $\rho=\rho'=0$ then we recover the class $S(1)$ defined
in~\eqref{e:S-1-def}.

\noindent 4. In~\cite{hgap,meassupp} the value of $\rho$ was taken close to~1 and $\rho'$ was either~0 or very small. In the present paper
we choose $\rho,\rho'$ in a more complicated way depending
on the size of the spectral gap of the matrix~$A$,
see~\S\ref{s:propagation-times}.

Each $a\in S_{L,\rho,\rho'}(\mathbb R^{2n})$ lies in $S(1)$ for any fixed value of $h>0$.
Therefore, the quantization $\Op_h(a)$ defines an operator
on $L^2(\mathbb R^n)$. The $S(1)$-seminorms of $a$ are not bounded uniformly
as $h\to 0$, so the standard properties of quantization from~\S\ref{s:semi-quant-basic}
do not apply. However, using that $\rho+\rho'<1$ and the fact that $L$ is coisotropic,
we can establish analogues of these properties with weaker remainders:
\begin{lemm}
  \label{l:quantization-properties}
1. Assume that $a,b\in S_{L,\rho,\rho'}(\mathbb R^{2n})$. Then $\Op_h(a)\Op_h(b)=\Op_h(a\#b)$
where $a\#b\in S_{L,\rho,\rho'}(\mathbb R^{2n})$ satisfies
the following asymptotic expansion as $h\to 0$ for all $N\in\mathbb N_0$:
\begin{equation}
  \label{e:S-L-prod}
a\#b(z)=\sum_{k=0}^{N-1} {(-ih)^k\over 2^k k!}\big(\sigma(\partial_z,\partial_w)^k(a(z)b(w))\big)\big|_{w=z}+\mathcal O(h^{(1-\rho-\rho')N})_{S_{L,\rho,\rho'}}.
\end{equation}

2. Assume that $a\in S_{L,\rho,\rho'}(\mathbb R^{2n})$. Then $\|\Op_h(a)\|_{L^2\to L^2}$ is bounded uniformly in $h\in (0,1]$.

3. Assume that $a\in S_{L,\rho,\rho'}(\mathbb R^{2n})$ and $a\geq 0$ everywhere. Then
there exists a constant~$C$ such that
\begin{equation}
  \label{e:S-L-Garding}
\langle \Op_h(a)f,f\rangle_{L^2}\geq -Ch^{1-\rho-\rho'}\|f\|_{L^2}^2\quad\text{for all}\quad f\in L^2(\mathbb R^n),\quad
0<h\leq 1.
\end{equation}
The constants in the above estimates depend only on certain $S_{L,\rho,\rho'}$-seminorms
of $a,b$ similarly to the properties of the $S(1)$-calculus in~\S\ref{s:semi-quant-basic},
where the choice of the seminorms additionally depends on $\rho,\rho'$.
\end{lemm}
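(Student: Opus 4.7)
The plan is to reduce all three parts of the lemma to standard Zworski results for the mildly exotic class $S_\delta(1)$ with $\delta<\tfrac12$, by means of a symplectic rescaling that converts $S_{L,\rho,\rho'}(\mathbb R^{2n})$ into $S_{(\rho+\rho')/2}(1)$. Since $\rho+\rho'<1$ by hypothesis, the target exponent $\delta=(\rho+\rho')/2$ satisfies $\delta<\tfrac12$, so all three desired properties will follow from \cite[Theorems~4.17--4.18,~4.23,~4.32]{Zworski-Book}. First, by symplectic linear algebra any coisotropic $L$ with $k:=2n-\dim L\leq n$ is equivalent, via a linear symplectic change of coordinates, to the standard form $L_0=\{(x,\xi)\in\mathbb R^{2n}\colon \xi_1=\cdots=\xi_k=0\}$, so that $L_0^{\perp\sigma}=\mathrm{span}(\partial_{x_1},\ldots,\partial_{x_k})\subset L_0$. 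Conjugating by a metaplectic operator $M\in\mathcal M$ implementing this change of variables, and using the exact Egorov identity~\eqref{e:egorov-metaplectic}, we may assume $L=L_0$: in these coordinates, $\partial_{\xi_1},\ldots,\partial_{\xi_k}$ are the only wild directions (cost $h^{-\rho}$), while every other coordinate constant vector field is tangent to $L_0$ and costs only $h^{-\rho'}$.

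Next, apply the $h$-dependent symplectic linear dilation
\[
\tilde x_i=h^{(\rho-\rho')/2}\,x_i,\qquad \tilde\xi_i=h^{-(\rho-\rho')/2}\,\xi_i\qquad (1\leq i\leq k),
\]
and leave the remaining coordinates unchanged. This map is realized on $L^2(\mathbb R^n)$ by an $h$-dependent unitary dilation $D_h$ of the base variables $x_1,\ldots,x_k$; by Egorov applied to the metaplectic operator $D_h$, one has $D_h^{-1}\Op_h(a)D_h=\Op_h(\tilde a)$ where $\tilde a$ is the pullback of $a$ by the rescaling. The chain-rule identity
\[
-\tfrac{\rho-\rho'}{2}-\rho'=-\tfrac{\rho+\rho'}{2}=\tfrac{\rho-\rho'}{2}-\rho
\]
then yields, uniformly in $h$, the bound $|\partial^\alpha\tilde a|\leq C h^{-|\alpha|(\rho+\rho')/2}$ for all multiindices $\alpha$, with the constant $C$ controlled by finitely many $S_{L,\rho,\rho'}$-seminorms of $a$. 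Hence $\tilde a\in S_\delta(1)$ with $\delta=(\rho+\rho')/2<\tfrac12$, uniformly in $h$.

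The three parts of the lemma now follow from their classical $S_\delta(1)$ counterparts: part~(2) is Calder\'on--Vaillancourt \cite[Theorem~4.23]{Zworski-Book}; part~(3) is the sharp G\r{a}rding inequality \cite[Theorem~4.32]{Zworski-Book}, whose remainder is of order $h^{1-2\delta}=h^{1-\rho-\rho'}$; and part~(1) is the Weyl composition formula \cite[Theorems~4.17--4.18]{Zworski-Book}, whose $N$-th Moyal remainder is controlled in $S_\delta(1)$ by $h^{N(1-2\delta)}=h^{N(1-\rho-\rho')}$. Unscaling and undoing the metaplectic conjugation transfers each statement back to the original coordinates: the form $\sigma(\partial_z,\partial_w)$ is invariant under linear symplectic changes of variables, so the Moyal expansion in the rescaled coordinates pulls back term-by-term to~\eqref{e:S-L-prod}; nonnegativity of $a$ is preserved because the rescaling is a diffeomorphism of $\mathbb R^{2n}$; and linear changes of coordinates map constant vector fields tangent to $L$ to constant vector fields tangent to $L$, so the pulled-back remainders still lie in $S_{L,\rho,\rho'}$ with seminorms uniform in $h$. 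The main point requiring care is precisely this uniform-in-$h$ bookkeeping at the rescaling step; the coisotropic hypothesis enters both through the normal form (placing $L^{\perp\sigma}$ inside $L$ as a coordinate subspace) and through the resulting observation that every application of $\sigma(\partial_z,\partial_w)$ carries at most one wild $h^{-\rho}$-derivative, which is what keeps the $k$-th Moyal term in~\eqref{e:S-L-prod} of size $h^{k(1-\rho-\rho')}$.
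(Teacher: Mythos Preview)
Your reduction by symplectic rescaling is a genuinely different route from the paper's. The paper conjugates by the \emph{non-symplectic} dilation $Tf(x)=h^{-\rho' n/2}f(h^{-\rho'}x)$, which changes the effective Planck constant to $\tilde h=h^{1-\rho-\rho'}$ and sends the symbol to $\tilde a(x,\xi)=a(h^{\rho'}x,h^\rho\xi)\in S(1)$; you instead conjugate by a \emph{symplectic} dilation on the first $k$ conjugate pairs only, keeping $h$ but landing in $S_\delta(1)$ with $\delta=(\rho+\rho')/2<\tfrac12$. Both strategies are sound, and your argument handles parts~(2) and~(3) correctly.

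There is, however, a genuine gap in part~(1) when $L$ is not Lagrangian ($k<n$). The standard $S_\delta(1)$ composition theorem gives the Moyal remainder only in $S_\delta(1)$: it loses $h^{-\delta}$ per derivative in \emph{every} direction. Pulling back through your rescaling restores the correct anisotropy in the rescaled block $i\le k$ (you get $h^{-\rho'}$ for $x_i$ and $h^{-\rho}$ for $\xi_i$), but the unrescaled tangent directions $x_i,\xi_i$ with $i>k$ still carry $h^{-\delta}=h^{-(\rho+\rho')/2}$ per derivative, which is strictly worse than the $h^{-\rho'}$ that membership in $S_{L,\rho,\rho'}$ demands. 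Your sentence ``linear changes of coordinates map constant vector fields tangent to $L$ to constant vector fields tangent to $L$, so the pulled-back remainders still lie in $S_{L,\rho,\rho'}$'' does not bridge this: the rescaling preserves the tangent/transverse splitting, but the $S_\delta(1)$ calculus you invoke has already forgotten it.

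The paper's non-symplectic rescaling runs into the same obstruction (its bound~\eqref{e:zebra} also loses $h^{-\rho}$ in the tangent $\xi$-directions) and fixes it by a short bootstrap: the $k$-th Moyal term \emph{does} lie in $\mathcal O(h^{k(1-\rho-\rho')})_{S_{L,\rho,\rho'}}$ (exactly your closing observation about $\sigma(\partial_z,\partial_w)$ carrying at most one wild derivative), and since $1-\rho-\rho'>0$ one may take the expansion to sufficiently high order $N'$ so that $r_{N'}$ is small in any prescribed seminorm, then write $r_N=r_{N'}+\sum_{k=N}^{N'-1}(\text{$k$-th term})$. You have all the ingredients; you just need to connect them.
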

\begin{proof}
1. Let $\dim L=n+p$. Since $L$ is coisotropic, there exists a linear symplectomorphism
\begin{equation}
  \label{e:L0-def}
A\in\Sp(2n,\mathbb R)\quad\text{such that}\quad
A^{-1}(L)=L_0:=\Span(\partial_{x_1},\dots,\partial_{x_n},\partial_{\xi_1},\dots,\partial_{\xi_p}).
\end{equation}
Denote $\xi=(\xi',\xi'')$ where $\xi'\in\mathbb R^p$
and $\xi''\in\mathbb R^{n-p}$.
For $a(x,\xi;h)\in C^\infty(\mathbb R^{2n})$, we have
$$
a\in S_{L,\rho,\rho'}(\mathbb R^{2n})\ \iff\
a\circ A\in S_{L_0,\rho,\rho'}(\mathbb R^{2n})
$$
where the space $S_{L_0,\rho,\rho'}(\mathbb R^{2n})$ can be characterized by the following
inequalities for all multiindices $\alpha,\beta$:
\begin{equation}
  \label{e:S-L0}
\sup_{\mathbb R^{2n}}|\partial^\alpha_{(x,\xi')}\partial^\beta_{\xi''}a|\leq C_{\alpha\beta} h^{-\rho'|\alpha|-\rho|\beta|}.
\end{equation}
Fix a metaplectic operator $M\in \mathcal M_A$. Then by~\eqref{e:egorov-metaplectic}
(which applies since the symbols in question are in $S(1)$ for any fixed~$h$)
$$
M^{-1}\Op_h(a)M=\Op_h(a\circ A)\quad\text{for all}\quad a\in S_{L,\rho,\rho'}(\mathbb R^{2n}).
$$
Since $M$ is unitary on $L^2(\mathbb R^n)$ and the terms in the expansion~\eqref{e:S-L-prod} are equivariant
under composing $a,b$ with~$A$, we reduce the proof of the lemma to the case
$L=L_0$.

\noindent 2. Henceforth we assume that $L=L_0$. We use the unitary rescaling operator
$T$ on~$L^2(\mathbb R^n)$ defined by
$$
T f(x)=h^{-{\rho' n\over 2}}f(h^{-\rho'}x).
$$
A direct calculation using~\eqref{e:weyl-quantization} shows that
for all $a\in S_{L_0,\rho,\rho'}(\mathbb R^{2n})$ we have
\begin{equation}
  \label{e:dilate-Op-h}
T^{-1} \Op_h(a) T=\Op_{\tilde h}(\tilde a)\quad\text{where}\quad
\tilde h:=h^{1-\rho-\rho'},\quad
\tilde a(x,\xi):=a(h^{\rho'}x,h^\rho \xi).
\end{equation}
By~\eqref{e:S-L0} we see that $\tilde a\in S(1)$ uniformly in~$h$.
This gives parts~2--3 of the lemma.

\noindent 3. To show part~1 of the lemma, define the symbol $a\widetilde\# b$ by
the formula $\Op_{\tilde h}(a)\Op_{\tilde h}(b)=\Op_{\tilde h}(a\widetilde\#b)$ and consider the rescaling
map $B(x,\xi)=(h^{\rho'}x,h^{\rho}\xi)$. Then by~\eqref{e:dilate-Op-h}
\begin{equation}
  \label{e:tilde-moyal}
(a\# b)\circ B=\tilde a\widetilde\#\tilde b\quad\text{where}\quad
\tilde a:=a\circ B,\quad
\tilde b:=b\circ B.
\end{equation}
Define the remainder
$$
r_N(z):=a\#b(z)-\sum_{k=0}^{N-1} {(-ih)^k\over 2^k k!}\big(\sigma(\partial_z,\partial_w)^k(a(z)b(w))\big)\big|_{w=z},
$$
then by~\eqref{e:tilde-moyal}
$$
r_N(B(z))=\tilde a\widetilde\# \tilde b(z)-\sum_{k=0}^{N-1} {(-i\tilde h)^k\over 2^k k!}\big(\sigma(\partial_z,\partial_w)^k(\tilde a(z)\tilde b(w))\big)\big|_{w=z}.
$$
Since $\tilde a,\tilde b$ are bounded in $S(1)$ uniformly in~$h$, we see from~\eqref{e:Op-h-composition-expansion} 
with $h$ replaced by~$\tilde h$ that
$$
r_N(B(z))=\mathcal O(\tilde h^N)_{S(1)}
$$
which implies that for all multiindices $\alpha,\beta$
\begin{equation}
  \label{e:zebra}
\sup_{\mathbb R^{2n}}|\partial^\alpha_x\partial^\beta_\xi r_N|
=\mathcal O(h^{(1-\rho-\rho')N-\rho'|\alpha|-\rho|\beta|}).
\end{equation}
If $p=0$, i.e. $L$ is Lagrangian, then this immediately gives the expansion~\eqref{e:S-L-prod}
as~\eqref{e:zebra} shows that $r_N=\mathcal O(h^{(1-\rho-\rho')N})_{S_{L_0,\rho,\rho'}}$.
In the general case, since $1-\rho-\rho'>0$ we get that all the derivatives of the remainder become rapidly decaying in $h$ when $N\to \infty$, that is for each $N_0,\alpha,\beta$
there exists $N$ such that
$\sup_{\mathbb R^{2n}}|\partial^\alpha_x\partial^\beta_\xi r_N|=\mathcal O(h^{N_0})$.
Combining this with the fact that the $k$-th term in~\eqref{e:S-L-prod}
is $\mathcal O(h^{(1-\rho-\rho')k})_{S_{L_0,\rho,\rho'}}$ we see
that the expansion~\eqref{e:S-L-prod} holds.
\end{proof}
\Remark
Lemma~\ref{l:quantization-properties} does not hold for the standard quantization
$$
\Op^0_h(a)f(x)=(2\pi h)^{-n}\int_{\mathbb R^{2n}}e^{{i\over h}\langle x-x',\xi\rangle}
a(x,\xi)f(x')\,dx'd\xi.
$$
Indeed, consider the case $n=1$ and $L=\Span(\partial_x+\partial_\xi)$. The analog
of the expansion~\eqref{e:S-L-prod} for the standard quantization is~\cite[Theorem~4.14]{Zworski-Book}
$$
\Op^0_h(a)\Op^0_h(b)=\Op^0_h(a\#^0b),\quad
a\#^0b(x,\xi)\sim\sum_{k=0}^\infty {(-ih)^k\over k!}\partial^k_\xi a(x,\xi)\partial^k_x b(x,\xi)\quad\text{as}\quad h\to 0.
$$
Take $\rho':=0$ and put $a(x,\xi)=\chi(h^{-\rho}(x-\xi))$
for some nonzero $h$-independent function $\chi\in\CIc(\mathbb R)$. Then $a\in S_{L,\rho,0}(\mathbb R^2)$ and
the $k$-th term in the asymptotic expansion for~$a\#^0a$ is
$$
{i^k\over k!}\big(\chi^{(k)}(h^{-\rho}(x-\xi))\big)^2 h^{(1-2\rho)k}.
$$
For ${1\over 2}<\rho<1$, each successive term in the expansion grows
faster in $h$ than the previous one, which makes it impossible for this expansion to hold.
The difference between the standard and the Weyl quantization exploited in the proof of Lemma~\ref{l:quantization-properties} is that the Weyl quantization obeys the exact Egorov Theorem~\eqref{e:egorov-metaplectic}
and the related fact that the terms in the asymptotic expansion~\eqref{e:S-L-prod}
are equivariant under $\Sp(2n,\mathbb R)$.

We also prove here a statement on composition of operators
whose symbols have well-separated supports, used in the proof of Lemma~\ref{l:non_stationary}:
\begin{lemm}
  \label{l:nonint-sup-2}
Let $q:\mathbb R^{2n}\to \mathbb R$ be a linear functional of norm~1
and assume that $a,b\in S_{L,\rho,\rho'}(\mathbb R^{2n})$ satisfy
for some $r_0\in\mathbb R$ and $r>0$
\begin{equation}
  \label{e:nonint-sup-2}
\supp a\subset \{z\in\mathbb R^{2n}\mid q(z)\leq r_0-r\},\quad
\supp b\subset \{z\in\mathbb R^{2n}\mid q(z)\geq r_0+r\}.
\end{equation}
Then for each $c\in S_{L,\rho,\rho'}(\mathbb R^{2n})$ and $N>0$ we have
\begin{equation}
  \label{e:nonint-sup-2-st}
\|\Op_h(a)\Op_h(c)\Op_h(b)\|_{L^2(\mathbb R^n)\to L^2(\mathbb R^n)}\leq
C_N h^N (1+r)^{-N}
\end{equation}
where the constant $C_N$ depends only on $n$, $N$, and some $n,N$-dependent $S_{L,\rho,\rho'}(\mathbb R^{2n})$-seminorms of $a,b,c$.
\end{lemm}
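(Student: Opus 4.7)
The plan is to split the middle factor $c$ with a smooth cutoff aligned with $q$ and reduce each of the resulting two-operator products to Lemma \ref{l:nonint-sup-1} via the rescaling argument from the proof of Lemma \ref{l:quantization-properties}. Preliminarily, observe that the case $r \leq 1$ follows by iterating the composition formula \eqref{e:S-L-prod}: each term in the Moyal expansion of $a\#c\#b$ is a polynomial differential expression in $a, b, c$ evaluated at a single point, hence vanishes identically since $\supp a \cap \supp b = \emptyset$; therefore $a\#c\#b = \mathcal O(h^\infty)_{S_{L,\rho,\rho'}}$ and $\|\Op_h(a)\Op_h(c)\Op_h(b)\|_{L^2 \to L^2} = \mathcal O(h^\infty)$, which absorbs the factor $(1+r)^{-N}$ for $r \leq 1$. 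Henceforth assume $r \geq 1$.

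Fix $\chi \in C^\infty(\R; [0,1])$ with $\chi \equiv 1$ on $(-\infty, 0]$ and $\chi \equiv 0$ on $[1, \infty)$, and split $c = c_- + c_+$ where $c_-(z) := c(z)\chi\bigl(2(q(z) - r_0)/r\bigr)$ and $c_+ := c - c_-$. Since $r \geq 1$ and $q$ has norm one, the derivatives of the cutoff factor are bounded uniformly in $h$ and $r$, so $c_\pm \in S_{L,\rho,\rho'}(\R^{2n})$ with seminorms controlled by those of $c$; moreover $\supp c_- \subset \{q \leq r_0 + r/2\}$ and $\supp c_+ \subset \{q \geq r_0\}$. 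Applying $L^2$-boundedness (Lemma \ref{l:quantization-properties}(2)) to $\Op_h(b)$ for the piece involving $c_+$ and to $\Op_h(a)$ for the piece involving $c_-$, it suffices to bound each of $\|\Op_h(a)\Op_h(c_+)\|_{L^2 \to L^2}$ and $\|\Op_h(c_-)\Op_h(b)\|_{L^2 \to L^2}$ by $C_N h^N (1+r)^{-N}$, since in each pair the supports are separated by a gap of at least $r/2$ in the direction of $q$.

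To obtain this two-operator bound in the exotic calculus we imitate the proof of Lemma \ref{l:quantization-properties}: conjugate by a metaplectic $M \in \mathcal M_A$ with $A^{-1}L = L_0$ and by the unitary rescaling $Tf(x) := h^{-\rho' n/2} f(h^{-\rho'}x)$, converting the problem into an $S(1)$ estimate at the effective parameter $\tilde h := h^{1-\rho-\rho'}$, with linear functional $q$ replaced by $\tilde q := q \circ A \circ B$ where $B(x,\xi) := (h^{\rho'}x, h^\rho\xi)$. The rescaled supports remain separated by a gap of at least $r/2$ in the direction of the unnormalized $\tilde q$, so after normalization the effective separation in the unit-norm direction $\tilde q/\|\tilde q\|$ is at least $r/(2\|A\|)$ (using $\|\tilde q\| \leq \|A\|\cdot h^{\rho'} \leq \|A\|$). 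Applying (a trivially translated version of) Lemma \ref{l:nonint-sup-1} at semiclassical parameter $\tilde h$ with exponent $N_0 := \lceil N/(1-\rho-\rho')\rceil$ yields $C_{N_0}\tilde h^{N_0}\bigl(1 + r/(2\|A\|)\bigr)^{-N_0} \leq C_N h^N (1+r)^{-N}$, using $\tilde h^{N_0} \leq h^N$ and $(1+r)^{-N_0} \leq (1+r)^{-N}$. The main subtlety is precisely this bookkeeping of $\tilde q$: its operator norm may shrink like $h^{\rho'}$, but the effective separation $r/\|\tilde q\|$ remains $\gtrsim r$, which is what preserves the decay factor $(1+r)^{-N}$ after unwinding the rescaling.
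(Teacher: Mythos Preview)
Your proof is correct and follows essentially the same approach as the paper's: handle small $r$ by the disjoint-support composition argument, for large $r$ split $c$ with a cutoff in the $q$-direction into two pieces, and reduce each resulting two-operator product to Lemma~\ref{l:nonint-sup-1} by the metaplectic-plus-rescaling reduction from Lemma~\ref{l:quantization-properties}. The only cosmetic difference is that the paper isolates the two-operator bound as a preliminary claim and performs the quantum translation to $r_0=0$ before the rescaling, whereas you absorb that shift into a ``translated version'' of Lemma~\ref{l:nonint-sup-1} and track the matrix $A$ explicitly in the bound $\|\tilde q\|\le \|A\|h^{\rho'}$; the bookkeeping is equivalent.
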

\begin{proof}
1. We first show that for all $a,b$ satisfying~\eqref{e:nonint-sup-2}
and all~$N$
\begin{equation}
  \label{e:nonint-sup-2-1}
\|\Op_h(a)\Op_h(b)\|_{L^2(\mathbb R^n)\to L^2(\mathbb R^n)}\leq C_N h^N (1+r)^{-N}.
\end{equation}
We may shift the supports of $a,b$ by conjugating by a quantum
translation (see~\eqref{e:conjugate-translation}), so we may assume that $r_0=0$.
We may also conjugate by a metaplectic transformation similarly to Step~1 in the proof of Lemma~\ref{l:quantization-properties} to reduce to the case $L=L_0$ where
$L_0$ is defined in~\eqref{e:L0-def}.
Next, using the dilation formula~\eqref{e:dilate-Op-h} we see that
$$
\|\Op_h(a)\Op_h(b)\|_{L^2(\mathbb R^n)\to L^2(\mathbb R^n)}=
\|\Op_{\tilde h}(\tilde a)\Op_{\tilde h}(\tilde b)\|_{L^2(\mathbb R^n)\to L^2(\mathbb R^n)}
$$
where $\tilde h=h^{1-\rho-\rho'}$ and the rescaled symbols $\tilde a(x,\xi)=a(h^{\rho'}x,h^\rho\xi)$,
$\tilde b(x,\xi)=b(h^{\rho'}x,h^\rho\xi)$ lie in~$S(1)$ uniformly in~$h$. The support condition~\eqref{e:nonint-sup-2} implies that
$$
\supp \tilde a\subset \{z\in\mathbb R^{2n}\mid \tilde q(z)\leq -\tilde r\},\quad
\supp \tilde b\subset \{z\in\mathbb R^{2n}\mid \tilde q(z)\geq \tilde r\}
$$
where we put
$$
q'(x,\xi):=q(h^{\rho'}x,h^\rho\xi),\quad
\tilde q:={q'\over\|q'\|},\quad
\tilde r:={r\over\|q'\|}.
$$
Note that $\|q'\|\leq 1$ and thus $\tilde r\geq r$.

We now apply Lemma~\ref{l:nonint-sup-1} to $\tilde a,\tilde b,\tilde h,\tilde r$
to get for all~$N$
$$
\|\Op_{\tilde h}(\tilde a)\Op_{\tilde h}(\tilde b)\|_{L^2(\mathbb R^n)\to L^2(\mathbb R^n)}\leq C_N \tilde h^N(1+\tilde r)^{-N}
\leq C_N h^{N(1-\rho-\rho')}(1+r)^{-N}
$$
which implies~\eqref{e:nonint-sup-2-1} since $1-\rho-\rho'>0$.

2. We now prove~\eqref{e:nonint-sup-2-st}. If $0<r<1$
then~\eqref{e:nonint-sup-2-st} follows by applying twice
the composition formula~\eqref{e:S-L-prod}
and using the $L^2$ boundedness property of the class $S_{L,\rho,\rho'}$
and the fact that $\supp a\cap\supp b=\emptyset$.
Henceforth we assume that $r\geq 1$. Fix
$$
\chi\in C^\infty(\mathbb R;[0,1]),\quad
\chi=1\text{ on }[\textstyle{1\over 2},\infty),\quad
\chi=0\text{ on }(-\infty,-\textstyle{1\over 2}]
$$
and decompose
$$
c=c_1+c_2,\quad
c_1(z):=c(z)\chi\Big({q(z)-r_0\over r}\Big),\quad
c_2(z):=c(z)\bigg(1-\chi\Big({q(z)-r_0\over r}\Big)\bigg).
$$
Then the symbols
$c_1,c_2$ are bounded in $S_{L,\rho,\rho'}(\mathbb R^{2n})$ uniformly in~$r$. Moreover
$$
\supp c_1\subset \{z\in\mathbb R^{2n}\mid q(z)\geq r_0-\textstyle{r\over 2}\},\quad
\supp c_2\subset \{z\in\mathbb R^{2n}\mid q(z)\leq r_0+\textstyle{r\over 2}\}.
$$
We now write $\Op_h(c)=\Op_h(c_1)+\Op_h(c_2)$ and estimate (using $L^2$ boundedness of~$\Op_h(a)$, $\Op_h(b)$)
$$
\|\Op_h(a)\Op_h(c)\Op_h(b)\|_{L^2\to L^2}\leq
C\big(\|\Op_h(a)\Op_h(c_1)\|_{L^2\to L^2}
+\|\Op_h(c_2)\Op_h(b)\|_{L^2\to L^2}\big).
$$
We finally use~\eqref{e:nonint-sup-2-1} with
$r_0$ replaced by $r_0\pm {3r\over 4}$ and
$r$ replaced by $r\over 4$
to get
$$
\|\Op_h(a)\Op_h(c_1)\|_{L^2\to L^2},
\|\Op_h(c_2)\Op_h(b)\|_{L^2\to L^2}
\leq C_N h^N (1+r)^{-N}
$$
which finishes the proof.
\end{proof}

\subsection{Quantization on the torus}

In this section we study quantizations of functions on the torus
$$
\mathbb T^{2n}:=\mathbb R^{2n}/\mathbb Z^{2n}.
$$
Each $a\in C^\infty(\mathbb T^{2n})$ can be identified with a $\mathbb Z^{2n}$-periodic function on $\mathbb R^{2n}$.
This function lies in the symbol class $S(1)$ defined in~\eqref{e:S-1-def}
and thus its Weyl quantization $\Op_h(a)$ is an operator on $L^2(\mathbb R^n)$.
We will decompose $L^2(\mathbb R^n)$ into a direct integral
of finite dimensional spaces $\mathcal H_{\mathbf N}(\theta)$, $\theta\in\mathbb T^{2n}$,
which we call the \emph{spaces of quantum states}. The operator $\Op_h(a)$ descends to these spaces and gives a quantization
of the observable~$a\in C^\infty(\mathbb T^{2n})$. Our presentation partially follows~\cite{Bouzouina-deBievre}.

To make sure that the spaces of quantum states are nontrivial, we henceforth make
the following assumption (see~\cite[Proposition~2.1]{Bouzouina-deBievre}):
\begin{equation}
  \label{e:h-N-def}
h={1\over 2\pi \mathbf N}\quad\text{for some}\quad \mathbf N\in\mathbb N.
\end{equation}

\subsubsection{The spaces of quantum states}
\label{s:quantum-states}

Recall the quantum translations $U_w$, $w\in\mathbb R^{2n}$, defined in~\S\ref{s:quantum-translations}.
By~\eqref{e:conjugate-translation} we have the commutation relations
\begin{equation}
  \label{e:U-w-Op-a}
\Op_h(a)U_w=U_w\Op_h(a)\quad\text{for all}\quad
a\in C^\infty(\mathbb T^{2n}),\quad
w\in\mathbb Z^{2n}.
\end{equation}
This motivates the following definition of the spaces of quantum states:
for each $\theta\in\mathbb T^{2n}$, 
put
\begin{equation}
  \label{e:H-N-theta}
\mathcal H_{\mathbf N}(\theta):=\{f\in \mathscr S'(\mathbb R^n)\mid U_w f=e^{2\pi i\sigma(\theta,w)+\mathbf N\pi iQ(w)}f\text{ for all }w\in\mathbb Z^{2n}\}
\end{equation}
where the quadratic form $Q$ on $\mathbb R^n$ is defined by
\begin{equation}
  \label{e:Q-def}
Q(w):=\langle y,\eta\rangle\quad\text{where}\quad w=(y,\eta)\in\mathbb R^{2n}.
\end{equation}
Denote
$$
\mathbb Z_{\mathbf N}:=\{0,\dots,\mathbf N-1\}.
$$
The following description of the spaces $\mathcal H_{\mathbf N}(\theta)$ is a higher
dimensional version of~\cite[Proposition~2.1]{Bouzouina-deBievre}:
\begin{lemm}
The space $\mathcal H_{\mathbf N}(\theta)$ is ${\mathbf N}^n$-dimensional with a basis given by
$\mathbf e^\theta_j$, $j\in \mathbb Z_{\mathbf N}^n$, where for $\theta=(\theta_x,\theta_\xi)\in\mathbb R^{2n}$ we define
\begin{equation}
  \label{e:e-kappa-def}
\mathbf e^\theta_j(x):=\mathbf N^{-{n\over 2}}\sum_{k\in\mathbb Z^n} e^{-2\pi i\langle \theta_\xi,k\rangle}
\delta\bigg(x-{\mathbf Nk+ j-\theta_x\over \mathbf N}\bigg),\quad  j\in \mathbb Z^n.
\end{equation}
\end{lemm}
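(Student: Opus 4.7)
The plan is to decode the quasi-invariance condition defining $\mathcal{H}_{\mathbf{N}}(\theta)$ by restricting $w$ first to the position and frequency coordinate sublattices of $\mathbb{Z}^{2n}$, pinning down $f$ up to $\mathbf{N}^n$ scalars, and then invoking the twisted group law \eqref{e:U-w-prod} to recover the full condition for arbitrary $w\in\mathbb{Z}^{2n}$. For $w=(k,0)$ with $k\in\mathbb{Z}^n$, formula \eqref{e:U-w-form} gives $U_{(k,0)}f(x)=f(x-k)$, while $\sigma(\theta,w)=\langle\theta_\xi,k\rangle$ and $Q(w)=0$, so the defining relation reads
\[
  f(x-k)=e^{2\pi i\langle\theta_\xi,k\rangle}f(x)\quad\text{for all }k\in\mathbb{Z}^n.
\]
For $w=(0,k)$ instead, the same formula with $h=(2\pi\mathbf{N})^{-1}$ reduces the condition to
\[
  \bigl(e^{2\pi i\mathbf{N}\langle k,x\rangle}-e^{-2\pi i\langle\theta_x,k\rangle}\bigr)f(x)=0\quad\text{for all }k\in\mathbb{Z}^n.
\]
The common zero set of these multipliers is the lattice $\mathbf{N}^{-1}(\mathbb{Z}^n-\theta_x)$, which must therefore contain $\mathrm{supp}(f)$.

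A tempered distribution supported on a discrete lattice is automatically a locally finite sum $\sum_m P_m(\partial)\delta_{a_m}$. Each multiplier above vanishes to first order at every lattice point, so if some $P_m$ had positive order the product would produce nonzero lower-order Dirac contributions that cannot cancel for generic $k$; hence $f=\sum_m c_m\,\delta_{(m-\theta_x)/\mathbf{N}}$ with no derivative terms. Plugging this into the position condition yields the recursion $c_{m+\mathbf{N} k}=e^{-2\pi i\langle\theta_\xi,k\rangle}c_m$, so the values $c_j$ for $j\in\mathbb{Z}_{\mathbf{N}}^n$ are free and determine $f$ completely. After matching normalizations this produces $f=\mathbf{N}^{n/2}\sum_j c_j\,\mathbf{e}^\theta_j$. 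Linear independence of $\{\mathbf{e}^\theta_j\}_{j\in\mathbb{Z}_{\mathbf{N}}^n}$ is immediate from the disjointness of their singular supports inside any fundamental domain for $\mathbb{Z}^n$, giving the asserted basis and the dimension bound.

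Finally, to confirm that each $\mathbf{e}^\theta_j$ actually lies in $\mathcal{H}_{\mathbf{N}}(\theta)$ (and not merely satisfies the two pure-lattice conditions), I check those two special cases directly by re-indexing the sum defining $\mathbf{e}^\theta_j$, and then extend to arbitrary $w=(y,\eta)\in\mathbb{Z}^{2n}$ using \eqref{e:U-w-prod} in the form
\[
  U_w=e^{\frac{i}{2h}\langle y,\eta\rangle}\,U_{(y,0)}U_{(0,\eta)}=e^{\pi i\mathbf{N} Q(w)}\,U_{(y,0)}U_{(0,\eta)},
\]
whose phases combine to exactly $2\pi i\sigma(\theta,w)+\mathbf{N}\pi iQ(w)$. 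The main technical point I expect to be the distributional step ruling out derivatives of Dirac masses; once that is handled, the remainder is a careful bookkeeping of phases and a straightforward index change in the defining sum.
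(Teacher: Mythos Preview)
Your proof is correct and follows essentially the same route as the paper: both reduce the defining relation to the two sublattice conditions for $w=(k,0)$ and $w=(0,k)$, identify the resulting distributions as lattice Dirac combs with the appropriate quasi-periodicity, and invoke the group law~\eqref{e:U-w-prod} to recover the full condition. The only cosmetic differences are that the paper first uses a quantum translation to reduce to $\theta=0$ whereas you work with general~$\theta$ directly, and you spell out the distributional argument ruling out derivatives of Dirac masses, which the paper states without justification.
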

\Remark The distributions $\mathbf e^\theta_j$ satisfy the identities
\begin{equation}
  \label{e:e-theta-equiv}
\begin{aligned}
\mathbf e^{\theta+w}_j&=\mathbf e^\theta_{j-y}\quad\text{for all}\quad w=(y,\eta)\in \mathbb Z^{2n},\\
\mathbf e^\theta_{j+\mathbf N\ell}&=e^{2\pi i\langle\theta_\xi,\ell\rangle}\mathbf e^\theta_j\quad\text{for all}\quad \ell\in\mathbb Z^n.
\end{aligned}
\end{equation}
In particular, even though the space $\mathcal H_{\mathbf N}(\theta)$ is canonically defined
for $\theta$ in the torus~$\mathbb T^{2n}$, in order to define the basis
$\{\mathbf e^\theta_j\}$ we need to fix a representative $\theta_x\in\mathbb R^{n}$.
Note also that $\mathbf e^\theta_j$ is supported on the shifted lattice
$\mathbf N^{-1}(j-\theta_x)+\mathbb Z^n$.
\begin{proof}
By~\eqref{e:U-w-comm}, for each $v\in\mathbb R^{2n}$ the quantum translation
$U_{v}$ is an isomorphism from $\mathcal H_{\mathbf N}(\theta)$ onto $\mathcal H_{\mathbf N}(\theta-\mathbf Nv)$.
On the other hand, we compute for all $j\in\mathbb Z^n$ and $v\in \mathbb R^{2n}$
\begin{equation}
  \label{e:U-w-e-theta}
U_v\mathbf e^\theta_j=e^{2\pi i\langle\eta,j-\theta_x\rangle+i\pi {\mathbf N}Q(v)}\mathbf e_j^{\theta-{\mathbf N}v}\quad\text{where}\quad
v=(y,\eta).
\end{equation}
Thus it suffices to consider the case $\theta=0$.

Using $w=(0,\ell)$ and $w=(\ell,0)$ in the definition~\eqref{e:H-N-theta},
as well as~\eqref{e:U-w-prod},
we can characterize $\mathcal H_{\mathbf N}(0)$ as the set of all $f\in\mathscr S'(\mathbb R^n)$ such that
\begin{equation}
  \label{e:hair-conditioner}
e^{2\pi i \mathbf N\langle \ell,x\rangle}f(x)=f(x),\quad
f(x-\ell)=f(x)\quad\text{for all}\quad
\ell\in\mathbb Z^n.
\end{equation}
The first condition in~\eqref{e:hair-conditioner} is equivalent to $f$ being a linear combination of delta functions at the points in the lattice~$\mathbf N^{-1}\mathbb Z^n$, that is
$$
f(x)=\sum_{r\in\mathbb Z^n}f_r \delta\Big(x-{r\over\mathbf N}\Big)\quad\text{for some}\quad
(f_r\in\mathbb C)_{r\in\mathbb Z^n}.
$$
The second condition in~\eqref{e:hair-conditioner} is then equivalent to
the periodic property $f_{r-\mathbf N\ell}=f_r$ for all $\ell\in\mathbb Z^n$.
It follows that $\mathcal H_{\mathbf N}(0)$ is the span of $\{\mathbf e^0_j\mid j\in\mathbb Z_{\mathbf N}^n\}$, which finishes the proof.
\end{proof}
We fix the inner product $\langle\bullet,\bullet\rangle_{\mathcal H}$ on each~$\mathcal H_{\mathbf N}(\theta)$ by requiring that
$\{\mathbf e^\theta_j\}_{j\in\mathbb Z^n_{\mathbf N}}$ be an orthonormal basis. Note that
while the basis $\{\mathbf e^\theta_j\}$ depends on the choice
of the representative $\theta_x\in\mathbb R^n$,
the inner product only depends on $\theta\in\mathbb T^{2n}$
as follows from~\eqref{e:e-theta-equiv}.

Using the bases $\{\mathbf e^\theta_j\}$, we can consider the spaces $\mathcal H_{\mathbf N}(\theta)$ as the fibers of a smooth $\mathbf N^n$-dimensional complex vector bundle over $\mathbb T^{2n}$, which we denote by $\mathcal H_{\mathbf N}$.

\subsubsection{Decomposing $L^2$}
\label{s:decomposing-L2}

We now construct a unitary isomorphism $\Pi_{\mathbf N}$ between $L^2(\mathbb R^n)$ 
and the space of $L^2$ sections $L^2(\mathbb T^{2n};\mathcal H_{\mathbf N})$.
This gives a decomposition of $L^2(\mathbb R^n)$ into the direct integral
of the spaces $\mathcal H_{\mathbf N}(\theta)$ over $\theta\in\mathbb T^{2n}$.

Define the operators $\Pi_{\mathbf N}(\theta):\mathscr S(\mathbb R^n)\to \mathcal H_{\mathbf N}(\theta)$ by
\begin{equation}
  \label{e:Pi-N-theta-def}
\Pi_{\mathbf N}(\theta)f:=\sum_{j\in \mathbb Z_{\mathbf N}^n} \langle f,\mathbf e^\theta_j\rangle_{L^2}\,\mathbf e^\theta_j,\quad \theta\in\mathbb T^{2n}.
\end{equation}
Even though the basis $\{\mathbf e_j^\theta\}$ depends on the choice
of the preimage $\theta_x\in\mathbb R^n$, the operator $\Pi_{\mathbf N}(\theta)$
does not depend on this choice as follows from~\eqref{e:e-theta-equiv}.
We next define the operator
$$
\Pi_{\mathbf N}:\mathscr S(\mathbb R^n)\to C^\infty(\mathbb T^{2n};\mathcal H_{\mathbf N}),\quad
\Pi_{\mathbf N}f:=(\Pi_{\mathbf N}(\theta)f)_{\theta\in\mathbb T^{2n}}.
$$
We also define the operator $\Pi_{\mathbf N}^*:C^\infty(\mathbb T^{2n};\mathcal H_{\mathbf N})\to \mathscr S(\mathbb R^n)$ as follows:
for $g\in C^\infty(\mathbb T^{2n};\mathcal H_{\mathbf N})$
$$
\begin{gathered}
\Pi_{\mathbf N}^* g(x):={\mathbf N}^{{n\over 2}}\int_{\mathbb T^n}\big\langle g(-\mathbf Nx,\theta_\xi),\mathbf e^{(-\mathbf Nx,\theta_\xi)}_0\big\rangle_{\mathcal H}\,d\theta_\xi,\quad
x\in\mathbb R^n.
\end{gathered}
$$
Here one can check that $\Pi_{\mathbf N}^*g\in \mathscr S(\mathbb R^n)$ using a non-stationary phase argument and the following corollary of~\eqref{e:e-theta-equiv}:
$$
\Pi_{\mathbf N}^* g(x-\ell)=\Pi_{\mathbf N}^*(e^{2\pi i\langle \ell,\theta_\xi\rangle} g)(x)\quad\text{for all}\quad \ell\in\mathbb Z^n.
$$
\begin{lemm}
  \label{l:L2-decomposed}
The map $\Pi_{\mathbf N}$ extends to a unitary isomorphism from $L^2(\mathbb R^{n})$ to $L^2(\mathbb T^{2n};\mathcal H_{\mathbf N})$
and $\Pi_{\mathbf N}^*$ extends to its adjoint.
\end{lemm}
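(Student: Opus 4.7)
The plan is to derive the result from two computational identities verified on the dense subspaces $\mathscr S(\mathbb R^n) \subset L^2(\mathbb R^n)$ and $C^\infty(\mathbb T^{2n};\mathcal H_{\mathbf N}) \subset L^2(\mathbb T^{2n};\mathcal H_{\mathbf N})$: the Parseval-type identity $\|\Pi_{\mathbf N} f\|_{L^2(\mathbb T^{2n};\mathcal H_{\mathbf N})} = \|f\|_{L^2(\mathbb R^n)}$ and the adjoint relation $\langle \Pi_{\mathbf N} f, g\rangle_{L^2(\mathbb T^{2n};\mathcal H_{\mathbf N})} = \langle f, \Pi_{\mathbf N}^* g\rangle_{L^2(\mathbb R^n)}$. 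Once both are established on dense subspaces, they extend by continuity, giving $\Pi_{\mathbf N}$ as an isometry with $\Pi_{\mathbf N}^*$ as its adjoint. Surjectivity of $\Pi_{\mathbf N}$ will then follow from a parallel computation showing $\Pi_{\mathbf N}\Pi_{\mathbf N}^* = I$ on smooth sections.

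For the Parseval identity, since $\{\mathbf e^\theta_j\}_{j\in\mathbb Z_{\mathbf N}^n}$ is orthonormal in $\mathcal H_{\mathbf N}(\theta)$, one has
\begin{equation*}
\|\Pi_{\mathbf N}(\theta)f\|^2_{\mathcal H}
= \sum_{j\in\mathbb Z_{\mathbf N}^n} |\langle f, \mathbf e^\theta_j\rangle_{L^2}|^2
= \mathbf N^{-n} \sum_{j}\bigg|\sum_{k\in\mathbb Z^n} e^{2\pi i\langle\theta_\xi,k\rangle} f\Big(\frac{\mathbf Nk+j-\theta_x}{\mathbf N}\Big)\bigg|^2.
\end{equation*}
Integrating first over $\theta_\xi\in\mathbb T^n$, Parseval's identity for Fourier series on $\mathbb T^n$ removes the oscillations in $k$ and leaves $\mathbf N^{-n}\sum_{j,k}|f((\mathbf Nk+j-\theta_x)/\mathbf N)|^2$. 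The change of variables $u=(\mathbf Nk+j-\theta_x)/\mathbf N$, with $d\theta_x = \mathbf N^n\,du$, together with the fact that the cubes of side $1/\mathbf N$ indexed by $(j,k)\in \mathbb Z_{\mathbf N}^n\times\mathbb Z^n$ partition $\mathbb R^n$, then yields $\int_{\mathbb R^n}|f(u)|^2\,du$ with all normalizations matching.

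Both the adjoint identity and the inverse relation $\Pi_{\mathbf N}^*\Pi_{\mathbf N}=I$ on $\mathscr S(\mathbb R^n)$ come from direct computation once one observes that, from \eqref{e:e-kappa-def}, $\langle f, \mathbf e_0^{(-\mathbf Nx,\theta_\xi)}\rangle_{L^2} = \mathbf N^{-n/2}\sum_{k\in\mathbb Z^n} e^{2\pi i\langle\theta_\xi,k\rangle}f(x+k)$. By orthonormality of $\{\mathbf e_j^\theta\}_j$ this equals $\langle \Pi_{\mathbf N}(-\mathbf Nx,\theta_\xi)f,\mathbf e_0^{(-\mathbf Nx,\theta_\xi)}\rangle_{\mathcal H}$, so
\begin{equation*}
\Pi_{\mathbf N}^*\Pi_{\mathbf N} f(x) = \mathbf N^{n/2}\int_{\mathbb T^n}\langle f, \mathbf e_0^{(-\mathbf Nx,\theta_\xi)}\rangle_{L^2}\,d\theta_\xi = \sum_{k\in\mathbb Z^n}\delta_{k,0}\,f(x+k) = f(x)
\end{equation*}
by orthogonality of characters on $\mathbb T^n$. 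The mirror identity $\Pi_{\mathbf N}\Pi_{\mathbf N}^*=I$ on smooth sections $g=\sum_j g_j\mathbf e_j^\theta$ is handled by using the quasi-periodicity relations \eqref{e:e-theta-equiv}, which permute the basis vectors under integer shifts of~$\theta$, to reconstruct each coefficient $g_j(\theta)$ from the $\mathbf e_0^\bullet$-component extracted by $\Pi_{\mathbf N}^*$.

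The main technical obstacle is careful bookkeeping: the various $\mathbf N^{\pm n/2}$ factors, the shifted-lattice support of $\mathbf e_j^\theta$, the dependence of the basis on the representative $\theta_x\in\mathbb R^n$, and the quasi-periodic transformation laws \eqref{e:e-theta-equiv} all need to conspire just right, with the identities closing up precisely because the cells $\mathbf N^{-1}(j+[0,1)^n)$ indexed by $j\in\mathbb Z_{\mathbf N}^n$ together with translations by $\mathbb Z^n$ partition $\mathbb R^n$ exactly once. Once these dense-subspace identities are in place, the extension to $L^2$ and hence the unitarity of $\Pi_{\mathbf N}$ with adjoint $\Pi_{\mathbf N}^*$ follow from standard density and continuity arguments.
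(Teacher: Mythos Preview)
Your approach is essentially the same as the paper's: both prove the isometry via the same Parseval computation (integrate first in~$\theta_\xi$ to collapse the Fourier series, then tile~$\mathbb R^n$ by the cells indexed by $(j,k)$) and then establish surjectivity and the adjoint property via $\Pi_{\mathbf N}\Pi_{\mathbf N}^*=I$ on smooth sections using the quasi-periodicity relations~\eqref{e:e-theta-equiv}. Your extra computation of $\Pi_{\mathbf N}^*\Pi_{\mathbf N}=I$ is correct but redundant once the isometry is known, and your sketch for $\Pi_{\mathbf N}\Pi_{\mathbf N}^*=I$ is exactly what the paper carries out in detail.
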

\begin{proof}
We argue similarly to~\cite[Proposition~2.3]{Bouzouina-deBievre}.

\noindent 1. We first show that $\Pi_{\mathbf N}$ extends to
an isometry from $L^2(\mathbb R^{n})$ to $L^2(\mathbb T^{2n};\mathcal H_{\mathbf N})$.
For that it suffices to show the identity
\begin{equation}
  \label{e:L2-dec-1}
\int_{\mathbb T^{2n}}\|\Pi_{\mathbf N}(\theta)f\|_{\mathcal H}^2\,d\theta=\|f\|_{L^2(\mathbb R^n)}^2\quad\text{for all}\quad f\in\mathscr S(\mathbb R^{n}).
\end{equation}
For $j\in\mathbb Z^n$ and $\theta_x\in\mathbb R^n$, define the function
$F_{j,\theta_x}\in C^\infty(\mathbb T^n)$ by
$$
F_{j,\theta_x}(\theta_\xi)=\langle f,\mathbf e_j^\theta\rangle_{L^2}\quad\text{where}\quad
\theta=(\theta_x,\theta_\xi).
$$
Then $F_{j,\theta_x}$ can be written as a Fourier series:
$$
F_{j,\theta_x}(\theta_\xi)={\mathbf N}^{-{n\over 2}}\sum_{k\in\mathbb Z^n}f\Big({\mathbf Nk+j-\theta_x\over\mathbf N}\Big)e^{2\pi i\langle\theta_\xi,k\rangle}.
$$
Therefore by Parseval's Theorem
$$
\int_{\mathbb T^n}|F_{j,\theta_x}(\theta_\xi)|^2\,d\theta_\xi=\mathbf N^{-n}
\sum_{k\in\mathbb Z^n}\bigg|f\Big({\mathbf Nk+j-\theta_x\over\mathbf N}\Big)\bigg|^2.
$$
We then have
$$
\begin{aligned}
\int_{\mathbb T^{2n}}\|\Pi_{\mathbf N}(\theta)f\|_{\mathcal H}^2\,d\theta
&=\int_{[0,1]^n}\int_{\mathbb T^n}\sum_{j\in\mathbb Z_{\mathbf N}^n}|F_{j,\theta_x}(\theta_\xi)|^2\,d\theta_\xi d\theta_x
\\
&=\mathbf N^{-n}\sum_{j\in\mathbb Z_{\mathbf N}^n,k\in\mathbb Z^n}\int_{[0,1]^n}
\bigg|f\Big({\mathbf Nk+j-\theta_x\over\mathbf N}\Big)\bigg|^2\,d\theta_x\\
&=\mathbf N^{-n}\sum_{r\in\mathbb Z^n}\int_{[0,1]^n}
\bigg|f\Big({r-\theta_x\over\mathbf N}\Big)\bigg|^2\,d\theta_x
=\|f\|_{L^2(\mathbb R^n)}^2
\end{aligned}
$$
which gives~\eqref{e:L2-dec-1}.

\noindent 2. It remains to show that $\Pi_{\mathbf N}$ is onto
and $\Pi_{\mathbf N}^*$ is the adjoint of $\Pi_{\mathbf N}$. For that
it suffices to prove that for each $g\in C^\infty(\mathbb T^{2n};\mathcal H_{\mathbf N})$
we have $\Pi_{\mathbf N}\Pi_{\mathbf N}^*g=g$.
We compute for all~$\theta=(\theta_x,\theta_\xi)\in\mathbb R^{2n}$ and~$j\in\mathbb Z^n$
$$
\begin{aligned}
\langle\Pi_{\mathbf N}^*g,\mathbf e_j^\theta\rangle_{L^2}&=
\mathbf N^{-{n\over 2}}\sum_{k\in\mathbb Z^n}e^{2\pi i\langle\theta_\xi,k\rangle}
\Pi_{\mathbf N}^*g\Big({\mathbf Nk+j-\theta_x\over\mathbf N}\Big)
\\
&=\sum_{k\in\mathbb Z^n}e^{2\pi i\langle\theta_\xi,k\rangle}
\int_{\mathbb T^n} \langle g(\theta_x,\tilde\theta_\xi),\mathbf e_0^{(\theta_x-\mathbf Nk-j,\tilde\theta_\xi)}\rangle_{\mathcal H}\,d\tilde \theta_\xi
\\
&=\langle g(\theta),\mathbf e^\theta_j\rangle_{\mathcal H}.
\end{aligned}
$$
Here in the last line we use that $\mathbf e_0^{(\theta_x-\mathbf Nk-j,\tilde\theta_\xi)}=e^{2\pi i\langle\tilde\theta_\xi,k\rangle}\mathbf e_j^{(\theta_x,\tilde\theta_\xi)}$ by~\eqref{e:e-theta-equiv}, as well as convergence of the Fourier series
of the function $\theta_\xi\mapsto \langle g(\theta_x,\theta_\xi),\mathbf e^{(\theta_x,\theta_\xi)}_j\rangle_{\mathcal H}$. We now compute
$\Pi_{\mathbf N}(\theta)\Pi_{\mathbf N}^*g=g(\theta)$ for
all $\theta\in\mathbb T^{2n}$, finishing the proof.
\end{proof}
By duality, we may extend $\Pi_{\mathbf N},\Pi^*_{\mathbf N}$ to operators
$$
\Pi_{\mathbf N}:\mathscr S'(\mathbb R^n)\to \mathcal D'(\mathbb T^{2n};\mathcal H_{\mathbf N}),\quad
\Pi_{\mathbf N}^*:\mathcal D'(\mathbb T^{2n};\mathcal H_{\mathbf N})\to
\mathscr S'(\mathbb R^n).
$$
We then have the natural formula
\begin{equation}
  \label{e:delta-galore}
\Pi_{\mathbf N}^*(\delta(\theta-\theta_0)f)=f\quad\text{for all}\quad
\theta_0\in\mathbb T^{2n},\quad
f\in \mathcal H_{\mathbf N}(\theta_0)\subset\mathscr S'(\mathbb R^n),
\end{equation}
which follows by duality from the identity
\begin{equation*}
\begin{split}
\langle\delta(\theta-\theta_0)f,\Pi_{\mathbf N}\tilde f\rangle_{L^2(\mathbb T^{2n};\mathcal H_{\mathbf N})} & =\langle f,\Pi_{\mathbf N}(\theta_0)\tilde f\rangle_{\mathcal H}  = \sum_{j\in \mathbb Z_{\mathbf N}^n}   \langle f ,\mathbf e^{\theta_0}_j \rangle_{\mathcal{H}}
\,\langle \mathbf e^{\theta_0}_j,\tilde f\rangle_{L^2}
  \\
& =\langle f,\tilde f\rangle_{L^2(\mathbb R^n)}\quad\text{for all}\quad
\tilde f\in\mathscr S(\mathbb R^n).
\end{split}
\end{equation*}

\subsubsection{Semiclassical quantization}
\label{s:quantization-on-torus}

Fix $\mathbf N\in\mathbb N$ and put $h:=(2\pi\mathbf N)^{-1}$ as before.
Let $a\in C^\infty(\mathbb T^{2n})$.
By~\eqref{e:U-w-Op-a} and~\eqref{e:H-N-theta}, the operator $\Op_h(a)$
maps each of the spaces $\mathcal H_{\mathbf N}(\theta)$
to itself. This defines the quantizations
$$
\Op_{\mathbf N,\theta}(a):=\Op_h(a)|_{\mathcal H_{\mathbf N}(\theta)}:\mathcal H_{\mathbf N}(\theta)\to\mathcal H_{\mathbf N}(\theta),\quad
\theta\in\mathbb T^{2n}
$$
which depend smoothly on~$\theta$.

A special case is given by $a(x,\xi)=a(x)$ which is independent of $\xi$ and $\mathbb Z^n$-periodic in~$x$. In this case $\Op_h(a)$ is the multiplication operator by~$a$
(see~\cite[Theorem~4.3]{Zworski-Book}), so by~\eqref{e:e-kappa-def}
\begin{equation}
  \label{e:Op-a-special}
\Op_{\mathbf N,\theta}(a)\mathbf e^\theta_j=a\Big({j-\theta_x\over\mathbf N}\Big) \mathbf e^\theta_j\quad\text{for all}\quad
\theta=(\theta_x,\theta_\xi)\in\mathbb R^{2n},\quad
j\in\mathbb Z^n.
\end{equation}
In particular, $\Op_{\mathbf N,\theta}(1)$ is the identity operator on $\mathcal H_{\mathbf N}(\theta)$.

Let $\Pi_{\mathbf N},\Pi_{\mathbf N}^*$ be the unitary operators
constructed in~\S\ref{s:decomposing-L2}. By~\eqref{e:delta-galore}, they
relate the operator $\Op_h(a):\mathscr S'(\mathbb R^n)\to \mathscr S'(\mathbb R^n)$
to its restrictions $\Op_{\mathbf N,\theta}(a)$ as follows:
\begin{equation}
  \label{e:relator}
\Pi_{\mathbf N}\Op_h(a)\Pi_{\mathbf N}^* g(\theta)=\Op_{\mathbf N,\theta}(a)g(\theta)\quad\text{for all}\quad g\in \mathcal D'(\mathbb T^{2n};\mathcal H_{\mathbf N}).
\end{equation}
Notice that \eqref{e:relator} may also be deduced from the explicit expression
(verified by an explicit computation using~\eqref{e:U-w-form}, \eqref{e:e-kappa-def}, \eqref{e:Pi-N-theta-def}, and the Poisson summation formula;
the series below converges in~$\mathscr S'(\mathbb R^n)$)
\begin{equation}
   \label{e:explicit_projector}
\Pi_{\mathbf N}(\theta) f = \sum_{w \in \mathbb{Z}^{2n}} e^{i \pi \mathbf{N} Q(w) - 2 i \pi \sigma(\theta,w)} U_w f \quad \textup{ for all } \quad f \in \mathscr{S}(\R^n)
\end{equation}
and the commutation identity~\eqref{e:U-w-Op-a}.

Since $\Op_{\mathbf N,\theta}(a)$ depends smoothly on~$\theta$, it follows from \eqref{e:relator} and Lemma~\ref{l:L2-decomposed} that
\begin{equation}
  \label{e:op-norm-decomposed}
\max_{\theta\in\mathbb T^{2n}}\|\Op_{\mathbf N,\theta}(a)\|_{\mathcal H_{\mathbf N}(\theta)\to\mathcal H_{\mathbf N}(\theta)}=\|\Op_h(a)\|_{L^2(\mathbb R^n)\to L^2(\mathbb R^n)}.
\end{equation}
Recall from Definition~\ref{d:S-L} the symbol class $S_{L,\rho,\rho'}(\mathbb R^{2n})$ 
where $L\subset\mathbb R^{2n}$ is a coisotropic subspace and $0\leq \rho'\leq\rho$,
$\rho+\rho'<1$. We similarly define the corresponding
symbol class on the torus
$$
S_{L,\rho,\rho'}(\mathbb T^{2n})
$$
whose elements are the $\mathbb Z^{2n}$-periodic symbols in $S_{L,\rho,\rho'}(\mathbb R^{2n})$.
Note that putting $\rho=\rho'=0$ we obtain the standard symbol class
$S(\mathbb T^{2n})$ consisting of functions in $C^\infty(\mathbb T^{2n})$ with all derivatives
bounded uniformly in~$h$.

Using~\eqref{e:relator} and~\eqref{e:op-norm-decomposed},
we see that Lemma~\ref{l:quantization-properties} applies
to the quantization $\Op_{\mathbf N,\theta}(a)$. In particular,
we have the product formula for all $a,b\in S_{L,\rho,\rho'}(\mathbb T^{2n})$
\begin{equation}
  \label{e:our-product}
\Op_{\mathbf N,\theta}(a)\Op_{\mathbf N,\theta}(b)=\Op_{\mathbf N,\theta}(a\# b)
\end{equation}
where $a\#b\in S_{L,\rho,\rho'}(\mathbb T^{2n})$ satisfies the expansion~\eqref{e:S-L-prod},
the adjoint formula (following from~\eqref{e:basic-adjoint})
\begin{equation}
  \label{e:our-adjoint}
\Op_{\mathbf N,\theta}(a)^*=\Op_{\mathbf N,\theta}(\bar a),
\end{equation}
the norm $\|\Op_{\mathbf N,\theta}(a)\|_{\mathcal H_{\mathbf N}(\theta)\to\mathcal H_{\mathbf N}(\theta)}$ is bounded by some $S_{L,\rho,\rho'}$-seminorm of~$a$,
and we have the sharp G\r arding inequality
for all $a\in S_{L,\rho,\rho'}(\mathbb T^{2n})$
such that $a\geq 0$ everywhere
\begin{equation}
  \label{e:our-garding}
\langle \Op_{\mathbf N,\theta}(a) f,f\rangle_{\mathcal H}\geq -C\mathbf N^{\rho+\rho'-1}\|f\|_{\mathcal H}^2\quad\text{for all}\quad f\in \mathcal H_{\mathbf N}(\theta)
\end{equation}
where $C$ is some $S_{L,\rho,\rho'}$-seminorm of~$a$.
The choice of seminorms above depends on~$\rho,\rho'$ but not on $\mathbf N$ or~$\theta$. The inequality \eqref{e:our-garding} follows from the usual sharp G\r arding inequality, and Lemma \ref{l:L2-decomposed} that implies that $\Op_{\mathbf N,\theta}(a)$ is self-adjoint and that its spectrum is contained in the $L^2$ spectrum of $\Op_h(a)$.

We now give several corollaries of the basic calculus above.
First of all, from~\eqref{e:our-product}, the expansion~\eqref{e:S-L-prod} with $N=1$,
and the boundeness of the operator norm of $\Op_{\mathbf N,\theta}(\bullet)$
we get for all $a,b\in S_{L,\rho,\rho'}(\mathbb T^{2n})$
\begin{equation}
  \label{e:our-product-2}
\Op_{\mathbf N,\theta}(a)\Op_{\mathbf N,\theta}(b)=\Op_{\mathbf N,\theta}(ab)
+\mathcal O(\mathbf N^{\rho+\rho'-1})_{\mathcal H_{\mathbf N}(\theta)\to\mathcal H_{\mathbf N}(\theta)}
\end{equation}
where the constant in $\mathcal O(\bullet)$ depends only on some
$S_{L,\rho,\rho'}$-seminorms of $a,b$.

Next, we have the following inequality of norms:
\begin{lemm}
\label{l:advanced-norm}
Assume that $a,b\in S_{L,\rho,\rho'}(\mathbb T^{2n})$ and $|a|\leq |b|$ everywhere.
Then we have for all $f\in \mathcal H_{\mathbf N}(\theta)$
\begin{equation}
  \label{e:advanced-norm}
\|\Op_{\mathbf N,\theta}(a)f\|_{\mathcal H}
\leq \|\Op_{\mathbf N,\theta}(b)f\|_{\mathcal H}+C\mathbf N^{\rho+\rho'-1\over 2}\|f\|_{\mathcal H}
\end{equation}
where the constant $C$ depends only on some $S_{L,\rho,\rho'}$-seminorms of~$a,b$.
\end{lemm}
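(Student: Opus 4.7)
The natural strategy is to compare the squared norms $\|\Op_{\mathbf N,\theta}(a)f\|_{\mathcal H}^2$ and $\|\Op_{\mathbf N,\theta}(b)f\|_{\mathcal H}^2$, reducing matters to applying the sharp G\r arding inequality~\eqref{e:our-garding} to the nonnegative symbol $|b|^2-|a|^2$.

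First I would observe that the class $S_{L,\rho,\rho'}(\mathbb T^{2n})$ is closed under complex conjugation, pointwise multiplication, and linear combinations (this follows immediately from the Leibniz rule applied to the derivative bounds in Definition~\ref{d:S-L}). In particular $\bar a a=|a|^2$, $\bar b b=|b|^2$, and the difference $|b|^2-|a|^2\ge 0$ all lie in $S_{L,\rho,\rho'}(\mathbb T^{2n})$, with seminorms controlled by those of $a$ and $b$.

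Next I would combine the adjoint identity~\eqref{e:our-adjoint} with the product expansion~\eqref{e:our-product-2} applied to the pairs $(\bar a,a)$ and $(\bar b,b)$ to write
\begin{equation*}
\|\Op_{\mathbf N,\theta}(a)f\|_{\mathcal H}^2=\langle \Op_{\mathbf N,\theta}(|a|^2)f,f\rangle_{\mathcal H}+\mathcal O(\mathbf N^{\rho+\rho'-1})\|f\|_{\mathcal H}^2,
\end{equation*}
and similarly for $b$. Subtracting the two identities yields
\begin{equation*}
\|\Op_{\mathbf N,\theta}(b)f\|_{\mathcal H}^2-\|\Op_{\mathbf N,\theta}(a)f\|_{\mathcal H}^2=\langle \Op_{\mathbf N,\theta}(|b|^2-|a|^2)f,f\rangle_{\mathcal H}+\mathcal O(\mathbf N^{\rho+\rho'-1})\|f\|_{\mathcal H}^2,
\end{equation*}
where the constants in $\mathcal O(\bullet)$ depend only on finitely many $S_{L,\rho,\rho'}$-seminorms of $a,b$.

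Then I would apply the sharp G\r arding inequality~\eqref{e:our-garding} to the nonnegative symbol $|b|^2-|a|^2$, obtaining $\langle \Op_{\mathbf N,\theta}(|b|^2-|a|^2)f,f\rangle_{\mathcal H}\ge -C\mathbf N^{\rho+\rho'-1}\|f\|_{\mathcal H}^2$. Combining with the previous display gives
\begin{equation*}
\|\Op_{\mathbf N,\theta}(a)f\|_{\mathcal H}^2\le \|\Op_{\mathbf N,\theta}(b)f\|_{\mathcal H}^2+C\mathbf N^{\rho+\rho'-1}\|f\|_{\mathcal H}^2.
\end{equation*}
Finally, taking square roots and using $\sqrt{x+y}\le\sqrt{x}+\sqrt{y}$ for $x,y\ge 0$ produces~\eqref{e:advanced-norm}. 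No real obstacle arises in this argument; the only point to verify carefully is the symbol-class closure noted in the first step, which is routine.
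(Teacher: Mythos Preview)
Your proof is correct and follows essentially the same approach as the paper: use~\eqref{e:our-adjoint} and~\eqref{e:our-product-2} to write the difference of squared norms as $\langle \Op_{\mathbf N,\theta}(|b|^2-|a|^2)f,f\rangle_{\mathcal H}+\mathcal O(\mathbf N^{\rho+\rho'-1})\|f\|_{\mathcal H}^2$, apply sharp G\r arding to the nonnegative symbol $|b|^2-|a|^2$, and take square roots. The paper's version is slightly more terse but the argument is identical.
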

\Remark Taking $b$ to be the constant symbol $b:=\sup|a|$, we get
\begin{equation}
  \label{e:basic-norm}
\|\Op_{\mathbf N,\theta}(a)\|_{\mathcal H_{\mathbf N}(\theta)\to\mathcal H_{\mathbf N}(\theta)}\leq \sup_{z\in\mathbb T^{2n}}|a(z)|+ C\mathbf N^{\rho+\rho'-1\over 2}
\end{equation}
where $C$ only depends on some $S_{L,\rho,\rho'}$-seminorm of~$a$.
\begin{proof}
By~\eqref{e:our-adjoint} and~\eqref{e:our-product-2} we have
\begin{equation}
  \label{e:advn-1}
\begin{gathered}
\Op_{\mathbf N,\theta}(b)^*\Op_{\mathbf N,\theta}(b)
-\Op_{\mathbf N,\theta}(a)^*\Op_{\mathbf N,\theta}(a)\\
=\Op_{\mathbf N,\theta}(|b|^2-|a|^2)
+\mathcal O(\mathbf N^{\rho+\rho'-1})_{\mathcal H_{\mathbf N}(\theta)\to\mathcal H_{\mathbf N}(\theta)}.
\end{gathered}
\end{equation}
Since $|a|\leq |b|$ everywhere, we have $|b|^2-|a|^2\geq 0$ everywhere, so by~\eqref{e:our-garding}
\begin{equation}
  \label{e:advn-2}
\langle \Op_{\mathbf N,\theta}(|b|^2-|a|^2)f,f\rangle_{\mathcal H}\geq
-C\mathbf N^{\rho+\rho'-1}\|f\|_{\mathcal H}^2.
\end{equation}
Together~\eqref{e:advn-1} and~\eqref{e:advn-2} give
$$
\|\Op_{\mathbf N,\theta}(b)f\|_{\mathcal H}^2-
\|\Op_{\mathbf N,\theta}(a)f\|_{\mathcal H}^2\geq 
-C\mathbf N^{\rho+\rho'-1}\|f\|_{\mathcal H}^2
$$
which implies~\eqref{e:advanced-norm}.
\end{proof}
Finally, we record here the following lemma regarding products of many quantized
observables, which is analogous to~\cite[Lemmas~A.1 and~A.6]{meassupp}:
\begin{lemm}\label{l:long_product}
Assume that $a_1,\dots,a_R\in S_{L,\rho,\rho'}(\mathbb T^{2n})$,
where $R\leq C_0\log\mathbf N$, satisfy $\sup_{\mathbb T^{2n}}|a_j|\leq 1$
and each $S_{L,\rho,\rho'}$-seminorm of $a_j$ is bounded uniformly in~$j$.
Then:

\noindent 1. The product $a_1\cdots a_R$ lies in $S_{L,\rho+\varepsilon,\rho'+\varepsilon}(\mathbb T^{2n})$
for all small $\varepsilon>0$.

\noindent 2. We have for all $\varepsilon>0$
$$
\Op_{\mathbf N,\theta}(a_1)\cdots \Op_{\mathbf N,\theta}(a_R)=
\Op_{\mathbf N,\theta}(a_1\cdots a_R)+\mathcal O(\mathbf N^{\rho+\rho'-1+\varepsilon})_{\mathcal H_{\mathbf N}(\theta)\to\mathcal H_{\mathbf N}(\theta)}.
$$
Here the constant in $\mathcal O(\bullet)$ depends only on $\rho,\rho',\varepsilon,C_0$,
and on the maximum over~$j$ of a certain $S_{L,\rho,\rho'}$-seminorm of~$a_j$.
\end{lemm}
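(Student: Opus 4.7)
Both parts hinge on the hypothesis $R\leq C_0\log\mathbf N$, which lets us convert any polynomial-in-$R$ loss into a factor $\mathbf N^\varepsilon$ for arbitrarily small $\varepsilon>0$.

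\emph{Part 1.} For any constant vector fields $X_1,\dots,X_k,Y_1,\dots,Y_m$ on $\mathbb R^{2n}$ with the $Y_i$ tangent to $L$, the Leibniz rule expands $X_1\cdots X_kY_1\cdots Y_m(a_1\cdots a_R)$ as a sum of at most $R^{k+m}$ terms, each a product $\prod_{j=1}^R D_ja_j$, where $D_j$ is a monomial involving $\ell_j$ of the $X_i$ and $m_j$ of the $Y_i$ with $\sum_j\ell_j=k$ and $\sum_j m_j=m$. Factors with $D_j=1$ are bounded by $\sup|a_j|\leq 1$, and factors with $D_j\neq 1$ are bounded by $Ch^{-\rho\ell_j-\rho'm_j}$ by the uniform $S_{L,\rho,\rho'}$-seminorm bound. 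Each term is therefore at most $C^{k+m}h^{-\rho k-\rho'm}$, and the total is at most
$$
R^{k+m}C^{k+m}h^{-\rho k-\rho'm}\leq C_{k,m}(\log\mathbf N)^{k+m}h^{-\rho k-\rho'm}\leq C_{k,m,\varepsilon}\,h^{-(\rho+\varepsilon)k-(\rho'+\varepsilon)m},
$$
using $(\log\mathbf N)^{k+m}=\mathcal O_\varepsilon(\mathbf N^{\varepsilon(k+m)})$. This proves $a_1\cdots a_R\in S_{L,\rho+\varepsilon,\rho'+\varepsilon}(\mathbb T^{2n})$.

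\emph{Part 2.} Fix $\varepsilon>0$ and choose $\varepsilon_0>0$ with $3\varepsilon_0<\varepsilon$ and $\rho+\rho'+2\varepsilon_0<1$. Let $b_k:=a_1\cdots a_k$. By Part 1, each $b_k$ lies in $S_{L,\rho+\varepsilon_0,\rho'+\varepsilon_0}(\mathbb T^{2n})$ with seminorms bounded uniformly in $k$ (and $R$). Applying the product formula~\eqref{e:our-product-2} in this enlarged calculus gives, uniformly in~$k$,
$$
\Op_{\mathbf N,\theta}(b_k)\Op_{\mathbf N,\theta}(a_{k+1})=\Op_{\mathbf N,\theta}(b_{k+1})+E_k,\quad\n{E_k}_{\mathcal H\to\mathcal H}=\mathcal O(\mathbf N^{\rho+\rho'+2\varepsilon_0-1}).
$$
Telescoping yields
$$
\Op_{\mathbf N,\theta}(a_1)\cdots\Op_{\mathbf N,\theta}(a_R)-\Op_{\mathbf N,\theta}(a_1\cdots a_R)=\sum_{k=1}^{R-1}E_k\,\Op_{\mathbf N,\theta}(a_{k+2})\cdots\Op_{\mathbf N,\theta}(a_R).
$$
By~\eqref{e:basic-norm}, $\n{\Op_{\mathbf N,\theta}(a_j)}_{\mathcal H\to\mathcal H}\leq 1+C\mathbf N^{(\rho+\rho'-1)/2}$, so any product of at most $R\leq C_0\log\mathbf N$ such factors is bounded by $\exp(CC_0\log\mathbf N\cdot\mathbf N^{(\rho+\rho'-1)/2})$, which stays bounded as $\mathbf N\to\infty$. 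Combining, the total error is $\mathcal O(R\cdot\mathbf N^{\rho+\rho'+2\varepsilon_0-1})=\mathcal O(\mathbf N^{\rho+\rho'-1+\varepsilon})$ upon absorbing $R=\mathcal O(\mathbf N^{\varepsilon_0})$.

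\emph{Main obstacle.} The delicate point is that the intermediate seminorms of $b_k$ in the exotic class $S_{L,\rho+\varepsilon_0,\rho'+\varepsilon_0}$ must be bounded uniformly in $k$ and $R$; Part 1 is designed to deliver exactly this, essentially because $\sup|a_j|\leq 1$ renders undifferentiated factors harmless in the Leibniz expansion. The logarithmic bound $R\leq C_0\log\mathbf N$ enters twice: to absorb the combinatorial $R^{k+m}$ factor in Part 1, and to keep the tail product $\prod_j\n{\Op_{\mathbf N,\theta}(a_j)}$ bounded in the telescoping. Without either ingredient the strategy would collapse.
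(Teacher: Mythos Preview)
Your proof is correct and follows essentially the same approach as the paper: Part~1 uses the Leibniz rule and the bound $R^{k+m}=\mathcal O(\mathbf N^{\varepsilon(k+m)})$ exactly as in the paper, and Part~2 uses the same telescoping identity (the paper's $B_j=\Op_{\mathbf N,\theta}(a_1\cdots a_{j-1})\Op_{\mathbf N,\theta}(a_j)-\Op_{\mathbf N,\theta}(a_1\cdots a_j)$ is your $E_{j-1}$), the same uniform-in-$k$ application of Part~1 to $a_1\cdots a_k$, and the same use of~\eqref{e:basic-norm} to bound the tail products. Your explicit bookkeeping of the $\varepsilon_0$ losses is slightly more detailed than the paper's ``$\mathcal O(\mathbf N^{\rho+\rho'-1+})$'' shorthand, but the arguments are otherwise identical.
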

\begin{proof}
1. We have $\sup_{\mathbb T^{2n}}|a_1\cdots a_R|\leq 1$, so it suffices to show
that for each constant vector fields $X_1,\dots,X_k,Y_1,\dots,Y_m$ on $\mathbb T^{2n}$
such that $Y_1,\dots,Y_m$ are tangent to~$L$ we have
\begin{equation}
  \label{e:multiplicator-int-1}
\sup_{\mathbb T^{2n}}|X_1\dots X_k Y_1\dots Y_m(a_1\cdots a_R)|
=\mathcal O(h^{-\rho k-\rho'm-}).
\end{equation}
Using the Leibniz Rule, we write $X_1\dots X_k Y_1\dots Y_m(a_1\cdots a_R)$
as a sum of $R^{m+k}=\mathcal O(h^{0-})$ terms. Each of these is a product
of the form $(P_1 a_1)\cdots (P_Ra_R)$ where each $P_j$ is a product
of some subset of $X_1,\dots,X_k,Y_1,\dots,Y_m$ and $P_1\dots P_R=X_1\dots X_k Y_1\dots Y_m$.
Using that $\sup |a_j|\leq 1$ and each $a_j$ is bounded in $S_{L,\rho,\rho'}$ uniformly in~$j$,
we get~\eqref{e:multiplicator-int-1}.

\noindent 2. We write
$$
\Op_{\mathbf N,\theta}(a_1)\cdots \Op_{\mathbf N,\theta}(a_R)-
\Op_{\mathbf N,\theta}(a_1\cdots a_R)=\sum_{j=1}^R B_j\Op_{\mathbf N,\theta}(a_{j+1})\cdots \Op_{\mathbf N,\theta}(a_R)
$$
where $B_j:=\Op_{\mathbf N,\theta}(a_1\cdots a_{j-1})\Op_{\mathbf N,\theta}(a_j)-\Op_{\mathbf N,\theta}(a_1\cdots a_j)$. By~\eqref{e:basic-norm} and since $\sup|a_j|\leq 1$ we have
$\|\Op_{\mathbf N,\theta}(a_j)\|_{\mathcal H_{\mathbf N}(\theta)\to\mathcal H_{\mathbf N}(\theta)}\leq 1+C\mathbf N^{\rho+\rho'-1\over 2}$ where the constant~$C$
is uniform in~$j$. Therefore (assuming $\mathbf N$ is large enough)
$$
\|\Op_{\mathbf N,\theta}(a_1)\cdots \Op_{\mathbf N,\theta}(a_R)-
\Op_{\mathbf N,\theta}(a_1\cdots a_R)\|_{\mathcal H_{\mathbf N}(\theta)\to\mathcal H_{\mathbf N}(\theta)}\leq 2\sum_{j=1}^R \|B_j\|_{\mathcal H_{\mathbf N}(\theta)\to\mathcal H_{\mathbf N}(\theta)}.
$$
It remains to use that $a_1\cdots a_j$ is bounded in $S_{L,\rho+\varepsilon,\rho'+\varepsilon}(\mathbb T^{2n})$ uniformly in~$j$ for all~$\varepsilon>0$, so by~\eqref{e:our-product-2} we have
$\|B_j\|_{\mathcal H_{\mathbf N}(\theta)\to\mathcal H_{\mathbf N}(\theta)}=\mathcal O(\mathbf N^{\rho+\rho'-1+})$ uniformly in~$j$.
\end{proof}

\subsubsection{Quantization of toric automorphisms}
\label{s:quantization-toric-auto}

Let $\Sp(2n,\mathbb Z)\subset \Sp(2n,\mathbb R)$ be the subgroup of integer symplectic matrices, i.e. elements
of $\Sp(2n,\mathbb R)$ which preserve the lattice $\mathbb Z^{2n}$.
We will quantize elements of $\Sp(2n,\mathbb Z)$ as unitary operators
on the spaces $\mathcal H_{\mathbf N}(\theta)$ provided that
$\mathbf N,\theta$ satisfy the condition~\eqref{e:theta-quantize-condition} below.
To do this we need the following
\begin{lemm}
  \label{l:parity}
Denote by
$\mathbb Z_2:=\mathbb Z/2\mathbb Z$ the field of order~2.
Then for each $A\in\Sp(2n,\mathbb Z)$ there exists unique $\varphi_A\in \mathbb Z_2^{2n}$ such that, with the symplectic form $\sigma$ defined in~\eqref{e:symplectic-def}
and the quadratic form $Q$ defined in~\eqref{e:Q-def}
\begin{equation}
  \label{e:parity}
Q(A^{-1}w)-Q(w)=\sigma(\varphi_A,w)\mod 2\mathbb Z\quad\text{for all}\quad w\in\mathbb Z^{2n}.
\end{equation}
\end{lemm}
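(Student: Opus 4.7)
The proof plan is to verify that the map $w \mapsto Q(A^{-1}w) - Q(w) \bmod 2\Z$ descends to a $\Z_2$-linear functional on $\Z_2^{2n}$, and then use non-degeneracy of $\sigma$ modulo~2 to identify this functional with $\sigma(\varphi_A, \cdot)$ for a unique $\varphi_A$.

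First I would define the integer-valued function $F:\Z^{2n}\to\Z$ by $F(w):=Q(A^{-1}w)-Q(w)$, which is well-defined because $A\in\Sp(2n,\Z)$ forces $A^{-1}\in\Sp(2n,\Z)$ (for instance, because $A^{-1}=J^{-1}A^TJ$ with $J$ integer). The crucial preliminary observation is that the symmetric bilinear form $B$ polarizing $Q$, namely
\begin{equation*}
B(w_1,w_2):=Q(w_1+w_2)-Q(w_1)-Q(w_2)=\langle y_1,\eta_2\rangle+\langle y_2,\eta_1\rangle,
\end{equation*}
agrees with the symplectic form modulo~$2\Z$ on integer vectors: indeed
\begin{equation*}
B(w_1,w_2)-\sigma(w_1,w_2)=2\langle y_1,\eta_2\rangle\in 2\Z,\quad w_1,w_2\in\Z^{2n}.
\end{equation*}

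Next I would verify additivity modulo~$2$. Writing
\begin{equation*}
F(w_1+w_2)-F(w_1)-F(w_2)=B(A^{-1}w_1,A^{-1}w_2)-B(w_1,w_2),
\end{equation*}
and using the previous observation together with $\sigma(A^{-1}w_1,A^{-1}w_2)=\sigma(w_1,w_2)$ (because $A$ is symplectic), the right-hand side is even. Invariance of $F \bmod 2$ under translations by $2\Z^{2n}$ then follows from additivity combined with $F(2v)=4(Q(A^{-1}v)-Q(v))\in 4\Z$. Consequently $F$ induces a well-defined $\Z_2$-linear map $\overline F:\Z_2^{2n}\to\Z_2$.

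Finally I would invoke non-degeneracy of the reduction of $\sigma$ to $\Z_2^{2n}$: the standard basis pairs show that the bilinear form $(\varphi,w)\mapsto\sigma(\varphi,w)\bmod 2$ is a perfect pairing on $\Z_2^{2n}$. Hence the map $\Z_2^{2n}\to\Hom_{\Z_2}(\Z_2^{2n},\Z_2)$ sending $\varphi$ to $\sigma(\varphi,\cdot)\bmod 2$ is an isomorphism, producing a unique $\varphi_A\in\Z_2^{2n}$ representing $\overline F$. The only subtle point is keeping track of the discrepancy between $B$ and $\sigma$ and checking it really is even on the integer lattice; once that is in hand the symplectic invariance of $\sigma$ under $A$ makes the additivity automatic, and the rest is formal.
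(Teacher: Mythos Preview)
Your proof is correct and follows essentially the same route as the paper: both show that $w\mapsto Q(A^{-1}w)-Q(w)\bmod 2$ is additive by using that the polarization of $Q$ agrees with $\sigma$ modulo~2 on integer vectors together with the symplectic invariance of $\sigma$, and then represent the resulting $\Z_2$-linear functional via $\sigma(\varphi_A,\cdot)$. The paper's version is slightly terser (it writes the key identity as $Q(w+w')\equiv Q(w)+Q(w')+\sigma(w,w')\bmod 2$ directly), and your check that $F(2v)\in 4\Z$ is redundant once additivity is established, but these are cosmetic differences.
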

\Remark Note that the map $A\mapsto\varphi_A$ satisfies for all $A,B\in\Sp(2n,\mathbb Z)$
$$
\varphi_{AB}=\varphi_A+A\varphi_B,\quad
\varphi_{A^{-1}}=A^{-1}\varphi_A.
$$
\begin{proof}
For $w\in\mathbb Z^{2n}$, denote
$$
Z(w):=\big(Q(A^{-1}w)-Q(w)\big)\bmod 2\mathbb Z\ \in\ \mathbb Z_2.
$$
We have
$$
Q(w+w')=Q(w)+Q(w')+\sigma(w,w')\mod 2\mathbb Z\quad\text{for all}\quad w,w'\in\mathbb Z^{2n}
$$
which together with the fact that $A\in\Sp(2n,\mathbb Z)$ implies that
$$
Z(w+w')=Z(w)+Z(w')\quad\text{for all}\quad
w,w'\in\mathbb Z^{2n}.
$$
Thus $Z$ is a group homomorphism $\mathbb Z^{2n}\to\mathbb Z_2$,
which gives the existence and uniqueness of $\varphi_A$ such that~\eqref{e:parity} holds.
\end{proof}
Now, fix $A\in\Sp(2n,\mathbb Z)$ and choose a metaplectic operator (see~\S\ref{s:metaplectic})
$$
M\in\mathcal M_A.
$$
Here we put $h:=(2\pi\mathbf N)^{-1}$ as before.
Using~\eqref{e:metaplectic-shift} and~\eqref{e:H-N-theta} we see that
$$
M(\mathcal H_{\mathbf N}(\theta))\subset \mathcal H_{\mathbf N}\big(A\theta+\textstyle{\mathbf N\varphi_A\over 2}\big)\quad\text{for all}\quad
\theta\in\mathbb T^{2n}.
$$
Denote
$$
M_{\mathbf N,\theta}:=M|_{\mathcal H_{\mathbf N}(\theta)}:
\mathcal H_{\mathbf N}(\theta)\to\mathcal H_{\mathbf N}\big(A\theta+\textstyle{\mathbf N\varphi_A\over 2}\big)
$$
which depends smoothly on $\theta\in\mathbb T^{2n}$.
Using~\eqref{e:delta-galore} or \eqref{e:explicit_projector}, we see that the action of $M$
on~$L^2(\mathbb R^n)$ is intertwined with the operators $M_{\mathbf N,\theta}$
as follows:
\begin{equation}\label{e:intertwining}
\Pi_{\mathbf N}M\Pi_{\mathbf N}^* g\big(A\theta+\textstyle{\mathbf N\varphi_A\over 2}\big)=M_{\mathbf N,\theta}\,g(\theta)\quad\text{for all}\quad
g\in\mathcal D'(\mathbb T^{2n};\mathcal H_{\mathbf N}).
\end{equation}
Since $M$ is unitary on $L^2(\mathbb R^n)$, it follows
that each $M_{\mathbf N,\theta}$ is a unitary operator as well.

We will be interested in the spectrum of $M_{\mathbf N,\theta}$, so we need
its domain and range to be the same space $\mathcal H_{\mathbf N}(\theta)$. This is true if
we choose $\theta\in\mathbb T^{2n}$ such that the following \emph{quantization condition} holds:
\begin{equation}
  \label{e:theta-quantize-condition}
(I-A)\theta={\mathbf N\varphi_A\over 2}\mod \mathbb Z^{2n}.
\end{equation}
Note that when $\mathbf N$ is even or $\varphi_A=0$, the equation~\eqref{e:theta-quantize-condition} is satisfied in particular when~$\theta=0$.

Assuming~\eqref{e:theta-quantize-condition}, from~\eqref{e:egorov-metaplectic} we get the following exact Egorov's Theorem:
\begin{equation}
  \label{e:our-egorov}
M_{\mathbf N,\theta}^{-1}\Op_{\mathbf N,\theta}(a)M_{\mathbf N,\theta}
=\Op_{\mathbf N,\theta}(a\circ A)\quad\text{for all}\quad
a\in C^\infty(\mathbb T^{2n}).
\end{equation}

\subsubsection{Explicit formulas}
\label{s:explicit-formulas}

Here we give some explicit formulas for the operators $\Op_{\mathbf N,\theta}(a)$
and $M_{\mathbf N,\theta}$. These are not used in the proofs but are
helpful for implementing numerics. For simplicity we assume in this subsection that
$\theta=0$.

For $f\in\mathcal H_{\mathbf N}(0)$ define the coordinates
$$
f_j:=\langle f,\mathbf e^0_j\rangle_{\mathcal H},\quad
j\in\mathbb Z^n
$$
where $\mathbf e^0_j$ is defined in~\eqref{e:e-kappa-def}.
By~\eqref{e:e-theta-equiv} we have $f_{j+\mathbf N\ell}=f_j$
for all $\ell\in\mathbb Z^n$.

Our first statement computes the expression $\langle\Op_{\mathbf N,0}(a)f,f\rangle_{\mathcal H}$ in terms of the values of $a$ at the points in the lattice ${1\over 2\mathbf N}\mathbb Z^{2n}$. As before we define
$\mathbb Z_{\mathbf N}:=\{0,\dots,\mathbf N-1\}$
and similarly $\mathbb Z_{2\mathbf N}:=\{0,\dots,2\mathbf N-1\}$.
\begin{prop}\label{p:explicit_pseudors}
Let $a\in C^\infty(\mathbb T^{2n})$. Then for all $f\in\mathcal H_{\mathbf N}(0)$ we have
\begin{equation}
  \label{e:Op-h-paired}
\langle\Op_{\mathbf N,0}(a)f,f\rangle_{\mathcal H}=(2\mathbf N)^{-n}
\sum_{j\in \mathbb Z_{\mathbf N}^n,\  k, \ell\in \mathbb Z_{2\mathbf N}^n}
e^{{\pi i\over \mathbf N}\langle  k, \ell\rangle}
a\Big({2j+ k\over 2\mathbf N},{ \ell\over 2\mathbf N}\Big)f_{j}\overline{f_{j+ k}}.
\end{equation}
\end{prop}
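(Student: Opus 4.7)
The plan is to verify \eqref{e:Op-h-paired} on a single Fourier mode $a=e_{m,\mu}$ with $e_{m,\mu}(y,\xi):=e^{2\pi i(\langle m,y\rangle+\langle\mu,\xi\rangle)}$ and then extend by linearity. Expanding a general $a\in C^\infty(\mathbb T^{2n})$ as $\sum_{m,\mu\in\mathbb Z^n}\widehat a_{m,\mu}\,e_{m,\mu}$, rapid decay of $\widehat a_{m,\mu}$ together with continuous dependence of both sides of \eqref{e:Op-h-paired} on $a$ in a fixed $C^N$ seminorm (with $\mathbf N$ and $f$ fixed) reduces the problem to the case of each mode.

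For $a=e_{m,\mu}$, comparing with \eqref{e:U-w-def} identifies $\Op_h(e_{m,\mu})=U_{w_{m,\mu}}$ where $w_{m,\mu}=(-\mu/\mathbf N,\,m/\mathbf N)$. Applying the explicit action \eqref{e:U-w-form} of $U_w$ to the distribution $\mathbf e^0_j$ from \eqref{e:e-kappa-def} (or, equivalently, combining \eqref{e:U-w-e-theta} with the periodicity identities \eqref{e:e-theta-equiv} at $\theta=0$) yields
\[
U_{w_{m,\mu}}\,\mathbf e^0_j \;=\; e^{\pi i\langle m,\,2j-\mu\rangle/\mathbf N}\,\mathbf e^0_{j-\mu}.
\]
Writing $f=\sum_{j\in\mathbb Z_{\mathbf N}^n}f_j\,\mathbf e^0_j$ and using orthonormality of the basis, the left-hand side of \eqref{e:Op-h-paired} with $a=e_{m,\mu}$ becomes
\[
\langle\Op_{\mathbf N,0}(e_{m,\mu})f,f\rangle_{\mathcal H} \;=\; \sum_{j\in\mathbb Z_{\mathbf N}^n} e^{\pi i\langle m,\,2j-\mu\rangle/\mathbf N}\,f_j\,\overline{f_{j-\mu}}.
\]

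For the right-hand side of \eqref{e:Op-h-paired} with $a=e_{m,\mu}$, the exponential factors and the inner sum $\sum_{\ell\in\mathbb Z_{2\mathbf N}^n} e^{\pi i\langle k+\mu,\,\ell\rangle/\mathbf N}$ equals $(2\mathbf N)^n$ if $k+\mu\equiv 0\pmod{2\mathbf N}$ componentwise and vanishes otherwise, by orthogonality of characters on $(\mathbb Z/2\mathbf N\mathbb Z)^n$. Selecting the unique such $k\in\mathbb Z_{2\mathbf N}^n$ and using the $\mathbf N$-periodicity of $(f_r)_{r\in\mathbb Z^n}$ to rewrite $f_{j+k}=f_{j-\mu}$, together with $k+\mu\in 2\mathbf N\mathbb Z^n$ to replace $e^{\pi i\langle m,(2j+k)/\mathbf N\rangle}$ by $e^{\pi i\langle m,(2j-\mu)/\mathbf N\rangle}$, collapses the right-hand side to exactly the same expression as the left-hand side. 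The only delicate point is the refined lattice $(2\mathbf N)^{-1}\mathbb Z^{2n}$ on which $a$ is sampled: the factor of two originates from the midpoint $(x+x')/2$ in the Weyl quantization \eqref{e:weyl-quantization}, and it is precisely what allows the character sum to detect $\mu$ modulo $2\mathbf N$ rather than only modulo $\mathbf N$; everything else is routine bookkeeping.
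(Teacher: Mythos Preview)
Your proof is correct and follows essentially the same approach as the paper's: both reduce to a single Fourier mode by density of trigonometric polynomials, identify the quantization of that mode as a quantum translation $U_w$, compute its action on the basis $\mathbf e^0_j$ via \eqref{e:U-w-e-theta} and \eqref{e:e-theta-equiv}, and then match the two sides using the character-sum orthogonality in $\ell\in\mathbb Z_{2\mathbf N}^n$. The only difference is notational: the paper indexes modes by $w=(y,\eta)$ through $a(z)=e^{2\pi i\sigma(w,z)}$, whereas you use $(m,\mu)$ through $e^{2\pi i(\langle m,x\rangle+\langle\mu,\xi\rangle)}$; these correspond via $\eta=m$, $y=-\mu$, and all subsequent computations agree.
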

\begin{proof}
Since trigonometric polynomials are dense in $C^\infty(\mathbb T^{2n})$, it suffices to consider the case of
$$
a(z)=e^{2\pi i\sigma(w,z)}\quad\text{for some}\quad
w=(y,\eta)\in \mathbb Z^{2n}.
$$
Using~\eqref{e:U-w-def}, \eqref{e:U-w-e-theta}, and~\eqref{e:e-theta-equiv}, we compute for all $j\in\mathbb Z^n$
$$
\Op_{\mathbf N,0}(a)\mathbf e^0_j=U_{w/\mathbf N}\,\mathbf e^0_j=e^{{2\pi i\over \mathbf N}\langle \eta,j\rangle+{\pi i\over \mathbf N}\langle y,\eta\rangle}\mathbf e^0_{j+y}.
$$
Therefore the left-hand side of~\eqref{e:Op-h-paired} is
$$
\langle\Op_{\mathbf N,0}(a)f,f\rangle_{\mathcal H}=\sum_{j\in\mathbb Z_{\mathbf N}^n} f_j\langle\Op_{\mathbf N,0}(a)\mathbf e_j^0,f\rangle_{\mathcal H}
=\sum_{j\in \mathbb Z_{\mathbf N}^n} e^{{2\pi i\over \mathbf N}\langle \eta,j\rangle+{\pi i\over \mathbf N}\langle y,\eta\rangle} f_j\overline{f_{j+y}}.
$$
On the other hand, the right-hand side of~\eqref{e:Op-h-paired} is equal to
$$
(2\mathbf N)^{-n}\sum_{j\in \mathbb Z_{\mathbf N}^n,\
k, \ell\in \mathbb Z_{2\mathbf N}^n}
e^{{\pi i\over \mathbf N}(\langle k-y,\ell\rangle+\langle\eta,2j+k\rangle)}f_j\overline{f_{j+k}}.
$$
The sum over $\ell$ is equal to~0 unless $k-y\in 2\mathbf N\mathbb Z^n$
which happens
for exactly one value of $k\in\mathbb Z_{2\mathbf N}^n$. Using that
$f_{j+k}=f_{j+y}$ for this~$k$ we write the right-hand side of~\eqref{e:Op-h-paired} as
$$
\sum_{j\in \mathbb Z_{\mathbf N}^n}
e^{{\pi i\over \mathbf N}\langle \eta,2j+y\rangle}f_j\overline{f_{j+y}}.
$$
This equals the left-hand side of~\eqref{e:Op-h-paired} which finishes the proof.
\end{proof}
Proposition~\ref{p:explicit_pseudors} can be interpreted as follows:
$$
\langle\Op_{\mathbf N,0}(a)f,f\rangle_{\mathcal H}
=(2\mathbf N)^{-2n}\sum_{p,q\in\mathbb Z_{2\mathbf N}^n}a\Big({p\over2\mathbf N},
{q\over 2\mathbf N}\Big) W(f)_{pq}
$$
where the \emph{Wigner matrix} $W(f)_{pq}$ of $f$ is given by
\begin{equation}
  \label{e:Wigner}
W(f)_{pq}=(2\mathbf N)^n\sum_{j\in\mathbb Z_{\mathbf N}^n}
e^{{\pi i\over\mathbf N}\langle p-2j,q\rangle}f_j \overline{f_{p-j}},\quad
p,q\in\mathbb Z_{2\mathbf N}^n.
\end{equation}

We now compute the action of metaplectic transformations
$M_{\mathbf N,0}:\mathcal H_{\mathbf N}(0)\to\mathcal H_{\mathbf N}(0)$
where $M\in \mathcal M_A$ for some $A\in\Sp(2n,\mathbb Z)$ and
we assume that $\mathbf N$ is even so that~\eqref{e:theta-quantize-condition}
is satisfied for $\theta=0$ and all~$A$. The general formulas are complicated,
so instead we follow the approach of~\cite[\S1.2.1]{Kelmer-cat}.
As proved for instance in~\cite{Kohnen} the group $\Sp(2n,\mathbb Z)$ is
generated by matrices of the following block form, where
$E^{-T}$ denotes the transpose of $E^{-1}$:
\begin{equation}
  \label{e:special-sp}
\begin{aligned}
S_B&:=\begin{pmatrix} I & 0 \\ B & I\end{pmatrix},\quad
B\text{ is a symmetric $n\times n$ integer matrix};\\
L_E&:=\begin{pmatrix} E & 0 \\ 0 & E^{-T}\end{pmatrix},\quad
E\in \GL(n,\mathbb Z),\quad |\det E|=1;\\
F&:=\begin{pmatrix} 0 & I \\ -I & 0 \end{pmatrix}.
\end{aligned}
\end{equation}
Since the map $M\in\mathcal M\mapsto A\in\Sp(2n,\mathbb R)$ such that
$M\in\mathcal M_A$ is a group homomorphism, it suffices to compute
the operators $M_{\mathbf N,0}$ for $M\in\mathcal M_A$ where $A$
is in one of the forms~\eqref{e:special-sp}. This is done in
\begin{prop}\label{p:explicit_quantization}
Assume that $\mathbf N$ is even. Then
there exist
\begin{align}
\label{e:explicit-meta-1}
M\in\mathcal M_{S_B}&\quad\text{such that}\quad (M_{\mathbf N,0}f)_j=e^{{\pi i\over \mathbf N}\langle Bj,j\rangle}f_j,\\
\label{e:explicit-meta-2}
M\in\mathcal M_{L_E}&\quad\text{such that}\quad (M_{\mathbf N,0}f)_j=f_{E^{-1}j},\\
\label{e:explicit-meta-3}
M\in\mathcal M_{F}&\quad\text{such that}\quad (M_{\mathbf N,0}f)_{j}=\mathbf N^{-{n\over 2}}\sum_{k\in \mathbb Z_{\mathbf N}^{n}}e^{-{2\pi i\over \mathbf N}\langle j,k\rangle}f_{k}
\end{align}
for all $f\in \mathcal H_\mathbf N(0)$ and $j\in\mathbb Z^n$ where $f_j:=\langle f,\mathbf e^0_j\rangle_{\mathcal H}$.
\end{prop}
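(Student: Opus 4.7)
The plan is to construct, for each of the three families of generators in \eqref{e:special-sp}, an explicit unitary operator on $L^2(\mathbb{R}^n)$ that belongs to $\mathcal{M}_A$ for the corresponding $A$, and then to evaluate its action directly on the basis vectors $\mathbf{e}^0_j$ using the explicit formula \eqref{e:e-kappa-def} (with $\theta_x=\theta_\xi=0$). Since $\mathbf{N}$ is even, the quantization condition \eqref{e:theta-quantize-condition} is automatically satisfied at $\theta=0$, so the restrictions $M_{\mathbf{N},0}$ really do map $\mathcal{H}_{\mathbf{N}}(0)$ to itself.

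For $A=S_B$, I would take $M$ to be multiplication by the quadratic phase $e^{\pi i \mathbf{N}\langle Bx,x\rangle}=e^{i\langle Bx,x\rangle/(2h)}$. Unitarity is immediate, and the Egorov property $M^{-1}\Op_h(a)M=\Op_h(a\circ S_B)$ follows from a direct computation on exponentials $e^{2\pi i\sigma(w,z)}$ via \eqref{e:U-w-def}, \eqref{e:U-w-prod} (or is the standard fact that quantization of a linear symplectic shear is multiplication by the corresponding quadratic phase). Evaluating at $x=k+j/\mathbf{N}$ in \eqref{e:e-kappa-def}, the phase becomes
\[
e^{\pi i\mathbf{N}\langle Bk,k\rangle}\,e^{2\pi i\langle Bk,j\rangle}\,e^{\pi i\langle Bj,j\rangle/\mathbf{N}}.
\]
The middle factor is $1$ since $B$ is an integer matrix and $k,j\in\mathbb{Z}^n$. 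The first factor is $1$ because $\langle Bk,k\rangle\in\mathbb{Z}$ and $\mathbf{N}$ is even, which is the unique place where the parity assumption on $\mathbf{N}$ really intervenes. This gives the stated formula \eqref{e:explicit-meta-1}.

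For $A=L_E$, I would take $Mf(x)=f(E^{-1}x)$; this is unitary because $|\det E|=1$, and the change of variables in \eqref{e:weyl-quantization} immediately gives the Egorov identity $M^{-1}\Op_h(a)M=\Op_h(a\circ L_E)$. Acting on the delta-train representation \eqref{e:e-kappa-def}, the substitution in each delta and the relabeling $k\mapsto k$ (which is a bijection of $\mathbb{Z}^n$) produce $M\mathbf{e}^0_j=\mathbf{e}^0_{Ej}$, which translates into the coefficient formula \eqref{e:explicit-meta-2}.

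For $A=F$, I would take $M$ (up to a global unit phase) to be the semiclassical Fourier transform $\mathcal{F}_hf(\xi)=(2\pi h)^{-n/2}\int e^{-i\langle x,\xi\rangle/h}f(x)\,dx$, which is unitary on $L^2(\mathbb{R}^n)$ and satisfies the classical Egorov identity $\mathcal{F}_h^{-1}\Op_h(a)\mathcal{F}_h=\Op_h(a\circ F)$. Applying $\mathcal{F}_h$ to the Dirac comb $\mathbf{e}^0_j$, using $1/h=2\pi\mathbf{N}$ so that the resulting oscillatory sum is a Dirac comb at the lattice $\tfrac{1}{\mathbf{N}}\mathbb{Z}^n$, then grouping $m=\mathbf{N}k'+j'$ with $j'\in\mathbb{Z}_{\mathbf{N}}^n$ to collapse the $k'$-sums back into $\mathbf{e}^0_{j'}$, and finally using $(2\pi h)^{-n/2}=\mathbf{N}^{n/2}$, yields exactly \eqref{e:explicit-meta-3}.

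The only nontrivial point in the argument is the arithmetic cancellation in the $S_B$ case, where without $\mathbf{N}$ even the diagonal contribution $e^{\pi i\mathbf{N}\langle Bk,k\rangle}$ would not be trivial on the lattice and the proposed $M_{\mathbf{N},0}$ would not even be well defined as an operator on $\mathcal{H}_{\mathbf{N}}(0)$; the $L_E$ and $F$ cases are routine computations with distributions and the Poisson summation formula.
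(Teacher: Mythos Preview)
Your proposal is correct and follows essentially the same approach as the paper: construct the explicit metaplectic operators (multiplication by $e^{i\langle Bx,x\rangle/(2h)}$, composition with $E^{-1}$, and the semiclassical Fourier transform~$\mathcal F_h$), verify the Egorov identity~\eqref{e:egorov-metaplectic} directly, and then evaluate on the delta-comb basis~$\mathbf e^0_j$ using~\eqref{e:e-kappa-def}, invoking Poisson summation for the Fourier case. Your identification of the parity condition on~$\mathbf N$ as exactly the step where $e^{\pi i\mathbf N\langle Bk,k\rangle}=1$ is needed also matches the paper's remark after the proposition.
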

\Remark The evenness assumption on $\mathbf N$ is only required in~\eqref{e:explicit-meta-1}.
The formulas~\eqref{e:explicit-meta-2}--\eqref{e:explicit-meta-3} are valid
for all~$\mathbf N$ and one can check that $\varphi_{L_E}=\varphi_{F}=0$ where
$\varphi_A$ is defined in~\eqref{e:parity}.
\begin{proof}
This follows from the definition~\eqref{e:e-kappa-def}
of $\mathbf e^0_j$, the Poisson summation formula (in case of~\eqref{e:explicit-meta-3}),
and the following formulas for metaplectic transformations
for which~\eqref{e:egorov-metaplectic} can be verified directly using~\eqref{e:weyl-quantization}:
\begin{itemize}
\item we have $M\in\mathcal M_{S_B}$ where
$$
Mf(x)=e^{{i\over 2h}\langle Bx,x\rangle}f(x);
$$
\item we have $M\in\mathcal M_{L_E}$ where 
$$
Mf(x)=f(E^{-1}x);
$$
\item
we have $\mathcal F_h\in \mathcal M_{F}$ where
\begin{equation}
  \label{e:F-h-def-2}
\mathcal F_hf(x)=(2\pi h)^{-{n\over 2}}\int_{\mathbb R^{n}}e^{-{i\over h}\langle x,\eta\rangle}f(\eta)\,d\eta.
\end{equation}
\end{itemize}
\end{proof}

\section{Proofs of Theorems~\ref{t:general}--\ref{t:measures}}
\label{s:proofs-big}

In this section, we reduce the proofs of Theorems~\ref{t:general} and~\ref{t:damped} to a decay statement for long words, Proposition~\ref{l:FUP_type}, that will be proved in~\S \ref{s:decay_long_words}. In \S\ref{s:words}, we introduce notation that will be used in the proofs of Theorems \ref{t:general} and \ref{t:damped} and state the main estimates that we will need to write these proofs: Lemmas~\ref{l:controlled_support},
\ref{l:controlled_damped}, and~\ref{l:uncontrolled}. In \S \ref{s:proofs}, we explain how these estimates allow us to prove Theorems~\ref{t:general} and~\ref{t:damped}. 
In~\S\ref{s:proof-measures}, we derive Theorem~\ref{t:measures} from Theorem~\ref{t:general}.
In~\S \ref{s:controlled}, we prove Lemmas \ref{l:controlled_support} and \ref{l:controlled_damped}. Finally, in \S \ref{s:reduction}, we reduce Lemma \ref{l:uncontrolled} to Proposition~\ref{l:FUP_type}.

Our strategy in this section generally follows~\cite{meassupp,Jin-DWE,varfup}.
However, the proofs have to be modified to adapt to the setting of quantum maps
used here and to the assumption of geometric control transversally to~$\mathbb T_\pm$.
Another important difference is the choice of propagation times, see~\S\ref{s:propagation-times}.

Throughout this section we fix $A \in \Sp(2n,\Z)$ that satisfies the spectral gap condition~\eqref{e:spectral-gap} and choose a metaplectic operator $M \in \mathcal{M}_A$ quantizing $A$. 
We take $\mathbf N$ large and $\theta\in\mathbb T^{2n}$ satisfying the quantization condition~\eqref{e:theta-quantize-condition} and study the restrictions
$M_{\mathbf N,\theta}$ of $M$ to the spaces of quantum states
$\mathcal H_{\mathbf N}(\theta)$, see~\S\ref{s:quantization-toric-auto}.
Following~\eqref{e:h-N-def} we put $h:=(2\pi\mathbf N)^{-1}$.

\subsection{Words decomposition}\label{s:words}

In the proofs of both Theorems \ref{t:general} and \ref{t:damped}, we will consider two $\mathbf N$-independent functions $b_1,b_2$ on the torus~$\mathbb T^{2n}$. The choice of these functions will differ in the proof of each theorem, but we will always assume that they satisfy
\begin{equation}
\label{e:b-assumptions}
b_1,b_2\in C^\infty(\mathbb T^{2n}),\quad
|b_1|+|b_2|\leq 1.
\end{equation}
The functions $b_1$ and $b_2$ are supposed given for now, we will explain in the proofs of Theorems~\ref{t:general} and~\ref{t:damped} how they are constructed. We will always explicitly point out when specific properties of $b_1$ and $b_2$ are required.

Let us write $b := b_1 + b_2$ and take the quantizations (see~\S\ref{s:quantization-on-torus})
\begin{equation*}
B := \Op_{\mathbf N,\theta}(b),\quad B_1 := \Op_{\mathbf N,\theta}(b_1),\quad B_2 := \Op_{\mathbf{N},\theta}(b_2).
\end{equation*}
For any operator $L$ on $\mathcal{H}_{\mathbf{N}}(\theta)$ and $T \in \Z$, we define the conjugated operator
\begin{equation}
\label{e:conjugation-def}
L(T) := M_{\mathbf{N},\theta}^{-T}\, L\, M^T_{\mathbf{N},\theta}:
\mathcal H_{\mathbf N}(\theta)\to \mathcal H_{\mathbf N}(\theta).
\end{equation}
For $m \in \N$, we introduce the set of words
\begin{equation*}
\begin{split}
\mathcal{W}(m) := \set{1,2}^m.
\end{split}
\end{equation*}
For $\w = w_0 \dots w_{m-1} \in \mathcal{W}(m)$, define the operator
\begin{equation}
\label{e:operator_bw}
\begin{split}
B_{\w} \coloneqq B_{w_{m-1}}(m-1) \cdots B_{w_1}(1) B_{w_0}(0)
\end{split}
\end{equation}
and the corresponding symbol
\begin{equation}\label{e:symbole_bw}
\begin{split}
b_\w \coloneqq \prod_{j = 0}^{m-1} b_{w_j} \circ A^j.
\end{split}
\end{equation}
To a function $c : \mathcal{W}(m) \to \C$, we associate the operator
\begin{equation}
  \label{e:B-c-def}
B_c \coloneqq \sum_{\w \in \mathcal{W}(m)} c(\w)B_{\w}
\end{equation}
and accordingly the symbol
\begin{equation*}
\begin{split}
b_c \coloneqq \sum_{\w \in \mathcal{W}(m)} c(\w)b_{\w}.
\end{split}
\end{equation*}
If $c=\indic_E$ is the characteristic function of a subset $E\subset\mathcal{W}(m)$, then we simply write $B_E$ instead of $B_{\indic_E}$. Notice then that we have
\begin{equation}\label{e:total_B}
B_{\mathcal{W}(m)} = 
B(m-1)\cdots B(1)B(0) =
M_{\mathbf{N},\theta}^{-(m-1)} \p{BM_{\mathbf{N},\theta}}^{m-1} B.
\end{equation}
In the proof of Theorem \ref{t:general}, we will have $B = I$, and thus $B_{\mathcal{W}(m)}=I$ as well. On the other hand, Theorem \ref{t:damped} will be deduced from estimates on large powers of $BM_{\mathbf{N},\theta}$ that will follow from estimates on $B_{\mathcal{W}(m)}$ for $m$ large.

\subsubsection{Propagation times}
\label{s:propagation-times}

We need now to fix a few parameters that will be used to choose a relevant value of~$m$. 
Recall from~\S\ref{s:more-results}
that by the spectral gap assumption~\eqref{e:spectral-gap}, $A$ has two
particular simple eigenvalues
\begin{equation}
  \label{e:lambda-pm-def}
\lambda_\pm\in\mathbb R,\quad
\lambda_-=\lambda_+^{-1},\quad
|\lambda_-|<1<|\lambda_+|
\end{equation}
and all other eigenvalues~$\lambda$ of $A$ are contained in the open annulus
$|\lambda_-|<|\lambda|<|\lambda_+|$.
Fix a constant $\gamma$ such that
$$
1<\gamma<|\lambda_+|,\quad
\Spec(A)\setminus \{\lambda_+,\lambda_-\}\subset \{\lambda\colon \gamma^{-1}<|\lambda|<\gamma\}.
$$
Define the hyperplanes
\begin{equation}
  \label{e:L-pm-def}
L_\mp:=\Range(A-\lambda_\pm I)\ \subset\ \mathbb R^{2n}.
\end{equation}
Note that $L_\mp$ are coisotropic (since they have codimension~1), invariant under~$A$ and, denoting by $E_\pm$ the eigenspaces of $A$ corresponding to $\lambda_\pm$,
\begin{equation}
  \label{e:L-pm-sum}
\R^{2n} = E_+\oplus L_-  = E_- \oplus L_+.
\end{equation}
Moreover, since $A\in\Sp(2n,\mathbb R)$ we have (where $\sigma$ denotes the symplectic form from~\eqref{e:symplectic-def})
\begin{equation}
  \label{e:L-pm-perp}
\sigma(z,w)=0\quad\text{for all}\quad
z\in E_\pm,\quad
w\in L_\pm.
\end{equation}
Next, $L_\mp\otimes\mathbb C$ is the sum of all generalized eigenspaces of~$A$ with eigenvalues not equal to~$\lambda_\pm$. Therefore
we have the spectral radius bounds
\begin{equation}
  \label{e:L-pm-norm}
\begin{aligned}
\big\|A^{\pm m}\big\|&=\mathcal O(|\lambda_+|^m)\quad\text{as}\quad m\to\infty,\\
\big\|A^{\pm m}|_{L_\mp}\big\|&=\mathcal O(\gamma^m)\quad\text{as}\quad m\to \infty.
\end{aligned}
\end{equation}
Let us now fix two numbers $\rho,\rho' \in (0,1)$ such that
\begin{equation}\label{e:conditions_rho}
\rho + \rho' < 1, \quad  \rho \frac{\log \gamma}{\log \va{\lambda_+}}<\rho'<{1\over 2}<\rho.
\end{equation}
We also fix an integer $J$ that satisfies
\begin{equation}\label{e:condition_J}
J > 1 + \frac{2 \log 2}{\log \va{\lambda_+}}.
\end{equation}
We now set
\begin{equation}\label{e:def_times}
T_0 := \left\lfloor \frac{\rho \log \mathbf{N}}{J \log \va{\lambda_+}} \right\rfloor \quad \textup{ and } \quad T_1 := J T_0\approx
{\rho\log\mathbf N\over\log|\lambda_+|}.
\end{equation}
We call $T_0$ the \emph{short logarithmic propagation time} and $T_1$
the \emph{long logarithmic propagation time}.

\Remark Let us give some explanations regarding the choice of $\rho,\rho'$
and the propagation times $T_0,T_1$:
\begin{itemize}
\item We will use the semiclassical symbol classes $S_{L_\pm,\rho,\rho'}(\mathbb T^{2n})$ introduced in~\S\ref{s:quantization-on-torus} and for that we need $0\leq \rho'\leq \rho$
and $\rho+\rho'<1$. In particular, this is used in Lemma~\ref{l:long_logarithmic_words}
and, most crucially, in Lemma~\ref{l:non_stationary}.
\item From~\eqref{e:conditions_rho} and~\eqref{e:def_times} we see that
when $\mathbf N$ is large,
\begin{equation}
  \label{e:special-times-condition}
|\lambda_+|^{T_1}\sim \mathbf N^{\rho},\quad
\gamma^{T_1}\ll\mathbf N^{\rho'}.
\end{equation}
These inequalities are used to show that the symbols $b_{\w}$, $\w\in\mathcal W(T_1)$,
lie in the class $S_{L_-,\rho+\varepsilon,\rho'+\varepsilon}(\mathbb T^{2n})$,
see Lemma~\ref{l:long_logarithmic_words}.
They are also used in the proof of the porosity property,
Lemma~\ref{l:porosity}.
\item The requirement $\rho>{1\over 2}$ ensures
that the support of $b_{\w}$,
$\w\in\mathcal W(T_1)$,
is porous in the direction of the eigenvector of $A$ with eigenvalue $\lambda_+$
on scales (almost) up to $h^{\rho}\ll h^{1/2}$,
so that the fractal uncertainty principle can be applied~--
see Proposition~\ref{p:fractal_uncertainty_principle},
Lemma~\ref{l:porosity}, and the last step of the proof of Lemma~\ref{l:coltar_fup}
in~\S\ref{s:fup-endgame}.
\item The inequality~\eqref{e:condition_J} on~$J$ ensures that
the errors coming from the exotic calculus $S_{L_-,{\rho\over J},{\rho'\over J}}$ decay faster
than the growth of the number of elements in~$\mathcal W(T_0)$. More precisely,
it makes the remainders in~\eqref{e:controlled_support} and~\eqref{e:estimate_BZ}
decay as a negative power of~$\mathbf N$.
\item It is also useful to consider what happens in degenerate
cases. Assume first that all the eigenvalues in $\Spec(A)\setminus \{\lambda_+,\lambda_-\}$
lie on the unit circle (this is true in particular if $n=1$).
Then we could take $\rho$ to be any fixed number
in $({1\over 2},1)$ and $\rho'$ to be any sufficiently small positive number.
This is the choice made in~\cite{meassupp}
(which additionally took $\rho$ close to~1).
\item On the other hand, if $\gamma$ is close to $|\lambda_+|$ (i.e. the
spectral gap of $A$ is small) then our conditions
force $\rho'<{1\over 2}<\rho$ to both be close to~$1\over 2$.
\end{itemize}

\subsubsection{Partition into controlled/uncontrolled words and main estimates}
\label{s:partition-control}

We now decompose the operator $B_{\mathcal W(m)}$, $m:=2T_1$, into
the sum of two operators corresponding to the
controlled and uncontrolled region (see~\eqref{e:decomposition} below), and state the main estimates used
in the proofs of Theorems~\ref{t:general} and~\ref{t:damped}.
 
Let $F: \mathcal{W}(T_0) \to \left[0,1\right]$ be the function that gives the proportion of the digit $1$ in a word, that is for $\w = w_0 \dots w_{T_0 - 1}$ we have
\begin{equation}\label{e:definition_F}
F(\w) = \frac{\# \set{j \in \set{0, \dots, T_0 - 1}: w_j= 1}}{T_0}.
\end{equation}
Let then $\alpha > 0$ be very small (small enough so that Lemma \ref{l:uncontrolled} below holds) and define
\begin{equation}\label{e:def_Z}
\mathcal{Z}:= \set{\w \in \mathcal{W}(T_0): F(\w) \geq \alpha}.
\end{equation}
We call elements of $\mathcal Z$ \emph{controlled short logarithmic words}.

We next use the set $\mathcal{Z}$ to split $\mathcal{W}\p{2 T_1}$ into two subsets:
$\mathcal W(2T_1)=\mathcal X\sqcup \mathcal Y$,
writing each word in $\mathcal W(2T_1)$ as a concatenation
of $2J$ words in $\mathcal W(T_0)$:
\begin{align}
\label{e:def_X}
\mathcal{X} &\,:= \set{ \w = \w^{(1)} \dots \w^{(2J)} : \w^{(\ell)} \in \mathcal{W}(T_0) \setminus \mathcal{Z} \textup{ for all } \ell = 1,\dots, 2J};\\
\label{e:def_Y}
\mathcal{Y} &\,:= \set{\w = \w^{(1)} \dots \w^{(2J)}: \w^{(\ell)} \in \mathcal{Z} \textup{ for some } \ell \in \set{1,\dots,2J}}.
\end{align}
We call elements of $\mathcal X$ \emph{uncontrolled long logarithmic words}
and elements of $\mathcal Y$ \emph{controlled long logarithmic words}.

It follows from \eqref{e:total_B} that (with the operators $B_{\mathcal X},B_{\mathcal Y}$
defined using~\eqref{e:B-c-def})
\begin{equation}\label{e:decomposition}
M_{\mathbf{N},\theta}^{-(2 T_1-1)} (BM_{\mathbf{N},\theta})^{2 T_1 -1} B = 
B_{\mathcal W(2T_1)}=B_{\mathcal{X}} + B_{\mathcal{Y}}.
\end{equation}
In order to get an estimate on $M_{\mathbf{N},\theta}^{-(2 T_1-1)} (BM_{\mathbf{N},\theta})^{2 T_1 -1} B$, we will control $B_{\mathcal{X}}$ and $B_{\mathcal{Y}}$ separately. Let us start with $B_{\mathcal{Y}}$. It will be dealt with differently in the proofs of Theorem \ref{t:general} and \ref{t:damped}, but the main idea is the same: we make an assumption on the symbol~$b_1$ that translates into control on the operator~$B_1$ that is inherited by $B_{\mathcal{Y}}$. In more practical terms, we will use the following lemma in the proof of Theorem \ref{t:general}.
\begin{lemm}\label{l:controlled_support}
Let $a \in C^{\infty}\p{\T^{2n}}$. Assume that $b_1,b_2\in C^\infty(\mathbb T^{2n})$ satisfy
\begin{equation}
b_1,b_2\geq 0,\quad
b_1+b_2=1,\quad
\supp b_1\subset \bigcup_{m\in\mathbb Z}A^m\big(\{a\neq 0\}\big).
\end{equation}
Let $\varepsilon > 0$. Then there is a constant $C > 0$ such that for all
$\mathbf N$
and $u \in \mathcal{H}_{\mathbf{N}}(\theta)$
\begin{equation}\label{e:controlled_support}
\n{B_{\mathcal{Y}} u}_{\mathcal{H}} \leq C \n{\Op_{\mathbf N, \theta}(a) u}_{\mathcal H} + C \mathbf{r}_M(u) \log \mathbf{N} + C \mathbf{N}^{ - \frac{1}{2} + \frac{1}{2J}\big(1 + \frac{2 \log 2}{\log \va{\lambda_+}}\big) + \varepsilon} \n{u}_{\mathcal{H}}. 
\end{equation} 
\end{lemm}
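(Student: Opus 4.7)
My plan is to unfold $B_{\mathcal Y}$ according to the position of the first controlled short block, to estimate the resulting ``free tail'' factor using Lemmas~\ref{l:long_product} and~\ref{l:advanced-norm} together with the choice~\eqref{e:condition_J} of~$J$, and to reduce the remaining piece to $\Op_{\mathbf N,\theta}(a)$ via the geometric control hypothesis on~$b_1$.

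First I would partition $\mathcal Y = \bigsqcup_{\ell=1}^{2J} \mathcal Y_\ell$, where $\mathcal Y_\ell$ consists of those $\w$ whose $\ell$-th block is the earliest one lying in~$\mathcal Z$. Using the concatenation rule for operators $B_\w$ and the fact that $B=B_1+B_2=\Op_{\mathbf N,\theta}(1)=I$, the sums over the unconstrained blocks $\w^{(\ell+1)},\ldots,\w^{(2J)}$ collapse through $B_{\mathcal W(T_0)}=I$, yielding
\begin{equation*}
B_{\mathcal Y_\ell} \;=\; M_{\mathbf N,\theta}^{-(\ell-1)T_0}\, B_{\mathcal Z}\, \bigl[M_{\mathbf N,\theta}^{T_0} B_{\mathcal W(T_0)\setminus \mathcal Z}\bigr]^{\ell-1}.
\end{equation*}
Lemma~\ref{l:long_product} places each symbol $b_\w$ for $\w\in\mathcal W(T_0)$ uniformly in $S_{L_-,\rho/J+\varepsilon,\rho'/J+\varepsilon}(\mathbb T^{2n})$; hence $b_{\mathcal W(T_0)\setminus \mathcal Z}=1-b_{\mathcal Z}$ takes values in $[0,1]$ while its seminorms pick up at most a factor of $|\mathcal W(T_0)|=2^{T_0}=\mathbf N^{\rho\log 2/(J\log|\lambda_+|)}$ from summing that many terms. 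Applying Lemma~\ref{l:advanced-norm} with $b=1$ and invoking~\eqref{e:condition_J} then produces the bound
\begin{equation*}
\|B_{\mathcal W(T_0)\setminus \mathcal Z}\|_{\mathcal H\to\mathcal H}\;\leq\; 1 + C\,\mathbf N^{-\frac12 + \frac{1}{2J}\big(1+\frac{2\log 2}{\log|\lambda_+|}\big)+\varepsilon},
\end{equation*}
whose exponent is exactly the one appearing in~\eqref{e:controlled_support}; since $\ell-1\leq 2J-1$ is bounded, the $(\ell-1)$-st power is still $1+O(\mathbf N^{-\beta})$.

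To handle the remaining factor $B_{\mathcal Z}$, I would extract a $B_1$ by partitioning $\mathcal Z$ according to the smallest index~$j$ with $w_j=1$ (such $j$ exists since $F(\w)\geq\alpha>0$), so that $B_\w = Q_\w^{(j)} B_1(j) B_2(j-1)\cdots B_2(0)$ with the prefactor $Q_\w^{(j)}$ controllable by a variant of the previous norm estimate. To connect $B_1$ to $\Op_{\mathbf N,\theta}(a)$, I would use the geometric control hypothesis: by compactness of $\supp b_1 \subset \bigcup_m A^m\{a\neq 0\}$ and a subordinate smooth partition of unity, one writes $b_1=\sum_{p=1}^{K} c_p\circ A^{-m_p}$ with $c_p\in C^\infty(\mathbb T^{2n})$ compactly supported in $\{a\neq 0\}$; since $|a|$ is bounded below on each $\supp c_p$, one may further factor $c_p=\tilde c_p\cdot a$ with $\tilde c_p\in C^\infty(\mathbb T^{2n})$. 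Exact Egorov~\eqref{e:our-egorov} and the composition formula~\eqref{e:our-product} then express $B_1$, up to an $O(\mathbf N^{-1})$ operator-norm remainder, as a bounded linear combination of operators of the form (bounded)$\,\cdot\,\Op_{\mathbf N,\theta}(a)\cdot M_{\mathbf N,\theta}^{-m_p}$.

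Combining the three ingredients, each summand of $B_{\mathcal Y_\ell} u$ is reduced, via exact Egorov and unitarity of $M_{\mathbf N,\theta}$, to an expression in which $\Op_{\mathbf N,\theta}(a)$ acts on $M_{\mathbf N,\theta}^k u$ with $|k|\leq 2T_1 = O(\log\mathbf N)$; the telescoped quasimode bound $\|M_{\mathbf N,\theta}^k u - z^k u\|_{\mathcal H}\leq |k|\,\mathbf r_M(u)$ following from~\eqref{e:quasimode} transfers everything onto~$u$ itself at the cost of the $\mathbf r_M(u)\log\mathbf N$ term in~\eqref{e:controlled_support}. Summing over $\ell\in\{1,\ldots,2J\}$, a bounded range, completes the proof. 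The main obstacle I anticipate is the bookkeeping: the successive exotic-calculus remainders must remain of the form $O(\mathbf N^{-\beta})$ at each step despite the many operator manipulations, and the summations over blocks and over the position~$j$ of the first digit $1$ must not inflate the coefficient of $\|\Op_{\mathbf N,\theta}(a) u\|_{\mathcal H}$ beyond a constant, which relies crucially on the fact that free block sums collapse to~$I$ thanks to $b_1+b_2=1$ rather than being estimated termwise by the triangle inequality.
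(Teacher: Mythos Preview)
There is a genuine gap in the step where you claim that ``each summand of $B_{\mathcal Y_\ell} u$ is reduced \ldots\ to an expression in which $\Op_{\mathbf N,\theta}(a)$ acts on $M_{\mathbf N,\theta}^k u$''. In your decomposition, the operator $B_1(j)$ does \emph{not} act on a vector of the form $M_{\mathbf N,\theta}^k u$: to its right sit the factors $B_2(j-1)\cdots B_2(0)$ and then $[M^{T_0}B_{\mathcal W(T_0)\setminus\mathcal Z}]^{\ell-1}$, and none of these collapse to powers of $M_{\mathbf N,\theta}$. Exact Egorov and unitarity only let you move $M_{\mathbf N,\theta}^k$ through; they do not let you remove genuine pseudodifferential factors such as $B_2(k)$ or $B_{\mathcal W(T_0)\setminus\mathcal Z}$. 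Consequently, the elliptic estimate for $B_1$ (your ``$b_1=\sum c_p\circ A^{-m_p}$ with $c_p=\tilde c_p\cdot a$'' factorization) produces $\Op_{\mathbf N,\theta}(a)$ acting on $B_2(j-1)\cdots B_2(0)[M^{T_0}B_{\mathcal W\setminus\mathcal Z}]^{\ell-1}u$, and there is no mechanism to convert $\|\Op_{\mathbf N,\theta}(a)\,v\|$ into $\|\Op_{\mathbf N,\theta}(a)\,u\|$ for such $v$.

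There are two separate places where this goes wrong. First, your partition of $\mathcal Y$ by the \emph{earliest} controlled block puts the uncontrolled blocks to the right of $B_{\mathcal Z}$; the paper instead partitions by the \emph{largest} $\ell$ with $\w^{(\ell)}\in\mathcal Z$, so that (using $B_{\mathcal W(T_0)}=I$) the factor immediately to the right of $B_{\mathcal Z}$ is $M_{\mathbf N,\theta}^{(\ell-1)T_0}$ and the quasimode bound applies. Second, and more seriously, even with that fix your extraction of $B_1(j)$ from $B_{\mathcal Z}$ via the smallest index still leaves $B_2(j-1)\cdots B_2(0)$ to the right of $B_1(j)$, and these do \emph{not} sum to the identity because of the constraint $\w\in\mathcal Z$ (nor would using the largest index help, since the remaining digits are still constrained). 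The paper avoids this by a different idea: it compares $\alpha\,\indic_{\mathcal Z}\leq F$ via the G\r arding-type inequality of Lemma~\ref{l:garding_words}, obtaining $\|B_{\mathcal Z}u\|\leq \alpha^{-1}\|B_F u\|+\text{error}$, and then observes that $B_F=T_0^{-1}\sum_{j}B_1(j)$ exactly, because in $B_F$ \emph{all} other digits are free and collapse to~$I$. This is the missing ingredient that makes $B_1(j)$ act directly on $u$.
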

Here we recall that the quantity $\mathbf{r}_M(u)$, defined in~\eqref{e:quasimode},
measures how close $u$ is to an eigenfunction of~$M_{\mathbf N,\theta}$.
Note that the condition \eqref{e:condition_J} on $J$ ensures that the power of $\mathbf{N}$ in
the last term on the right-hand side of~\eqref{e:controlled_support} is negative for
$\varepsilon$ small enough.

The proof of Theorem \ref{t:damped} will rely on the following estimate instead of Lemma \ref{l:controlled_support}.
\begin{lemm}\label{l:controlled_damped}
Assume that $b_1,b_2$ satisfy~\eqref{e:b-assumptions} and
\begin{equation}
  \label{e:b-assumptions-dwe}
|b_1|+|b_2| < 1\quad\text{on}\quad
\supp b_1.
\end{equation}
Then there exist $C,\delta > 0$ such that for every $\mathbf{N}$ we have
\begin{equation}\label{e:controlled_damped}
\n{B_{\mathcal{Y}}}_{\mathcal{H} \to \mathcal{H}} \leq C \mathbf{N}^{- \delta}.
\end{equation}
\end{lemm}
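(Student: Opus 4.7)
The strategy is to reduce the bound on $B_{\mathcal{Y}}$ to a strong decay estimate for the single operator $B_{\mathcal{Z}}$, and then prove that using the strict inequality~\eqref{e:b-assumptions-dwe}. Writing $B_{\mathcal{W}(T_0)} = B_{\mathcal{Z}} + B_{\mathcal{W}(T_0)\setminus\mathcal{Z}}$, I would expand
\[
B_{\mathcal{W}(2T_1)} = \prod_{\ell = 2J}^{1}\bigl(B_{\mathcal{Z}}((\ell-1)T_0) + B_{\mathcal{W}(T_0)\setminus\mathcal{Z}}((\ell-1)T_0)\bigr)
\]
(the factors written right to left). The single term in which every factor is $B_{\mathcal{W}(T_0)\setminus\mathcal{Z}}$ is precisely $B_{\mathcal{X}}$, so $B_{\mathcal{Y}}$ is the sum of at most $2^{2J}-1$ products, each containing at least one factor of the form $M_{\mathbf{N},\theta}^{-(\ell-1)T_0} B_{\mathcal{Z}} M_{\mathbf{N},\theta}^{(\ell-1)T_0}$. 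Since $M_{\mathbf{N},\theta}$ is unitary and $\n{B}_{\mathcal{H}\to\mathcal{H}} \leq 1+O(\mathbf{N}^{-1/2})$ by~\eqref{e:basic-norm}, the norm of each non-$\mathcal{Z}$ factor is bounded by $\n{B}^{T_0} + \n{B_{\mathcal{Z}}}_{\mathcal{H}\to\mathcal{H}} = O(1)$ (using $T_0 = O(\log\mathbf{N})$). Hence the lemma reduces to establishing $\n{B_{\mathcal{Z}}}_{\mathcal{H}\to\mathcal{H}} \leq C\mathbf{N}^{-\delta}$ for some $\delta > 0$.

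To bound $B_{\mathcal{Z}}$ I would approximate it by the quantization of its symbol $b_{\mathcal{Z}} = \sum_{\w \in \mathcal{Z}} b_{\w}$. The parameter choices~\eqref{e:conditions_rho}--\eqref{e:def_times} are tuned so that the symbols $b_{w_j}\circ A^j$ belong to $S_{L_-, \rho/J, \rho'/J}(\mathbb{T}^{2n})$ uniformly in $0 \leq j < T_0$ and $w_j \in \{1,2\}$: the expansion bound $\n{A^j} \lesssim |\lambda_+|^{T_0} \lesssim \mathbf{N}^{\rho/J}$ from~\eqref{e:L-pm-norm} controls general derivatives, while $\n{A^j|_{L_-}} \lesssim \gamma^{T_0} \lesssim \mathbf{N}^{\rho'/J - \eta}$ (via $\rho\log\gamma/\log|\lambda_+| < \rho'$) controls derivatives along $L_-$. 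Applying Lemma~\ref{l:long_product} to each $B_{\w}$ with $\w \in \mathcal{Z}$ and summing gives
\[
B_{\mathcal{Z}} = \Op_{\mathbf{N},\theta}(b_{\mathcal{Z}}) + \mathcal{O}\bigl(|\mathcal{Z}|\cdot \mathbf{N}^{(\rho+\rho')/J - 1 + \varepsilon}\bigr)_{\mathcal{H}\to\mathcal{H}}.
\]
The crude cardinality bound $|\mathcal{Z}| \leq 2^{T_0} = \mathbf{N}^{\rho\log 2/(J\log|\lambda_+|)}$, combined with $\rho+\rho'<1$ and~\eqref{e:condition_J}, makes this remainder a negative power of~$\mathbf{N}$. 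By~\eqref{e:basic-norm} what remains is an $L^\infty$ bound on $b_{\mathcal{Z}}$.

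This is where assumption~\eqref{e:b-assumptions-dwe} is used in a crucial way. By compactness of $\supp b_1$ there exists $\eta \in (0,1)$ with $|b_1|+|b_2| \leq \eta$ on $\supp b_1$. Fix $z \in \mathbb{T}^{2n}$ and set $S(z) := \{j \in \{0,\dots,T_0-1\} : A^j z \in \supp b_1\}$. For $\w \in \mathcal{Z}$ the product $b_{\w}(z) = \prod_j b_{w_j}(A^j z)$ vanishes unless $\{j : w_j = 1\} \subset S(z)$, so a nonzero term forces $|S(z)| \geq \alpha T_0$. Identifying $\w \in \mathcal Z$ with the subset $W = \{j : w_j = 1\} \subset S(z)$ and using $|b_2| \leq 1$,
\[
\sum_{\w \in \mathcal{Z}} |b_{\w}(z)| \leq \sum_{W \subset S(z)} \prod_{j \in W}|b_1(A^j z)|\prod_{j \notin W}|b_2(A^j z)| \leq \prod_{j \in S(z)}(|b_1|+|b_2|)(A^j z) \leq \eta^{|S(z)|} \leq \eta^{\alpha T_0},
\]
which is indeed a negative power of $\mathbf{N}$ since $T_0 \asymp \log\mathbf{N}$. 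The main technical difficulty is the bookkeeping in the previous paragraph: one must ensure that this combinatorial gain $\eta^{\alpha T_0}$ beats both the cardinality $|\mathcal{Z}|$ and the quantization remainder from Lemma~\ref{l:long_product}, which is exactly why the conditions $\rho+\rho'<1$ and $J > 1 + 2\log 2/\log|\lambda_+|$ are imposed in~\eqref{e:conditions_rho}--\eqref{e:condition_J}.
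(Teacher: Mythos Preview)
Your argument is correct and follows essentially the same route as the paper: bound $B_{\mathcal Z}$ by quantizing its symbol via the short-time calculus (Lemma~\ref{l:short_log_words}/Lemma~\ref{l:long_product}) and using the strict damping $|b_1|+|b_2|\leq\eta<1$ on $\supp b_1$ to get $\sup|b_{\mathcal Z}|\leq\eta^{\alpha T_0}$, then pass from $B_{\mathcal Z}$ to $B_{\mathcal Y}$ by splitting each block into $\mathcal Z$ and $\mathcal W(T_0)\setminus\mathcal Z$ parts. The only cosmetic differences are that the paper groups $B_{\mathcal Y}$ into $2J$ terms (indexing by the last block in $\mathcal Z$) rather than your $2^{2J}-1$ terms, and obtains the bound $|b_{\mathcal Z}|\leq\eta^{\alpha T_0}$ via the pointwise inequality $|b_1|\leq\eta(1-|b_2|)$ and the auxiliary partition $\tilde b_1=1-|b_2|$, $\tilde b_2=|b_2|$ rather than your set $S(z)$; both arguments give exactly the same estimate.
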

The proofs of Lemmas~\ref{l:controlled_support}--\ref{l:controlled_damped} can be found in~\S \ref{s:controlled}.

The control on $B_{\mathcal{X}}$ will be more subtle to obtain. We will use the following estimate, whose proof ultimately relies on the Fractal Uncertainty Principle. Recall the subtori $\mathbb T_\pm\subset \mathbb T^{2n}$ defined in~\eqref{e:T-pm-def}, and make the following
\begin{defi}
\label{d:safe}
Let $U\subset \mathbb T^{2n}$ be a set. We say that $U$ is \emph{safe}
if and only if for every $x\in\mathbb T^{2n}$, each
of the shifted tori $x+\mathbb T_+$, $x+\mathbb T_-$ intersects~$U$.
\end{defi}
Notice that being safe is slightly more restrictive than satisfying the geometric control condition with respect to $\mathbb{T}_+$ and $\mathbb{T}_-$ (Definition \ref{d:geometric_control_condition}) as we do not have here the flexibility to replace a point $x$ by its image by an iterate of $A$.

The control on $B_{\mathcal X}$ is achieved in
\begin{lemm}\label{l:uncontrolled}
Assume that $b_1,b_2$ satisfy~\eqref{e:b-assumptions} and the complements
$\mathbb T^{2n}\setminus \supp b_1$,
$\mathbb T^{2n}\setminus \supp b_2$
are both safe. Assume also that the constant $\alpha$ in~\eqref{e:def_Z} is chosen small enough.
Then there are constants $C,\delta > 0$ such that for every $\mathbf{N}$ we have
\begin{equation*}
\n{B_{\mathcal{X}}}_{\mathcal{H} \to \mathcal{H}} \leq C \mathbf{N}^{- \delta}.
\end{equation*}
\end{lemm}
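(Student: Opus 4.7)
The plan is to carry out the midpoint-conjugation reduction sketched in \S\ref{s:outline}, which identifies $\n{B_{\mathcal{X}}}$ with the norm of a product $\n{B_-B_+}$ of two pseudodifferential operators living in complementary exotic symbol classes, and then to apply Proposition~\ref{l:FUP_type}. Since the block-by-block condition in~\eqref{e:def_X} factors, the set $\mathcal{X}$ is a Cartesian product of two copies of $\p{\mathcal{W}(T_0)\setminus\mathcal{Z}}^J$; splitting each $\w\in\mathcal{X}$ at its midpoint and conjugating by the unitary operator $M_{\mathbf{N},\theta}^{T_1}$ yields
\begin{equation*}
M_{\mathbf{N},\theta}^{T_1}\,B_{\mathcal{X}}\,M_{\mathbf{N},\theta}^{-T_1}=B_-B_+,
\end{equation*}
where $B_-$ is the sum over the second halves of uncontrolled long words of forward-propagated operators at times $0,\ldots,T_1-1$, and $B_+$ is the sum over the first halves of backward-propagated operators at times $-T_1,\ldots,-1$.

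Using a long-word pseudodifferential identification (Lemma~\ref{l:long_logarithmic_words}) together with the norm bounds~\eqref{e:L-pm-norm} and the parameter conditions~\eqref{e:conditions_rho}--\eqref{e:special-times-condition}, I write $B_\pm=\Op_{\mathbf{N},\theta}(\tilde b_\pm)+\mathcal{O}(\mathbf{N}^{-\delta_0})$ where the symbols
\begin{equation*}
\tilde b_-=\sum_{\w\in(\mathcal{W}(T_0)\setminus\mathcal{Z})^J}\prod_{j=0}^{T_1-1}b_{w_j}\circ A^j,\qquad \tilde b_+=\sum_{\w\in(\mathcal{W}(T_0)\setminus\mathcal{Z})^J}\prod_{j=1}^{T_1}b_{w_{T_1-j}}\circ A^{-j}
\end{equation*}
lie respectively in $S_{L_-,\rho+\varepsilon,\rho'+\varepsilon}(\mathbb{T}^{2n})$ and $S_{L_+,\rho+\varepsilon,\rho'+\varepsilon}(\mathbb{T}^{2n})$. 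A standard entropy bound gives $\va{\mathcal{W}(T_0)\setminus\mathcal{Z}}\leq 2^{H(\alpha)T_0}$ with $H(\alpha)\to 0$ as $\alpha\to 0$, so the number of terms in each sum is at most $\mathbf{N}^{\kappa(\alpha)}$ with $\kappa(\alpha)$ arbitrarily small when $\alpha$ is small; this keeps the approximation error subpolynomial.

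The bound $\n{\Op_{\mathbf{N},\theta}(\tilde b_-)\,\Op_{\mathbf{N},\theta}(\tilde b_+)}_{\mathcal{H}\to\mathcal{H}}=\mathcal{O}(\mathbf{N}^{-\beta})$ is then the content of Proposition~\ref{l:FUP_type}. Its porosity hypothesis will be verified by combining: (i) each summand $b_{\w^\pm}$ has support contained in an iterated intersection of preimages of $\supp b_i$ under $A^{\pm j}$, and the expansion rate $\va{\lambda_+}^j$ along $E_\pm$ forces holes at every dyadic scale between $\va{\lambda_+}^{-T_1}\sim\mathbf{N}^{-\rho}$ and $1$ in the $E_\pm$-direction; (ii) the safety of $\mathbb{T}^{2n}\setminus\supp b_i$ for both $i=1,2$ together with the minimality of the $e_\pm$-translation flow on $\T_\pm$ (Lemma~\ref{l:minimality}), which produces the scale-$1$ holes; and (iii) an $L_\pm$-adapted partition of unity $\set{\psi_k}$ at scale $\mathbf{N}^{-\rho'}$ inserted between the two quantizations, whose terms are almost orthogonal via Lemma~\ref{l:nonint-sup-2} (Cotlar--Stein), allowing the one-dimensional fractal uncertainty principle of~\cite{fullgap} to be applied to each localised piece.

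The main obstacle is the book-keeping in this last step: the FUP gain $\mathbf{N}^{-\beta}$, which requires porosity down to scale $\mathbf{N}^{-\rho}$ with $\rho>\tfrac12$, must dominate the combinatorial loss $\mathbf{N}^{\kappa(\alpha)}$ from summing over uncontrolled half-words, the exotic-calculus remainders $\mathbf{N}^{-\delta_0}$, and the almost-orthogonality errors coming from the $L_\pm$-localisation. Achieving this balance is precisely what forces the parameter inequalities~\eqref{e:conditions_rho}--\eqref{e:condition_J} and determines how small $\alpha$ must be chosen.
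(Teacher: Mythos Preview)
Your midpoint conjugation and the factorisation $\mathcal X=\mathcal X_+\times\mathcal X_-$ are correct, and your description in step~(iii) of how Proposition~\ref{l:FUP_type} is \emph{proved} in \S\ref{s:decay_long_words} is accurate. But there is a genuine gap in how you \emph{invoke} that proposition.

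Proposition~\ref{l:FUP_type} is stated for the symbols $b_\pm$ of~\eqref{e:def_bplus_bminus} coming from a \emph{single} word $\w=\w_+\w_-\in\mathcal W(2T_1)$; it does not apply to your summed symbols $\tilde b_\pm$. Two ingredients of its proof break down for $\tilde b_\pm$. First, the $S_{L_\pm,\rho+\varepsilon,\rho'+\varepsilon}$-seminorms of $\tilde b_\pm$ are not uniformly bounded: each derivative of $\tilde b_\pm$ picks up a factor $\#\big((\mathcal W(T_0)\setminus\mathcal Z)^J\big)\sim\mathbf N^{\kappa(\alpha)}$, so the constants in Lemma~\ref{l:quantization-properties} and in Lemma~\ref{l:coltar_fup} degenerate polynomially, and since the fractal-uncertainty exponent $\beta$ is not quantitative this cannot simply be absorbed. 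Second, and more seriously, the porosity argument of Lemma~\ref{l:porosity} does not survive passing to the union $\bigcup_\w\supp b_\w$: at scale $|\lambda_+|^{-j}$ the hole in $\supp b_\w$ is where $A^j z\in K_{w_j}$, but for a different uncontrolled word $\w'$ with $w'_j\neq w_j$ this same segment need not be a hole. In the setting of Theorem~\ref{t:general} one has $\supp b_1\cup\supp b_2=\mathbb T^{2n}$, so every point lies in $\supp b_\w$ for \emph{some} word $\w$; restricting to uncontrolled words still allows up to $\alpha T_0$ positions with $w_j=1$ per block, which is enough to spoil porosity at $\alpha T_0$ of the $\sim T_1$ relevant scales, and Proposition~\ref{p:fractal_uncertainty_principle} requires porosity at \emph{all} scales.

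The paper avoids both problems by reversing the order of operations: it applies Proposition~\ref{l:FUP_type} to each individual word $\w\in\mathcal X$ (via Lemma~\ref{l:bound_Bw}, which is exactly your conjugation identity~\eqref{e:factorization} applied one word at a time), obtaining $\|B_\w\|_{\mathcal H\to\mathcal H}\leq C\mathbf N^{-\beta}$ uniformly in~$\w$, and only then sums using the triangle inequality together with the cardinality bound of Lemma~\ref{l:cardinal_X}. The combinatorial loss $\mathbf N^{\kappa(\alpha)}$ that you identify in your last paragraph then appears additively in the exponent rather than inside the symbol calculus, and the smallness condition on~$\alpha$ is precisely~\eqref{e:condition_alpha}.
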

The proof of Lemma \ref{l:uncontrolled} is reduced to a decay
result for long logarithmic words, Proposition~\ref{l:FUP_type}, in~\S\ref{s:reduction}.
The proof of the latter result is the point of~\S\ref{s:decay_long_words}.

\subsection{Proofs of Theorems \ref{t:general} and \ref{t:damped}}\label{s:proofs}

Let us explain now how Lemmas \ref{l:controlled_support}--\ref{l:uncontrolled} allow us to prove Theorems \ref{t:general} and \ref{t:damped}. First of all, we need to construct the functions $b_1$ and~$b_2$ that appear in the statement of these lemmas. To do so, we will use the following
two lemmas:
\begin{lemm}\label{l:safe-cpct}
Assume that $V\subset\T^{2n}$ is a safe open set.
Then $V$ contains a safe compact subset~$K$.
\end{lemm}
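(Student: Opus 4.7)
The plan is to handle $\T_+$ and $\T_-$ separately by a compactness argument in the quotient tori $\T^{2n}/\T_\pm$, and then take the union of the two compact sets produced.

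First I would work with $\T_+$ and consider the quotient map $\pi_+ : \T^{2n} \to \T^{2n}/\T_+$, which is continuous, open, and surjective onto a compact Hausdorff space because $\T_+$ is a closed subtorus of $\T^{2n}$ (recall that $\T_+$ is the projection of the $\Q$-rational subspace $V_+ \otimes \R$). For each $y \in \T^{2n}/\T_+$, the fiber $\pi_+^{-1}(y)$ is a shifted torus of the form $x + \T_+$, so the safeness of $V$ provides a point $x_y \in V \cap \pi_+^{-1}(y)$. Since $V$ is open, I may choose an open neighborhood $U_y$ of $x_y$ with $\overline{U_y} \subset V$, and then $\pi_+(U_y)$ is an open neighborhood of $y$ in $\T^{2n}/\T_+$ (openness of $\pi_+$).

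Next, compactness of $\T^{2n}/\T_+$ gives a finite subcover $\pi_+(U_{y_1}), \dots, \pi_+(U_{y_k})$, and I set $K_+ := \overline{U_{y_1}} \cup \dots \cup \overline{U_{y_k}}$. Then $K_+$ is a compact subset of $V$, and every fiber $\pi_+^{-1}(y)$ hits some $U_{y_i}$ and therefore hits $K_+$, so $K_+$ intersects every shifted torus $x + \T_+$. Running the identical argument with $\T_-$ in place of $\T_+$ produces a compact $K_- \subset V$ meeting every shifted torus $x + \T_-$, and the union $K := K_+ \cup K_-$ is then a safe compact subset of $V$.

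There is no real obstacle here: the proof is essentially a standard finite-subcover routine. The only point demanding care is the verification that the quotients $\T^{2n}/\T_\pm$ are compact Hausdorff and the maps $\pi_\pm$ are open, which follows at once from $\T_\pm$ being closed subgroups of $\T^{2n}$ (a consequence of the $\Q$-rationality built into the definition of $V_\pm$).
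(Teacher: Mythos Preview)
Your argument is correct and takes a genuinely different route from the paper's. The paper argues by contradiction: it exhausts $V$ by an increasing sequence of compact sets $K_m$, assumes none is safe, and extracts a convergent subsequence of ``bad'' points $x_m$ (together with a fixed sign $\sigma\in\{+,-\}$) whose limit $x$ must have $x+\T_\sigma$ meeting $V$; translating back to $x_m+\T_\sigma$ then contradicts the assumption for large $m$. Your argument is direct and more structural: you pass to the quotient tori $\T^{2n}/\T_\pm$, use openness of the quotient map and compactness of the quotient to extract a finite subcover, and assemble $K$ as a finite union of compact closures. Both approaches ultimately rest on compactness of the torus; yours exploits the group structure more explicitly (and produces the set $K$ constructively), while the paper's avoids setting up the quotient spaces and works purely with sequential compactness. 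Neither is essentially harder than the other. The one point you flag as needing care --- that $\T_\pm$ are closed subtori so that the quotients are compact Hausdorff --- is handled correctly: rationality of $V_\pm$ guarantees that $V_\pm\otimes\R$ meets $\Z^{2n}$ in a full-rank lattice, hence projects to a closed subtorus.
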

\begin{proof}
We argue by contradiction: we write $V=\bigcup_{m\in\mathbb N}K_m$ where $K_m$ are
compact sets with $K_m\subset K_{m+1}^\circ$ and assume that none of the $K_m$'s are safe.
Then for every $m \in \N$ there exist
$$
x_m \in \T^{2n},\quad
\sigma_m \in \set{+,-}\quad\text{such that}\quad
(x_m + \T_{\sigma_m})\cap K_m=\emptyset.
$$
Up to extracting a subsequence, we may assume that $(\sigma_m)_{m \in \N}$ is constant equal to some $\sigma \in \set{+,-}$ and that $(x_m)_{m \in \N}$ converges to some point $x \in \T^{2n}$.
Since $V$ is safe, the set $x+\mathbb T_\sigma$ intersects $V$.
Take $y\in (x+\mathbb T_\sigma)\cap V$ and put
$y_m:=x_m-x+y\in x_m+\mathbb T_\sigma$. Then $y_m\to y\in V$,
so $y_m\in K_m$ for $m$ large enough.
This gives a contradiction with our assumption that $x_m+\mathbb T_\sigma$ does not intersect~$K_m$.
\end{proof}

\begin{lemm}\label{l:partition_unity}
Let $U$ be an open subset of $\T^{2n}$
which is safe in the sense of Definition~\ref{d:safe}.
Then there exist $a_1,a_2\in C^\infty(\mathbb T^{2n})$ such that
$$
a_1,a_2\geq 0,\quad
a_1+a_2=1,\quad
\supp a_1\subset U
$$
and the complements $\mathbb T^{2n}\setminus \supp a_1$,
$\mathbb T^{2n}\setminus\supp a_2$ are both safe.
\end{lemm}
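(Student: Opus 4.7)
The plan is to reduce the lemma to a geometric core: construct a safe open set $V\subset U$ with $\overline V\subset U$ such that $\overline V$ contains no entire translate $x+\mathbb T_+$ or $x+\mathbb T_-$; equivalently, such that $\mathbb{T}^{2n}\setminus\overline V$ is also safe. Granting such a $V$, a smooth Urysohn construction produces $a_1\in C^\infty(\mathbb{T}^{2n})$ with $0\leq a_1\leq 1$, $a_1\equiv 1$ on an open neighborhood of $\overline V$, and $\supp a_1$ contained in a slight open thickening $W\subset U$ of $V$ inheriting the same no-translate-inside property. Setting $a_2:=1-a_1$ then yields the required partition of unity: $\mathbb{T}^{2n}\setminus\supp a_2\supset V$ is safe, and $\mathbb{T}^{2n}\setminus\supp a_1\supset\mathbb{T}^{2n}\setminus\overline W$ is safe.

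For the construction of $V$ I will use two geometric facts. First, each $\mathbb T_\pm$ is a connected subtorus of $\mathbb{T}^{2n}$ of positive dimension (since $V_\pm\supset E_\pm$ has $\mathbb Q$-dimension at least one), and hence has a strictly positive diameter $c_\pm>0$ in the flat quotient metric of $\mathbb{T}^{2n}$, a diameter shared by every translate $x+\mathbb T_\pm$ by translation-invariance of the metric. Second, the quotient maps $\pi_\pm\colon\mathbb{T}^{2n}\to\mathbb{T}^{2n}/\mathbb T_\pm$ are smooth surjective Lie group homomorphisms, hence open maps, and the safety of $U$ is exactly the condition $\pi_\pm(U)=\mathbb{T}^{2n}/\mathbb T_\pm$. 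By compactness of each quotient, for any sufficiently small $r>0$ I can pick finitely many distinct centers $x_1,\dots,x_K,y_1,\dots,y_L\in U$ with the open balls $B(x_i,r),B(y_j,r)$ contained in $U$ and such that $\bigcup_i\pi_+(B(x_i,r))=\mathbb{T}^{2n}/\mathbb T_+$ and $\bigcup_j\pi_-(B(y_j,r))=\mathbb{T}^{2n}/\mathbb T_-$. Taking $V$ as the union of all these balls, $V$ is safe by construction.

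The main obstacle is the extra condition that $\overline W$ contain no translate of $\mathbb T_\pm$. I will achieve this by first fixing the projected centers $\pi_+(x_i)$ and $\pi_-(y_j)$, then perturbing each $x_i$ within the open subset $U\cap\pi_+^{-1}(\pi_+(x_i))$ of its $\pi_+$-fiber (and similarly each $y_j$ within its $\pi_-$-fiber), which is nonempty and positive-dimensional since $\dim\mathbb T_\pm\geq 1$; this perturbation does not disturb the coverage condition and allows me to arrange that the $K+L$ centers have pairwise distance exceeding some $d_0>0$, after which I shrink $r$ to satisfy $4r<\min(d_0,c_+,c_-)$. Defining $W$ as the union of the enlarged open balls $B(x_i,2r),B(y_j,2r)\subset U$, its closure $\overline W$ is then a pairwise disjoint union of closed balls each of diameter at most $4r<c_\pm$. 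Any translate $x+\mathbb T_\pm\subset\overline W$ would, by connectedness of the translate and pairwise disjointness of the closed balls, have to lie inside a single such ball, contradicting the fact that the translate has diameter exactly $c_\pm$. Hence no translate lies in $\overline W$, so $\mathbb{T}^{2n}\setminus\overline W$ is safe, and the smooth Urysohn step finishes the construction.
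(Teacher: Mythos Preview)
Your approach is different from the paper's and contains a genuine gap.

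The problem is a circularity between the radius $r$ and the separation $d_0$. You first fix $r>0$ and choose centers $x_1,\dots,x_K,y_1,\dots,y_L$ (with $K,L$ depending on~$r$) so that the projected balls of radius~$r$ cover $\mathbb T^{2n}/\mathbb T_\pm$; you then perturb the centers within their fibers to make them pairwise distinct, obtaining some $d_0>0$; and finally you \emph{shrink}~$r$ so that $4r<d_0$. But shrinking~$r$ while keeping the same centers destroys the coverage condition $\bigcup_i\pi_+(B(x_i,r))=\mathbb T^{2n}/\mathbb T_+$, and hence the safety of~$V$. You cannot simply add more centers to restore coverage, since that changes~$d_0$. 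Your perturbation step only guarantees that the centers are distinct, i.e.\ $d_0>0$, which may well be $\ll r$; nothing in the argument shows that for some~$r$ the centers can be placed with $d_0>4r$. (Such a statement is plausible via a packing argument, using that $K+L\sim r^{-d}$ with $d=2n-\dim\mathbb T_\pm<2n$, but this requires a careful placement exploiting the fiber freedom and is not what you wrote.) There is also a minor issue: after sliding $x_i$ along its $\pi_+$-fiber you still need $B(x_i,2r)\subset U$, which further constrains the perturbation.

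The paper's proof avoids this balancing act entirely by a different construction. It fixes a hyperplane $H\subset\mathbb R^{2n}$ transverse to both $V_+\otimes\mathbb R$ and $V_-\otimes\mathbb R$ and shows that for $R$ large the projected disk $D_R:=\pi(B_H(R))\subset\mathbb T^{2n}$ is safe. Since $D_R$ is a piece of a hypersurface transverse to every translate $x+\mathbb T_\pm$, it meets each such translate in a set of empty interior; hence $U\setminus D_R$ is still open and safe, and Lemma~\ref{l:safe-cpct} produces a compact safe $K_1\subset U\setminus D_R$. Then $\mathbb T^{2n}\setminus K_1\supset D_R$ is safe, a second application of Lemma~\ref{l:safe-cpct} gives a compact safe $K_2\subset\mathbb T^{2n}\setminus K_1$, and a partition of unity subordinate to $\{U\setminus K_2,\ \mathbb T^{2n}\setminus K_1\}$ finishes. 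The transversal hypersurface is automatically ``thin'' in every translate, so no interplay between radii and separation is needed.
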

\begin{proof}
\noindent 1. We show that there exist two compact sets
\begin{equation}
  \label{e:K-1-2}
K_1,K_2\subset\mathbb T^{2n}\quad\text{such that}\quad
K_1\cap K_2=\emptyset,\quad
K_1\subset U,
\end{equation}
and $K_1,K_2$ are both safe. To do this, let $H\subset\mathbb R^{2n}$ be a hyperplane
transverse to each of the tangent spaces $V_\pm\otimes\mathbb R$ of $\mathbb T_\pm$
where $V_\pm\subset\mathbb Q^{2n}$ are the subspaces defined in the paragraph
preceding~\eqref{e:T-pm-def}. Denote by $\pi:\mathbb R^{2n}\to\mathbb T^{2n}$ the projection map.
Take large $R>0$,
denote by $B_H(R)$ the closed ball of radius~$R$ in $H$, and define
$$
D_R:=\pi(B_H(R))\ \subset\ \mathbb T^{2n}.
$$
Then, we can fix $R$ large enough so that the set $D_R$ is safe.
Indeed, every element of $\mathbb T^{2n}$ can be written
as $\pi(x)$ for some $x\in [0,1]^{2n}\subset\mathbb R^{2n}$.
Then we can decompose $x=x'_\pm+x''_\pm$
where $x'_\pm\in H$, $x''_\pm\in V_\pm\otimes\mathbb R$.
Moreover, if $R$ is large enough then we can choose this decomposition
so that $x'_\pm\in B_H(R)$. Then $\pi(x'_\pm)\in (\pi(x)+\mathbb T_\pm)\cap D_R$.

Now, the open set $U\setminus D_R$ is safe since $U$ is open and safe and each
intersection $D_R\cap (x+\mathbb T_\pm)$ has empty interior in $x+\mathbb T_\pm$.
Then by Lemma~\ref{l:safe-cpct} there exists a safe compact set $K_1\subset U\setminus D_R$.
The complement $\mathbb T^{2n}\setminus K_1$ contains $D_R$ and thus
is an open safe set. Using Lemma~\ref{l:safe-cpct} again, let
$K_2$ be a compact safe subset of this complement,
then $K_1,K_2$ satisfy~\eqref{e:K-1-2}.

\noindent 2. Using a partition of unity subordinate
to the cover of $\mathbb T^{2n}$ by the sets
$U\setminus K_2$, $\mathbb T^{2n}\setminus K_1$
we choose $a_1,a_2\in C^\infty(\mathbb T^{2n})$ such that
$$
a_1,a_2\geq 0,\quad
a_1+a_2=1,\quad
\supp a_1\subset U\setminus K_2,\quad
\supp a_2\subset\mathbb T^{2n}\setminus K_1.
$$
The complements of $\supp a_1$, $\supp a_2$ contain the sets
$K_2$, $K_1$ and are thus both safe, finishing the proof.
\end{proof}
We are now ready to prove Theorems \ref{t:general} and \ref{t:damped}.
\begin{proof}[Proof of Theorem \ref{t:general}]
Let $a \in C^\infty\p{\T^{2n}}$ be as in the statement of Theorem \ref{t:general}.
Since $\set{a \neq 0}$ satisfies the geometric control condition transversally to $\T_+$ and $\T_-$, the open set
$$
U:=\bigcup_{m \in \Z} A^{m} \big(\{a \neq 0\}\big)
$$
is safe.
We apply Lemma \ref{l:partition_unity} to~$U$
to construct two functions $a_1,a_2$ and we set $b_1 := a_1$ and $b_2 := a_2$
in~\S\ref{s:words}.
Notice that we have consequently $b = b_1+b_2=1$ and \eqref{e:decomposition} becomes
\begin{equation*}
I = B_{\mathcal{X}} + B_{\mathcal{Y}}.
\end{equation*}
Fix $\alpha > 0$ be small enough so that Lemma~\ref{l:uncontrolled} applies,
that is there are $C, \delta > 0$ such that for every $u \in \mathcal{H}_{\mathbf N}(\theta)$ we have
\begin{equation}\label{e:unc_support}
\begin{split}
\n{B_{\mathcal{X}} u }_{\mathcal{H}} \leq C \mathbf N^{- \delta} \n{u}_{\mathcal{H}}.
\end{split}
\end{equation}
Next, applying Lemma~\ref{l:controlled_support} with sufficiently small $\varepsilon>0$,
recalling~\eqref{e:condition_J},
and making $C$ larger and $\delta>0$ smaller if needed, we have for every $u \in \mathcal{H}_{\mathbf N}(\theta)$
\begin{equation}\label{e:con_support}
\n{B_{\mathcal{Y}} u}_{\mathcal{H}} \leq C \n{\Op_{\mathbf{N},\theta}(a)u}_{\mathcal{H}} + C \mathbf{r}_M(u) \log \mathbf{N} +  C \mathbf{N}^{ - \delta} \n{u}_{\mathcal{H}}.
\end{equation}
Putting \eqref{e:unc_support} and \eqref{e:con_support} together, we find that, for every $u \in \mathcal{H}_{\mathbf N}(\theta)$, we have
\begin{equation}\label{e:almost_support}
\n{u}_{\mathcal{H}} \leq C \n{\Op_{\mathbf{N},\theta}(a)u}_{\mathcal{H}} + C \mathbf{r}_M(u) \log \mathbf{N} +  C \mathbf{N}^{- \delta}\n{u}_{\mathcal{H}}.
\end{equation}
Now, for $\mathbf N$ large enough we can remove the last term on the right-hand side
of~\eqref{e:almost_support}, obtaining~\eqref{e:general-estimate} and finishing the proof.\end{proof}
\smallskip
\begin{proof}[Proof of Theorem \ref{t:damped}]
We will find $\mathbf N$-independent constants $C, \delta, \kappa > 0$ and an integer $m_0\geq 0$ such that
\begin{equation}\label{e:estimate_spectral_radius}
\n{\p{\Op_{\mathbf{N},\theta}(b) M_{\mathbf{N},\theta}}^{(2m_0+1)\lfloor \kappa \log \mathbf N \rfloor}}_{\mathcal{H} \to \mathcal{H}} \leq C \mathbf{N}^{- \delta}.
\end{equation}
It will then follow from \eqref{e:estimate_spectral_radius} that the spectral radius of $\Op_{\mathbf{N},\theta}(b) M_{\mathbf N,\theta}$ is bounded above by
\begin{equation*}
C^{\frac{1}{(2m_0+1)\lfloor \kappa \log \mathbf N \rfloor}} \exp\p{- \frac{\delta \log \mathbf N}{(2m_0+1)\lfloor \kappa \log \mathbf N \rfloor}}\quad \xrightarrow{\mathbf N \to + \infty}\quad e^{- \frac{\delta}{(2m_0+1)\kappa}} < 1
\end{equation*}
which will give the conclusion of the theorem.

\noindent 1. We first reduce to the situation when
the set $\{|b|<1\}$ is safe (which is a stronger assumption
than made in Theorem~\ref{t:damped}).
Consider the open set
$$
U:=\bigcup_{m\in\mathbb Z}A^m\big(\{|b|<1\}\big)=\bigcup_{m_0\geq 1} U_{m_0}\quad\text{where}\quad
U_{m_0}:=\bigcup_{|m|\leq m_0}A^m\big(\{|b|<1\}\big).
$$
By the assumption of the theorem, the set $U$ is safe.
By Lemma~\ref{l:safe-cpct}, $U$
contains a safe compact subset $K$. Since each $U_{m_0}$ is open,
we may fix~$m_0$ such that $U_{m_0}\supset K$,
which implies that $U_{m_0}$ is safe.

Using~\eqref{e:our-egorov} and~\eqref{e:our-product-2} (with $\rho = \rho' = 0$), we next see that
\begin{equation*}
\p{\Op_{\mathbf{N},\theta}(b) M_{\mathbf N,\theta}}^{2 m_0 +1} = M^{m_0}_{\mathbf{N},\theta} \Op_{\mathbf{N},\theta}(\tilde{b}) M^{m_0 +1}_{\mathbf{N},\theta} + \mathcal{O}\p{\mathbf N^{-1}}_{\mathcal{H}_{\mathbf N}(\theta) \to \mathcal{H}_{\mathbf N}(\theta)},
\end{equation*}
where 
\begin{equation*}
\tilde{b} := \prod_{m = -m_0}^{m_0} b \circ A^m.
\end{equation*}
Since $|b|\leq 1$ everywhere, the set $\{|\tilde b|<1\}=U_{m_0}$ is safe.

Using \eqref{e:basic-norm} to bound the operator norm of $\Op_{\mathbf{N},\theta}(\tilde{b})$ by $1 + \mathcal{O}(\mathbf{N}^{-{1\over 2}})$, we have for any fixed $\kappa$
\begin{equation*}
\begin{split}
& \p{\Op_{\mathbf{N},\theta}(b) M_{\mathbf N,\theta}}^{(2 m_0 +1)\lfloor \kappa \log \mathbf N \rfloor } \\ & \qquad \qquad = M_{\mathbf{N},\theta}^{m_0} \p{\Op_{\mathbf{N},\theta}(\tilde{b})M_{\mathbf N,\theta}^{2 m_0 +1}}^{\lfloor \kappa \log \mathbf N \rfloor} M_{\mathbf{N},\theta}^{-m_0} + \mathcal{O}(\mathbf{N}^{-1+})_{\mathcal{H}_{\mathbf N}(\theta) \to \mathcal{H}_{\mathbf N}(\theta))}.
\end{split}
\end{equation*}
Hence to show~\eqref{e:estimate_spectral_radius} it suffices to prove
\begin{equation}
  \label{e:estimate_spectral_radius_2}
\Big\|\big(\Op_{\mathbf N,\theta}(\tilde b)M_{\mathbf N,\theta}^{2m_0+1}\big)^{\lfloor\kappa\log\mathbf N\rfloor}\Big\|_{\mathcal H\to\mathcal H}\leq C\mathbf N^{-\delta}.
\end{equation}
The operator $M^{2m_0+1}$ lies in $\mathcal M_{A^{2m_0+1}}$,
with the matrix $A^{2m_0+1}$ still satisfying the spectral gap
assumption~\eqref{e:spectral-gap} and producing the
same tori $\mathbb T_\pm$. Therefore, to show~\eqref{e:estimate_spectral_radius_2}
it suffices to prove the bound
\begin{equation}
  \label{e:estimate_spectral_radius_3}
\Big\|\big(\Op_{\mathbf N,\theta}(b)M_{\mathbf N,\theta}\big)^{\lfloor\kappa\log\mathbf N\rfloor}\Big\|_{\mathcal H\to\mathcal H}\leq C\mathbf N^{-\delta}
\end{equation}
for any $A$ satisfying~\eqref{e:spectral-gap} and any $M\in \mathcal M_A$,
where we assume that $b\in C^\infty(\mathbb T^{2n})$,
$|b|\leq 1$ everywhere, and the set $\{|b|<1\}$ is safe.

\noindent 2. We now show~\eqref{e:estimate_spectral_radius_3}.
Using Lemma~\ref{l:partition_unity} for the set $\{|b|<1\}$,
we construct two cutoff functions $a_1,a_2\in C^\infty(\mathbb T^{2n})$. Put
$$
b_1 = a_1 b,\quad
b_2 = a_2 b.
$$
Note that $b = b_1 + b_2$ in agreement with the convention of~\S\ref{s:words}
and $B = \Op_{\mathbf{N},\theta}(b)$.
Moreover, $|b_1|+|b_2|=|b|\leq 1$.
 By construction, $b_1$ and $b_2$ satisfy the hypotheses of Lemma~\ref{l:uncontrolled}. Consequently, we can choose $\alpha$ small enough so that this lemma applies: there are $C,
\delta > 0$ such that
\begin{equation*}
\n{B_{\mathcal{X}}}_{\mathcal{H} \to \mathcal{H}} \leq C \mathbf{N}^{- \delta}.
\end{equation*}
As $a_1$ is supported in $\set{\va{b} < 1}$, we see that $\va{b_1} + \va{b_2} < 1$ on
$\supp b_1$, and we can consequently apply Lemma~\ref{l:controlled_damped} to see that, up to making $C$ larger and $\delta$ smaller, we also have
\begin{equation*}
\n{B_{\mathcal{Y}}}_{\mathcal{H} \to \mathcal{H}} \leq C \mathbf{N}^{-\delta}.
\end{equation*}
Recalling \eqref{e:decomposition}, we see that
\begin{equation*}
\begin{split}
\n{\p{\Op_{\mathbf{N},\theta}(b) M_{\mathbf{N},\theta}}^{2 T_1}}_{\mathcal{H} \to \mathcal{H}} & = \n{M_{\mathbf{N},\theta}^{2 T_1 - 1} \p{B_{\mathcal{X}} + B_{\mathcal{Y}}} M_{\mathbf{N},\theta}}_{\mathcal{H} \to \mathcal{H}} \\
    & \leq \n{B_{\mathcal{X}}}_{\mathcal{H} \to \mathcal{H}} + \n{B_{\mathcal{Y}}}_{\mathcal{H} \to \mathcal{H}} \leq 2 C \mathbf{N}^{- \delta}.
\end{split}
\end{equation*}
Since $T_1$ is defined by \eqref{e:def_times}, we just established \eqref{e:estimate_spectral_radius_3}, which ends the proof of the theorem.
\end{proof}

\subsection{Proof of Theorem~\ref{t:measures}}
\label{s:proof-measures}

We argue by contradiction. Let $\mu$ be a semiclassical measure associated to~$A$
and assume that $\supp\mu$ does not contain any sets of the form
$x+\mathbb T_+$ or $x+\mathbb T_-$. Then the complement
$\mathbb T^{2n}\setminus\supp\mu$ is an open safe set in the sense of Definition~\ref{d:safe}.
{By Lemma~\ref{l:safe-cpct}},
there exists a compact safe set $K$ such that
$K\cap\supp\mu=\emptyset$. Take
$a\in C^\infty(\mathbb T^{2n})$ such that
$\supp a\cap\supp\mu=\emptyset$ and
$a=1$ on $K$. Then the set $\{a\neq 0\}$ is safe, so
$a$ satisfies the hypothesis of Theorem~\ref{t:general}.

Let $u_j\in\mathcal H_{\mathbf N_j}(\theta_j)$ be a sequence converging to~$\mu$
in the sense of~\eqref{e:measure-def}. Since $u_j$ is an eigenfunction of
$M_{\mathbf N_j,\theta_j}$ and $\mathbf N_j\to\infty$, Theorem~\ref{t:general}
shows that there exists a constant $C_a$ such that for all large enough $j$
\begin{equation}
  \label{e:lobound}
1=\|u_j\|_{\mathcal H}\leq C_a\|\Op_{\mathbf N_j,\theta_j}(a)u_j\|_{\mathcal H}.
\end{equation}
Now, by~\eqref{e:our-adjoint} and~\eqref{e:our-product-2} (with $\rho=\rho'=0$)
we have 
\begin{equation}
  \label{e:lobound-2}
\begin{aligned}
\|\Op_{\mathbf N_j,\theta_j}(a)u_j\|_{\mathcal H}^2
&=\langle\Op_{\mathbf N_j,\theta_j}(a)^*\Op_{\mathbf N_j,\theta_j}(a)u_j,u_j\rangle_{\mathcal H}\\
&=\langle\Op_{\mathbf N_j,\theta_j}(|a|^2)u_j,u_j\rangle_H+\mathcal O(\mathbf N_j^{-1}).
\end{aligned}
\end{equation}
By~\eqref{e:lobound}, the left-hand side of~\eqref{e:lobound-2}
is bounded away from~0 as $j\to\infty$. By~\eqref{e:measure-def}, the right-hand side
of~\eqref{e:lobound-2} converges to $\int_{\mathbb T^{2n}}|a|^2\,d\mu$; thus
this integral is positive. This contradicts the fact that $\supp a\cap\supp\mu=\emptyset$
and finishes the proof.

\subsection{Estimates in the controlled region}\label{s:controlled}

Here we prove Lemmas \ref{l:controlled_support} and \ref{l:controlled_damped}, using the notation that we introduced in \S \ref{s:words}. We start by relating the operator $B_\w$ from~\eqref{e:operator_bw} and the symbol $b_\w$ from~\eqref{e:symbole_bw}.
Recall the symbol classes $S_{L,\rho,\rho'}(\mathbb T^{2n})$ from~\S\ref{s:quantization-on-torus},
the constants $\rho,\rho'$ defined in~\eqref{e:conditions_rho},
the integers $J,T_0$ defined in~\eqref{e:condition_J} and~\eqref{e:def_times}, and 
the hyperplane $L_-\subset\mathbb R^{2n}$ defined in~\eqref{e:L-pm-def}.
\begin{lemm}\label{l:short_log_words}
Let $\varepsilon > 0$. For every $\w \in \mathcal{W}(T_0)$, the symbol $b_\w$ belongs to the symbol class $S_{L_-, \frac{\rho}{J} + \varepsilon, \frac{\rho'}{J} + \varepsilon}(\mathbb T^{2n})$ and
\begin{equation*}
B_\w = \Op_{\mathbf{N},\theta}(b_\w) + \mathcal{O}\p{\mathbf{N}^{\frac{\rho + \rho'}{J} + \varepsilon - 1}}_{\mathcal{H}_{\mathbf N}(\theta) \to \mathcal{H}_{\mathbf N}(\theta)}.
\end{equation*}
Here the semi-norms of $b_\w$ and the constant in the $\mathcal{O}(\bullet)$ are bounded uniformly in $\w$.
\end{lemm}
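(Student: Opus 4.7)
The plan is to combine the exact Egorov theorem with the long-product machinery of Lemma~\ref{l:long_product}, after establishing that each factor $b_{w_j}\circ A^j$ sits in a sufficiently mild anisotropic class uniformly in $j\in\{0,\dots,T_0-1\}$ and in $w_j\in\{1,2\}$.

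First I would rewrite the conjugated factors using~\eqref{e:our-egorov}:
\begin{equation*}
B_{w_j}(j)=M_{\mathbf N,\theta}^{-j}\Op_{\mathbf N,\theta}(b_{w_j})M_{\mathbf N,\theta}^{j}=\Op_{\mathbf N,\theta}(b_{w_j}\circ A^j),
\end{equation*}
so that
\begin{equation*}
B_\w=\Op_{\mathbf N,\theta}(b_{w_{T_0-1}}\circ A^{T_0-1})\cdots\Op_{\mathbf N,\theta}(b_{w_1}\circ A)\Op_{\mathbf N,\theta}(b_{w_0}).
\end{equation*}

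Next I would check that for $0\leq j\leq T_0-1$ the symbol $b_{w_j}\circ A^j$ lies in $S_{L_-,\rho/J,\rho'/J}(\mathbb T^{2n})$ uniformly in $j$ and $w_j$. Since $b_{w_j}$ is $\mathbf N$-independent and smooth, the chain rule gives, for constant vector fields $X_1,\dots,X_k$ on $\mathbb R^{2n}$ and $Y_1,\dots,Y_m$ tangent to the $A$-invariant hyperplane $L_-$,
\begin{equation*}
\sup_{\mathbb T^{2n}}|X_1\cdots X_k Y_1\cdots Y_m(b_{w_j}\circ A^j)|\leq C_{k,m}\,\|A^j\|^k\,\|A^j|_{L_-}\|^m.
\end{equation*}
By~\eqref{e:L-pm-norm} and the definition~\eqref{e:def_times} of $T_0$, the right-hand side is bounded by $C_{k,m}|\lambda_+|^{T_0 k}\gamma^{T_0 m}$, and since $|\lambda_+|^{T_0}\leq|\lambda_+|^{T_1/J}\asymp \mathbf N^{\rho/J}$ while~\eqref{e:conditions_rho} gives $\gamma^{T_0}\leq\gamma^{T_1/J}=\mathbf N^{\rho\log\gamma/(J\log|\lambda_+|)}\leq \mathbf N^{\rho'/J}$, this is $\mathcal O(h^{-\rho k/J-\rho' m/J})$ as required. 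Moreover $|b_{w_j}\circ A^j|\leq|b_{w_j}|\leq 1$ by~\eqref{e:b-assumptions}.

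Finally I would apply Lemma~\ref{l:long_product} with the replacement $(\rho,\rho')\mapsto(\rho/J,\rho'/J)$ (legitimate since $\rho/J+\rho'/J<1$) to the $R=T_0=\mathcal O(\log\mathbf N)$ symbols $a_\ell:=b_{w_{T_0-\ell}}\circ A^{T_0-\ell}$, $\ell=1,\dots,T_0$. Part~1 of that lemma yields $b_\w=\prod_{j=0}^{T_0-1}b_{w_j}\circ A^j\in S_{L_-,\rho/J+\varepsilon,\rho'/J+\varepsilon}(\mathbb T^{2n})$, and part~2 gives
\begin{equation*}
B_\w=\Op_{\mathbf N,\theta}(b_\w)+\mathcal O\big(\mathbf N^{(\rho+\rho')/J+\varepsilon-1}\big)_{\mathcal H_{\mathbf N}(\theta)\to\mathcal H_{\mathbf N}(\theta)},
\end{equation*}
with all constants uniform in $\w$ since the $S_{L_-,\rho/J,\rho'/J}$-seminorms of the individual $b_{w_j}\circ A^j$ were bounded uniformly. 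There is no real obstacle here; the only point to be careful about is matching the propagation time $T_0$ to the symbol calculus exponents via~\eqref{e:conditions_rho} and~\eqref{e:def_times}, which is precisely why $T_0$ was defined as $T_1/J$.
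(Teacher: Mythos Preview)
Your proof is correct and follows essentially the same approach as the paper: both first verify via the chain rule and the norm bounds~\eqref{e:L-pm-norm} that each factor $b_{w_j}\circ A^j$ lies in $S_{L_-,\rho/J,\rho'/J}(\mathbb T^{2n})$ uniformly in $j$ and~$w_j$, then invoke the exact Egorov theorem~\eqref{e:our-egorov} and Lemma~\ref{l:long_product} (with the exponents $(\rho/J,\rho'/J)$) to pass from the product of operators to the quantization of the product of symbols. The only cosmetic difference is the order of presentation; your estimates and the final application of Lemma~\ref{l:long_product} match the paper's argument.
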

\begin{proof}
1. We first show that each $S_{L_-,{\rho\over J},{\rho'\over J}}$-seminorm
of the symbols
\begin{equation*}
b_1 \circ A^j \quad \textup{ and } \quad b_2 \circ A^j,\quad
0\leq j\leq T_0
\end{equation*}
is bounded uniformly in $j$ and $\mathbf N$. To simplify notation we give the proof for $b_1$,
which applies to $b_2$ as well (in fact, it applies with $b_1$ replaced by any fixed function
in $C^\infty(\mathbb T^{2n})$).

Let $X_1,\dots,X_k, Y_1,\dots,Y_m$ be constant vector fields on $\R^{2n}$ such that $Y_1,\dots,Y_m$ are tangent to $L_-$. Since $A$ is a linear map, we compute for $x \in \T^{2n}$
\begin{equation*}
X_1 \dots X_k Y_1 \dots Y_m (b_1 \circ A^j) (x) = D^{k+m} b_1 (A^j x) \cdot \p{A^j X_1,\dots,A^j X_k, A^j Y_1,\dots, A^j Y_m}.
\end{equation*}
Here, $D^{k +m} b_1$ denotes the $k+m$-th derivative of $b_1$, which is a $k+m$-linear form, uniformly bounded since it does not depend on $\mathbf{N}$. Consequently, for some $\mathbf N$-independent constant $C > 0$, we have
\begin{equation*}
\sup\va{X_1 \dots X_k Y_1 \dots Y_m (b_1 \circ A^j)} \leq C \va{A^j X_1} \cdots \va{A^j X_k} \va{A^j Y_1} \cdots \va{A^j Y_m}.
\end{equation*}
Using the norm bounds~\eqref{e:L-pm-norm} on $\|A^j\|$ and $\|A^j|_{L_-}\|$
we get (for a different choice of~$C$)
\begin{equation*}
\begin{split}
\sup\va{X_1 \dots X_k Y_1 \dots Y_m (b_1 \circ A^j)} & \leq C \va{\lambda_+}^{jk} \gamma^{jm} \leq C \va{\lambda_+}^{k T_0} \gamma^{m T_0} \\
    & \leq C \mathbf{N}^{\frac{\rho}{J} k + \frac{\rho'}{J} m},
\end{split}
\end{equation*}
which is precisely the required estimate. Here we used the definition~\eqref{e:def_times} of $T_0$ and the condition \eqref{e:conditions_rho} that we imposed on $\rho'$.

\noindent 2. Let $\w\in \mathcal W(T_0)$. By the exact Egorov theorem~\eqref{e:our-egorov}, and recalling~\eqref{e:conjugation-def}, we have
$$
B_{w_j}(j)=\Op_{\mathbf N,\theta}(b_{w_j}\circ A^j)\quad\text{for all}\quad j=0,\dots,T_0-1.
$$
Now it remains to use Lemma~\ref{l:long_product}, the definitions~\eqref{e:operator_bw}
and~\eqref{e:symbole_bw}
of $B_\w$ and $b_\w$, and the uniform bound etablished in Step~1 of this proof.
\end{proof}
With Lemma~\ref{l:short_log_words} at our disposal, we can produce the proof of Lemma \ref{l:controlled_damped}.

\begin{proof}[Proof of Lemma \ref{l:controlled_damped}] 
1. Let us prove first the required estimate for the operator $B_{\mathcal{Z}}$ (instead of $B_{\mathcal{Y}}$) associated to the set $\mathcal{Z}$ defined by \eqref{e:def_Z}.
Put
$$
\eta:=\max_{\supp b_1} \big(|b_1|+|b_2|\big)<1
$$
then $\va{b_1} \leq \eta \p{ 1 - \va{b_2}}$ everywhere. For $\w\in\mathcal W(T_0)$,
denote
$$
\tilde b_\w:=\prod_{j=0}^{T_0-1}\tilde b_{w_j}\circ A^j\quad\text{where}\quad
\tilde b_1:=1-|b_2|,\quad
\tilde b_2:=|b_2|.
$$
Recalling the function $F$ from~\eqref{e:definition_F} and the definition~\eqref{e:symbole_bw} of $b_\w$, we see that
\begin{equation*}
\va{b_\w} \leq \eta^{\alpha T_0} \tilde b_\w\quad\text{for all}\quad
\w\in\mathcal Z.
\end{equation*}
Summing over $\w \in \mathcal{Z}$, we find that
\begin{equation*}
\begin{split}
\va{b_{\mathcal{Z}}} & \leq \eta^{\alpha T_0} \sum_{\w \in \mathcal{Z}}\tilde b_\w \leq \eta^{\alpha T_0} \sum_{\w \in \mathcal{W}(T_0)} \tilde b_\w = \eta^{\alpha T_0}.
\end{split}
\end{equation*}
Let $\varepsilon > 0$ be very small. By~\eqref{e:def_times}, there are at most $2^{T_0} \leq \mathbf{N}^{\frac{\rho \log 2}{J \log \va{\lambda_+}}}$ elements in~$\mathcal{Z}$. It follows from Lemma \ref{l:short_log_words} that $\mathbf{N}^{-\frac{\rho \log 2}{J \log \va{\lambda_+}}} b_{\mathcal{Z}}$ is bounded in $S_{L_-, \frac{\rho}{J} + \varepsilon, \frac{\rho'}{J} + \varepsilon}$ uniformly in~$\mathbf N$ and that 
\begin{equation*}
\mathbf{N}^{-\frac{\rho \log 2}{J \log \va{\lambda_+}}} B_{\mathcal{Z}} = \Op_{\mathbf{N},\theta}\p{\mathbf{N}^{-\frac{\rho \log 2}{J \log \va{\lambda_+}}} b_{\mathcal{Z}}} + \mathcal{O}\p{\mathbf{N}^{\frac{\rho + \rho'}{J} + \varepsilon - 1}}_{\mathcal{H}_{\mathbf N}(\theta) \to \mathcal{H}_{\mathbf N}(\theta)}.
\end{equation*}
We can consequently apply \eqref{e:basic-norm} to the symbol $\mathbf{N}^{-\frac{\rho \log 2}{J \log \va{\lambda_+}}} b_{\mathcal{Z}}$ in order to find that, for some $C > 0$ that may vary from one line to another,
\begin{equation}\label{e:estimate_BZ}
\begin{split}
\n{B_{\mathcal{Z}}}_{\mathcal{H} \to \mathcal{H}} & \leq \mathbf{N}^{\frac{\rho \log 2}{J \log \va{\lambda_+}}}\n{\Op_{\mathbf{N},\theta}\p{\mathbf{N}^{-\frac{\rho \log 2}{J \log \va{\lambda_+}}} b_{\mathcal{Z}}}}_{\mathcal{H} \to \mathcal{H}} + C \mathbf{N}^{\frac{\rho \log 2}{J \log \va{\lambda_+}}+\frac{\rho + \rho'}{J} + \varepsilon - 1} \\
   & \leq \eta^{\alpha T_0} + C \mathbf{N}^{\frac{\rho \log 2}{J \log \va{\lambda_+}}+\frac{\rho + \rho'}{2J} + \varepsilon - {1\over 2}} \\
   & \leq C \mathbf{N}^{- \delta},
\end{split}
\end{equation}
with
\begin{equation*}
\delta := \min \p{- \frac{\alpha \rho \log \eta}{J \log \va{\lambda_+}}, {1\over 2} - \frac{\rho \log 2}{J \log \va{\lambda_+}}-\frac{\rho + \rho'}{2J} - \varepsilon}.
\end{equation*}
Notice that the condition \eqref{e:condition_J} that we imposed on $J$ ensures that $\delta$ is positive (provided $\varepsilon$ is small enough).

\noindent 2. The same proof with $\eta$ replaced by $1$ gives 
\begin{equation}\label{e:estimate_others}
\n{B_{\mathcal{W}(T_0)}}_{\mathcal{H} \to \mathcal{H}} \leq 1 + C \mathbf{N}^{- \delta},\quad \n{B_{\mathcal{W}(T_0) \setminus \mathcal{Z}}}_{\mathcal{H} \to \mathcal{H}} \leq 1 + C \mathbf{N}^{- \delta} .
\end{equation}

\noindent 3. Let us now use the estimates \eqref{e:estimate_BZ} and \eqref{e:estimate_others} to prove the lemma. If $\w=\w^{(1)}\dots \w^{(2J)}\in\mathcal W(2T_1)$
is the concatenation of the words $\w^{(1)},\dots,\w^{(2J)}\in\mathcal W(T_0)$,
then by~\eqref{e:operator_bw} we have
$$
B_\w=B_{\w^{(2J)}}((2J-1)T_0)\cdots B_{\w^{(2)}}(T_0)B_{\w^{(1)}}(0).
$$
Using the definition~\eqref{e:def_Y} of $\mathcal{Y}$, 
and splitting this set into the disjoint union of $2J$ subsets corresponding
to the largest $\ell$ such that $\w^{(\ell)}\in\mathcal Z$,
we write
\begin{equation}
\label{e:Y-decomposed}
\begin{split}
B_{\mathcal{Y}} = \sum_{\ell = 1}^{2J} M_{\mathbf{N},\theta}^{-(2J - 1)T_0} \p{B_{\mathcal{W}(T_0) \setminus \mathcal{Z}} M_{\mathbf{N},\theta}^{T_0}}^{2 J - \ell} B_{\mathcal{Z}}  \p{M_{\mathbf{N},\theta}^{T_0} B_{\mathcal{W}(T_0)} }^{\ell - 1} .
\end{split}
\end{equation}
According to \eqref{e:estimate_BZ} and \eqref{e:estimate_others}, the operator norm of each term in this sum is less than
\begin{equation*}
C \p{1 + C\mathbf{N}^{- \delta}}^{2J - 1} \mathbf{N}^{- \delta}. 
\end{equation*}
Since the number of terms in this sum is $2J$, that does not depend on $\mathbf{N}$, the estimate \eqref{e:controlled_damped} follows.
\end{proof}
In order to prove Lemma \ref{l:controlled_support}, we need a few more preliminary results.
We start with a norm estimate on the operators $B_c$ defined in~\eqref{e:B-c-def}.
\begin{lemm}\label{l:garding_words}
Assume that $0 \leq b_1,b_2 \leq 1$. Let $\varepsilon > 0$. Let
$$
c,d:\mathcal W(T_0)\to\mathbb C,\quad
|c(\w)|\leq d(\w)\leq 1\quad\text{for all}\quad \w\in\mathcal W(T_0).
$$
Then there is a constant $C > 0$, that does not depend on $c$ nor $d$ such that, for every $u \in \mathcal{H}_{\mathbf{N}}(\theta)$, we have
\begin{equation*}
\n{B_c u}_{\mathcal{H}} \leq \n{B_d u}_{\mathcal{H}} + C \mathbf{N}^{- \frac{1}{2} +\frac{1}{2J}\p{1 + \frac{2 \log 2}{\log \va{\lambda_+}}}+\varepsilon } \n{u}_{\mathcal{H}}
\end{equation*}
\end{lemm}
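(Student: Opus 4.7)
The plan is to pass from the operators $B_c, B_d$ to the pseudodifferential operators $\Op_{\mathbf N,\theta}(b_c), \Op_{\mathbf N,\theta}(b_d)$ using Lemma~\ref{l:short_log_words}, then compare the latter two via Lemma~\ref{l:advanced-norm}, exploiting the pointwise inequality $|b_c| \leq b_d$. The only subtlety is keeping track of the $2^{T_0}$ factor coming from the number of words in $\mathcal{W}(T_0)$.

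First I would observe that since $b_1, b_2 \geq 0$, each symbol $b_\w$ defined by \eqref{e:symbole_bw} is nonnegative, so the assumption $|c(\w)| \leq d(\w)$ gives the pointwise bound $|b_c| \leq \sum_\w |c(\w)| b_\w \leq \sum_\w d(\w) b_\w = b_d$ on $\T^{2n}$. Next, by Lemma~\ref{l:short_log_words}, each $b_\w$ has $S_{L_-, \rho/J+\varepsilon, \rho'/J+\varepsilon}$-seminorms bounded uniformly in $\w$, and since $|\mathcal{W}(T_0)| = 2^{T_0} \leq \mathbf{N}^{\rho \log 2/(J \log |\lambda_+|)}$, both $b_c$ and $b_d$ lie in the same symbol class with seminorms of order at most $\mathbf{N}^{\rho \log 2/(J \log|\lambda_+|)}$. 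Lemma~\ref{l:short_log_words} also gives
\begin{equation*}
B_c = \Op_{\mathbf N,\theta}(b_c) + \mathcal{O}\p{\mathbf{N}^{\rho \log 2/(J \log |\lambda_+|)+(\rho+\rho')/J+\varepsilon-1}}_{\mathcal{H} \to \mathcal{H}}
\end{equation*}
(and similarly for $B_d$) by summing the errors over all $\w \in \mathcal{W}(T_0)$.

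The main step is then to apply Lemma~\ref{l:advanced-norm} to the rescaled symbols $\mathbf{N}^{-\rho \log 2/(J \log |\lambda_+|)} b_c$ and $\mathbf{N}^{-\rho \log 2/(J \log |\lambda_+|)} b_d$, which are now bounded in $S_{L_-, \rho/J+\varepsilon, \rho'/J+\varepsilon}$ uniformly in $\mathbf N$ and still satisfy the pointwise inequality. Multiplying through by the rescaling factor yields
\begin{equation*}
\n{\Op_{\mathbf N,\theta}(b_c) u}_{\mathcal H} \leq \n{\Op_{\mathbf N,\theta}(b_d) u}_{\mathcal H} + C\,\mathbf{N}^{\rho \log 2/(J \log |\lambda_+|) + (\rho+\rho')/(2J) + \varepsilon - 1/2} \n{u}_{\mathcal H}.
\end{equation*}
Combining with the two approximation estimates from Lemma~\ref{l:short_log_words} and using $\rho < 1$ together with $\rho + \rho' < 1$ to bound the dominant error by $\mathbf{N}^{-1/2 + (1/(2J))(1 + 2\log 2/\log|\lambda_+|) + \varepsilon}$ (absorbing the gap into $\varepsilon$) gives the claim.

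I do not expect any serious obstacle here: the only point requiring attention is that the pointwise comparison $|b_c| \leq b_d$ only works because $b_\w \geq 0$, which in turn uses the assumption $b_1, b_2 \geq 0$ of the lemma, and the bookkeeping that ensures the $2^{T_0}$ factor gets absorbed into the stated exponent via the choice of $J$ in \eqref{e:condition_J}.
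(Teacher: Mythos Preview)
Your proposal is correct and follows essentially the same approach as the paper's proof: rescale $b_c,b_d$ by $\mathbf N^{-\rho\log 2/(J\log|\lambda_+|)}$ so that they are uniformly bounded in $S_{L_-,\rho/J+\varepsilon,\rho'/J+\varepsilon}$, pass from $B_c,B_d$ to their quantized symbols via Lemma~\ref{l:short_log_words}, apply Lemma~\ref{l:advanced-norm} using the pointwise inequality $|b_c|\le b_d$, and then simplify the exponent using $\rho\le\rho+\rho'<1$. You also correctly identify that the nonnegativity assumption on $b_1,b_2$ is exactly what makes the pointwise comparison work.
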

\begin{proof}
Since the number of elements in~$\mathcal{W}(T_0)$ is $2^{T_0}\leq\mathbf{N}^{\frac{\rho \log 2}{J \log \va{\lambda_+}}}$, it follows from Lemma~\ref{l:short_log_words} that $\mathbf{N}^{-\frac{\rho \log 2}{J \log \va{\lambda_+}}} b_c$ and $\mathbf{N}^{-\frac{\rho \log 2}{J \log \va{\lambda_+}}} b_d$ are bounded in the symbol class $S_{L_-, \frac{\rho}{J}+ \varepsilon,\frac{\rho'}{J}+ \varepsilon}$ uniformly in $\mathbf N,c,d$ and that
\begin{equation*}
\mathbf{N}^{-\frac{\rho \log 2}{J \log \va{\lambda_+}}} B_c = \Op_{\mathbf{N},\theta}\p{\mathbf{N}^{-\frac{\rho \log 2}{J \log \va{\lambda_+}}} b_c} + \mathcal{O}\p{\mathbf{N}^{\frac{\rho + \rho'}{J} + \varepsilon - 1}}_{\mathcal{H}_{\mathbf{N}}(\theta) \to \mathcal{H}_{\mathbf{N}}(\theta)}.
\end{equation*}
The same estimate is satisfied by $B_d$ and $b_d$.
By our assumption, $|b_c|\leq b_d$ everywhere. Thus by Lemma~\ref{l:advanced-norm}
\begin{equation*}
\n{B_c u}_{\mathcal{H}} \leq \n{B_d u}_{\mathcal{H}} + C \mathbf{N}^{- {1\over 2} + \frac{\rho + \rho'}{2J} + \frac{ \rho \log 2}{J \log \va{\lambda_+}} +  \varepsilon} \n{u}_{\mathcal{H}}.
\end{equation*}
The result then follows by using that $\rho \leq \rho + \rho' \leq 1$.
\end{proof}

We will also use a standard elliptic estimate.
\begin{lemm}\label{l:elliptic_estimate}
Let $a \in C^{\infty}(\T^{2n})$ and assume that
$$
\supp b_1\ \subset\ \bigcup_{\ell \in \Z} A^{ \ell}(\set{a \neq 0}).
$$
Then there is a constant $C > 0$ such that, for every $m \in \Z$ and every $u \in \mathcal{H}_{\mathbf{N}}(\theta)$, we have (with $\mathbf r_M(u)$ defined in~\eqref{e:quasimode})
\begin{equation}
\label{e:elliptic_estimate}
\n{B_1(m)u}_{\mathcal{H}} \leq C \n{\Op_{\mathbf{N},\theta}(a) u}_{\mathcal{H}} + C (1 + |m|) \mathbf{r}_M(u) + C \mathbf{N}^{-1}\n{u}_{\mathcal{H}}. 
\end{equation} 
\end{lemm}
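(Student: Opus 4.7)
The plan is to combine three standard ingredients: (i) the ellipticity of $a$ on $\supp b_1$ up to iterates of $A$, which gives a division formula; (ii) the exact Egorov theorem \eqref{e:our-egorov}, which converts iterates of $A$ on symbols into conjugation by $M_{\mathbf{N},\theta}$ on operators; and (iii) the fact that if $u$ is a quasimode in the sense of \eqref{e:quasimode}, then $M_{\mathbf{N},\theta}^j u$ stays close to a scalar multiple of $u$ with error linear in $|j|$.

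First I would compactify the cover: since $\supp b_1$ is compact and contained in $\bigcup_{\ell \in \mathbb{Z}} A^\ell(\{a \neq 0\})$, there exist finitely many integers $\ell_1, \dots, \ell_K$ and a smooth partition of unity $\chi_1, \dots, \chi_K \in C^\infty(\mathbb{T}^{2n})$ with $\sum_k \chi_k = 1$ on $\supp b_1$ and $\supp \chi_k \subset A^{\ell_k}(\{a \neq 0\})$. For each $k$, the function $(b_1 \chi_k)\circ A^{\ell_k}$ is supported in $\{a \neq 0\}$, so there exists $q_k \in C^\infty(\mathbb{T}^{2n})$ with
\begin{equation*}
(b_1 \chi_k) \circ A^{\ell_k} = q_k \cdot a.
\end{equation*}
Quantizing via \eqref{e:our-product-2} with $\rho = \rho' = 0$ and applying \eqref{e:our-egorov} to pull out the conjugation by $M_{\mathbf{N},\theta}^{\ell_k}$, one obtains
\begin{equation*}
\Op_{\mathbf{N},\theta}(b_1 \chi_k) = M_{\mathbf{N},\theta}^{\ell_k}\, \Op_{\mathbf{N},\theta}(q_k)\, \Op_{\mathbf{N},\theta}(a)\, M_{\mathbf{N},\theta}^{-\ell_k} + \mathcal{O}(\mathbf{N}^{-1})_{\mathcal{H} \to \mathcal{H}}.
\end{equation*}
Summing over $k$ and conjugating by $M_{\mathbf{N},\theta}^m$ expresses $B_1(m)$ as $\sum_k M_{\mathbf{N},\theta}^{\ell_k - m} \Op_{\mathbf{N},\theta}(q_k)\, \Op_{\mathbf{N},\theta}(a)\, M_{\mathbf{N},\theta}^{m - \ell_k}$ up to an $\mathcal{O}(\mathbf{N}^{-1})\|u\|_{\mathcal{H}}$ error.

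The remaining step is the quasimode propagation bound. Choose $z \in \mathbb{S}^1$ realizing the minimum in \eqref{e:quasimode}. A simple telescoping using unitarity of $M_{\mathbf{N},\theta}$ gives $\|M_{\mathbf{N},\theta}^j u - z^j u\|_{\mathcal{H}} \leq |j|\, \mathbf{r}_M(u)$ for every $j \in \mathbb{Z}$. Applying this with $j = m - \ell_k$ yields
\begin{equation*}
\|\Op_{\mathbf{N},\theta}(a) M_{\mathbf{N},\theta}^{m - \ell_k} u\|_{\mathcal{H}} \leq \|\Op_{\mathbf{N},\theta}(a) u\|_{\mathcal{H}} + C |m - \ell_k|\, \mathbf{r}_M(u),
\end{equation*}
and composing with the uniformly bounded operators $M_{\mathbf{N},\theta}^{\ell_k - m}\Op_{\mathbf{N},\theta}(q_k)$ (using \eqref{e:basic-norm}) preserves this estimate. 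Summing over the finitely many $k$ and bounding $|m - \ell_k| \leq \max_k |\ell_k| + |m| \lesssim 1 + |m|$ absorbs the shifts into the constant, producing \eqref{e:elliptic_estimate}.

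There is no real obstacle here; the one thing to verify carefully is the linear-in-$|m|$ dependence, which comes cleanly out of the telescoping bound $\|M_{\mathbf{N},\theta}^j u - z^j u\|_{\mathcal{H}} \leq |j|\, \mathbf{r}_M(u)$ applied to the shifts $j = m - \ell_k$. All other error terms are $\mathcal{O}(\mathbf{N}^{-1})$ coming from the composition formula \eqref{e:our-product-2} applied a fixed number of times, which is strictly better than what is stated in \eqref{e:elliptic_estimate}.
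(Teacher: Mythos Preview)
Your proof is correct and follows essentially the same route as the paper: compactness to reduce to finitely many shifts $\ell$, a smooth division of $b_1$ by (shifts of) $a$, exact Egorov, and the telescoping bound $\|M_{\mathbf N,\theta}^j u - z^j u\|_{\mathcal H}\le |j|\,\mathbf r_M(u)$. The only cosmetic difference is that the paper first reduces to $a\ge 0$ and divides by the single symbol $\tilde a=\sum_{|\ell|\le \ell_0}a\circ A^\ell$, whereas you use a partition of unity $\{\chi_k\}$ and divide each piece separately; your variant has the small advantage of not needing the reduction to $a\ge 0$.
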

\begin{proof} 
1. First of all, we may assume that $a\geq 0$ everywhere, since
we may replace $a$ with $|a|^2$ and use the following corollary of~\eqref{e:our-product-2}
(with $\rho=\rho'=0$):
$$
\begin{aligned}
\|\Op_{\mathbf N,\theta}(|a|^2)u\|_{\mathcal H}&\leq
\|\Op_{\mathbf N,\theta}(\bar a)\Op_{\mathbf N,\theta}(a)u\|_{\mathcal H}
+C\mathbf N^{-1}\|u\|_{\mathcal H}\\
&\leq C\|\Op_{\mathbf N,\theta}(a)u\|_{\mathcal H}
+C\mathbf N^{-1}\|u\|_{\mathcal H}.
\end{aligned}
$$
Here $C$ denotes an $\mathbf N$-independent constant whose precise value may change from line to line.

\noindent 2. By a compactness argument, we see that there exists $\ell_0 \in \N$ such that 
$$
\supp b_1\ \subset\
\bigcup_{\ell = - \ell_0}^{\ell_0} A^{ \ell} (\set{a>0})
=\{\tilde a>0\}\quad\text{where}\quad
\tilde a:=\sum_{\ell=-\ell_0}^{\ell_0}
a\circ A^\ell.
$$
We write $b_1=q \tilde a$ for some $q\in C^\infty(\mathbb T^{2n})$. Then
by~\eqref{e:our-product-2}
and~\eqref{e:our-egorov}
$$
\begin{aligned}
\|\Op_{\mathbf N,\theta}(b_1)u\|_{\mathcal H}
&\leq 
\|\Op_{\mathbf N,\theta}(q)\Op_{\mathbf N,\theta}(\tilde a)u\|_{\mathcal H}
+C\mathbf N^{-1}\|u\|_{\mathcal H}\\
&\leq C\|\Op_{\mathbf N,\theta}(\tilde a)u\|_{\mathcal H}
+C\mathbf N^{-1}\|u\|_{\mathcal H}\\
&=C\sum_{\ell=-\ell_0}^{\ell_0}\|M_{\mathbf N,\theta}^{-\ell}\Op_{\mathbf N,\theta}(a)
M_{\mathbf N,\theta}^\ell u\|_{\mathcal H}
+C\mathbf N^{-1}\|u\|_{\mathcal H}.
\end{aligned}
$$
Applying this with $u$ replaced by $M_{\mathbf N,\theta}^m u$, we see that
for all~$m$, with the constant $C$ independent of~$m$,
\begin{equation}
  \label{e:ell-int-1}
\|B_1(m)u\|_{\mathcal H}
\leq C\sum_{\ell=-\ell_0}^{\ell_0}
\|\Op_{\mathbf N,\theta}(a)M_{\mathbf N,\theta}^{m+\ell}u\|_{\mathcal H}
+C\mathbf N^{-1}\|u\|_{\mathcal H}.
\end{equation}

\noindent 3. We have for every operator $A:\mathcal H_{\mathbf N}(\theta)\to\mathcal H_{\mathbf N}(\theta)$, $m\in\mathbb Z$, and $u\in\mathcal H_{\mathbf N}(\theta)$
\begin{equation}
  \label{e:ell-int-2}
\|AM_{\mathbf N,\theta}^m u\|_{\mathcal H}
\leq \|Au\|_{\mathcal H}+\|A\|_{\mathcal H\to\mathcal H} |m|\mathbf r_M(u).
\end{equation}
Indeed, take $z\in\mathbb S^1$ such that
$\mathbf r_M(u)=\|M_{\mathbf N,\theta}u-zu\|_{\mathcal H}$. Then we have when $m \geq 0$,
\begin{equation*}
\begin{split}
\n{M_{\mathbf{N},\theta}^m u - z^m u}_{\mathcal{H}} & \leq \sum_{\ell = 1}^{m} \n{z^{m-\ell} M_{\mathbf{N},\theta}^{\ell - 1}\p{M_{\mathbf{N},\theta} - z}u}_{\mathcal{H}} \\ & \leq |m| \mathbf{r}_M(u).
\end{split}
\end{equation*}
By a similar computation, we see that this estimate still holds when $m < 0$. These estimates
imply~\eqref{e:ell-int-2}.

\noindent 4. Finally, putting together~\eqref{e:ell-int-1} and~\eqref{e:ell-int-2}
(with $A:=\Op_{\mathbf N,\theta}(a)$) we get~\eqref{e:elliptic_estimate}.
\end{proof}

We are now ready to prove Lemma \ref{l:controlled_support}.

\begin{proof}[Proof of Lemma \ref{l:controlled_support}]
We will prove the estimate \eqref{e:controlled_support} first for $B_F$, then for $B_{\mathcal{Z}}$, and finally for $B_{\mathcal{Y}}$.

\noindent 1. We start by considering the operator $B_F$ associated to the function $F$ defined by~\eqref{e:definition_F}. Notice that the assumption that $b_1 + b_2 = 1$ implies that $B_1(j) + B_2(j) = 1$ for $j = 0,\dots, T_0 - 1$. It follows from the definition of $F$ that
\begin{equation*}
B_F = \frac{1}{T_0} \sum_{\w \in \mathcal{W}(T_0)} \p{\sum_{j= 0}^{T_0 - 1} \indic_{\set{w_j = 1}}} B_{\w} = \frac{1}{T_0} \sum_{j = 0}^{T_0 - 1} \sum_{\substack{\w \in \mathcal{W}(T_0) \\ w_j = 1}} B_{\w} = \frac{1}{T_0} \sum_{j = 0}^{T_0 - 1} B_1(j).
\end{equation*}
Consequently, using Lemma \ref{l:elliptic_estimate}, we find that for $u \in \mathcal{H}_{\mathbf{N}}(\theta)$ we have
\begin{equation*}
\n{B_F u}_{\mathcal{H}} \leq C \n{\Op_{\mathbf{N},\theta}(a) u}_{\mathcal{H}} + C T_0 \mathbf{r}_M(u) + C \mathbf{N}^{-1} \n{u}_{\mathcal{H}}.
\end{equation*}
Recalling the definition \eqref{e:def_times} of $T_0$, we find that, up to making $C$ larger, we have
\begin{equation}\label{e:estimate_BF}
\n{B_F u}_{\mathcal{H}} \leq C \n{\Op_{\mathbf{N},\theta}(a) u}_{\mathcal{H}} + C \mathbf{r}_M(u) \log \mathbf{N} + C \mathbf{N}^{-1} \n{u}_{\mathcal{H}}.
\end{equation}

\noindent 2. By the  definition~\eqref{e:def_Z} of $\mathcal Z$,
we have $F \geq \alpha \indic_{\mathcal{Z}}$. By Lemma \ref{l:garding_words}, we deduce from~\eqref{e:estimate_BF} that, for some new $C > 0$ depending on~$\alpha$ and every $u \in \mathcal{H}_{\mathbf{N}}(\theta)$, we have
\begin{equation}\label{e:estimate_BZ2}
\n{B_{\mathcal{Z}}u}_{\mathcal{H}} \leq C \n{\Op_{\mathbf{N},\theta}(a) u}_{\mathcal{H}} + C \mathbf{r}_M(u) \log \mathbf{N} + C \mathbf{N}^{- \frac{1}{2} + \frac{1}{2J}\p{1+ \frac{2 \log 2}{\log \va{\lambda_+}}} + \varepsilon} \n{u}_{\mathcal{H}}.
\end{equation}

\noindent 3. From our assumption that $b_1 + b_2 = 1$, we deduce that $B_{\mathcal{W}(T_0)} = I$. Hence we have similarly to~\eqref{e:Y-decomposed} 
$$
\label{e:decomposition_BY}
B_{\mathcal{Y}} = \sum_{ \ell = 1}^{2J} M_{\mathbf N,\theta}^{-(2J-1)T_0}
(B_{\mathcal W(T_0)\setminus\mathcal Z}M_{\mathbf N,\theta}^{T_0})^{2J-\ell}
B_{\mathcal Z} M_{\mathbf N,\theta}^{(\ell-1)T_0}.
$$
Similarly to~\eqref{e:estimate_others} we have $\|B_{\mathcal Z}\|_{\mathcal H\to\mathcal H},\|B_{\mathcal W(T_0)\setminus\mathcal Z}\|_{\mathcal H\to\mathcal H}\leq 1+C\mathbf N^{-\delta}$ for some $\delta>0$, so by~\eqref{e:ell-int-2}
\begin{equation}
\label{e:decomposition_BY_2}
\begin{aligned}
\|B_{\mathcal Y}u\|_{\mathcal H}&\leq
2\sum_{\ell=1}^{2J}
\|B_{\mathcal Z}M_{\mathbf N,\theta}^{(\ell-1)T_0}u\|_{\mathcal H}
\\&\leq
4J\|B_{\mathcal Z}u\|_{\mathcal H}
+C\mathbf r_M(u)\log\mathbf N.
\end{aligned}
\end{equation}
Now \eqref{e:controlled_support} follows from~\eqref{e:estimate_BZ2} and~\eqref{e:decomposition_BY_2}.
\end{proof}

\subsection{Reduction to a fractal uncertainty principle}\label{s:reduction}

We now explain how Lemma~\ref{l:uncontrolled} may be deduced from a Fractal Uncertainty Principle type statement, Proposition~\ref{l:FUP_type} below. The proof of Proposition~\ref{l:FUP_type} is given in~\S\ref{s:proof_lemma}.

Let $\w \in \mathcal{W}(2 T_1)$. Decompose the word $\w$ into two words of length $T_1$:
\begin{equation*}
\w = \w_+ \w_-, \quad \w_{\pm} \in \mathcal{W}(T_1).
\end{equation*}
Then, we relabel $\w_+$ and $\w_-$ as
\begin{equation*}
\w_+ = w_{T_1}^{+} \dots w_1^{+}, \quad \w_- = w_0^{-} \dots w_{T_1 - 1}^{-},
\end{equation*}
and define the symbols
\begin{equation}\label{e:def_bplus_bminus}
b_+ = \prod_{j = 1}^{T_1} b_{w_j^+} \circ A^{-j}, \quad b_- = \prod_{j = 0}^{T_1 - 1} b_{w_j^-} \circ A^j.
\end{equation}
In~\S\ref{s:decay_long_words}, we will prove the following estimate,
where $ h = (2 \pi \mathbf{N})^{-1}$, $\Op_h$ is the Weyl quantization on~$\mathbb R^n$
defined in~\eqref{e:weyl-quantization}, and we treat $b_\pm\in C^\infty(\mathbb T^{2n})$
as $\mathbb Z^{2n}$-periodic functions in $C^\infty(\mathbb R^{2n})$.
\begin{prop}\label{l:FUP_type}
Assume that the complements $\T^{2n}\setminus\supp b_1$, $\T^{2n}\setminus \supp b_2$
are both safe in the sense of Definition~\ref{d:safe}.
Then there are constants $C,\beta > 0$ that do not depend on~$\w$ nor $\mathbf{N}$ such that
\begin{equation*}
\n{\Op_h(b_-) \Op_h(b_+)}_{L^2(\mathbb R^n) \to L^2(\mathbb R^n)} \leq C h^{\beta}.
\end{equation*}
\end{prop}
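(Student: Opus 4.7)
The plan is to follow the three-step outline in \S\ref{s:outline}. First, I would establish the microlocal structure of $b_\pm$: iterating the argument of Step~1 in the proof of Lemma~\ref{l:short_log_words} over $J$ consecutive blocks of length $T_0$, and using the norm bounds \eqref{e:L-pm-norm} together with the parameter choices in \eqref{e:conditions_rho}--\eqref{e:def_times} (which give $|\lambda_+|^{T_1} \lesssim \mathbf{N}^\rho$ and $\gamma^{T_1} \ll \mathbf{N}^{\rho'}$), one shows that for every $\varepsilon > 0$, uniformly in $\w$ and $\mathbf{N}$,
\begin{equation*}
b_+ \in S_{L_+, \rho+\varepsilon, \rho'+\varepsilon}(\mathbb{R}^{2n}), \qquad b_- \in S_{L_-, \rho+\varepsilon, \rho'+\varepsilon}(\mathbb{R}^{2n}).
\end{equation*}
Since $\rho > \tfrac{1}{2}$ the two classes cannot be merged into a single Moyal calculus; this is what forces the refined approach below.

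Second, I would use a mesoscopic partition of unity to decouple the two calculi. Let $\{\psi_k\}_{k \in \mathcal{K}}$ be a smooth $\mathbb{Z}^{2n}$-periodic partition of unity on $\mathbb{R}^{2n}$ at scale $h^{\rho'}$, obtained by translating a fixed bump function over a lattice of $\mathcal{O}(h^{-2n\rho'})$ points per fundamental domain. Each $\psi_k$ then lies in $S_{L, \rho', \rho'}(\mathbb{R}^{2n}) \subset S_{L, \rho, \rho'}(\mathbb{R}^{2n})$ for \emph{every} coisotropic hyperplane $L$, so $\psi_k$ can be composed in either of the calculi $S_{L_\pm}$. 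Setting $B_{(k)} := \Op_h(b_-) \Op_h(\psi_k) \Op_h(b_+)$ I obtain $\Op_h(b_-)\Op_h(b_+) = \sum_{k} B_{(k)}$. The key almost orthogonality $\|B_{(k)}^* B_{(\ell)}\| + \|B_{(k)} B_{(\ell)}^*\| = \mathcal{O}(h^N (1+\va{k-\ell})^{-N})$ for every $N$ follows by writing $\Op_h(b_\mp)^* \Op_h(b_\mp) = \Op_h(c_\mp)$ with $c_\mp \in S_{L_\mp, \rho+\varepsilon, \rho'+\varepsilon}$ via Lemma~\ref{l:quantization-properties}, and then applying Lemma~\ref{l:nonint-sup-2} in the $S_{L_\mp}$-calculus to $\Op_h(\overline{\psi_k}) \Op_h(c_\mp) \Op_h(\psi_\ell)$, noting that disjoint $\supp\psi_k, \supp\psi_\ell$ are separated at distance $\gtrsim h^{\rho'}\va{k-\ell}$ along some unit linear functional. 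The Cotlar--Stein lemma then reduces the proposition to the uniform individual bound
\begin{equation*}
\|B_{(k)}\|_{L^2 \to L^2} \leq C h^\beta \qquad \text{for some fixed } \beta > 0.
\end{equation*}

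The third and most delicate step is this individual bound, which I would obtain from the one-dimensional Fractal Uncertainty Principle of~\cite{fullgap}. Choose linear functionals $\pi_\pm : \mathbb{R}^{2n} \to \mathbb{R}$ with $\ker\pi_\pm = L_\pm$, so that $\ker\pi_+ \cap \ker\pi_-$ is a codimension-$2$ symplectic subspace; a metaplectic change of frame (as in \S\ref{s:metaplectic}) straightens these hyperplanes to coordinate ones and brings the situation to the regime where the one-dimensional FUP applies to the tensor factor transverse to $\ker\pi_+\cap\ker\pi_-$, delivering the bound provided each $X_\pm^{(k)} := \pi_\pm(\supp(b_\pm\psi_k))$ is $\nu$-porous on every scale between $C h^\rho$ and~$1$, for a fixed $\nu > 0$ independent of $\w, k, \mathbf{N}$. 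I would prove porosity of $X_-^{(k)}$ (that of $X_+^{(k)}$ is symmetric under $A \leftrightarrow A^{-1}$) by combining three observations: (i) for $z, w \in \supp\psi_k$ with $z-w \in L_-$, the bounds $\|A^j|_{L_-}\| \lesssim \gamma^j$, $|z-w| \lesssim h^{\rho'}$, and $\gamma^{T_1} \ll \mathbf{N}^{\rho'}$ give $|A^j z - A^j w| \ll 1$ for $0 \leq j \leq T_1$, so $\supp b_- \cap \supp\psi_k$ is essentially determined by an $E_+$-directional section through the centre of $\psi_k$; (ii) by Definition~\ref{d:safe} and the description of $\mathbb{T}_+$ as the closure of an $E_+$-orbit (Lemma~\ref{l:minimality}), any sufficiently long segment in the $E_+$ direction on $\mathbb{T}^{2n}$ meets the complement of each $\supp b_{w_j^-}$, producing macroscopic pores in these $E_+$-directional sections of $\supp b_-$; (iii) the expansion factor $|\lambda_+|^j$ of $A^j$ along $E_+$ rescales the macroscopic pore coming from time $j$ down to size $|\lambda_+|^{-j}$, and as $j$ runs over $0, 1, \ldots, T_1-1$ these pores cover every scale between $1$ and $|\lambda_+|^{-T_1}\sim h^\rho$. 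The main obstacle is this final step: turning the qualitative safety property into \emph{quantitatively uniform} porosity of $X_\pm^{(k)}$ on every intermediate dyadic scale requires a quantitative form of minimality of the $\mathbb{T}_\pm$ translation flow and a coherent simultaneous tracking of the pores produced at all propagation times $j$ through the iterates $A^j$.
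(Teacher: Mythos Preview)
Your proposal is correct and follows essentially the same route as the paper, including the Cotlar--Stein reduction via a scale-$h^{\rho'}$ partition (the paper takes $\sum_k \psi_k^2 = 1$ so that each term factors cleanly as $P_k \approx \Op_h(b_-\psi_k)\Op_h(b_+\psi_k)$ before the FUP is applied), the almost orthogonality via Lemma~\ref{l:nonint-sup-2}, the metaplectic straightening of $E_\pm,L_\pm$, and the porosity argument via expansion along $E_\pm$. The quantitative uniformity you flag as the main obstacle is dispatched by a standard compactness argument: fixing compact sets $K_\ell \subset \mathbb{T}^{2n}\setminus\supp b_\ell$ with safe interior (Lemma~\ref{l:safe-cpct}), one argues by contradiction using Lemma~\ref{l:minimality} that there exist $R,r>0$, independent of the base point, such that every $\varphi_\pm^t$-orbit segment of length $R$ contains a length-$r$ subsegment in each $K_\ell$; this directly produces $\nu$-pores at every scale between $h^\varrho$ and~$1$.
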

In order to take advantage of Proposition~\ref{l:FUP_type}, let us first notice that the proof of Lemma \ref{l:short_log_words} also gives without major modification:
\begin{lemm}\label{l:long_logarithmic_words}
Let $\varepsilon > 0$. The symbols $b_-$ and $b_+$ belong respectively to the symbol classes $S_{L_-,\rho+\varepsilon,\rho'+\varepsilon}(\T^{2n})$ and $S_{L_+,\rho+\varepsilon,\rho'+\varepsilon}(\T^{2n})$, with bounds on the semi-norms that do not depend on $\mathbf N,\w$. (Here $L_\pm$
are defined in~\eqref{e:L-pm-def}.) Moreover, we have
$$
\begin{aligned}
B_{\w_+}(- T_1) &= \Op_{\mathbf{N},\theta}(b_+) + \mathcal{O}\p{\mathbf{N}^{\rho + \rho' + \varepsilon - 1}}_{\mathcal{H}_{\mathbf{N}}(\theta) \to \mathcal{H}_{\mathbf{N}}(\theta)},\\
B_{\w_-} &= \Op_{\mathbf{N},\theta}(b_-) + \mathcal{O}\p{\mathbf{N}^{\rho + \rho' + \varepsilon - 1}}_{\mathcal{H}_{\mathbf{N}}(\theta) \to \mathcal{H}_{\mathbf{N}}(\theta)},
\end{aligned}
$$
where the constants in~$\mathcal{O}(\bullet)$ are uniform in $\mathbf N,\w$.
\end{lemm}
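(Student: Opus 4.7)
The plan is to mirror the proof of Lemma~\ref{l:short_log_words} essentially verbatim, the only change being that the propagation time is now $T_1$ rather than $T_0$, which is harmless because the parameters $\rho$ and $\rho'$ were chosen in~\eqref{e:conditions_rho} and~\eqref{e:def_times} precisely so that $|\lambda_+|^{T_1}\lesssim \mathbf{N}^\rho$ and $\gamma^{T_1}\ll \mathbf{N}^{\rho'}$, see~\eqref{e:special-times-condition}.

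First I would verify that each factor $b_{w_j^-}\circ A^j$, $0\leq j<T_1$, lies in $S_{L_-,\rho+\varepsilon,\rho'+\varepsilon}(\T^{2n})$ with seminorms bounded uniformly in $j$ and $\mathbf{N}$. As in Step~1 of the proof of Lemma~\ref{l:short_log_words}, given constant vector fields $X_1,\dots,X_k$ on $\R^{2n}$ and $Y_1,\dots,Y_m$ tangent to $L_-$, the chain rule expresses
\begin{equation*}
X_1\cdots X_k Y_1\cdots Y_m\big(b_{w_j^-}\circ A^j\big)(x)=D^{k+m}b_{w_j^-}(A^jx)\cdot\big(A^jX_1,\dots,A^jX_k,A^jY_1,\dots,A^jY_m\big).
\end{equation*}
Using the $A$-invariance of $L_-$ together with the norm bounds~\eqref{e:L-pm-norm}, and the fact that the derivatives of $b_{w_j^-}$ are $\mathbf{N}$-independent, this is bounded (uniformly in $j\leq T_1$) by $C|\lambda_+|^{T_1 k}\gamma^{T_1 m}\leq C\mathbf{N}^{\rho k+\rho' m+\varepsilon(k+m)}$ for any $\varepsilon>0$, by~\eqref{e:special-times-condition}. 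For $b_+$, one argues analogously with $A^{-j}$ in place of $A^j$ and $L_+$ in place of $L_-$: here $L_+$ is $A$-invariant (hence $A^{-1}$-invariant), and the norm bounds $\|A^{-j}\|=\mathcal O(|\lambda_+|^j)$ and $\|A^{-j}|_{L_+}\|=\mathcal O(\gamma^j)$ from~\eqref{e:L-pm-norm} yield the corresponding statement in the class $S_{L_+,\rho+\varepsilon,\rho'+\varepsilon}(\T^{2n})$.

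With these uniform symbol estimates in hand, I would apply Lemma~\ref{l:long_product} to the family $\{b_{w_j^-}\circ A^j\}_{0\leq j<T_1}$, which has length $R=T_1=\mathcal O(\log\mathbf N)$ and enjoys the bound $\sup|b_{w_j^-}\circ A^j|\leq 1$ coming from~\eqref{e:b-assumptions}. Part~1 of that lemma then gives $b_-\in S_{L_-,\rho+2\varepsilon,\rho'+2\varepsilon}(\T^{2n})$, while part~2, combined with the exact Egorov theorem~\eqref{e:our-egorov} (which identifies $B_{w_j^-}(j)$ with $\Op_{\mathbf{N},\theta}(b_{w_j^-}\circ A^j)$), yields
\begin{equation*}
B_{\w_-}=\Op_{\mathbf{N},\theta}(b_-)+\mathcal O\big(\mathbf N^{\rho+\rho'-1+3\varepsilon}\big)_{\mathcal H_{\mathbf N}(\theta)\to\mathcal H_{\mathbf N}(\theta)}.
\end{equation*}
The statement for $B_{\w_+}(-T_1)$ follows from the same reasoning applied to the family $\{b_{w_j^+}\circ A^{-j}\}_{1\leq j\leq T_1}$, noting that $M_{\mathbf{N},\theta}^{j}\Op_{\mathbf{N},\theta}(a)M_{\mathbf{N},\theta}^{-j}=\Op_{\mathbf{N},\theta}(a\circ A^{-j})$. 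After renaming $\varepsilon$ the claimed estimates follow.

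There is no real obstacle here beyond bookkeeping; the only point that requires some care is matching the correct coisotropic hyperplane to the correct direction of propagation, namely $L_-$ for forward iterates (giving $b_-$) and $L_+$ for backward iterates (giving $b_+$), exploiting the invariance of each under the appropriate iterates of $A$.
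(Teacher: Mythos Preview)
Your proposal is correct and follows exactly the approach the paper intends; the paper itself simply states that the proof of Lemma~\ref{l:short_log_words} carries over without major modification, and you have faithfully written out those modifications. One minor remark: the individual factors $b_{w_j^-}\circ A^j$ actually lie in $S_{L_-,\rho,\rho'}$ (no $\varepsilon$ needed at that stage, since $|\lambda_+|^{T_1}\leq\mathbf N^\rho$ and $\gamma^{T_1}\leq C\mathbf N^{\rho'}$ with $\mathbf N$-independent constants), and the $\varepsilon$ only enters when Lemma~\ref{l:long_product} is applied to the product, but this does not affect the argument.
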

Using Proposition~\ref{l:FUP_type} and Lemma~\ref{l:long_logarithmic_words}, we get a uniform bound on the operator norm of $B_\w$.
\begin{lemm}\label{l:bound_Bw}
Assume that the complements $\T^{2n}\setminus\supp b_1$, $\T^{2n}\setminus \supp b_2$
are both safe. Then there are constants $C,\beta > 0$ that do not depend on $\w$ nor $\mathbf{N}$ such that 
\begin{equation*}
\n{B_\w}_{\mathcal{H} \to \mathcal{H}} \leq C \mathbf{N}^{- \beta}\quad\text{for all}\quad
\w\in\mathcal W(2T_1).
\end{equation*}
\end{lemm}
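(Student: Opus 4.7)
The plan is to reduce $B_\w$ to a product of two Weyl-quantized operators on $L^2(\R^n)$, to which Proposition~\ref{l:FUP_type} directly applies. Essentially no new work is required beyond that proposition and Lemma~\ref{l:long_logarithmic_words}; the rest of the proof is algebraic bookkeeping.

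First, I would recenter the time propagation at $T_1$. Splitting $\w = \w_+\w_-$ as in the lemma's preamble, the definition~\eqref{e:operator_bw} factors $B_\w$ as the product of the $T_1$ factors with time indices $T_1,\dots,2T_1-1$ (coming from $\w_-$) and the $T_1$ factors with time indices $0,\dots,T_1-1$ (which is exactly $B_{\w_+}$). Applying the identity $L(s+T_1) = M_{\mathbf{N},\theta}^{-T_1} L(s) M_{\mathbf{N},\theta}^{T_1}$ to the first group, together with $M_{\mathbf{N},\theta}^{T_1} L = L(-T_1)\, M_{\mathbf{N},\theta}^{T_1}$, yields
\begin{equation*}
B_\w = M_{\mathbf{N},\theta}^{-T_1}\, B_{\w_-}\, B_{\w_+}(-T_1)\, M_{\mathbf{N},\theta}^{T_1}.
\end{equation*}
Since $M_{\mathbf{N},\theta}$ is unitary, $\n{B_\w}_{\mathcal H \to \mathcal H} = \n{B_{\w_-}\, B_{\w_+}(-T_1)}_{\mathcal H \to \mathcal H}$.

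Next, I would invoke Lemma~\ref{l:long_logarithmic_words} to write $B_{\w_-} = \Op_{\mathbf{N},\theta}(b_-) + E_-$ and $B_{\w_+}(-T_1) = \Op_{\mathbf{N},\theta}(b_+) + E_+$, with $\n{E_\pm}_{\mathcal H \to \mathcal H} = \mathcal{O}(\mathbf{N}^{\rho+\rho'+\varepsilon-1})$. Since $\va{b_\pm}\leq 1$ and $b_\pm$ are bounded uniformly in $S_{L_\pm,\rho+\varepsilon,\rho'+\varepsilon}(\T^{2n})$, the norm bound~\eqref{e:basic-norm} gives $\n{\Op_{\mathbf{N},\theta}(b_\pm)}_{\mathcal H \to \mathcal H} = \mathcal{O}(1)$. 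Expanding $B_{\w_-}\,B_{\w_+}(-T_1)$ then shows it equals $\Op_{\mathbf{N},\theta}(b_-)\,\Op_{\mathbf{N},\theta}(b_+)$ up to an error whose operator norm is $\mathcal{O}(\mathbf{N}^{\rho+\rho'+\varepsilon-1})$, which is a negative power of $\mathbf{N}$ for $\varepsilon$ sufficiently small, by~\eqref{e:conditions_rho}.

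Finally, applying~\eqref{e:relator} twice together with the unitarity of $\Pi_{\mathbf{N}}$ from Lemma~\ref{l:L2-decomposed} identifies the product $\Op_{\mathbf{N},\theta}(b_-)\,\Op_{\mathbf{N},\theta}(b_+)$ with the restriction of $\Op_h(b_-)\,\Op_h(b_+)$ to the fiber $\mathcal H_{\mathbf{N}}(\theta)$, yielding
\begin{equation*}
\n{\Op_{\mathbf{N},\theta}(b_-)\,\Op_{\mathbf{N},\theta}(b_+)}_{\mathcal H \to \mathcal H} \leq \n{\Op_h(b_-)\,\Op_h(b_+)}_{L^2(\R^n) \to L^2(\R^n)} \leq C h^{\beta}
\end{equation*}
by Proposition~\ref{l:FUP_type}, where we use that the safety hypotheses on $\T^{2n}\setminus\supp b_j$ are preserved from the statement of Lemma~\ref{l:bound_Bw}. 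Combining all three steps and setting $\beta' := \min(\beta,\, 1-\rho-\rho'-\varepsilon)$ gives the desired bound $\n{B_\w}_{\mathcal H \to \mathcal H} \leq C \mathbf{N}^{-\beta'}$, uniformly in $\w$ and $\theta$. There is no real obstacle beyond keeping the time-shift algebra straight in the first step; the hard analytic content resides entirely in Proposition~\ref{l:FUP_type}.
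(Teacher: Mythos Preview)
Your proof is correct and follows essentially the same route as the paper: the factorization $B_\w = M_{\mathbf{N},\theta}^{-T_1} B_{\w_-} B_{\w_+}(-T_1) M_{\mathbf{N},\theta}^{T_1}$, the replacement of $B_{\w_-}$ and $B_{\w_+}(-T_1)$ by $\Op_{\mathbf{N},\theta}(b_-)$ and $\Op_{\mathbf{N},\theta}(b_+)$ via Lemma~\ref{l:long_logarithmic_words}, the passage to $L^2(\R^n)$ via~\eqref{e:relator} and Lemma~\ref{l:L2-decomposed}, and the appeal to Proposition~\ref{l:FUP_type} all match the paper's argument.
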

\begin{proof}
We start by noticing that
\begin{equation}\label{e:factorization}
B_{\w} = M_{\mathbf{N},\theta}^{- T_1} B_{\w_-} B_{\w_+}(-T_1) M_{\mathbf{N},\theta}^{T_1}.
\end{equation}
By Lemma~\ref{l:long_logarithmic_words}, the operators
$\Op_{\mathbf N,\theta}(b_\pm)$ are bounded in norm uniformly in~$\w,\mathbf N$.
Hence, we deduce from Lemma \ref{l:long_logarithmic_words} and \eqref{e:factorization} that, for some $C > 0$, we have
\begin{equation*}
\n{B_{\w}}_{\mathcal{H} \to \mathcal{H}} \leq C \n{\Op_{\mathbf{N},\theta}(b_-) \Op_{\mathbf{N},\theta}(b_+)}_{\mathcal{H} \to \mathcal{H}} + C \mathbf{N}^{\rho + \rho' + \varepsilon - 1}.
\end{equation*}
Now, we deduce from \eqref{e:relator} that for every $g \in C^\infty(\mathbb{T}^{2n}; \mathcal{H}_N)$ we have
\begin{equation*}
\Pi_N \Op_h(b_-) \Op_h(b_+) \Pi_N^{*} g(\theta) = \Op_{\mathbf{N},\theta}(b_-) \Op_{\mathbf{N},\theta}(b_+) g(\theta).
\end{equation*}
Hence, using Lemma \ref{l:L2-decomposed}, we find as we did for \eqref{e:op-norm-decomposed} that
\begin{equation*}
\n{\Op_{\mathbf{N},\theta}(b_-) \Op_{\mathbf{N},\theta}(b_+)}_{\mathcal{H} \to \mathcal{H}} \leq \n{\Op_h(b_-) \Op_h(b_+)}_{L^2 \to L^2}, 
\end{equation*}
and the result follows then immediately from Proposition~\ref{l:FUP_type}.
\end{proof}

In order to get an estimate on the operator norm of $B_\mathcal{X}$ from Lemma \ref{l:bound_Bw}, we will need the following bound on the cardinal of $\mathcal{X}$.

\begin{lemm}\label{l:cardinal_X}
There are a constant $c>0$ (that does not depend on $\alpha$) and a constant $C > 0$ (that may depend on $\alpha$) such that for $\mathbf{N}$ large enough we have
\begin{equation*}
\# \mathcal{X} \leq C (\log \mathbf{N})^{2J} \mathbf{N}^{\frac{2 c \rho \sqrt{\alpha} }{\log \va{\lambda_+}}}.
\end{equation*}
\end{lemm}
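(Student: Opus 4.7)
The proof is a standard combinatorial/entropy estimate. The plan is to bound $\#(\mathcal{W}(T_0)\setminus\mathcal{Z})$ by a binomial sum, apply a Stirling-type bound in terms of the binary entropy, and then use the fact that the entropy grows sublinearly in~$\alpha$ near~$0$.

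Concretely, I would first observe that since every element of $\mathcal{X}$ is by definition the concatenation of $2J$ words in $\mathcal{W}(T_0)\setminus\mathcal{Z}$, we have
\begin{equation*}
\# \mathcal{X} = \bigl(\#(\mathcal{W}(T_0)\setminus\mathcal{Z})\bigr)^{2J}.
\end{equation*}
A word $\w\in\mathcal{W}(T_0)\setminus\mathcal{Z}$ is by definition one whose proportion $F(\w)$ of $1$'s is strictly less than~$\alpha$, so the number of such words is
\begin{equation*}
\sum_{k=0}^{\lceil\alpha T_0\rceil-1}\binom{T_0}{k}\ \leq\ T_0\binom{T_0}{\lfloor \alpha T_0\rfloor},
\end{equation*}
using that for $\alpha\leq 1/2$ the binomial coefficients $\binom{T_0}{k}$ are increasing in~$k$ up to $k=\lfloor T_0/2\rfloor$.

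Next, I would apply Stirling's formula to obtain $\binom{T_0}{\lfloor\alpha T_0\rfloor}\leq e^{T_0 h(\alpha)}$, where
\begin{equation*}
h(\alpha):=-\alpha\log\alpha-(1-\alpha)\log(1-\alpha)
\end{equation*}
is the binary entropy in natural logarithms. The key elementary input is then the bound $h(\alpha)\leq c\sqrt{\alpha}$ for all $\alpha\in(0,1/2]$, with some absolute constant~$c>0$. This follows from the elementary observation that $\sqrt{\alpha}(-\log\alpha)$ is bounded on $(0,1]$ (its maximum is $2/e$, attained at $\alpha=e^{-2}$) and from $-(1-\alpha)\log(1-\alpha)=O(\alpha)=O(\sqrt{\alpha})$ near~$0$. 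Combining these yields
\begin{equation*}
\#(\mathcal{W}(T_0)\setminus\mathcal{Z})\ \leq\ T_0\, e^{c\sqrt{\alpha}\, T_0}.
\end{equation*}

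Finally, I would raise to the $2J$-th power and insert the definition of~$T_0$ from~\eqref{e:def_times}, which gives $T_0\leq\frac{\rho\log\mathbf{N}}{J\log|\lambda_+|}$ and $2JT_0\leq\frac{2\rho\log\mathbf{N}}{\log|\lambda_+|}$. This yields
\begin{equation*}
\#\mathcal{X}\ \leq\ T_0^{2J}\, e^{2cJT_0\sqrt{\alpha}}\ \leq\ C(\log\mathbf{N})^{2J}\,\mathbf{N}^{\frac{2c\rho\sqrt{\alpha}}{\log|\lambda_+|}},
\end{equation*}
as claimed. There is no real obstacle here; the only point to be slightly careful about is that the constant $c$ arising from the bound $h(\alpha)\leq c\sqrt{\alpha}$ is absolute (independent of $\alpha$), so that $\alpha$ enters only through the factor $\sqrt{\alpha}$ in the final exponent, while the prefactor $(\log\mathbf{N})^{2J}$ absorbs the polynomial loss from the bound $\binom{T_0}{\lfloor\alpha T_0\rfloor}\cdot T_0$.
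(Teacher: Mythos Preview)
Your proof is correct and follows essentially the same approach as the paper: bound $\#(\mathcal{W}(T_0)\setminus\mathcal{Z})$ by a tail binomial sum, control the largest term via Stirling and the binary entropy, use $h(\alpha)\leq c\sqrt{\alpha}$ with an absolute constant, and raise to the $2J$-th power using the definition of $T_0$. The only cosmetic differences are that the paper bounds the number of summands by $\alpha T_0+1$ rather than $T_0$ and uses $\lceil\alpha T_0\rceil$ rather than $\lfloor\alpha T_0\rfloor$ for the largest binomial coefficient; neither affects the argument.
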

\begin{proof}
By~\eqref{e:def_Z}, a word $\mathtt{v} = v_0 \dots v_{T_0 - 1}$ belongs to $\mathcal{W}(T_0) \setminus \mathcal{Z}$ if and only if the set $\set{j \in \set{0,\dots,T_0 - 1}: v_j = 1}$ has fewer than $\alpha T_0$ elements. Hence, assuming $\alpha < 1/2$ and recalling
the definition~\eqref{e:def_times} of~$T_0$, we have
\begin{equation*}
\begin{split}
\# \big(\mathcal{W}(T_0) \setminus \mathcal{Z}\big) & \leq \sum_{0 \leq \ell \leq \alpha T_0} \begin{pmatrix} T_0 \\ \ell \end{pmatrix} \leq \p{\alpha T_0 + 1} \begin{pmatrix} T_0  \\ \lceil \alpha T_0 \rceil \end{pmatrix} \\
   & \leq C \log \mathbf{N} \exp\big( - (\alpha \log \alpha + (1 - \alpha) \log(1 - \alpha)) T_0\big) \\
   & \leq C \mathbf{N}^{\frac{c \rho \sqrt{\alpha}}{J \log \va{\lambda_+}}} \log \mathbf{N}.
\end{split}
\end{equation*}
Here, we applied Stirling's formula and the constant $c > 0 $ is such that
\begin{equation*}
- (\alpha \log \alpha + (1 - \alpha) \log(1 - \alpha)) \leq c \sqrt{\alpha} \quad \textup{ for all } \quad \alpha \in (0,1).
\end{equation*}
The result then follows from the fact that $\# \mathcal{X} = \#( \mathcal{W}(T_0) \setminus \mathcal{Z})^{2J}$.
\end{proof}
We have now all the tools required to prove Lemma \ref{l:uncontrolled}.
\begin{proof}[Proof of Lemma \ref{l:uncontrolled}]
We choose $\alpha > 0$ small enough so that
\begin{equation}\label{e:condition_alpha}
\frac{2 c \rho \sqrt{\alpha} }{\log \va{\lambda_+}} < \beta,
\end{equation}
where $\beta$ is from Lemma \ref{l:bound_Bw} and $c$ is from Lemma \ref{l:cardinal_X}.
Then we see that $B_{\mathcal{X}}$ is the sum of at most $\# \mathcal{X}$ terms each of whose operator norms is $\mathcal{O}(\mathbf{N}^{- \beta})$. Hence, we have for some $C > 0$,
\begin{equation*}
\n{B_{\mathcal{X}}}_{\mathcal{H} \to \mathcal{H}} \leq C (\log \mathbf{N})^{2J} \mathbf{N}^{- \beta + \frac{2 c \rho \sqrt{\alpha} }{\log \va{\lambda_+}}},
\end{equation*}
and the lemma follows due to \eqref{e:condition_alpha}.
\end{proof}

\section{Decay for long words}\label{s:decay_long_words}

In this section, we use the fractal uncertainty principle, Proposition~\ref{p:fractal_uncertainty_principle} below, to prove Proposition~\ref{l:FUP_type} and end the proof of Theorems \ref{t:general} and \ref{t:damped}. In~\S\ref{s:fup}, we recall the definition of porosity and the statement of the fractal uncertainty principle. In~\S \ref{s:porosity}, we establish porosity estimates for the supports of $b_-$ and $b_+$ from Proposition~\ref{l:FUP_type}, which allows us to use the fractal uncertainty principle in~\S\ref{s:proof_lemma} to prove Proposition~\ref{l:FUP_type}.

\subsection{Fractal uncertainty principle}\label{s:fup}

The central tool of our proof is the fractal uncertainty principle, due
originally to Bourgain--Dyatlov~\cite{fullgap}. Roughly speaking,
it states that a function in $L^2(\mathbb R)$ cannot be localized in both
position and (semiclassically rescaled) frequency near a fractal set.
To make the statement precise, we use the following
\begin{defi}
  \label{d:porous}
Let $\nu \in (0,1)$ and $\tau_0 \leq \tau_1$ be nonnegative real numbers. Let $X$ be a subset of $\R$. We say that $X$ is \emph{$\nu$-porous on scales $\tau_0$ to $\tau_1$} if for every interval $I \subset \R$ of length $\va{I} \in \left[\tau_0,\tau_1\right]$, there is a subinterval $J\subset I$ of length $\va{J} = \nu \va{I}$ such that $J \cap X = \emptyset$.
\end{defi}
We also recall the definition of the $1$-dimensional semiclassical Fourier transform:
\begin{equation}
  \label{e:F-h-def}
\mathcal{F}_h f(x) = (2 \pi h)^{- \frac{1}{2}} \int_{\R} e^{-\frac{i}{h} x \eta} f(\eta)\,d\eta,\quad
f\in L^2(\mathbb R).
\end{equation}
Denote by $\indic_X:L^2(\mathbb R)\to L^2(\mathbb R)$ the multiplication
operator by the indicator function of~$X$. We use the following
extension of the fractal uncertainty principle of~\cite{fullgap} proved by
Dyatlov--Jin--Nonnenmacher~\cite[Proposition~2.10]{varfup}
(where we put $\gamma_0^\pm:=\varrho$, $\gamma_1^\pm:=0$
in the notation of~\cite{varfup}):
\begin{prop}\label{p:fractal_uncertainty_principle}
Let $\nu \in (0,1)$ and $ \varrho \in (\frac{1}{2},1]$. Then there exist $C,\beta > 0$ such that for every $h\in (0,1)$ and every $X,Y \subset \R$ which are $\nu$-porous on scales $h^\varrho$ to~$1$, we have
\begin{equation*}
\n{\indic_X \mathcal{F}_h \indic_Y}_{L^2\p{\R} \to L^2\p{\R}} \leq C h^\beta.
\end{equation*}
\end{prop}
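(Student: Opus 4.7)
The plan is to deduce Proposition~\ref{p:fractal_uncertainty_principle} from the endpoint fractal uncertainty principle of Bourgain--Dyatlov~\cite{fullgap}, which is the case $\varrho = 1$: for every $\nu \in (0,1)$ there exist $C_0, \beta_0 > 0$ such that for all $h \in (0,1)$ and all $X', Y' \subset \R$ which are $\nu$-porous on scales $h$ to $1$,
\begin{equation*}
\n{\indic_{X'} \mathcal F_h \indic_{Y'}}_{L^2(\R) \to L^2(\R)} \le C_0 h^{\beta_0}.
\end{equation*}
This endpoint statement I would cite as a black box; its proof in \cite{fullgap} is a delicate multiscale induction combined with Beurling--Malliavin type estimates on the near-orthogonality of exponentials. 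The case $\varrho = 1$ of the stated proposition is then immediate, so I focus on $\varrho \in (1/2, 1)$.

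The main step is a \emph{partition-and-dilation} reduction. Fix a smooth bump $\chi \in C^\infty_c(\R)$ supported in $[-1,1]$ with $\sum_{n \in \Z}\chi(\cdot - n)^2 \equiv 1$, set $\chi_n(x) := \chi(x-n)$, $\psi_m(\eta) := \chi(\eta - m)$, and decompose
\begin{equation*}
\indic_X \mathcal F_h \indic_Y = \sum_{n,m \in \Z} T_{n,m}, \qquad T_{n,m} := \indic_X \chi_n\, \mathcal F_h\, \psi_m \indic_Y.
\end{equation*}
For each pair $(n, m)$, the plan is to translate $(n,m)$ to the origin in phase space and then apply the symmetric dilation $x = h^{1-\varrho}u$, $\eta = h^{1-\varrho}v$. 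Up to a unimodular prefactor this conjugates $T_{n,m}$ to a fixed bump-cutoff version of $\mathcal F_{\tilde h}$ at the new semiclassical parameter
\begin{equation*}
\tilde h := h^{2\varrho - 1} \in (0,1),
\end{equation*}
where the hypothesis $\varrho > 1/2$ is used critically to guarantee $\tilde h < 1$. The rescaled sets $\widetilde X_n := h^{\varrho - 1}(X - n)$ and $\widetilde Y_m := h^{\varrho - 1}(Y - m)$ inherit $\nu$-porosity on the range $h^{\varrho}\cdot h^{\varrho - 1} = \tilde h$ up to $h^{\varrho - 1}$, which contains $[\tilde h, 1]$. Applying the endpoint FUP at parameter $\tilde h$ thus yields the per-summand bound
\begin{equation*}
\n{T_{n,m}}_{L^2 \to L^2} \le C_0 \tilde h^{\beta_0} = C_0\, h^{\beta_0(2\varrho - 1)}
\end{equation*}
uniformly in $n, m$.

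To convert this into a global bound I would verify the Cotlar--Stein almost-orthogonality criterion for the family $\{T_{n,m}\}$: by disjointness of the position supports of $\chi_n, \chi_{n'}$, the product $T_{n,m}^* T_{n',m'}$ vanishes whenever $|n-n'| \ge 2$, and symmetrically $T_{n,m} T_{n',m'}^*$ vanishes whenever $|m-m'| \ge 2$. The remaining cross terms (finite-range in one index, unbounded in the other) I would control by a non-stationary phase analysis of the kernel of $\mathcal F_h$, after first convolving $\indic_X, \indic_Y$ with smooth bumps at scale $h^\varrho$ so that repeated integration by parts produces polynomial decay in the unbounded index; the smoothing error is absorbed because porosity is preserved under such regularization. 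Cotlar--Stein then yields
\begin{equation*}
\n{\indic_X \mathcal F_h \indic_Y}_{L^2 \to L^2} \le C\, h^{\beta_0(2\varrho - 1)},
\end{equation*}
proving the proposition with $\beta := \beta_0(2\varrho - 1) > 0$. The main obstacle is precisely this Cotlar--Stein step: one must make the off-diagonal non-stationary phase bounds explicit in $h$, track the porosity constants through the regularization, and verify that the sum over the essentially-diagonal index pairs converges with the required uniformity; this is the technical heart of the argument, carried out carefully in the derivation of \cite[Proposition~2.10]{varfup}.
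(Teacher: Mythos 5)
The paper does not actually prove this statement: it is cited verbatim as a black box from \cite[Proposition~2.10]{varfup}, as announced in the sentence immediately preceding it. So there is no in-paper proof to compare against; one can only assess your reduction on its own terms. The partition-and-dilation skeleton is sound: translating a unit box to the origin and dilating by $h^{1-\varrho}$ does conjugate $\mathcal F_h$ to $\mathcal F_{\tilde h}$ with $\tilde h = h^{2\varrho-1}$, and you correctly check that $h^{\varrho-1}(X\cap[n-1,n+1]-n)$ is $\nu$-porous on $[\tilde h,1]$. The Cotlar--Stein step is also workable: $T_{n,m}^*T_{n',m'}=0$ for $|n-n'|\ge2$, $T_{n,m}T_{n',m'}^*=0$ for $|m-m'|\ge2$, and after replacing $\indic_X,\indic_Y$ by smooth majorants with derivatives $\mathcal O(h^{-\varrho k})$ supported in $h^\varrho$-neighborhoods (which are still $\nu/3$-porous), integration by parts in the $\mathcal F_h^*\,g\,\mathcal F_h$ kernel gives decay $\mathcal O_N\big((h^{1-\varrho}/|m-m'|)^N\big)$, which sums since $h^{1-\varrho}\to0$.

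The genuine gap is in the endpoint black box you invoke. What is proved in \cite{fullgap} is an uncertainty principle for \emph{$\delta$-regular sets} with \emph{compact support} (the constants there depend on an a priori bound on the diameter), not the statement you write down for arbitrary $\nu$-porous $X',Y'\subset\R$. Passing from regular to porous requires a separate covering lemma (this is \cite[Lemma~2.12]{varfup}), and passing from compact to unbounded supports requires exactly the kind of partition/almost-orthogonality argument you are running; in other words, the endpoint you cite \emph{is} \cite[Prop.~2.10]{varfup} with $\varrho=1$, not something more primitive, so your argument as written is circular. There is also a concrete technical obstruction with your chosen partition scale: after dilating by $h^{\varrho-1}$, the supports live in $[-h^{\varrho-1},h^{\varrho-1}]$, an interval of length growing to infinity, so even a compactly supported endpoint would not directly apply. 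The clean fix is to partition at scale $h^{1-\varrho}$ rather than at unit scale: then the rescaled supports land in $[-1,1]$, the rescaled porosity range is exactly $[\tilde h,1]$, and one can cite the compactly supported Bourgain--Dyatlov theorem together with the covering-by-regular-sets lemma and run Cotlar--Stein as you describe. In any case, the covering lemma is a step your proposal omits and must be included, since Bourgain--Dyatlov does not speak about porosity directly.
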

\Remark The condition that $\varrho>{1\over 2}$ is essential.
Indeed, the sets
$X=Y=[-{1\over 10}\sqrt h,{1\over 10}\sqrt h]$ are ${1\over 3}$-porous
on scales~$\sqrt h$ to~1, but
$\n{\indic_X\mathcal F_h\indic_Y}_{L^2\to L^2}$ does not
go to~0 as $h\to 0$, as can be checked by applying the operator in question
to the function $h^{-{1\over 4}}\chi(h^{-{1\over 2}}x)$
where $\chi\in \CIc((-{1\over 10},{1\over 10}))$ has $L^2$ norm~1.

\subsection{Porosity property}\label{s:porosity}

In order to use Proposition~\ref{p:fractal_uncertainty_principle}, we need to establish certain porosity properties for sets related to the support of $b_-$ and $b_+$ from Proposition~\ref{l:FUP_type}. Recall that the symbols~$b_\pm$ are defined in~\eqref{e:def_bplus_bminus} using arbitrary words $\w_\pm\in\mathcal W(T_1)$, where the long logarithmic propagation time $T_1$ is defined in~\eqref{e:def_times}.
The functions~$b_1,b_2$ used in the definitions of~$b_\pm$ satisfy~\eqref{e:b-assumptions}
and the complements $\mathbb T^{2n}\setminus \supp b_1$, $\mathbb T^{2n}\setminus\supp b_2$
are both safe in the sense of Definition~\ref{d:safe}.

Fix eigenvectors $e_\pm\in\mathbb R^{2n}\setminus\{0\}$ of~$A$ associated to the eigenvalues $\lambda_\pm$ (see~\eqref{e:lambda-pm-def}).
Note that by~\eqref{e:L-pm-sum} and~\eqref{e:L-pm-perp}
we have $\sigma(e_+,e_-)\neq 0$. We choose $e_\pm$
so that we get the following identity used in~\S\ref{s:fup-endgame} below:
\begin{equation}
  \label{e:e-pm-norm}
\sigma(e_+,e_-)=1.
\end{equation}
We let $\varphi_\pm^t$ be the translation flows on the torus corresponding to $e_\pm$, that is
\begin{equation}
\label{e:phi-pm-def}
\varphi_{\pm}^{t} (z) = z + t e_{\pm} \bmod \Z^{2n}\quad\text{for}\quad
z\in\mathbb T^{2n},\ t\in\mathbb R.
\end{equation}
Recall that the subtori $\mathbb T_\pm$
are defined as the projections to $\mathbb T^{2n}$ of the
spaces $V_\pm\otimes\mathbb R$ where $V_\pm\subset\mathbb Q^{2n}$
are minimal subspaces such that $e_\pm\in V_\pm\otimes\mathbb R$.
\begin{lemm}
\label{l:minimality}
Let $z \in \T^{2n}$. Then the closure in~$\mathbb T^{2n}$ of the orbit of $z$ under $\varphi_\pm^t$
is
\begin{equation}
  \label{e:minimality}
\overline{\{\varphi_\pm^t(z)\mid t\in\mathbb R\}}=z+\T_\pm.
\end{equation}
\end{lemm}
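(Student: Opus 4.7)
The plan is to translate~\eqref{e:minimality} into a question about the orbit of the origin. Since $\varphi_\pm^t(z) = z + te_\pm \bmod \mathbb{Z}^{2n}$ is a translation, it suffices to show
\begin{equation*}
H := \overline{\{t e_\pm \bmod \mathbb{Z}^{2n} : t \in \mathbb{R}\}} = \mathbb{T}_\pm.
\end{equation*}
The orbit on the left is the image of $\mathbb{R}$ under a continuous one-parameter subgroup, hence a connected subgroup of $\mathbb{T}^{2n}$; its closure $H$ is therefore a closed connected subgroup, that is, a subtorus of $\mathbb{T}^{2n}$.

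Next I would invoke the standard correspondence between subtori of $\mathbb{T}^{2n}$ and rational subspaces of $\mathbb{Q}^{2n}$: every subtorus of $\mathbb{T}^{2n}$ is of the form $\pi(W \otimes \mathbb{R})$ for a unique rational subspace $W \subseteq \mathbb{Q}^{2n}$, where $\pi : \mathbb{R}^{2n} \to \mathbb{T}^{2n}$ denotes the quotient map. The key underlying fact is that for a subspace $W' \subseteq \mathbb{R}^{2n}$ the sum $W' + \mathbb{Z}^{2n}$ is closed in $\mathbb{R}^{2n}$ if and only if $W' = W \otimes \mathbb{R}$ for some rational $W$, in which case $W \otimes \mathbb{R} \cap \mathbb{Z}^{2n}$ is a lattice of full rank in $W \otimes \mathbb{R}$ and $\pi(W \otimes \mathbb{R})$ is compact. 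Writing $H = \pi(W \otimes \mathbb{R})$ for such a $W$, the lemma reduces to proving $W = V_\pm$, which I would verify by two inclusions.

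For $V_\pm \subseteq W$: since $\pi$ is a local homeomorphism near the origin, the condition $te_\pm \bmod \mathbb{Z}^{2n} \in H$ forces $te_\pm \in W \otimes \mathbb{R}$ for all sufficiently small $|t|$, hence $e_\pm \in W \otimes \mathbb{R}$; the minimality in the definition of $V_\pm$ then yields $V_\pm \subseteq W$. For $W \subseteq V_\pm$: since $V_\pm$ is rational, the preceding paragraph shows $\mathbb{T}_\pm = \pi(V_\pm \otimes \mathbb{R})$ is a closed subset of $\mathbb{T}^{2n}$, and it contains every $te_\pm \bmod \mathbb{Z}^{2n}$ because $e_\pm \in V_\pm \otimes \mathbb{R}$; therefore $H \subseteq \mathbb{T}_\pm$ and consequently $W \subseteq V_\pm$. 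Combining the two inclusions gives $H = \mathbb{T}_\pm$, as desired.

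The only substantive input is the classification of subtori by rational subspaces, which is classical (a Kronecker-type statement); the rest is a direct unwinding of the definition of $V_\pm$. I do not anticipate any genuine obstacle.
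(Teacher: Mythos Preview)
Your proof is correct and follows essentially the same approach as the paper: reduce to the orbit of the origin, observe that its closure is a closed connected subgroup (hence a subtorus), identify its tangent space as a rational subspace, and conclude via the minimality of $V_\pm$. The only cosmetic difference is that the paper spells out the rational-subspace classification explicitly (showing the tangent space $\mathfrak g$ of the closure satisfies $\mathfrak g = V\otimes\mathbb R$ with $V$ spanned by the full-rank lattice $\mathfrak g\cap\mathbb Z^{2n}$), whereas you invoke it as a standard Kronecker-type fact.
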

\begin{proof}
Let us consider for instance the case of $\varphi_+^t$. By~\eqref{e:phi-pm-def},
it suffices to show that $G=\T_+$ where
$$
G:=\overline{\mathbb Re_+\bmod \mathbb Z^{2n}}\ \subset\ \mathbb T^{2n}.
$$
Since $\mathbb Re_+\bmod\mathbb Z^{2n}\subset\T_+$, we have $G \subset \T_+$, and let us prove the reciprocal inclusion.

The set $G$ is a closed subgroup of $\T^{2n}$, thus it is a Lie subgroup.
Let $\mathfrak g\subset\mathbb R^{2n}$ be the Lie algebra of $G$; since
$G$ is connected, the exponential map $\mathfrak g\to G$ is onto
and thus $G\simeq\mathfrak g/Z$ where $Z:=\mathfrak g\cap \mathbb Z^{2n}$.
Since $G$ is compact, the rank of the lattice $Z$ is equal to the dimension
of $\mathfrak g$, thus $\mathfrak g=V\otimes\mathbb R$
where $V\subset\mathbb Q^{2n}$ is the subspace generated by~$Z$.
Since $e_+\in\mathfrak g$, by the definition of $V_+$ we have
$V_+\subset V$. This implies that $V_+\otimes\mathbb R\subset\mathfrak g$
and thus $\mathbb T_+\subset G$ as needed.
\end{proof}
We now fix a constant $C_0>0$ (to be chosen in Step~2 of the proof of Lemma~\ref{l:coltar_fup} in~\S\ref{s:fup-endgame} below) and introduce for $z \in \T^{2n}$ the $h$-dependent sets 
\begin{equation}
  \label{e:Omega-pm-def}
\Omega_{\pm}(z) := \set{t \in \R\colon \exists v \in L_{\mp}\text{ such that }\va{v} \leq C_0h^{\rho'} \textup{ and } \varphi_{\pm}^t (z)+ v \in \textup{supp } b_{\mp}}.
\end{equation}
Here $\mathbf N=(2\pi h)^{-1}$ as before;
$L_{\pm}$ and $\rho'$ have been introduced in~\S\ref{s:propagation-times}, see in particular \eqref{e:L-pm-def} and \eqref{e:conditions_rho}.
We can visualize the sets $\Omega_\pm(z)$ as follows: let us lift $\supp b_\mp$
to a subset of $\mathbb R^{2n}$ and $z$ to a point in $\mathbb R^{2n}$. The set
$$
\{z + te_\pm + v\colon t\in\mathbb R,\ v\in L_\mp,\ |v|\leq C_0h^{\rho'}\}
$$
is a cylinder in $\mathbb R^{2n}=\mathbb Re_\pm\oplus L_\mp$,
and $\Omega_\pm(z)$ is the projection onto the $\mathbb R$ direction
of the intersection of this cylinder with $\supp b_\mp$.

The porosity statement needed in order to apply Proposition~\ref{p:fractal_uncertainty_principle} is the following
\begin{lemm}\label{l:porosity}
Let $\varrho \in (0, \rho)$. Then there exist $\nu,h_0\in (0,1)$, independent
of $\mathbf N,\w$
 such that, if $0 < h \leq h_0$, then for every $z \in \T^{2n}$, the sets $\Omega_{+}(z)$ and $\Omega_-(z)$ are $\nu$-porous on scales $h^\varrho$ to $1$.
\end{lemm}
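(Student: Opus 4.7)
\medskip

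\noindent\textbf{Proof plan.}
By replacing $A$ with $A^{-1}$ (which swaps the roles of $e_+,e_-$ and of $L_+,L_-$), it suffices to prove porosity of $\Omega_+(z)$. The strategy is to use the dynamics to convert a pore at scale $|I|$ into a pore at scale $\approx 1$ after applying an appropriately chosen power $A^j$. Concretely, recall from \eqref{e:def_bplus_bminus} that $\supp b_-\subset \bigcap_{j=0}^{T_1-1}A^{-j}(\supp b_{w_j^-})$ and note that $A$ preserves $L_-$, so $t\in\Omega_+(z)$ forces, for every $j$, the existence of $u\in L_-$ with $|u|\leq C_0\|A^j|_{L_-}\|h^{\rho'}$ and
\[
A^j\varphi_+^t(z)+u = \varphi_+^{\lambda_+^j t}(A^j z)+u\ \in\ \supp b_{w_j^-}.
\]
Thus finding a pore in $\Omega_+(z)$ amounts to finding a single index $j$ and a subinterval of $I$ on which, after conjugation by $A^j$, the translated orbit stays out of $\supp b_{w_j^-}$ even after a small $L_-$-perturbation.

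The first step will be to \emph{choose the scale}: given $|I|\in[h^\varrho,1]$, pick $j=j(I)\in\{0,\dots,T_1-1\}$ such that $|\lambda_+|^j |I|\in[1,|\lambda_+|]$. This is possible since $\varrho<\rho$ forces $\varrho\log\mathbf N/\log|\lambda_+|<T_1$ once $h$ is small, by \eqref{e:def_times}. Setting $y:=A^j z$ and $\tilde I:=\lambda_+^j I$, the problem reduces to producing a subinterval $\tilde J\subset\tilde I$ of length bounded below by a constant $\nu_1>0$ such that $\varphi_+^s(y)+u\notin \supp b_{w_j^-}$ for all $s\in\tilde J$ and all $u\in L_-$ with $|u|\leq \delta_1(h):=C_0\|A^j|_{L_-}\|h^{\rho'}$. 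By \eqref{e:L-pm-norm} and the key inequality $\gamma^{T_1}\ll \mathbf N^{\rho'}$ from \eqref{e:special-times-condition}, we have $\delta_1(h)\to 0$ as $h\to 0$, uniformly in $j\leq T_1$.

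The main work, and the expected obstacle, will be a \emph{uniform quantitative avoidance lemma}: for each $i\in\{1,2\}$ there are constants $T_*,\nu_1,\delta_*>0$ (independent of $y$) such that every interval $\tilde I'\subset\mathbb R$ of length $\geq T_*+\nu_1$ contains a subinterval $\tilde J'$ of length $\nu_1$ with
\[
\varphi_+^s(y)+u\notin\supp b_i\quad\text{for all}\quad s\in\tilde J',\ u\in L_-,\ |u|\leq\delta_*.
\]
To prove this I will use that $U_i:=\mathbb T^{2n}\setminus\supp b_i$ is open and safe, so by Lemma~\ref{l:safe-cpct} it contains a compact safe subset $K$. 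I choose an open neighborhood $V\supset K$ and a constant $\delta_*>0$ with $\overline V+\{u:|u|\leq\delta_*\}\subset U_i$. Because $V$ meets every shifted torus $y+\mathbb T_+$ (it contains $K$) and the orbit of $\varphi_+^t$ through $y$ is dense in $y+\mathbb T_+$ by Lemma~\ref{l:minimality}, the open sets $\varphi_+^{-s}(V)$, $s\geq 0$, cover $\mathbb T^{2n}$. Compactness yields $T_*>0$ such that every forward orbit segment of length $T_*$ enters $V$, and compactness of $\overline V$ yields $\nu_1>0$ such that any such entry persists for an interval of length at least $\nu_1$ within $\overline V$. Combined with the $\delta_*$-buffer, this gives the required uniform statement.

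Putting the pieces together, applying the avoidance lemma with $i=w_j^-$ to $\tilde I$ (which has length in $[1,|\lambda_+|]$) produces $\tilde J\subset\tilde I$ of length $\nu_1$; pulling back by $\lambda_+^j$ gives $J\subset I$ of length $\nu_1/|\lambda_+|^j\geq (\nu_1/|\lambda_+|)\,|I|$. Choosing $h_0$ so small that $\delta_1(h)\leq\delta_*$ for all $h\leq h_0$ ensures the perturbation is tolerated, so $J\cap\Omega_+(z)=\emptyset$. The porosity constant is $\nu:=\nu_1/|\lambda_+|$. The only delicate ingredient is the uniform avoidance lemma, where one must nest neighborhoods carefully to obtain uniform hit-time $T_*$, uniform visit length $\nu_1$, and uniform transverse tolerance $\delta_*$ simultaneously; all other steps are direct consequences of the parameter choices \eqref{e:conditions_rho}--\eqref{e:def_times} and the linear action of $A$ on $e_+$ and $L_-$.
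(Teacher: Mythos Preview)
Your approach is essentially the paper's: pick the power $j$ so that $A^j$ stretches $I$ to unit scale along $e_+$, use safety of $\mathbb T^{2n}\setminus\supp b_i$ together with Lemma~\ref{l:minimality} and compactness to find a uniformly long subarc avoiding $\supp b_{w_j^-}$, and use $\gamma^{T_1}h^{\rho'}\to 0$ to absorb the $L_-$--perturbation. Two small slips to fix:

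\medskip
\noindent\textbf{(i) Scale mismatch.} You choose $j$ so that $|\lambda_+|^j|I|\in[1,|\lambda_+|]$, but your avoidance lemma only applies to intervals of length $\ge T_*+\nu_1$, and there is no reason $T_*+\nu_1\le 1$. The paper instead takes $j$ minimal with $|\lambda_+|^j|I|\ge R$ where $R$ is the constant coming from the avoidance step; you should do the same (and then $\nu=\nu_1/(R|\lambda_+|)$, not $\nu_1/|\lambda_+|$). The check $j<T_1$ for small $h$ is unchanged.

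\medskip
\noindent\textbf{(ii) Persistence of the visit.} The sentence ``compactness of $\overline V$ yields $\nu_1>0$ such that any such entry persists for an interval of length at least $\nu_1$'' is not justified as written: entering an open set $V$ gives no uniform lower bound on how long the orbit stays in $\overline V$. You need either two nested neighborhoods $V_0\subset\!\subset V$ with $V_0\supset K$ (so that $d(\overline{V_0},\partial V)>0$ forces persistence time $\nu_1=d(\overline{V_0},\partial V)/|e_+|$), or---as the paper does---a direct contradiction argument producing $R,r>0$ such that every length-$R$ orbit segment of $\varphi_+$ contains a length-$r$ subarc inside a fixed compact $K_i$ with safe interior. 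Your final remark that ``one must nest neighborhoods carefully'' is exactly the point; it just needs to be carried out rather than asserted.
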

\Remarks
1. Lemma~\ref{l:porosity} is where we use the assumption
that the complements $\mathbb T^{2n}\setminus \supp b_1$, $\mathbb T^{2n}\setminus\supp b_2$
are both safe.

\noindent 2. The choice of scales in Lemma~\ref{l:porosity} can be explained as follows
using~\eqref{e:special-times-condition}
(taking $\Omega_+$ to simplify notation).
On one hand, since $Ae_+=\lambda_+ e_+$,
the map $A^{T_1}$ expands the vector $e_+$
by $|\lambda_+|^{T_1}\sim h^{-\rho}\gg h^{-\varrho}$. Thus we expect
porosity of $\supp b_-$ in the direction of $e_+$ on scales from $h^\varrho$ to~1.
On the other hand, by~\eqref{e:L-pm-norm},
the same map $A^{T_1}$ sends the ball $\{v\in L_-\colon |v|\leq C_0h^{\rho'}\}$
to a set of diameter $\leq C_0\gamma^{T_1}h^{\rho'}\ll 1$,
so changing $\varphi_+^t(z)$ by an element of this ball does not change
much the forward trajectory under $A$ up to time~$T_1$, which is used
to define the symbol $b_-$ in~\eqref{e:def_bplus_bminus}.
\begin{proof}
We show the porosity of~$\Omega_+(z)$, with the case of $\Omega_-(z)$
handled similarly (reversing the direction of time).

\noindent 1. Since the complements $\mathbb T^{2n}\setminus\supp b_1$,
$\mathbb T^{2n}\setminus\supp b_2$ are safe, by Lemma~\ref{l:safe-cpct} there exist compact
subsets $K_1,K_2\subset \mathbb T^{2n}$ such that the interiors $K_1^\circ,K_2^\circ$ are safe and
$$
K_1\cap\supp b_1=K_2\cap\supp b_2=\emptyset.
$$
We claim that there exist constants $R>1$, $r>0$ such that for any
$\ell=1,2$, the intersection of every length~$R$
flow line of $\varphi_+^t$ with $K_\ell$ contains a segment of length $r$.
(Here the length of flow lines is defined using the parametrization by~$t$.)

We argue by contradiction: assume that
such $R,r$ do not exist, then there is a sequence
$z_m\in\mathbb T^{2n}$ such that for each~$m$
the intersection $K_\ell\cap \{\varphi_+^t(z_m)\colon |t|\leq m\}$ does
not contain any segment of length $1/m$. Passing to a subsequence, we may
assume that the sequence $z_m$ converges to some $z_\infty\in\mathbb T^{2n}$ as $m\to\infty$.
Since the interior~$K_\ell^\circ$ is safe, it intersects $z_\infty+\mathbb T_+$.
Then by Lemma~\ref{l:minimality} there exists $t\in\mathbb R$ such that
$\varphi_+^t(z_\infty)\in K_\ell^\circ$. 
For $m$ large
enough the segment $\big\{\varphi_+^s(z_m)\colon s\in [t,t+1/m]\big\}$
lies inside $K_\ell^\circ$. This contradicts our assumption and proves the claim.

\noindent 2. Let $z\in\mathbb T^{2n}$. We will show that $\Omega_+(z)$
is $\nu$-porous on scales $h^\varrho$ to~1 for
$$
\nu:={r\over R|\lambda_+|}.
$$
Let $I \subset \R$ be an interval of length between $h^\varrho$ and $1$. Let $j$ denote the smallest integer such that $\va{\lambda_+}^j \va{I} \geq R$. By
the definition~\eqref{e:def_times} of $T_1$,
and recalling that $\mathbf N=(2\pi h)^{-1}$,
\begin{equation*}
\va{\lambda_+}^{T_1 - 1} \va{I} \geq (2 \pi)^{-\rho} \va{\lambda_+}^{- (1 +J)} h^{\varrho - \rho} \underset{h \to 0}{\to} + \infty,
\end{equation*}
thus $0<j <T_1$, provided $h$ is small enough.

Since $Ae_+=\lambda_+e_+$,
the set $A^{j}\p{\set{\varphi_+^t(z): t \in I}}$ is a flow line of $\varphi_+$ of length $\va{\lambda_+}^j \va{I} \geq R$. Consequently, the intersection of this set with $K_{w_j^-}$
contains a segment of length $r$. It follows that
there exists a segment $J\subset I$ of length $|\lambda_+|^{-j}r$
such that
\begin{equation}
  \label{e:porint}
A^j\varphi_+^t(z)\in K_{w_j^-}\quad\text{for all}\quad t\in J.
\end{equation}
By our choice of $j$, we have $|J|\geq \nu |I|$.
It remains to prove that $J\cap\Omega_+(z)=\emptyset$.
Recalling the definition~\eqref{e:Omega-pm-def} of~$\Omega_+(z)$,
we see that it suffices to show that for each $t\in J$ we have
$$
\varphi_+^t(z)+v\not\in \supp b_-
\quad\text{for all}\quad
v\in L_-\quad\text{such that}\quad
|v|\leq C_0h^{\rho'}.
$$
We have $A^j(\varphi_+^t(z) + v) = A^j \varphi_+^t (z) + A^j v$.
Recalling the bound~\eqref{e:L-pm-norm}
on the norm of~$A^j$ restricted to~$L_-$, as well as the condition~\eqref{e:conditions_rho} we imposed on $\rho'$, we see that (with the constant $C$ depending on~$C_0$)
\begin{equation*}
\va{A^j v} \leq C \gamma^j h^{\rho'} \leq C \gamma^{T_1} h^{\rho'} \leq C(2 \pi)^{-\rho \frac{\log \gamma}{\log \va{\lambda_+}}} h^{\rho' - \rho \frac{\log \gamma}{\log \va{\lambda_+}}} \underset{h \to 0}{\to} 0. 
\end{equation*}
By~\eqref{e:porint} and since $K_{w_j^-}\cap \supp b_{w_j^-}=\emptyset$,
we see that $A^j(\varphi_+^t(z) +v)\not\in \supp b_{w_j^-}$, provided $h$ is small enough
so that $|A^jv|$ is less than the distance between $K_{w_j^-}$ and $\supp b_{w_j^-}$.
Recalling the definition \eqref{e:def_bplus_bminus} of $b_-$, it follows that
$\varphi_+^t(z)+v\not\in \supp b_-$, which finishes the proof.
\end{proof}

\subsection{Proof of Proposition~\ref{l:FUP_type}}\label{s:proof_lemma}

We now give the proof of Proposition~\ref{l:FUP_type}, relying on the
following three ingredients:
\begin{itemize}
\item the fact that for any $\varepsilon>0$, we have $b_\pm\in S_{L_\pm,\rho+\varepsilon,\rho'+\varepsilon}(\mathbb T^{2n})$
uniformly in $h$ and in the words $\w_\pm$
(see Lemma~\ref{l:long_logarithmic_words});
\item the porosity property of the supports $\supp b_\pm$ given by Lemma~\ref{l:porosity};
\item and the fractal uncertainty princple in the form of Proposition~\ref{p:fractal_uncertainty_principle}.
\end{itemize}
Henceforth we treat $b_\pm$ as $\mathbb Z^{2n}$-invariant
symbols in $S_{L_\pm,\rho+\varepsilon,\rho'+\varepsilon}(\mathbb R^{2n})$.
All the constants in the estimates below are independent of $h,\w_\pm$.

\subsubsection{Decomposing the operator and the scheme of the proof}

We start by decomposing the operator $\Op_h(b_-)\Op_h(b_+)$ into
a series, see~\eqref{e:totop-decomposition} below.
For that, fix a function
$$
\widetilde\psi\in \CIc((-1,1)^{2n};\mathbb R),\quad
\sum_{k\in\mathbb Z^{2n}}\widetilde\psi(z-k)^2=1\quad\text{for all}\quad
z\in\mathbb R^{2n}.
$$
For instance, we can start with $\chi\in \CIc((-1,1)^{2n};[0,1])$ such that
the $\mathbb Z^{2n}$-periodic function $F(x):=\sum_{k\in\mathbb Z^{2n}}\chi(x-k)^2$
is everywhere positive, and put $\widetilde\psi(x):=F(x)^{-{1\over 2}}\chi(x)$.

Now, consider the partition of unity $1=\sum_{k\in\mathbb Z^{2n}}\psi_k^2$
where the $h$-dependent symbol $\psi_k\in\CIc(\mathbb R^{2n})$ is given by
\begin{equation}
  \label{e:psi-k-def}
\psi_k(z):=\widetilde\psi\Big({z\over h^{\rho'}}-k\Big),\quad
k\in\mathbb Z^{2n}.
\end{equation}
Recalling Definition~\ref{d:S-L}, we see that $\psi_k$ lies in $S_{L,\rho,\rho'}(\mathbb R^{2n})$ uniformly in~$h,k$ for any coisotropic subspace $L\subset\mathbb R^{2n}$.

We have the following decomposition, with the series
below converging in the strong operator topology
as an operator $L^2(\mathbb R^n)\to L^2(\mathbb R^n)$,
which can be checked by applying it to a function in $\mathscr S(\mathbb R^{n})$:
\begin{equation}
  \label{e:totop-decomposition}
\Op_h(b_-)\Op_h(b_+)=\sum_{k\in\mathbb Z^{2n}}P_k\quad\text{where}\quad P_k:=\Op_h(b_-)\Op_h(\psi_k^2)\Op_h(b_+).
\end{equation}
We now state two estimates on the operators $P_k$ which together will give
Proposition~\ref{l:FUP_type}.
The first one is an almost orthogonality type statement when $|k-\ell|$ is sufficiently large:
\begin{lemm}\label{l:non_stationary}
For every $m > 0$ there exists a constant $C_m >0$ such that for every~$k,\ell \in \Z^{2n}$
such that $|k-\ell|\geq 10\sqrt n$ we have
\begin{equation*}
\n{P_k^* P_\ell}_{L^2 \to L^2} \leq C_m h^m \va{k - \ell}^{-m} \quad \textup{ and } \quad \n{P_k P_\ell^*}_{L^2 \to L^2} \leq C_m h^m \va{k - \ell}^{-m}.
\end{equation*}
\end{lemm}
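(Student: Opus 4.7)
The strategy is to exploit the quantitative spatial separation of $\supp\psi_k$ and $\supp\psi_\ell$ (which holds precisely because $|k-\ell| \ge 10\sqrt n$) and to invoke Lemma~\ref{l:nonint-sup-2} on a three-operator sandwich. Since $\widetilde\psi$ is supported in $(-1,1)^{2n}$, formula~\eqref{e:psi-k-def} places $\supp\psi_k$ inside the cube of side $2h^{\rho'}$ centered at $h^{\rho'}k$, and similarly for $\ell$. The unit linear functional $q(z) := \langle z,\ell-k\rangle/|\ell-k|$ then separates the supports in the sense of~\eqref{e:nonint-sup-2}: one can pick $r_0\in\mathbb{R}$ and $r\ge \tfrac14 h^{\rho'}|k-\ell|$ with $\supp\psi_k^2\subset\{q\le r_0 - r\}$ and $\supp\psi_\ell^2\subset\{q\ge r_0 + r\}$.

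For $P_k^* P_\ell$, I will use the adjoint formula~\eqref{e:basic-adjoint} and the reality of $\psi_k,\psi_\ell$ to write
\[
P_k^* P_\ell = \Op_h(\bar b_+)\,\bigl[\Op_h(\psi_k^2)\,\Op_h(\bar b_-)\,\Op_h(b_-)\,\Op_h(\psi_\ell^2)\bigr]\,\Op_h(b_+).
\]
Fix $\varepsilon>0$ with $\rho+\rho'+2\varepsilon<1$, which is possible by~\eqref{e:conditions_rho}. Lemma~\ref{l:long_logarithmic_words} places $b_\pm$ into $S_{L_\pm,\rho+\varepsilon,\rho'+\varepsilon}$ uniformly in $h$ and $\w_\pm$; direct inspection of~\eqref{e:psi-k-def} shows $\psi_k^2\in S_{L,\rho',\rho'}$ for every coisotropic $L$, uniformly in $h,k$, hence in particular $\psi_k^2\in S_{L_-,\rho+\varepsilon,\rho'+\varepsilon}$. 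The composition formula in Lemma~\ref{l:quantization-properties}(1) then collapses the middle pair: $\Op_h(\bar b_-)\Op_h(b_-) = \Op_h(c)$ with $c\in S_{L_-,\rho+\varepsilon,\rho'+\varepsilon}$ of uniformly bounded seminorms. Applying Lemma~\ref{l:nonint-sup-2} to $a=\psi_k^2$, this $c$, and $b=\psi_\ell^2$ in the class $S_{L_-,\rho+\varepsilon,\rho'+\varepsilon}$ with the separation data above yields, for every integer $N\ge 0$,
\[
\bigl\|\Op_h(\psi_k^2)\,\Op_h(c)\,\Op_h(\psi_\ell^2)\bigr\|_{L^2\to L^2} \le C_N\, h^N\bigl(1+\tfrac14 h^{\rho'}|k-\ell|\bigr)^{-N}.
\]
Absorbing the outer factors $\Op_h(\bar b_+),\Op_h(b_+)$ via the uniform $L^2$-boundedness in Lemma~\ref{l:quantization-properties}(2) produces the same bound, up to a multiplicative constant, for $\|P_k^* P_\ell\|_{L^2\to L^2}$. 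The estimate on $\|P_k P_\ell^*\|_{L^2\to L^2}$ follows symmetrically from the factorization
\[
P_k P_\ell^* = \Op_h(b_-)\,\bigl[\Op_h(\psi_k^2)\,\Op_h(b_+)\,\Op_h(\bar b_+)\,\Op_h(\psi_\ell^2)\bigr]\,\Op_h(\bar b_-),
\]
worked out in the class $S_{L_+,\rho+\varepsilon,\rho'+\varepsilon}$ using the middle symbol $b_+\#\bar b_+$.

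Finally, I will convert the bound $h^N(1+h^{\rho'}|k-\ell|)^{-N}$ into $h^m|k-\ell|^{-m}$. Setting $t:=h^{\rho'}|k-\ell|$, the identity $h^m|k-\ell|^{-m} = h^{m(1+\rho')}t^{-m}$ reduces the matter to bounding $h^{N-m(1+\rho')}\,t^m(1+t)^{-N}$ uniformly in $t\ge 0$; since $t^m(1+t)^{-N}$ is bounded on $[0,\infty)$ whenever $N\ge m$, any $N\ge m(1+\rho')$ suffices. The main thing to keep straight is the uniform bookkeeping of seminorms across the exotic classes involved, as $h,k,\ell$ and the words $\w_\pm$ vary; the inequality $\rho+\rho'<1$ from~\eqref{e:conditions_rho} is exactly what makes the calculus of Lemma~\ref{l:quantization-properties} (and hence Lemma~\ref{l:nonint-sup-2}) applicable here, and I do not foresee any further obstacle.
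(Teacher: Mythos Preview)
Your proof is correct and follows essentially the same approach as the paper: the same linear functional $q$ to separate the supports, the same factorization of $P_k^*P_\ell$ with the middle symbol $\bar b_-\# b_-$ in the class $S_{L_-,\rho+\varepsilon,\rho'+\varepsilon}$, and the same appeal to Lemma~\ref{l:nonint-sup-2}. You are slightly more explicit than the paper in two places---you spell out the symmetric treatment of $P_kP_\ell^*$ via $S_{L_+,\rho+\varepsilon,\rho'+\varepsilon}$, and you carry out the conversion from $h^N(1+h^{\rho'}|k-\ell|)^{-N}$ to $h^m|k-\ell|^{-m}$---but these are just details the paper leaves to the reader.
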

The second one is the norm bound on each individual $P_k$,
which uses the fractal uncertainty principle:
\begin{lemm}\label{l:coltar_fup}
There exist constants $C,\beta>0$ such that for every $k\in \Z^{2n}$ we have
\begin{equation*}
\|P_k\|_{L^2\to L^2} \leq C h^\beta.
\end{equation*}
Here the constant $\beta$ only depends on the porosity constant $\nu$
in Lemma~\ref{l:porosity} and on~$\rho$.
\end{lemm}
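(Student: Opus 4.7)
The plan is to reduce the bound to the one-dimensional fractal uncertainty principle (Proposition~\ref{p:fractal_uncertainty_principle}) by exploiting the porosity statement of Lemma~\ref{l:porosity}.

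As a first step I would use the product formula~\eqref{e:S-L-prod} to simplify $P_k$. The cutoff $\psi_k$ from~\eqref{e:psi-k-def} belongs, uniformly in $h$ and~$k$, to $S_{L,\rho,\rho'}(\mathbb R^{2n})$ for any coisotropic $L$, and in particular for $L=L_\pm$. Combining $\Op_h(\psi_k^2) = \Op_h(\psi_k)^2 + \mathcal O(h^{1-\rho-\rho'})_{L^2\to L^2}$ with the product formula applied in the $S_{L_\pm,\rho+\varepsilon,\rho'+\varepsilon}$ calculi (each giving an $\mathcal O(h^{1-\rho-\rho'-\varepsilon})$ remainder) yields
\begin{equation*}
P_k \;=\; \Op_h(c_-)\,\Op_h(c_+)\;+\;\mathcal O(h^{1-\rho-\rho'-\varepsilon})_{L^2\to L^2},
\end{equation*}
where $c_\pm := \psi_k b_\pm$ lies in $S_{L_\pm,\rho+\varepsilon,\rho'+\varepsilon}(\mathbb R^{2n})$ uniformly in $h,k,\w_\pm$ and is supported in a ball of radius $\mathcal O(h^{\rho'})$ around $z_k := h^{\rho'}k$. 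Since $\sigma(e_+,e_-)=1$, there exists $A_0\in\Sp(2n,\mathbb R)$ sending $\partial_{\xi_1}$ to $e_+$ and $\partial_{x_1}$ to $e_-$, so that $A_0^{-1}L_-=\{\xi_1=0\}$ and $A_0^{-1}L_+=\{x_1=0\}$; fixing $M_0\in\mathcal M_{A_0}$, conjugation by $M_0$ is unitary on $L^2$ and by~\eqref{e:egorov-metaplectic} reduces the task to estimating $\|\Op_h(\tilde c_-)\Op_h(\tilde c_+)\|_{L^2\to L^2}$, where $\tilde c_\pm := c_\pm\circ A_0$ grows as $h^{-\rho-\varepsilon}$ only under $\partial_{\xi_1}$ (resp.\ $\partial_{x_1}$) and as $h^{-\rho'-\varepsilon}$ in all remaining directions.

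Next I would identify the relevant porous sets. Choosing the constant $C_0$ in~\eqref{e:Omega-pm-def} large enough that the ball $\{|v|\leq C_0h^{\rho'}\}$ contains $\supp\widetilde\psi$, the projection of $\supp\tilde c_-$ onto the $\xi_1$-axis is contained (modulo the $\mathbb Z$-periodicity of $b_-$) in a translate of $\Omega_+(z_k)$, and symmetrically the $x_1$-projection of $\supp\tilde c_+$ lies in a translate of $\Omega_-(z_k)$. Fixing any $\varrho\in({1\over 2},\rho)$, Lemma~\ref{l:porosity} then implies that both projections are $\nu$-porous on scales $h^\varrho$ to $1$---trivially so above $h^{\rho'}$, since they fit inside intervals of length $\lesssim h^{\rho'}$.

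Finally I would convert this porosity into a norm bound. Using a partition of unity $1=\sum_j\chi_j(\xi_1)^2$ with $\supp\chi_j\subset[jh^\varrho,(j+2)h^\varrho]$, Lemma~\ref{l:nonint-sup-2} with $q=\xi_1$ shows that the pieces $\Op_h(\tilde c_-)\Op_h(\chi_j(\xi_1)^2)$ indexed by $j$ whose $\supp\chi_j$ sits at distance $\gtrsim h^\varrho$ from the porous set $\pi_{\xi_1}(\supp\tilde c_-)$ together contribute $\mathcal O(h^\infty)$, giving
\begin{equation*}
\Op_h(\tilde c_-) \;=\; \Op_h(\tilde c_-)\,\chi_-(\xi_1) + \mathcal O(h^\infty)_{L^2\to L^2}
\end{equation*}
for $\chi_-$ a smooth cutoff to an $\mathcal O(h^\varrho)$-neighborhood $X_-$ of $\pi_{\xi_1}(\supp\tilde c_-)$; symmetrically $\Op_h(\tilde c_+) = \chi_+(x_1)\Op_h(\tilde c_+) + \mathcal O(h^\infty)$ for $X_+$ an $\mathcal O(h^\varrho)$-neighborhood of $\pi_{x_1}(\supp\tilde c_+)$. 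The enlarged sets $X_\pm$ retain $\nu'$-porosity on scales $h^\varrho$ to $1$ for a slightly smaller $\nu'$, and $\chi_-(\xi_1)\chi_+(x_1)$ acts in the $x_1$ variable as $\mathcal F_h^{-1}\chi_-\mathcal F_h\chi_+$ and as the identity in the tangential $(x',\xi')$ variables, so Proposition~\ref{p:fractal_uncertainty_principle} together with the uniform $L^2$-bound on $\Op_h(\tilde c_\pm)$ from Lemma~\ref{l:quantization-properties} yields the desired $\|P_k\|_{L^2\to L^2}\leq Ch^\beta$. The main obstacle is this last step: the individual pieces $\chi_j$ are not acceptable symbols in the exotic calculus (their $\xi_1$-derivatives grow as $h^{-\varrho}$ with $\varrho>\rho'$), so Lemma~\ref{l:nonint-sup-2} must be applied term by term using the explicit $\xi_1$-separation of supports, and the resulting $\mathcal O(h^{-\varrho})$ many surviving terms must be controlled by summation.
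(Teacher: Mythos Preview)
Your proposal follows the same route as the paper: reduce $P_k$ to $\Op_h(\psi_k b_-)\Op_h(\psi_k b_+)$ via the product formula, conjugate by a metaplectic transformation straightening $(e_\pm,L_\pm)$, identify the $\xi_1$- and $x_1$-projections of the supports with translates of the porous sets $\Omega_\pm$, insert one-dimensional cutoffs, and apply Proposition~\ref{p:fractal_uncertainty_principle} to the $x_1$ variable via the tensor decomposition $L^2(\mathbb R^n)=L^2(\mathbb R_{x_1})\otimes L^2(\mathbb R^{n-1})$.

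The only substantive difference is your cutoff-insertion step, and the obstacle you flag there is illusory. After conjugation one has $A_0^{-1}L_-=\{\xi_1=0\}$, so in the class $S_{A_0^{-1}L_-,\rho+\varepsilon,\rho'+\varepsilon}$ the direction $\partial_{\xi_1}$ is the \emph{transverse} (fast) one, allowed to cost $h^{-\rho-\varepsilon}$ per derivative, not $h^{-\rho'-\varepsilon}$. Hence a single cutoff $\chi_-(\xi_1)$ built at scale $h^\rho$ (as the paper does, by convolving $\indic_{\widetilde\Omega_+(h^\rho/2)}$ with a mollifier of width $h^\rho$) lies in $S_{A_0^{-1}L_-,\rho,0}\subset S_{A_0^{-1}L_-,\rho+\varepsilon,\rho'+\varepsilon}$ uniformly in $h,k$; since $\chi_-=1$ on the $\xi_1$-projection of $\supp\tilde c_-$, the product formula~\eqref{e:S-L-prod} gives directly
\[
\Op_h(\tilde c_-)\;=\;\Op_h(\tilde c_-)\,\Op_h(\chi_-(\xi_1))\;+\;\mathcal O(h^{1-\rho-\rho'-\varepsilon})_{L^2\to L^2},
\]
with no partition of unity and no appeal to Lemma~\ref{l:nonint-sup-2}. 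The same holds for $\chi_+(x_1)$ relative to $S_{A_0^{-1}L_+,\rho,0}$. Since $h^\rho\ll h^\varrho$, the $h^\rho$-neighborhoods $\widetilde\Omega_\pm(h^\rho)$ remain $\nu/3$-porous on scales $h^\varrho$ to~$1$, and the FUP applies. Your more elaborate scheme would also work (indeed your $\chi_j$'s \emph{are} admissible symbols, since $\varrho<\rho$), but the single-cutoff argument is shorter.
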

Before proving Lemmas~\ref{l:non_stationary} and~\ref{l:coltar_fup}, let us explain how they imply Proposition~\ref{l:FUP_type}:
\begin{proof}[Proof of Proposition~\ref{l:FUP_type}]
It follows from Lemma~\ref{l:non_stationary} (with $m:=4n+1$) and Lemma~\ref{l:coltar_fup} that there are
constants $C,\beta > 0$ such that
\begin{equation*}
\sup_{k \in \Z^{2n}} \sum_{\ell \in \Z^{2n}} \n{P_k^* P_\ell}_{L^2 \to L^2}^{\frac{1}{2}} \leq C h^\beta \quad \textup{ and } \quad \sup_{k \in \Z^{2n}} \sum_{\ell \in \Z^{2n}} \n{P_k P_\ell^*}_{L^2 \to L^2}^{\frac{1}{2}} \leq C h^\beta.
\end{equation*}
Hence, it follows from the Cotlar--Stein Theorem~\cite[Theorem C.5]{Zworski-Book} 
and the decomposition~\eqref{e:totop-decomposition}
that $\|\Op_h(b_-)\Op_h(b_+)\|_{L^2\to L^2}\leq Ch^\beta$ as needed.
\end{proof}

\subsubsection{Almost orthogonality}

We are left with the proofs of Lemmas \ref{l:non_stationary} and \ref{l:coltar_fup}. We start with Lemma \ref{l:non_stationary}:
\begin{proof}[Proof of Lemma \ref{l:non_stationary}]
1. Let $k,\ell \in \Z^{2n}$ be such that $|k - \ell| \geq 10\sqrt n$.
Define the linear functional $q:\mathbb R^{2n}\to\mathbb R$ by
$$
q(z)={\langle z,\ell-k\rangle\over |k-\ell|},\quad
z\in\mathbb R^{2n}.
$$
Note that $q$ has norm~1. Putting $r_0:=h^{\rho'}q({k+\ell\over 2})$, we have
\begin{equation}
  \label{e:nonst-int-1}
\begin{aligned}
\supp \psi_k&\subset \big\{z\in \mathbb R^{2n}\,\big|\, q(z)\leq r_0-h^{\rho'}\textstyle{|k-\ell|\over 4}\},\\
\supp \psi_\ell&\subset \big\{z\in \mathbb R^{2n}\,\big|\, q(z)\geq r_0+h^{\rho'}\textstyle{|k-\ell|\over 4}\}.
\end{aligned}
\end{equation}
Indeed, assume that $z\in\supp\psi_k$. Then
$|q(h^{-\rho'}z-k)|\leq |h^{-\rho'}z-k|\leq \sqrt{2n}$.
Since $h^{\rho'}q(k)=r_0-h^{\rho'}{|k-\ell|\over 2}$
and $|k-\ell|\geq 10\sqrt n$, we have
$q(z)\leq r_0-h^{\rho'}{|k-\ell|\over 4}$.
This gives the first statement in~\eqref{e:nonst-int-1}, with the second one
proved similarly.
Note that~\eqref{e:nonst-int-1} implies in particular
that $\supp\psi_k\cap\supp\psi_\ell=\emptyset$.

\noindent 2. We estimate the norm $\|P_k^*P_\ell\|_{L^2\to L^2}$,
with $\|P_kP_\ell^*\|_{L^2\to L^2}$ estimated in a similar way.
We write using~\eqref{e:basic-product} and~\eqref{e:basic-adjoint}
\begin{equation*}
P_k^* P_\ell =
\Op_h(\bar{b}_+) \Op_h(\psi_k^2) \Op_h(\bar{b}_- \# b_-) \Op_h(\psi_\ell^2) \Op_h(b_+).
\end{equation*}
It follows from Lemmas~\ref{l:quantization-properties} and~\ref{l:long_logarithmic_words} that $\Op_h(\bar{b}_+)$ is uniformly bounded on $L^2$. 
Thus it suffices to show that
\begin{equation}
  \label{e:nonst-int-2}
\|\Op_h(\psi_k^2) \Op_h(\bar{b}_- \# b_-) \Op_h(\psi_\ell^2)\|_{L^2\to L^2}
\leq C_m h^m|k-\ell|^{-m}.
\end{equation}
To show~\eqref{e:nonst-int-2}, it suffices to apply Lemma~\ref{l:nonint-sup-2} with
$$
r := h^{\rho'} \textstyle\frac{|k - \ell|}{4},\quad
a := \psi_k^2,\quad
b:=\psi_\ell^2,\quad
c := \bar{b}_- \# b_- .
$$
Here for each $\varepsilon>0$ the symbols $a,b,c$ are bounded in the class $S_{L_-,\rho+\varepsilon,\rho'+\varepsilon}(\mathbb R^{2n})$ uniformly in~$h$ by Lemmas~\ref{l:quantization-properties} and~\ref{l:long_logarithmic_words}, as well as~\eqref{e:psi-k-def}.
The support condition of Lemma~\ref{l:nonint-sup-2}
is satisfied by~\eqref{e:nonst-int-1}.
\end{proof}

\subsubsection{Decay for an individual summand}
\label{s:fup-endgame}

Finally, we apply the fractal uncertainty principle to prove Lemma~\ref{l:coltar_fup}.
\begin{proof}[Proof of Lemma \ref{l:coltar_fup}]
\noindent 1. Recall that the $\psi_k$'s belong uniformly to both the symbol classes $S_{L_+,\rho,\rho'}$ and $S_{L_-,\rho,\rho'}$. Recalling Lemma \ref{l:long_logarithmic_words}, we can apply the product formula from Lemma \ref{l:quantization-properties} to find that
for every $\varepsilon>0$
\begin{equation*}
P_k = \Op_h(b_- \psi_k) \Op_h(b_+ \psi_k) + \mathcal{O}(h^{1 - \rho - \rho'-\varepsilon})_{L^2\to L^2}.
\end{equation*}
Therefore it suffices to show that
\begin{equation}
  \label{e:endgame-fup}
\|\Op_h(b_-\psi_k)\Op_h(b_+\psi_k)\|_{L^2\to L^2}\leq Ch^\beta.
\end{equation}

\noindent 2. We next study the supports of the symbols $b_\pm\psi_k$.
We have from~\eqref{e:psi-k-def}
$$
\supp\psi_k\ \subset\ h^{\rho'}k+(-h^{\rho'},h^{\rho'})^{2n}.
$$
Thus by~\eqref{e:L-pm-sum} any $z\in\supp\psi_k$ can be written as
$z=h^{\rho'}k+t_\pm e_\pm+v_\mp$ where $t_\pm\in\mathbb R$,
$v_\mp\in L_\mp$, and $|v_\mp|\leq C_0h^{\rho'}$ for some
constant $C_0$ depending only on the matrix~$A$.
Choose $s_\pm^{(k)}\in\mathbb R$ such that $h^{\rho'}k\in s_\pm^{(k)}e_\pm+L_\mp$.
Put $z^{(k)}:=h^{\rho'}k\bmod\mathbb Z^{2n}\in\mathbb T^{2n}$.
Recalling the definition~\eqref{e:Omega-pm-def} of the sets~$\Omega_\pm(z)$,
we get
\begin{equation}
  \label{e:supporter-1}
\supp(b_\mp\psi_k)\ \subset\ \bigcup_{t\in\widetilde\Omega_\pm}\big(t e_\pm+L_\mp\big)\quad\text{where}\quad
\widetilde\Omega_\pm:=s_\pm^{(k)}+\Omega_\pm(z^{(k)})\ \subset\ \mathbb R.
\end{equation}

\noindent 3. We now conjugate by a metaplectic transformation which
`straightens out' the vectors $e_\pm$ and the subspaces $L_\pm$. Using \eqref{e:L-pm-perp},
\eqref{e:e-pm-norm}, and the linear version of Darboux's Theorem,
we construct a symplectic matrix $Q \in \Sp(2n,\R)$ such that
\begin{itemize}
\item $Q \partial_{x_1} = e_-$ and $Q \partial_{\xi_1} = e_+$;
\item $Q \Span (\partial_{x_1},\dots, \partial_{x_n},\partial_{\xi_2},\dots,\partial_{\xi_n}) = L_-$;
\item $Q \Span (\partial_{x_2},\dots,\partial_{x_n},\partial_{\xi_1},\dots,\partial_{\xi_n}) = L_+$.
\end{itemize}
Let $\widetilde M\in\mathcal M_Q$ be a metaplectic operator
associated to~$Q$ (see~\S\ref{s:metaplectic}). 
Then by~\eqref{e:egorov-metaplectic}
\begin{equation*}
\begin{split}
\n{\Op_h(b_- \psi_k) \Op_h(b_+ \psi_k)}_{L^2 \to L^2} & = \n{\widetilde M^{-1} \Op_h(b_- \psi_k) \Op_h(b_+ \psi_k) \widetilde M}_{L^2 \to L ^2} \\
    & = \n{\Op_h\big((b_-\psi_k)\circ Q\big) \Op_h\big((b_+\psi_k)\circ Q\big)}_{L^2 \to L ^2}.
\end{split}
\end{equation*}
Thus~\eqref{e:endgame-fup} reduces to
\begin{equation}
  \label{e:endgame-fup-2}
\n{\Op_h\big((b_-\psi_k)\circ Q\big) \Op_h\big((b_+\psi_k)\circ Q\big)}_{L^2 \to L ^2}\leq Ch^\beta
\end{equation}
and the support condition~\eqref{e:supporter-1} becomes
\begin{equation}
  \label{e:supporter-2}
\begin{aligned}
\supp\big((b_-\psi_k)\circ Q\big)\ &\subset\ \{ (x,\xi)\mid \xi_1\in \widetilde\Omega_+\},\\
\supp\big((b_+\psi_k)\circ Q\big)\ &\subset\ \{  (x,\xi)\mid x_1\in \widetilde\Omega_-\}.
\end{aligned}
\end{equation}

\noindent 4. For $\delta>0$, denote the $\delta$-neighborhood of $\widetilde\Omega_\pm$ by
$$
\widetilde\Omega_\pm(\delta):=\widetilde\Omega_\pm+[-\delta,\delta].
$$
Let $\chi_\mp$ be the convolutions of
the indicator functions of~$\widetilde\Omega_\pm({1\over 2}h^{\rho})$ with the function
$h^{-\rho}\chi(h^{-\rho}t)$ where $\chi\in \CIc((-{1\over 2},{1\over 2}))$ is a nonnegative
function integrating to~1. 
Then
$$
\chi_\mp\in C^\infty(\mathbb R;[0,1]),\quad
\supp\chi_\mp\subset \widetilde\Omega_\pm(h^\rho),\quad
\chi_\mp=1\quad\text{on}\quad \widetilde\Omega_\pm
$$
and for each $\ell$ there exists a constant $C_\ell$ (depending only on $\ell$
and the choice of $\chi$) such that
$$
\sup_{t\in\mathbb R} |\partial_t^\ell \chi_\pm(t)|\leq C_\ell h^{-\rho\ell}.
$$
Define the symbols $\widetilde \chi_\pm\in C^\infty(\mathbb R^{2n})$ by
$$
\widetilde\chi_-(x,\xi)=\chi_-(\xi_1),\quad
\widetilde\chi_+(x,\xi)=\chi_+(x_1).
$$
Then $\widetilde\chi_\pm$ lie in the symbol class $S_{Q^{-1}L_\pm,\rho,0}(\mathbb R^{2n})$
uniformly in~$h$. On the other hand, by Lemma~\ref{l:long_logarithmic_words}
the symbols $(b_\pm\psi_k)\circ Q$ lie in the larger class $S_{Q^{-1}L_\pm,\rho+\varepsilon,\rho'+\varepsilon}(\mathbb R^{2n})$ uniformly in~$h$ for every fixed $\varepsilon>0$.
By Lemma~\ref{l:quantization-properties} and since
$(b_\pm\psi_k)\circ Q=\widetilde\chi_\pm((b_\pm\psi_k)\circ Q)$ by~\eqref{e:supporter-2},
we have
$$
\begin{aligned}
\Op_h\big((b_-\psi_k)\circ Q\big)&=\Op_h\big((b_-\psi_k)\circ Q\big)\Op_h(\widetilde\chi_-)+\mathcal O(h^{1-\rho-\rho'-\varepsilon})_{L^2\to L^2},\\
\Op_h\big((b_+\psi_k)\circ Q\big)&=\Op_h(\widetilde\chi_+)\Op_h\big((b_+\psi_k)\circ Q\big)+\mathcal O(h^{1-\rho-\rho'-\varepsilon})_{L^2\to L^2}.
\end{aligned}
$$
Note that $\Op_h(\widetilde\chi_+)=\chi_+(x_1)$ is a multiplication operator
and $\Op_h(\widetilde\chi_-)=\chi_-(-ih\partial_{x_1})$ is a Fourier multiplier
(see~\cite[Theorem~4.9]{Zworski-Book}).
Multiplying the above estimates and using that $\Op_h((b_\pm\psi_k)\circ Q)$ are bounded uniformly
as operators on~$L^2(\mathbb R^n)$, we reduce~\eqref{e:endgame-fup-2} to the following estimate:
\begin{equation}
  \label{e:endgame-fup-3}
\|\chi_-(-ih\partial_{x_1})\chi_+(x_1)\|_{L^2(\mathbb R^n)\to L^2(\mathbb R^n)}\leq Ch^\beta.
\end{equation}

\noindent 5. If we consider $L^2(\mathbb R^n)$ as the Hilbert tensor product
$L^2(\mathbb R)\otimes L^2(\mathbb R^{n-1})$ corresponding to the decomposition
$x=(x_1,x')$, $x':=(x_2,\dots,x_n)$, then
the operators $\chi_+(x_1)$ and $\chi_-(-ih\partial_{x_1})$
are the tensor products of the same operators in one variable
with the identity operator on $L^2(\mathbb R^{n-1})$. Thus~\eqref{e:endgame-fup-3} is equivalent to
\begin{equation}
  \label{e:endgame-fup-4}
\|\chi_-(-ih\partial_{x_1})\chi_+(x_1)\|_{L^2(\mathbb R)\to L^2(\mathbb R)}\leq Ch^\beta
\end{equation}
where we now treat the factors in the product as operators on $L^2(\mathbb R)$.
Denote by~$\mathcal F_h:L^2(\mathbb R)\to L^2(\mathbb R)$ the unitary semiclassical
Fourier transform, see~\eqref{e:F-h-def}. Then
$\chi_-(-ih\partial_{x_1})=\mathcal F_h^{-1}\chi_-(x_1)\mathcal F_h$.
Thus the left-hand side of~\eqref{e:endgame-fup-4} is equal
to $\|\chi_-(x_1)\mathcal F_h\chi_+(x_1)\|_{L^2(\mathbb R)\to L^2(\mathbb R)}$.
Since $\chi_\pm=\chi_\pm\indic_{\widetilde\Omega_\mp(h^{\rho})}$
and $|\chi_\pm|\leq 1$, the bound~\eqref{e:endgame-fup-4} reduces to
\begin{equation}
  \label{e:endgame-fup-5}
\|\indic_{\widetilde\Omega_+(h^\rho)}\mathcal F_h\indic_{\widetilde\Omega_-(h^\rho)}\|_{L^2(\mathbb R)\to L^2(\mathbb R)}\leq Ch^\beta.
\end{equation}

\noindent 6. We finally apply the fractal uncertainty principle. Fix $\varrho\in({1\over 2},\rho)$, which is possible since~$\rho>{1\over 2}$ by~\eqref{e:conditions_rho}.
By Lemma~\ref{l:porosity}, there exists $\nu>0$ such that the sets $\widetilde\Omega_\pm$ are $\nu$-porous on scales $h^\varrho$ to~1. Since $h^\rho\ll h^\varrho$ for $h\ll 1$,
the neighborhoods $\widetilde\Omega_\pm(h^\rho)$ are $\nu\over 3$-porous
on scales $h^\varrho$ to~1~-- see for example~\cite[Lemma~2.11]{varfup}.
Now~\eqref{e:endgame-fup-5} follows from the fractal uncertainty principle
of Proposition~\ref{p:fractal_uncertainty_principle}, and the proof is finished.
\end{proof}

\appendix

\section{Properties of integer symplectic matrices}\label{appendix:symplectic_matrices}

In this Appendix, we discuss the algebraic hypotheses made on the matrix $A$ in Theorems \ref{t:general}, \ref{t:damped}, and \ref{t:measures}. More precisely, we investigate the spaces $V_+$ and $V_-$ (and hence the tori $\T_+$ and $\T_-$) defined in~\eqref{e:T-pm-def}. In particular, we prove Lemma~\ref{l:irreducible} that allows us to deduce Theorem~\ref{t:basic} from Theorem~\ref{t:general}.

\subsection{Algebraic considerations} 

We start by giving a new characterization of $V_+$ and $V_-$. Let $A\in\Sp(2n,\mathbb Z)$
satisfy the spectral gap condition~\eqref{e:spectral-gap}
and recall from the introduction that $V_\pm$ were defined as the smallest subspaces
of $\mathbb Q^{2n}$ such that $E_\pm\subset V_\pm\otimes\mathbb R$
where $E_\pm\subset\mathbb R^{2n}$ are the eigenspaces of $A$ corresponding to the eigenvalues
$\lambda_+$ and $\lambda_-:=\lambda_+^{-1}$.

We will be using basic field theory, see e.g.~\cite[Chapter~13]{Dummit-Foote}. Recall
that for an algebraic number $\lambda\in\mathbb C$, its \emph{minimal polynomial}
(over $\mathbb Q$)
is the unique irreducible monic polynomial $P\in \mathbb Q[x]$ such that
$P(\lambda)=0$. Two algebraic numbers are called \emph{Galois conjugates} if
they have the same minimal polynomial.
\begin{lemm}\label{l:characterization_Vpm}
Let $P_{\pm}$ denote the minimal polynomials of $\lambda_{\pm}$. Then $V_{\pm} = \ker P_{\pm}(A)$. The dimensions of $V_\pm$ are equal to each other and to the degrees of $P_\pm$.
Moreover, we have the following two cases:
\begin{enumerate}
\item if $\lambda_+$ is a Galois conjugate of $\lambda_-$, then $V_+=V_-$;
\item otherwise $V_+\cap V_-=\{0\}$.
\end{enumerate}
\end{lemm}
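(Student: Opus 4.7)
\medskip

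\noindent\textbf{Proof plan for Lemma~\ref{l:characterization_Vpm}.}

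The plan is to identify $V_\pm$ with $\ker P_\pm(A)$ by a dimension count, and then read off the two dichotomous cases from the arithmetic of $P_+$ and $P_-$. I will focus on $V_+$; the argument for $V_-$ is identical after replacing $\lambda_+$ with $\lambda_-$.

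First, I would verify the easy inclusion $V_+\subset \ker P_+(A)$. Since $P_+(A)$ has rational entries, $\ker P_+(A)$ is a $\mathbb{Q}$-subspace of $\mathbb{Q}^{2n}$; because $P_+$ annihilates $\lambda_+$, its complexification contains $E_+$, so the minimality of $V_+$ gives the inclusion. Next I would show that $V_+$ is $A$-invariant: the subspace $V_+\cap A^{-1}V_+$ is a $\mathbb{Q}$-subspace whose complexification still contains $E_+$ (using $AE_+=E_+$), so minimality forces $V_+\subset A^{-1}V_+$, hence $AV_+=V_+$ by dimension. Invariance combined with the previous step shows that the minimal polynomial over $\mathbb{Q}$ of $A|_{V_+}$ divides $P_+$; since $\lambda_+$ is an eigenvalue of $A|_{V_+}$ (as $E_+\subset V_+\otimes\mathbb{R}$) and $P_+$ is irreducible, this minimal polynomial equals $P_+$. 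From the structure theorem for $\mathbb{Q}[x]$-modules, or simply by picking a cyclic vector, this gives $\dim_\mathbb{Q} V_+ \geq \deg P_+$.

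The heart of the argument is then to show $\dim_\mathbb{Q}\ker P_+(A)\leq \deg P_+$, which forces equality everywhere. The plan is to complexify and decompose $\mathbb{C}^{2n}=\bigoplus_\mu G_\mu$ into generalized eigenspaces of $A$. On each $G_\mu$ we write $A=\mu I + N$ with $N$ nilpotent; since $\mathbb{Q}$ is perfect, $P_+$ has only simple roots, so $P_+(A)|_{G_\mu}$ is invertible when $P_+(\mu)\neq 0$ and has kernel exactly the eigenspace $E_\mu$ (not the generalized one) when $P_+(\mu)=0$. The key observation — and the step I expect to require the most care — is that every Galois conjugate $\mu$ of $\lambda_+$ is in fact a simple eigenvalue of $A$: the characteristic polynomial of $A$ has rational coefficients, so $P_+$ divides it, and the multiplicity of $P_+$ as a factor is the common algebraic multiplicity of all its roots as eigenvalues of $A$; since $\lambda_+$ is simple by~\eqref{e:spectral-gap}, that multiplicity is $1$. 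Hence each $\dim_\mathbb{C} E_\mu=1$ for $\mu$ a root of $P_+$, and summing yields $\dim_\mathbb{C}\ker P_+(A)=\deg P_+$. Combining with the lower bound gives $V_+=\ker P_+(A)$ and $\dim_\mathbb{Q} V_\pm=\deg P_\pm$. Finally, the identity $\deg P_+=\deg P_-$ follows by noting that $x^{\deg P_+}P_+(1/x)/P_+(0)$ is an irreducible polynomial over $\mathbb{Q}$ vanishing at $\lambda_-=\lambda_+^{-1}$ (here $P_+(0)\neq 0$ since $0$ is not an eigenvalue of the invertible matrix $A$), so it must equal $P_-$.

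For the dichotomy, the plan is purely algebraic. If $\lambda_+$ and $\lambda_-$ are Galois conjugates, then $P_+=P_-$, so $\ker P_+(A)=\ker P_-(A)$ and case~(1) follows at once from the identification proved above. Otherwise $P_+$ and $P_-$ are distinct irreducible polynomials in $\mathbb{Q}[x]$, hence coprime, so B\'ezout yields $S,T\in\mathbb{Q}[x]$ with $SP_++TP_-=1$. Applied to $A$ and evaluated on any $v\in V_+\cap V_-=\ker P_+(A)\cap\ker P_-(A)$, this gives $v=S(A)P_+(A)v+T(A)P_-(A)v=0$, establishing case~(2).
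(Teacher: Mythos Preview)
Your proof is correct and follows essentially the same strategy as the paper's: establish $V_+\subset\ker P_+(A)$ by minimality, use the simplicity of $\lambda_+$ to see that $P_+$ divides the characteristic polynomial of $A$ exactly once so that $\dim\ker P_+(A)=\deg P_+$, force equality by $A$-invariance and irreducibility, and then read off the dichotomy from whether $P_+$ and $P_-$ are equal or coprime. The paper organizes the dimension count slightly differently---it argues directly with the characteristic polynomial of $A|_{\ker P_+(A)}$ (a power of $P_+$ dividing the characteristic polynomial of $A$, hence $P_+$ itself) rather than complexifying and decomposing into generalized eigenspaces---and deduces $\deg P_+=\deg P_-$ from $\mathbb{Q}(\lambda_+)=\mathbb{Q}(\lambda_-)$ rather than the reciprocal polynomial, but these are cosmetic differences.
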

\begin{proof}
1. We first show that $V_+=\ker P_+(A)$ and $\dim V_+=\deg P_+$. (The case of $V_-$ is treated similarly.)
Note that $\ker P_+(A)$ is an $A$-invariant subspace of $\mathbb{Q}^{2n}$.
Any (complex) eigenvalue of the endomorphism $A|_{\ker P_+(A)}$ has to be a root of $P_+$,
thus the characteristic polynomial $\widetilde P_+\in \mathbb Q[x]$ of $A|_{\ker P_+(A)}$ is a power of $P_+$. On the other hand, $\widetilde P_+$ divides the characteristic polynomial
of~$A$. Since $\lambda_+$ is a simple eigenvalue of~$A$, we see
that $\widetilde P_+=P_+$.

Since $P_+(\lambda_+) = 0$, we see that $E_+ \subset \ker P_+(A) \otimes \mathbb{R}$, and consequently we have $V_+ \subset \ker P_+(A)$. As $V_+$ is $A$-invariant,
the characteristic polynomial of the endomorphism $A|_{V_+}$ divides the characteristic polynomial $P_+$ of the endomorphism
$A|_{\ker P_+(A)}$. Since $P_+$ is irreducible over $\mathbb Q$
and $\dim V_+>0$,
we see that these two characteristic polynomials are equal. It follows
that $V_+=\ker P_+(A)$ and $\dim V_+=\deg P_+$.

\noindent 2.
Recall that the degree of $P_{\pm}$ is the dimension of the field $\mathbb{Q}(\lambda_{\pm})$ as a vector field over~$\mathbb{Q}$. Since $\lambda_+=\lambda_-^{-1}$, we have $\mathbb{Q}(\lambda_+) = \mathbb{Q}(\lambda_-)$, so that $\deg P_+ = \deg P_-$. It follows that $\dim V_+ = \dim V_-$.

\noindent 3.
Since $P_+$ and $P_-$ are irreducible over the rationals, either they are coprime, in which case $V_+ \cap V_- = \{0\}$, or they are equal, in which case $V_+ = V_-$, due to the characterization we just proved. If $P_+ = P_-$, then $P_+(\lambda_-) = 0$ and $\lambda_-$ is a Galois conjugate of $\lambda_+$. Reciprocally, if $P_+(\lambda_-) =0$, then $P_+$ and $P_-$ are not coprime, so that $P_+ = P_-$.
\end{proof}

In order to discuss the sharpness of Theorem~\ref{t:measures}, we introduce a decomposition of~$\mathbb{Q}^{2n}$. For a subspace $V\subset \mathbb Q^{2n}$, denote
by $V^{\perp\sigma}\subset\mathbb Q^{2n}$ its symplectic complement,
see~\eqref{e:perp-sigma-def}. Recall
that $V$ is called \emph{symplectic} if $V\cap V^{\perp\sigma}=\{0\}$.
\begin{lemm}\label{l:decomposition_symplectic}
We have the following two cases:
\begin{enumerate}
\item if $\lambda_+$ is a Galois conjugate of $\lambda_-$, then
$V_+=V_-$ is symplectic;
\item otherwise $V_\pm$ are both isotropic and the symplectic form $\sigma$
is nondegenerate on $V_+ + V_-$.
\end{enumerate}
Consequently, we have a decomposition of $\mathbb{Q}^{2n}$ into
\begin{equation*}
\mathbb{Q}^{2n} = V_0 \oplus V_1,
\end{equation*}
where $V_0 = V_+ + V_-$ and $V_1 = (V_+ + V_-)^{\perp \sigma}$ are symplectic. 
\end{lemm}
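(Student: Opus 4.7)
The plan is to exploit the fact, shown in Lemma \ref{l:characterization_Vpm}, that $V_\pm = \ker P_\pm(A)$ with $P_\pm$ irreducible over $\Q$, and that the characteristic polynomial of $A|_{V_\pm}$ equals $P_\pm$. This makes $V_\pm$ a simple $\Q[A]$-module: any nonzero $A$-invariant rational subspace of $V_\pm$ equals $V_\pm$ itself. The other key input is the symplectic identity $\sigma(Av, Aw) = \sigma(v,w)$, which for complex eigenvectors $v, w$ of $A$ with eigenvalues $\mu, \nu$ gives $(1 - \mu\nu)\sigma(v,w) = 0$, so $\sigma$ pairs two eigenvectors only when their eigenvalues multiply to $1$.

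A Galois-theoretic observation drives the case analysis: for any Galois conjugate $\mu$ of $\lambda_+$, applying a field automorphism of $\overline{\Q}$ mapping $\lambda_+ \mapsto \mu$ sends $\lambda_- = \lambda_+^{-1}$ to $\mu^{-1}$, so $\mu \mapsto \mu^{-1}$ gives a bijection from the roots of $P_+$ to the roots of $P_-$. Consequently, in Case~(1) where $P_+ = P_-$, the roots of $P_+$ are closed under inversion, whereas in Case~(2) where $P_+$ and $P_-$ are coprime, no two roots of $P_+$ can multiply to $1$.

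In Case~(1), I will observe that the radical $V_+ \cap V_+^{\perp\sigma}$ is an $A$-invariant rational subspace of $V_+$, hence $0$ or $V_+$ by simplicity; if it equals $V_+$, i.e., $V_+$ is isotropic, then $\sigma(E_+, E_-) = 0$ (since $E_\pm \subset V_+ \otimes \R$), and combined with $\sigma(E_+, E_\mu) = 0$ for every other complex eigenspace $E_\mu$ of $A$ with $\mu \neq \lambda_-$ (from the symplectic identity, using that $\lambda_\pm$ are simple eigenvalues), this forces $E_+$ into the kernel of $\sigma$ on $\C^{2n}$, contradicting nondegeneracy. So $V_+$ must be symplectic. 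In Case~(2), the Galois observation directly yields $\sigma(v, w) = 0$ for all $v, w \in V_+ \otimes \C$, so $V_+$ is isotropic; similarly $V_-$ is isotropic. Since $V_\pm$ are isotropic, the nondegeneracy of $\sigma$ on $V_+ + V_-$ reduces to showing that the pairing $\sigma\colon V_+ \times V_- \to \Q$ has trivial left kernel; this kernel is an $A$-invariant subspace of $V_+$, hence $0$ or $V_+$ by simplicity, and the latter again gives $\sigma(E_+, E_-) = 0$ and the same contradiction.

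The main subtlety, though of a bookkeeping nature, will be assembling the Galois argument cleanly and tracking the complexified eigenspace decomposition carefully so that the local vanishing ``$\sigma(E_+, E_\mu) = 0$ for every $\mu \in \Spec(A) \setminus \{\lambda_-\}$'' yields a contradiction with the global nondegeneracy of $\sigma$ on $\C^{2n}$. Once both cases are handled, the final decomposition $\Q^{2n} = V_0 \oplus V_1$ with both summands symplectic will follow from the standard fact that for any symplectic subspace $V_0$ of a symplectic vector space, the symplectic complement $V_0^{\perp\sigma}$ is itself symplectic and a direct complement to $V_0$.
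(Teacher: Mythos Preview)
Your proof is correct and takes a somewhat different route from the paper's. The paper works entirely with the complexified generalized eigenspace decomposition $\C^{2n}=\bigoplus_{\lambda}V(\lambda)$: it proves by induction on the Jordan filtration that $\sigma(V(\lambda),V(\lambda'))=0$ whenever $\lambda\lambda'\neq 1$, then explicitly computes $(V_\pm\otimes\C)^{\perp\sigma}=\bigoplus_{P_\mp(\lambda)\neq 0}V(\lambda)$ using the identity $P_-(\lambda)=c\lambda^{\deg P_+}P_+(\lambda^{-1})$, from which both cases are read off directly. You instead exploit the simplicity of $V_\pm$ as $\Q[A]$-modules: the radical $V_+\cap V_+^{\perp\sigma}$ (in Case~1) and the kernel of the pairing $V_+\times V_-\to\Q$ (in Case~2) are $A$-invariant, hence either~$0$ or all of~$V_+$, and the latter is ruled out by the single concrete fact $\sigma(E_+,E_-)\neq 0$. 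Your Galois bijection $\mu\mapsto\mu^{-1}$ between roots of $P_+$ and $P_-$ is exactly the root-level restatement of the paper's polynomial identity. The paper's approach is more self-contained and gives the explicit formula for $V_\pm^{\perp\sigma}$; yours is more algebraic and avoids the full induction over generalized eigenvectors, since you only need orthogonality with the one-dimensional $E_+$ on one side. One point to make explicit when you write it up: in your Case~(1) contradiction, the ``eigenspaces $E_\mu$'' for $\mu\notin\{\lambda_+,\lambda_-\}$ must be taken as \emph{generalized} eigenspaces so that they span $\C^{2n}$; the pairing argument still goes through there because $E_+$ consists of genuine eigenvectors, which is presumably what your parenthetical about simplicity of $\lambda_\pm$ is flagging.
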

\begin{proof}
1. For each complex eigenvalue $\lambda\in\Spec(A)$, define the space
of generalized eigenvectors
$$
V(\lambda):=\{v\in \mathbb C^{2n}\mid \exists\ell\geq 0:\ (A-\lambda I)^\ell v=0\}.
$$
Then we have the decomposition
\begin{equation}
  \label{e:dead-composer}
\mathbb C^{2n}=\bigoplus_{\lambda\in \Spec(A)}V(\lambda).
\end{equation}
We claim that for all $\lambda,\lambda'\in\Spec(A)$
such that $\lambda\lambda'\neq 1$,
\begin{equation}
  \label{e:perpor}
\sigma(v,v')=0\quad\text{for all}\quad v\in V(\lambda),\ v'\in V(\lambda').
\end{equation}
To prove~\eqref{e:perpor}, we argue by induction on $\ell+\ell'$ where $\ell,\ell'\geq 0$
are the smallest numbers such that $(A-\lambda I)^\ell v=(A-\lambda'I)^{\ell'}v'=0$.
If $\ell=0$ or $\ell'=0$, then $\sigma(v,v')=0$ since 
$v=0$ or $v'=0$. Otherwise we use that $A$ is symplectic to write
$$
\sigma(v,v')=\sigma(Av,Av')=\lambda\lambda' \sigma(v,v')
+\sigma((A-\lambda I)v,Av')+\sigma(\lambda v,(A-\lambda'I)v').
$$
Using the inductive hypothesis we see that the last two terms on the right-hand side are~0,
which gives $\sigma(v,v')=0$ as needed.

By~\eqref{e:dead-composer} and~\eqref{e:perpor}, we see that for all $\lambda\in\Spec(A)$
\begin{equation}
  \label{e:perp-composer}
V(\lambda)^{\perp\sigma}=\bigoplus_{\lambda'\in\Spec(A),\,\lambda'\neq\lambda^{-1}}V(\lambda').
\end{equation}

\noindent 2. Since $\lambda_\pm$ is a simple eigenvalue of $A$ and
$P_\pm$ is its minimal polynomial, each root of~$P_\pm$ is a simple eigenvalue of~$A$.
By Lemma~\ref{l:characterization_Vpm} we have
$$
V_\pm\otimes\mathbb C=\bigoplus_{\lambda,\, P_\pm(\lambda)=0}V(\lambda).
$$
Since $P_\pm$ are the minimal polynomials of $\lambda_\pm$ and
$\lambda_+=\lambda_-^{-1}$, we have $P_-(\lambda)=c\lambda^{\deg P_+}P_+(\lambda^{-1})$
for some $c\in\mathbb Q\setminus \{0\}$. It follows from~\eqref{e:perp-composer} that
$$
(V_\pm\otimes\mathbb C)^{\perp\sigma}=\bigoplus_{\lambda\in\Spec(A),\,P_\mp(\lambda)\neq 0}V(\lambda).
$$
If $\lambda_+$ is a Galois conjugate of $\lambda_-$,
then $P_+=P_-$, so $V_+=V_-$ is symplectic.
Otherwise $P_+$ and $P_-$ are coprime, so $V_\pm$ are both isotropic
and $\sigma$ is nondegenerate on $V_+ + V_-$.
\end{proof}

\Remark Using Lemma \ref{l:minimality}, the algebraic consideration from this section have dynamical implication. Lemma \ref{l:characterization_Vpm} that if $z \in \mathbb{T}^{2n}$ then the closure of the orbits of $z$ for the flows $(\varphi^t_+)_{t \in \mathbb{R}}$ and $(\varphi^t_{-})_{t \in \mathbb{R}}$ are either identical (if $\lambda_+$ is a Galois conjugate of $\lambda_-$) or have a finite number of points of intersection.

From Lemma \ref{l:decomposition_symplectic}, we know that if $z \in \mathbb{T}^{2n}$ then the closure of the orbit of $z$ under the action by translation of the $2$-dimensional vector space generated by $e_+$ and $e_-$ is always a symplectic subtorus of $\mathbb{T}^{2n}$.

\subsection{Most favorable cases}\label{s:most_favorable}

Theorem~\ref{t:measures} gives a condition on the support of semiclassical measures for $A$ in terms of the spaces $V_+$ and $V_-$. The larger these spaces are, the stronger the conclusion of Theorem \ref{t:measures} is. Considering the decomposition from Lemma~\ref{l:decomposition_symplectic}, the most favorable case is when $V_1$ is trivial. In that situation, there are still two possibilities according to Lemma~\ref{l:characterization_Vpm}:
\begin{enumerate}
\item $\mathbb{Q}^{2n} = V_+ = V_-$, or
\item $\mathbb{Q}^{2n} = V_+ \oplus V_-$ and $V_\pm$ are Lagrangian.
\end{enumerate}
In case~(1), Theorem~\ref{t:measures} says that all semiclassical measures for $A$ are fully supported. Actually, this is exactly the setting of Theorem~\ref{t:basic}, as we prove now.
\begin{lemm}\label{l:irreducible}
The characteristic polynomial of $A$ is irreducible over $\mathbb{Q}$ if and only if $\mathbb{Q}^{2n} = V_+ = V_-$ (that is $\mathbb{T}^{2n} = \mathbb{T}_+ = \mathbb{T}_-$, or equivalently the flows $(\varphi_+^t)_{t \in \mathbb{R}}$ and $(\varphi_-^t)_{t \in \mathbb{R}}$ are minimal).
\end{lemm}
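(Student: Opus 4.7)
The plan is to use Lemma~\ref{l:characterization_Vpm}, which identifies $V_\pm$ with $\ker P_\pm(A)$ and gives $\dim V_\pm = \deg P_\pm$, together with the general fact that the minimal polynomial over $\mathbb Q$ of an eigenvalue divides the characteristic polynomial. The equivalence with minimality will then follow directly from Lemma~\ref{l:minimality}.

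First, I would establish that the three conditions $V_+ = \mathbb Q^{2n}$, $V_- = \mathbb Q^{2n}$, and $\chi_A$ irreducible over $\mathbb Q$ are mutually equivalent (where $\chi_A$ denotes the characteristic polynomial of $A$). The key observation is that since $\chi_A \in \mathbb Q[x]$ has $\lambda_\pm$ as a root, the minimal polynomial $P_\pm$ of $\lambda_\pm$ divides $\chi_A$. Now: if $\chi_A$ is irreducible, then $\chi_A$ is itself the minimal polynomial of~$\lambda_\pm$ (both are monic, irreducible, and vanish at $\lambda_\pm$), so $\deg P_\pm = 2n$, which by Lemma~\ref{l:characterization_Vpm} gives $V_\pm = \mathbb Q^{2n}$. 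Conversely, if $V_+ = \mathbb Q^{2n}$, then $\deg P_+ = 2n$; since $P_+ \mid \chi_A$ and both are monic of degree $2n$, we have $P_+ = \chi_A$, so $\chi_A$ is irreducible. The argument with $V_-$ is identical, which gives the three-way equivalence. In particular, each of $V_+ = \mathbb Q^{2n}$ and $V_- = \mathbb Q^{2n}$ individually forces the other, so they are equivalent to the single condition $V_+ = V_- = \mathbb Q^{2n}$, which is precisely $\mathbb T_+ = \mathbb T_- = \mathbb T^{2n}$.

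Finally, I would address the dynamical reformulation. By Lemma~\ref{l:minimality}, the closure of the orbit of any point $x \in \mathbb T^{2n}$ under $\varphi_\pm^t$ is $x + \mathbb T_\pm$. Consequently, the flow $\varphi_\pm^t$ is minimal on $\mathbb T^{2n}$ if and only if $x + \mathbb T_\pm = \mathbb T^{2n}$ for every $x$, i.e. $\mathbb T_\pm = \mathbb T^{2n}$.

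There is no real obstacle here: the whole statement is algebraic once Lemma~\ref{l:characterization_Vpm} is in hand, and the only subtle point worth mentioning explicitly in the write-up is the divisibility $P_\pm \mid \chi_A$, which is standard.
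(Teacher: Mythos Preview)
Your proof is correct and follows essentially the same approach as the paper: both use Lemma~\ref{l:characterization_Vpm} to equate $\dim V_\pm$ with $\deg P_\pm$, together with the divisibility $P_\pm \mid \chi_A$, to obtain the equivalence. Your write-up is slightly more explicit (treating $V_-$ separately and spelling out the minimality consequence via Lemma~\ref{l:minimality}), but the argument is the same.
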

\begin{proof}
Notice that $P_+$ divides the characteristic polynomial of $A$ and recall that the dimension of $V_+$ is the degree of $P_+$. Hence, if $V_+$ is equal to $\mathbb{Q}^{2n}$, the degree of $P_+$ is $2n$ and $P_+$ must be the characteristic polynomial of $A$, which is consequently irreducible. Reciprocally, if the characteristic polynomial of $A$ is irreducible, it must be equal to~$P_+$, so that $V_+ = \mathbb{Q}^{2n}$.
\end{proof}
Of course, when $V_+ = V_- = \mathbb{Q}^{2n}$, the control of the support of semiclassical measures for $A$ given by Theorem~\ref{t:measures} is sharp. When $n=1$, $A$
satisfies the spectral gap condition~\eqref{e:spectral-gap} if and only if it is hyperbolic
(i.e. it has no eigenvalues on the unit circle), and in this case we always
have $\mathbb Q^2=V_+=V_-$. When $n>1$, one can easily construct
examples of matrices satisfying \eqref{e:spectral-gap} with irreducible characteristic polynomials. For example, when $n=2$ one can take
\begin{equation}
\label{e:A-example-1}
A=\begin{pmatrix}
0 & 0 & 1 &0 \\
0 & 0 & 0 & 1 \\
- 1 & 0 & 0 & 1 \\
0 & -1 & 1 & 2
\end{pmatrix}
\end{equation}
with the characteristic polynomial
$$
P(\lambda)=\lambda^4-2\lambda^3+\lambda^2-2\lambda+1=(\lambda^2-(1+\sqrt 2)\lambda+1)(\lambda^2-(1-\sqrt 2)\lambda+1)
$$
which has a root in $(0,1)$, a root in $(1,\infty)$, and two complex roots
on the unit circle.

Let us now consider the case (2), when $\mathbb{Q}^{2n} = V_+ \oplus V_-$. Our result is still sharp in this situation since, under some mild additional assumptions, Kelmer~\cite[Theorem~1]{Kelmer-cat} constructed semiclassical measures supported in some translate of $\mathbb{T}_+$ and
semiclassical measures supported in some translate of $\mathbb{T}_-$. A basic example
(previously presented by Gurevich~\cite{Gurevich-cat} and Kelmer~\cite{Kelmer-cat}) is 
\begin{equation}\label{e:isotropic_case}
A = \begin{pmatrix} B & 0 \\ 0 & B^{-T} \end{pmatrix},
\end{equation}
where $B \in \GL(n,\mathbb{Z})$, $|\det B|=1$, has irreducible characteristic polynomial and a leading simple eigenvalue, that also dominates the inverses of the eigenvalues of $B$ (so that
$A$ satisfies the spectral gap condition~\eqref{e:spectral-gap}). One can take for instance
\begin{equation*}
B = \begin{pmatrix} 0 & 1 & 0 \\ 0 & 0 & 1 \\ 1 & 1 & 0 \end{pmatrix}.
\end{equation*}
Using the coordinates $(x,\xi)$ on $\mathbb R^{2n}$, we see that
the spaces $V_\pm$ are given by $V_+=\{\xi=0\}$, $V_-=\{x=0\}$.
Note that if we allow $B$ to be in $\GL(n,\mathbb{Q})$, then, when $\mathbb{Q}^{2n} = V_+ \oplus V_-$, the matrix $A$ is always of the form \eqref{e:isotropic_case} after a symplectic (rational) change of coordinates.

Using Proposition \ref{p:explicit_quantization}, we see that for a matrix $A$ of the form \eqref{e:isotropic_case} and $\theta=0$, the following elements of $\mathcal{H}_{\mathbf{N}}(0)$ are eigenvectors for the quantizations $M_{\mathbf N,0}$ of~$A$:
\begin{equation*}\label{e:eigenvectors_example}
\mathbf{e}_0^0 \quad \textup{ and } \quad \mathbf{N}^{-\frac{n}{2}} \sum_{j \in \Z_{\mathbf{N}}^n} \mathbf{e}_j^0.
\end{equation*}
It follows from Proposition \ref{p:explicit_pseudors} that these eigenvectors converge respectively to the semiclassical measures
\begin{equation*}
a \mapsto \int_{\mathbb{T}^{n}} a(0,\xi) \,d\xi \quad \textup{ and } \quad a \mapsto \int_{\T^{n}} a(x,0) \,dx.
\end{equation*}
These measures are supported respectively in $\T_-$ and $\T_+$.

\subsection{General case}

For now, we only considered the case in which the space $V_1$ from Lemma~\ref{l:decomposition_symplectic} is trivial. Let us now discuss what happens when $V_1$ is non-trivial. Let $\T_0$ and $\T_1$ be the subtori of $\T^{2n}$ tangent respectively to $V_0$ and $V_1$. As before, we consider two cases:
\begin{enumerate}
\item If $\lambda_+$ is a Galois conjugate of $\lambda_-$, then
Theorem~\ref{t:measures} shows that the support of every semiclassical
measure contains a translate of $\T_0=\T_\pm$.
\item Otherwise Theorem~\ref{t:measures} shows that the support of
every semiclassical measure contains a translate of $\T_+$ or $\T_-$, which are different tori (their tangent spaces intersect trivially).
On the other hand, from \cite[Theorem~1]{Kelmer-cat}, we know that
(under mild additional assumptions) there are semiclassical measures supported in some translate of~$\mathbb{T}_+ +\T_1$ and semiclassical measures supported in some translate of~$\T_-+ \T_1$.
\end{enumerate}
Note that in both cases the conclusion of Theorem~\ref{t:measures} is not sharp. However, we cannot say more on the support of the semiclassical measures for $A$ without further information on the action of $A$ on~$\T_1$. 

To illustrate this fact, take a matrix $B \in \Sp(2n,\mathbb{Z})$ that satisfies~\eqref{e:spectral-gap} and a matrix $C \in \Sp(2n',\mathbb{Z})$ whose eigenvalues are dominated by the leading eigenvalue of $B$. Assume in addition that in the decomposition from Lemma~\ref{l:decomposition_symplectic} for the matrix $B$, the factor $V_1$ is trivial. Then, we form the matrix
$$
A := B\oplus C\ \in\ \Sp(2(n+n'),\mathbb Z).
$$
Notice that the matrix $A$ satisfies the condition \eqref{e:spectral-gap} and that the action of $A$ on the spaces $V_0$ and $V_1$ is given respectively by the matrices $B$ and $C$. The quantizations $M_{\mathbf N,\theta}$ of $A$ are tensor products of quantizations
of $B$ with quantizations of~$C$, with a basis of eigenfunctions consisting
of tensor products of eigenfunctions corresponding to $B$ with those corresponding to $C$.
The torus decomposed as $\T^{2(n+n')} = \T_0 \times \T_1$ and the semiclassical measures for $A$ associated to eigenfunctions of product type are of the form $\mu = \mu_0 \times \mu_1$ where $\mu_0$ and $\mu_1$ are semiclassical measures respectively for $B$ and~$C$. We know that the support of $\mu_0$ must contain a translate of $\T_+$ or $\T_-$ (and this estimate cannot be improved as discussed in \S \ref{s:most_favorable}).

If the semiclassical measures for $C$ have large supports, then Theorem~\ref{t:measures} is not sharp. For instance, if $C$ satisfies~\eqref{e:spectral-gap} and has an irreducible characteristic polynomial over the rationals, then $\mu_1$ must be fully supported, so that the supports of the semiclassical measures for $A$ (associated to eigenfunctions
of product type) contain a translate of $\T_+ \times \T_1$ or $\T_- \times \T_1$.

However, it is not true in general that the semiclassical measures for $C$ have a large support. The most extreme case is when $C$ is given by the symplectic rotation matrix~$F$ from~\eqref{e:special-sp}. In that case, the Dirac mass at $0$ is a semiclassical measure for~$C$ (as proved in Lemma~\ref{l:dirac_zero} below). Hence, all the measures of the form $\mu_0 \times \delta_0$ are semiclassical measures for $A$, and we see that Theorem~\ref{t:measures} is sharp in that case. A concrete example of a matrix $A$ for which this happens is
\begin{equation}
  \label{e:A-example-2}
A=\begin{pmatrix}2 & 3 \\ 1 & 2\end{pmatrix} \oplus
\begin{pmatrix} 0 & 1 \\ -1 & 0\end{pmatrix}=
\begin{pmatrix} 2 & 0 & 3 & 0 \\
0 & 0 & 0 & 1 \\
1 & 0 & 2 & 0 \\
0 & -1 & 0 & 0
\end{pmatrix}.
\end{equation}
We end this section with an example of matrix with the Dirac mass at $0$ as a semiclassical measure, that was needed for our discussion above.
\begin{lemm}\label{l:dirac_zero}
Let $F$ be the symplectic matrix from \eqref{e:special-sp}. Then the Dirac mass at $0$ is a semiclassical measure associated to $F$.
\end{lemm}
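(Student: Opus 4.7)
The plan is to exhibit an explicit sequence of norm-one eigenfunctions of $M_{\mathbf N,0}$ whose semiclassical measure is $\delta_0$, built from the Gaussian coherent state at the origin. Since $\varphi_F=0$, the quantization condition~\eqref{e:theta-quantize-condition} holds at $\theta=0$ for every $\mathbf N\geq 1$, and I may take $M_{\mathbf N,0}=\mathcal F_h|_{\mathcal H_{\mathbf N}(0)}$ where $\mathcal F_h\in\mathcal M_F$ is the semiclassical Fourier transform~\eqref{e:F-h-def-2}. I define
\[
\phi_h(x):=(\pi h)^{-n/4}\,e^{-|x|^2/(2h)},\qquad u^{\mathbf N}:=\Pi_{\mathbf N}(0)\phi_h\in\mathcal H_{\mathbf N}(0),
\]
with $\Pi_{\mathbf N}(0)$ from~\eqref{e:Pi-N-theta-def}. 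A direct Gaussian integral gives $\mathcal F_h\phi_h=\phi_h$, and applying the intertwining relation~\eqref{e:intertwining} at $\theta=0$ with $A=F$, $\varphi_F=0$, $M=\mathcal F_h$, and $g:=\Pi_{\mathbf N}\phi_h$ yields $M_{\mathbf N,0}u^{\mathbf N}=(\Pi_{\mathbf N}\mathcal F_h\phi_h)(0)=u^{\mathbf N}$, so $u^{\mathbf N}$ is an eigenfunction of $M_{\mathbf N,0}$ with eigenvalue~$1$.

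Next, expanding $u^{\mathbf N}$ in the basis $\{\mathbf e^0_j\}$ via~\eqref{e:e-kappa-def} produces the explicit discrete periodized Gaussian
\[
u^{\mathbf N}_j=\langle\phi_h,\mathbf e^0_j\rangle_{L^2}=\mathbf N^{-n/2}(\pi h)^{-n/4}\sum_{k\in\Z^n}e^{-\pi\mathbf N|k+j/\mathbf N|^2},
\]
concentrated near $j\equiv 0\pmod{\mathbf N}$ at scale $\sim\sqrt{\mathbf N}$. Tail contributions from $k\neq 0$ in the inner sum, and from $j\in\Z^n_{\mathbf N}$ at distance $\geq\mathbf N/4$ from the origin, are both $O(e^{-c\mathbf N})$; approximating the remaining Gaussian sum by the corresponding Gaussian integral then gives $\|u^{\mathbf N}\|_{\mathcal H}^2=1+O(e^{-c\mathbf N})$, so $u^{\mathbf N}/\|u^{\mathbf N}\|_{\mathcal H}$ is a normalized eigenfunction.

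To identify the semiclassical measure, by linearity and density of trigonometric polynomials in $C^\infty(\T^{2n})$ it suffices to verify~\eqref{e:measure-def} for each character $z\mapsto e^{2\pi i\sigma(w,z)}$, $w=(y,\eta)\in\Z^{2n}$, which takes the value $1$ at $z=0$. The computation in the proof of Proposition~\ref{p:explicit_pseudors} gives $\Op_{\mathbf N,0}(e^{2\pi i\sigma(w,\cdot)})\mathbf e^0_j=e^{(2\pi i/\mathbf N)\langle\eta,j\rangle+(i\pi/\mathbf N)\langle y,\eta\rangle}\mathbf e^0_{j+y}$, so the corresponding matrix coefficient equals
\[
e^{(i\pi/\mathbf N)\langle y,\eta\rangle}\sum_{j\in\Z^n_{\mathbf N}}u^{\mathbf N}_j\,\overline{u^{\mathbf N}_{j+y}}\,e^{(2\pi i/\mathbf N)\langle\eta,j\rangle}.
\]
Completing the square $|j|^2+|j+y|^2=2|j+y/2|^2+|y|^2/2$ inside the Gaussian exponent and evaluating the resulting sum (either by Poisson summation, or by passing to the corresponding Gaussian integral and absorbing exponentially small tails) yields $e^{-\pi(|y|^2+|\eta|^2)/(2\mathbf N)}+O(e^{-c\mathbf N})$, which tends to $1$ as $\mathbf N\to\infty$. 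The main technical annoyance is the uniform control of the exponentially small errors coming from the periodic folding in $u^{\mathbf N}_j$ and from replacing the sum over $\Z^n_{\mathbf N}$ by the full lattice sum needed for Poisson summation; these bounds are uniform in $w$ on any fixed compact subset of $\Z^{2n}$, which is all one needs after the linearity reduction.
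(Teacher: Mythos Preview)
Your proof is correct and follows essentially the same starting point as the paper: project the Gaussian $\phi_h$, note $\mathcal F_h\phi_h=\phi_h$, and use the intertwining relation to get an eigenfunction of $M_{\mathbf N,0}$. The difference lies in how the semiclassical limit is identified. You compute $\langle\Op_{\mathbf N,0}(e^{2\pi i\sigma(w,\cdot)})u^{\mathbf N},u^{\mathbf N}\rangle_{\mathcal H}$ directly for every character, completing the square in the Gaussian and evaluating via Poisson summation to get $e^{-\pi(|y|^2+|\eta|^2)/(2\mathbf N)}\to 1$; density of trigonometric polynomials together with the uniform bound~\eqref{e:basic-norm} then gives convergence for all $a\in C^\infty(\mathbb T^{2n})$. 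The paper instead avoids the full character computation: it extracts a limiting measure $\mu$ by a diagonal argument, tests only against symbols $a(x)$ depending on the position variable (for which~\eqref{e:Op-a-special} makes the computation trivial) to get $\supp\mu\subset\{x=0\}$, and then uses the $F$-invariance of $\mu$ to conclude $\supp\mu\subset F(\{x=0\})=\{\xi=0\}$, forcing $\mu=\delta_0$. Your route is more explicit and self-contained; the paper's is shorter and exploits the symmetry under~$F$ to halve the work.
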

\begin{proof}
Note that $\varphi_F=0$, so the quantization condition~\eqref{e:theta-quantize-condition}
holds for $\theta=0$ and all~$N$.
We will construct an eigenvector for the quantizations of $F$ using a Gaussian function localized near $0$ in phase space. We start with the function
$$
f(x) = e^{- \frac{|x|^2}{2h}}\in L^2(\mathbb R^n),\quad \mathcal F_h f=f
$$
where $\mathcal F_h\in \mathcal M_F$ is the semiclassical Fourier transform on $L^2(\mathbb R^n)$ defined in~\eqref{e:F-h-def-2}
and a quantization of~$F$ on $\mathcal H_{\mathbf N}(0)$ is given by
$\mathcal F_h|_{\mathcal H_{\mathbf N}(0)}$.
Using the projector $\Pi_{\mathbf N}$ from~\S\ref{s:decomposing-L2}, define the state
\begin{equation*}
f_{\mathbf{N}} \coloneqq \Pi_{\mathbf{N}}(0) f = \sum_{j \in \mathbb{Z}_{\mathbf{N}}^n} f_{\mathbf N,j} \mathbf{e}_j^0\ \in\ \mathcal{H}_{\mathbf{N}}(0),
\end{equation*}
where $f_{\mathbf{N},j} = \langle f, \mathbf{e}_j^0 \rangle_{L^2}$ for $j \in \mathbb{Z}_{\mathbf{N}}^n$. More explicitly, recalling~\eqref{e:e-kappa-def} we have
\begin{equation}
\label{e:expression_coordinates}
f_{\mathbf{N},j} = \mathbf{N}^{- \frac{n}{2}} \sum_{k \in \mathbb{Z}^n} e^{- \pi \mathbf{N}\left|k + \frac{j}{\mathbf{N}}\right|^2}.
\end{equation}
Since $\mathcal F_hf=f$, it follows from Lemma \ref{l:L2-decomposed} and the intertwining relation \eqref{e:intertwining} that $f_{\mathbf{N}}$ is an eigenvector for the quantizations of $F$ on $\mathcal{H}_{\mathbf{N}}(0)$.
By a diagonal argument (similarly to~\cite[Theorem~5.2]{Zworski-Book})
there is a sequence of even numbers $\mathbf{N}_p \underset{p \to \infty}{\to} \infty$ and a semiclassical measure $\mu$ for $F$ such that, for every $a \in C^{\infty}(\mathbb{T}^{2n})$,
\begin{equation}\label{e:convergence_to_mu}
\frac{\langle \Op_{\mathbf{N}_p,0}(a) f_{\mathbf{N}_p}, f_{\mathbf{N}_p} \rangle_{\mathcal{H}}}{\n{f_{\mathbf N_p}}^2_{\mathcal{H}}} \underset{ p \to  \infty}{\to} \int_{\mathbb{T}^{2n}} a\,d\mu.
\end{equation}
We will show that $\mu$ is the delta measure at~$(0,0)$, which
(by the diagonal argument again and since the limit of every convergent subsequence is the same)
implies that the convergence statement~\eqref{e:convergence_to_mu}
holds for the entire sequence $f_{\mathbf N}$.
Let us prove first that $\mu$ is supported in $\{x=0\}$.
Let $a(x)\in C^\infty(\mathbb T^n)$ be such that the ball centered at~0 of some small radius $\varepsilon>0$ does not intersect $\supp a$. By~\eqref{e:Op-a-special},
we have
\begin{equation}\label{e:explicit_pseudors_specific}
\langle \Op_{\mathbf{N},0}(a) f_{\mathbf{N}}, f_{\mathbf{N}} \rangle_{\mathcal{H}} =
\sum_{j\in\mathbb Z_{\mathbf N}^n}a\Big({j\over \mathbf N}\Big) |f_{\mathbf N,j}|^2.
\end{equation}
From~\eqref{e:expression_coordinates} we get
$|f_{\mathbf{N},j}|^2 \leq C \mathbf N^{-n}e^{- 2\pi \varepsilon^2 \mathbf N}$
for all $j\in \mathbb Z_{\mathbf N}^n$ such that $j/\mathbf N\in\supp a$.
On the other hand $\|f_{\mathbf N}\|_{\mathcal H}\geq |f_{\mathbf N,0}|\geq \mathbf N^{-{n\over 2}}$. Thus
$$
\int_{\mathbb T^{2n}}a(x)\,d\mu=\lim_{\mathbf N\to\infty}{\langle\Op_{\mathbf N,0}(a)f_{\mathbf N},f_{\mathbf N}\rangle_{\mathcal H}\over \|f_{\mathbf N}\|_{\mathcal H}^2}=0
$$
for all $a\in \CIc(\mathbb T^n\setminus \{0\})$, which gives that
$\supp\mu\subset \{x=0\}$.
Since $\mu$ is a semiclassical measure associated to $F$, it is invariant under~$F$.
Thus $\supp\mu\subset F(\{x=0\})=\{\xi=0\}$. It follows that
$\supp\mu=\{(0,0)\}$. Since $\mu$ is a probability measure, it has to be the delta measure
at~$(0,0)$.
\end{proof}

\medskip\noindent\textbf{Acknowledgements.}
We would like to thank the anonymous referee for a careful reading of the paper and many useful comments.
SD was supported by NSF CAREER grant DMS-1749858
and a Sloan Research Fellowship.
Most of this work was done while MJ was supported by the European Research Council (ERC) under the European Union's Horizon 2020 research and innovation programme (grant agreement No 787304), and working at LPSM\footnote{Laboratoire de Probabilit\'es, Statistique et Mod\'elisation (LPSM), CNRS, Sorbonne Universit\'e, Universit\'e de Paris, 4, Place Jussieu, 75005 Paris, France}.

\bibliographystyle{alpha}
\bibliography{General,Dyatlov,QC,Scattering}

\end{document}